\newcommand{\good}{\textit{Good}}
\newcommand{\eps}{\varepsilon}
\newcommand{\E}{\mathbf{E}}
\renewcommand{\P}{\mathbf{P}}
\newcommand{\prob}{\mathbf{P}}
\newcommand{\dir}{\textup{Dir}}
\newcommand{\arc}{\textup{Arc}}
\newcommand{\Z}{\mathbb Z}
\newcommand{\N}{\mathbb N}
\newcommand{\R}{\mathbb R}
\definecolor{dark_green}{RGB}{1, 180, 1}
\tikzset{vert/.style={circle,fill,inner sep=0,
    minimum size=0.15cm,draw}, nerve/.style={circle,inner sep=0,
    minimum size=0.15cm,draw},
    rightnerve/.style={circle,inner sep=0,
    minimum size=0.3cm,draw, fill}}
\newcommand{\ball}{\textup{Ball}}
\newcommand{\Ec}{\mathcal E}
\newcommand{\geo}{{\textup{Geo} }}
\newcommand{\be}{\mathbf{e}}
\theoremstyle{plain}
\newtheorem{thm}{Theorem}[section]
\newtheorem{theorem}[thm]{Theorem}
\newtheorem{lemma}[thm]{Lemma}
\newtheorem{prop}[thm]{Proposition}
\newtheorem{cor}[thm]{Corollary}
\newtheorem{definition}[thm]{Definition}
\theoremstyle{remark}
\tikzset{every tree node/.style={align=center}}
\def\tree{{\mathscr T}}
\def\path{{\mathscr P}}
\def\functionals{{\mathscr C}}
\def\edges{{\mathcal E}}
\def\dedges{\overline{\mathcal E}}
\newcounter{saveenum}
\begin{document}

%\title{Density of geodesics in first-passage percolation}
\title{The number of geodesics in planar first-passage percolation grows sublinearly}

\author{Daniel Ahlberg, Jack Hanson and Christopher Hoffman}
%\address{}
%\email{}

\thanks{D.A.~was in part supported by the Swedish Research Council (VR) through grant 2021-03964 and the Ruth and Nils-Erik Stenb\"ack Foundation. J.H.~was supported by the NSF grants DMS-1954257 and DMS-2002388 and by a CUNY-JFRASE grant (through the Sloan Foundation).}

\begin{abstract}
We study a random perturbation of the Euclidean plane, and show that it is unlikely that the distance-minimizing path between the two points can be extended into an infinite distance-minimizing path. More precisely, we study a large class of planar first-passage percolation models and show that the probability that a given site is visited by an infinite geodesic starting at the origin tends to zero uniformly with the distance.
%Alternatively phrased, we show that for two distant points it is unlikely that the geodesic between them can be extended to an infinite distance-minimizing path.
In particular, this show that the collection of infinite geodesics starting at the origin covers a negligible fraction of the plane.
This provides the first progress on the `highways and byways' problem, posed by Hammersley and Welsh in the 1960s.
\end{abstract}

\maketitle

\section{Introduction}\label{sec:intro}

In the Euclidean plane, the distance-minimizing curve between two points is given by a line segment, and each line segment can be extended into a two-sided distance minimizing curve -- the straight line. The analogous statement remains true in hyperbolic geometry. Having zero curvature, Euclidean geometry lies at the rim between elliptic and hyperbolic geometry. Euclidean and hyperbolic geometry are distinguished by the postulate that in Euclidean geometry, given a straight line and a point not on that line, there is one and only one line through that point which is parallel to the first.
%This fact, known as Playfair's axiom, is the distinguishing feature of Euclidean and hyperbolic geometry.
Euclidean geometry is critical in this regard, and it seems plausible that `parallel' distance-minimizing curves should collapse into one in a random perturbation of the Euclidean metric, and two-sided distance-minimizing curves should no longer exist.
%Euclidean geometry is critical in this regard, and it seems plausible that two-sided distance-minimizing curves should vanish in a random perturbation of the Euclidean metric.
Various random metric spaces considered in the literature are globally described by a deterministic norm $\mu:\R^2\to[0,\infty)$, equivalent to Euclidean distance on $\R^2$, in the sense that the distance between the origin and $x\in\R^2$ is $\mu(x)+o(|x|)$ with probability tending to one. Models of this type may be thought of as such random perturbations, and first-passage percolation on $\Z^2$ may be the most well-known such model.

Random perturbations of the Euclidean metric such as first-passage percolation are believed to exhibit asymptotic scaling as predicted by Kardar, Parisi and Zhang~\cite{karparzha86}.
Charles Newman has elaborated a convincing heuristic argument for why two-sided distance-minimizing paths (also known as bi-infinite geodesics or bigeodesics) should not exist in models that exhibit KPZ-scaling; see~\cite{aufdamhan17}. Newman's argument has recently been adopted to rule out the existence of bigeodesics in related so-called `integrable' models of last-passage percolation, where KPZ-scaling has been established, by Basu, Hoffman and Sly~\cite{bashofsly}; see also the recent work of Bal\'azs, Busani and Sepp\"al\"ainen~\cite{balbussep20}.
%for another approach to bigeodesics in the exactly solvable model of exponential last-passage percolation.
(In contrast, Benjamini and Tessera~\cite{bentes17} have established existence of bigeodesics in an hyperbolic setting.)
While the description of these models is impressively precise, the connections their analysis rests upon are very fragile. Apart from the handful integrable models amenable to explicit computations, we completely lack techniques that appropriately capture the behaviour of the vast majority of models thought to belong to the KPZ universe.

In this paper we address the related question, whether in a random perturbation of the Euclidean metric the distance-minimizing curve between two given points may be extended into a one-sided distance-minimizing curve. Except for degenerate cases, this probability is non-zero, and the question is whether the probability may remain bounded away from zero as the distance between the two points grow. Coupier~\cite{coupier18} showed for certain geometric random trees, including isotropic and/or integrable versions of first- and last-passage percolation, that this is not the case, and that the probability decays to zero with the distance. In these integrable and isotropic models, the shape function $\mu:\R^2\to[0,\infty)$ is either a multiple of the Euclidean norm or known to be strictly convex/concave and differentiable. However, in absence of assumptions of strict convexity and differentiability of the shape functions, this problem remains open \footnote{Coupier claims in the statement of his Theorem 7 that he can show the analogue of our Theorem 1 for general LPP models whose limiting shape is strictly concave. His argument, however, appears to use more unproven assumptions than this. His Proposition 8, cited to [11], does not appear to be established under only an assumption of strict concavity; in the case that the asymptotic shape is not differentiable, one could expect the analogue of Proposition 8 to fail. Models of FPP with ergodic weights, for which the analogue of Proposition 8 fails, have been constructed; see \cite{aleber18}}. 

This is not only a technical point; showing differentiability and strict convexity of $\mu$ for typical `non-solvable' models remains an important open problem, and it is known that one can construct translation-ergodic measures for which these properties do not hold. The objective of this paper is to show that Coupier's result give evidence for a much more general phenomenon, by establishing the analogous result in the context of first-passage percolation, where very little is known about $\mu$ beyond convexity and compactness of its unit ball. Our approach requires no additional assumption on the shape, and extends to the ergodic setting.

Our main result is related to, but different from, the so-called `midpoint problem' posed to Benjamini, Kalai and Schramm~\cite{benkalsch03}. The midpoint problem can be thought of as a preliminary step towards the question of bigeodesics, and was resolved by Ahlberg and Hoffman~\cite{ahlhof}. An alternative solution was provided earlier by Damron and Hanson~\cite{damhan17}, assuming differentiability of the asypmtotic shape. Recent work of Dembin, Elboim and Peled~\cite{demelbpel} provides a quantitative solution to the same problem under the assumption that the shape is not a polygon with too few sides.

\subsection{Model description and result}

First-passage percolation was first introduced in the 1960s, in work of Eden~\cite{eden61} and Hammersley and Welsh~\cite{hamwel65}, and has since become an archetype in the realm of spatial growth. It is non-integrable in nature, and decades of study has made clear that it is a very challenging model to understand. However, the study of first-passage percolation has also led to the development of powerful mathematical techniques, such as an ergodic theory for subadditive sequences~\cite{kingman68}, applicable in a wide range of contexts.

Much past work on first-passage percolation has considered the model on the two (or higher) dimensional integer lattice. This is the graph $(\Z^2,\Ec^2)$, where $\Ec^2$ consists of edges of the form $\{x, x+ \be_i\}$ for $i = 1, 2$, where $\be_i$ is the $i$th standard basis vector. A sequence $(v_0,e_1,v_1,\ldots,e_n,v_n)$, alternating between vertices and edges, and where each edge $e_i$ has the form $e_i=\{v_{i-1},v_i\}$, will be referred to a {\bf lattice} or {\bf nearest-neighbor path}, or simply as a {\bf path}. This definition is extended to the case of singly infinite paths (indexed by $\N$) and doubly infinite paths (indexed by $\Z$) in the obvious way. We shall often abuse notation and identify a path with either its set of vertices or its set of edges, depending on the context. Moreover, we shall write
$$
\|x\|_1:=|x_1|+|x_2|,\quad|x|:=(x_1^2+x_2^2)^{1/2}\quad\text{and}\quad\|x\|_\infty:=\max(|x_1|,|x_2|)
$$
for the usual $\ell_1$-, $\ell_2$- and $\ell_\infty$-norms on $\R^2$.

To construct the model, the edges (or sites) of the integer lattice are equipped with nonnegative random weights $(\omega_e)_{e\in\Ec^2}$. These weights give an elegant formulation of the model as a random (pseudo-)metric space $(\Z^2,T)$, in which the distance between two points $x,y\in\Z^2$ is given by the minimal weight-sum among all nearest-neighbour paths connecting $x$ to $y$. That is, for any path $\pi$ and $x,y\in\Z^2$, we define 
\begin{equation}\label{eq:T}
\textstyle{T(\pi):=\sum_{e\in\pi}\omega_e\quad\text{and}\quad T(x,y):=\inf\big\{T(\pi):\pi\text{ connecting $x$ to }y\big\}.}
\end{equation}
In the above context we may interpret the weights as the `passage times' of a growing entity. From this perspective, balls $\ball(t):=\{z\in\Z^2:T(0,z)\le t\}$ in the metric space have the meaning of `propagation over time' of the same entity. The asymptotic behaviour of distances, balls and geodesics, i.e.\ distance-minimizing connections between points in the metric space, are the primary focus of study.

On a formal note, the edge-weight configuration $(\omega_e)_{e\in\Ec^2}$ is drawn from the sample space $\Omega_1=[0,\infty)^{\Ec^2}$ according to $\P$, which equipped with the Borel sigma algebra forms a probability space $(\Omega_1,\mathcal{F},\P)$. We shall in this paper work with a large class of edge-weight distributions which includes those with independent edge weights from a common continuous distribution with finite mean. (We shall be more precise on the exact conditions in Section~\ref{sec:geodesics}.)
In this setting, the infimum in~\eqref{eq:T} is known to be uniquely attained, almost surely, and we denote by $\geo(x,y)$ the path $\pi$ for which $T(\pi)=T(x,y)$.
An infinite path $(v_0,v_1,\ldots)$ with the property that $(v_0,v_1,\ldots,v_n)$ coincides with $\geo(v_0,v_n)$ for all $n\ge1$ will be referred to as an {\bf infinite} or {\bf singly infinite geodesic}. For $v\in\Z^2$ we let $\tree_v=\tree_v(\omega)$ be defined as
\begin{equation}\label{eq:tree}
\tree_v:=\{\text{infinite geodesics originating at }v\}
\end{equation}
denote the collection of infinite geodesics $(v_0,v_1,\ldots)$ such that $v_0=v$. We shall often identify $\tree_v$ with the subgraph of $(\Z^2,\Ec^2)$ obtained as the union of its elements. From this perspective, it is straightforward to verify that $\tree_v$ can be described as the graph limit\footnote{A sequence $(G_n)_{n\ge1}$ of subgraphs of $(\Z^2,\Ec^2)$ is said to converge to a subgraph $G$ if for every $m$ there exists $n_0$ so that the restrictions of $G_n$ and $G$ to $[-m,m]^2$ coincide for all $n\ge n_0$.}
$$
\tree_v=\lim_{n\to\infty}\bigcup_{x\in v+\partial B(n)}\geo(v,x),
$$
where $B(n):=\{x\in\Z^2:\|x\|_\infty\le n\}$ and $\partial B(n):=\{x\in\Z^2:\|x\|_\infty=n\}$. Under our standing assumptions, the graph $\tree_v$ is almost surely a tree, and every (finite or infinite) segment of a branch in $\tree_v$ starting at $v$ is a geodesic.

Taking the perspective that $\tree_v$ is a random tree, the symbols `$y\in\tree_v$' will be given the meaning that $y$ is a vertex of this graph, which is equivalent with the statement that $y\in\gamma$ from some infinite geodesic $\gamma\in\tree_v$. Our main theorem states that the density of the tree $\tree_v$ as a subset of the plane is zero in a uniform sense. Due to translation invariance it will suffice to consider the geodesic tree rooted at zero.

\begin{theorem}\label{thm:highways}
For first-passage percolation on $\Z^2$ we have that
$$
\limsup_{|v|\to\infty}\P\big(v\in\tree_0\big)=0.
$$
\end{theorem}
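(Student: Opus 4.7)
The plan is to convert the pointwise statement $\P(v\in\tree_0)\to 0$ into an integrated density bound on the tree $\tree_0$ via translation invariance. By stationarity of the weight law,
$$
\E|\tree_0\cap B(n)| \;=\; \sum_{v\in B(n)}\P(v\in\tree_0),
$$
so if $\limsup_{|v|\to\infty}\P(v\in\tree_0) = c > 0$, one would have $\E|\tree_0\cap B(n)|\ge c'n^2$ for infinitely many $n$. It therefore suffices to prove $\E|\tree_0\cap B(n)| = o(n^2)$ as $n\to\infty$.

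To bound $|\tree_0\cap B(n)|$, I would exploit the planar tree structure of $\tree_0$. Let $N_n$ denote the number of ``effective branches'' of $\tree_0$ visible at scale $n$, roughly the number of distinct infinite geodesics from $0$ that reach $\partial B(n)$ at essentially different locations. Crudely, every vertex of $\tree_0\cap B(n)$ lies on some initial segment of length $O(n)$ of one of these branches, so $|\tree_0\cap B(n)|\le C\, n\, N_n$. The theorem then reduces to showing $\E[N_n] = o(n)$.

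Proving this sublinear bound is the main step and the main obstacle. My approach would combine a coalescence argument for geodesics with a Burton--Keane--style count of branch points. The coalescence step would show that two infinite geodesics from $0$ exiting $\partial B(n)$ at nearby points must share a long common prefix, so the number of truly distinct branches is much smaller than $|\partial B(n)|$; a dual version of this, expressed through $\tree_v$ for varying $v$, should let one translate pairwise coalescence into a bound on the expected number of branches via a second moment of the indicator $v \in \tree_0$. The Burton--Keane step uses translation invariance and planarity to bound the number of trifurcations of the tree inside $B(n)$. Together these control $\E[N_n]$.

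The main difficulty, and precisely what distinguishes this setting from Coupier's, is that we cannot assume strict convexity or differentiability of the shape function $\mu$. Without such assumptions, infinite geodesics need not possess asymptotic directions, so the usual ``one geodesic per direction'' paradigm fails, and the coalescence estimates must be robustly geometric rather than directional. I expect the coalescence bound to be the hardest step, requiring a delicate ergodic and multiscale argument (and perhaps a noise-sensitivity ingredient in the spirit of Ahlberg--Hoffman's midpoint argument) to rule out wide families of non-coalescing geodesics from $0$ without appealing to any regularity of $\mu$.
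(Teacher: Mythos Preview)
Your reduction in the first paragraph is logically flawed, and this is fatal to the whole strategy. The hypothesis $\limsup_{|v|\to\infty}\P(v\in\tree_0)=c>0$ only gives you a \emph{single} diverging sequence $(v_k)$ along which $\P(v_k\in\tree_0)>c/2$. It does \emph{not} give a positive density of such vertices, so there is no reason that $\sum_{v\in B(n)}\P(v\in\tree_0)\ge c'n^2$ along any subsequence. Lattice symmetries buy you only the orbit of each $v_k$ (at most eight points), and the fact that on $\{v_k\in\tree_0\}$ every vertex of $\geo(0,v_k)$ also lies in $\tree_0$ gives only order $|v_k|$ many good vertices in a box of that scale --- a linear contribution, not a quadratic one. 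So the most your averaging argument could hope to extract from the contradiction hypothesis is $\E|\tree_0\cap B(n)|\gtrsim n$ along a subsequence, and proving $\E|\tree_0\cap B(n)|=o(n^2)$ would tell you nothing. In fact the implication goes the other way: the paper derives the sublinear bound $\E[f_r]=o(r)$ (essentially your $\E[N_n]=o(n)$) \emph{as a corollary} of the pointwise theorem, not as an ingredient in its proof.

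The paper's actual argument is structurally quite different from yours. It fixes a bad sequence $(v_k)$ with $\P(0\in\tree_{v_k})>\delta$ and $v_k/|v_k|\to v$, first shows (via random coalescing geodesics and a backward-finiteness lemma) that $\partial\ball$ must fail to be differentiable at $v$, then builds shift-invariant ``geodesic measures'' on $\Omega_1\times\Omega_2$ as weak limits of encodings of the finite geodesics $\geo(x,x+v_k)$ (or, in a second case, of certain infinite extensions). The heart of the contradiction is a structural theorem (Theorem~\ref{thm:non-crossing}) saying that any shift-invariant measure on non-crossing geodesics has at most four coalescence classes; the constructions are engineered so that the limiting measure has infinitely many. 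The Burton--Keane-type reasoning you gesture at lives inside that structural theorem and inside the Ahlberg--Hoffman theory of random coalescing geodesics, but it is applied to these auxiliary measures on \emph{families} of geodesics, not to $\tree_0$ directly via a density count.
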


Let us briefly relate the statement of the above theorem to the preceding discussion. For any $v\in\Z^2$ it is known that there exists an almost surely unique finite geodesic $\geo(0,v)$ connecting the origin to $v$. The theorem implies that for every $\eps>0$ there exists an $n_0$ such that the probability that either $v\in\tree_0$ or $0\in\tree_v$, and thus that $\geo(0,v)$ can be extended into a one-sided infinite geodesic, is at most $\eps$ whenever $|v|>n_0$.

The above theorem further provides the first progress on the so-called `highways and byways problem' due to Hammersley and Welsh~\cite{hamwel65}. Hammersley and Welsh referred to an edge $e\in\Ec^2$ as an {\bf highway arc} if $e$ is traversed by $\geo(0,z)$ for infinitely many $z\in\Z^2$, and as a {\bf byway arc} if $e$ is traversed by $\geo(0,z)$ for finitely many $z$. Note that an edge is an highway arc if and only if $e\in g$ for some $g\in\tree_0$. Let $f_r$ denote the number of highway arcs that intersect the circumference of the disc $\{x^2+y^2=r^2\}$. Hammersley and Welsh asked whether $f_r\to\infty$ as $r\to\infty$, and if so, at what rate? The best known general lower bound gives that $f_r\ge 4$ for large $r$, but also that $f_r\to\infty$ as long as the asymptotic shape is not a polygon. It is not known whether there are continuous weight distributions for which the asymptotic shape is non-polygonal. As an immediate consequence of Theorem~\ref{thm:highways}, without assumptions on the shape, we obtain the following.

\begin{cor}\label{cor:highways}
For first-passage percolation on $\Z^2$ we have that
$$
\E[f_r]=o(r).
$$
\end{cor}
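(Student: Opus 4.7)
The plan is to deduce the corollary directly from Theorem~\ref{thm:highways} by a crude first-moment calculation. Observe that if an edge $e = \{u,v\} \in \Ec^2$ is a highway arc, then by definition $e$ lies in some infinite geodesic $g \in \tree_0$, and in particular both endpoints satisfy $u, v \in \tree_0$. Hence
$$
\P\big(e \text{ is a highway arc}\big) \le \P\big(u \in \tree_0\big).
$$
Next, the lattice edges whose straight-line realization crosses the circle $\{x \in \R^2 : x_1^2 + x_2^2 = r^2\}$ are precisely those with one endpoint inside and one outside the open disc of radius $r$. Any such edge has an endpoint $u$ with $\big||u| - r\big| \le 1$, and an elementary counting argument shows the number of such edges is bounded by $Cr$ for a universal constant $C$.

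Now fix $\eps > 0$. By Theorem~\ref{thm:highways}, there exists $R = R(\eps)$ such that $\P(u \in \tree_0) < \eps$ for every lattice point $u$ with $|u| \ge R$. For $r \ge R + 1$, every edge crossing the circle of radius $r$ has an endpoint $u$ with $|u| \ge r - 1 \ge R$, so
$$
\E[f_r] = \sum_{e \text{ crosses circle of radius } r} \P\big(e \text{ is a highway arc}\big) \le Cr \cdot \eps.
$$
Dividing by $r$ and letting $r \to \infty$ then $\eps \to 0$ yields $\limsup_{r\to\infty} \E[f_r]/r = 0$, which is exactly $\E[f_r] = o(r)$.

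There is essentially no obstacle beyond Theorem~\ref{thm:highways} itself: the entire content of the corollary is the uniform decay in that theorem combined with the linear perimeter of the circle of radius $r$. The only minor care required is to verify that being a highway arc forces both endpoints of the edge into $\tree_0$, which is immediate from the fact that $\tree_0$ is almost surely a tree rooted at the origin and each of its edges belongs to an infinite geodesic from $0$.
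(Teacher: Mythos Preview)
Your proof is correct and is precisely the first-moment argument the paper has in mind when it calls the corollary ``an immediate consequence'' of Theorem~\ref{thm:highways}; the paper does not give a separate proof. The only thing you might tighten is to note explicitly that you may always choose the endpoint $u$ of each crossing edge with $|u| \ge r$ (the outer one), so that for $r \ge R$ the bound $\P(u \in \tree_0) < \eps$ applies directly.
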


This work is a continuation of recent work developing an ergodic theory for the study of infinite geodesics in first-passage percolation. To prove Theorem~\ref{thm:highways} we will apply an approach based on the convergence of geodesic measures, introduced by Damron and Hanson~\cite{damhan14}, together with elements from the ergodic theory for infinite geodesics developed by Ahlberg and Hoffman~\cite{ahlhof}. We mention that a similar approach was used in~\cite{ahlhof} in order to resolve the `midpoint problem' of Benjamini, Kalai and Schramm~\cite{benkalsch03}.

\subsection{The asymptotic shape and the shape theorem}

Central in the study of first-passage percolation is the existence of a `shape theorem', describing the first-order asymptotics of distances and balls in the first-passage metric. A first step towards the shape theorem was made by Kingman~\cite{kingman68}, who from his subadditive ergodic theorem obtained the existence of a norm $\mu:\R^2\to[0,\infty)$, referred to as the {\bf time constant}, such that for every $z\in\Z^2$ we have
\begin{equation}\label{eq:timeconstant}
\lim_{n\to\infty}\frac1nT(0,nz)=\mu(z),\quad\text{almost surely}.
\end{equation}
In the setting of i.i.d.\ exponential edge weights, Richardson~\cite{richardson73} extended the \emph{radial} convergence in~\eqref{eq:timeconstant} to \emph{simultaneous} convergence in all directions,  proving the first version of the shape theorem. Later work of Cox and Durrett~\cite{coxdur81} gave both necessary and sufficient conditions under which its conclusion, that
\begin{equation}\label{eq:shapethm}
\limsup_{|z|\to\infty}\frac{1}{|z|}\big|T(0,z)-\mu(z)\big|=0\quad\text{almost surely},
\end{equation}
holds. In the stationary ergodic setting, a corresponding result was obtained by Boivin~\cite{boivin90}.

The shape theorem is above described in terms of distances, but can equally well be be phrased as an asymptotic result for large balls in the first-passage metric. Indeed, this may well be the more familiar version of the theorem. As such, the theorem relates the ball $\ball(t)=\{z\in\Z^2:T(0,z)\le t\}$ of the random metric to the unit ball in the norm $\mu$ on $\R^2$, referred to as the {\bf asymptotic shape}, as which we shall denote by
\begin{equation}\label{eq:ball}
\ball:=\{x\in\R^2:\mu(x)\le1\}.
\end{equation}
Since $\mu:\R^2\to[0,\infty)$ defines a norm on $\R^2$, it follows from the subadditive property that the asymptotic shape is convex. In the i.i.d.\ setting, $\ball$ obeys the symmetries of the $\Z^2$ lattice fixing the origin, and Kesten~\cite{kesten86} has showed that $\ball$ is compact as long as the edge-weight distribution puts mass strictly less than 1/2 at zero-weight edges. Under the assumptions stipulated in this paper, the shape will enjoy all of these properties.

Beyond the elementary properties of the shape described above, very little general information is available regarding the geometry of $\ball$. The lack of a more qualitative description of the shape has been an impediment for a more detailed understanding of the geometry of geodesics. This led Newman~\cite{newman95}, in his pioneering work on geodesics, to introduce an unproven condition of uniform curvature. The uniform curvature condition implies that the asymptotic shape is both strictly convex and its boundary differentiable. Although widely believed to be true (in the i.i.d.\ setting), both strict convexity and differentiability remain unproven to this day. As we shall see, the main obstacle in the present paper is the possible existence of `corners', i.e.\ points of non-differentiability, of the asymptotic shape, and we shall have to work hard in order to circumvent this difficulty.

\subsection{Outline of the proof}
We outline here our arguments under the additional assumption that the boundary $\partial\ball:=\{x\in\R^2:\mu(x)=1\}$ of the asymptotic shape is differentiable in every direction, meaning that there is a unique supporting line to $\ball$ through every point $x\in\partial\ball$, i.e.\ the tangent line. We emphasize that this assumption allows us to dramatically simplify our arguments --- indeed, the case that $\partial \ball$ is differentiable is handled by a relatively short argument at Lemma~\ref{corners} below, and does not require us to construct geodesic measures. We choose to focus on this simpler case to give the reader some insight without getting into the complications introduced when considering distributions with less regular $\ball$.

We shall argue by contradiction. So, suppose that for some $\delta>0$ and some diverging sequence $(v_k)_{k\ge1}$ of vertices of $\Z^2$ we have uniformly in $k$ that
\begin{equation}\label{eq:contra}
\P\big(v_k\in\tree_0\big)>\delta.
\end{equation}
By restricting to a subsequence we may further assume that $v_k/|v_k|\to v$ for some unit vector $v\in\R^2$. By symmetry and translation invariance, the assumption in~\eqref{eq:contra} is equivalent to
\begin{equation}\label{eq:contra2}
\P\big(0\in\tree_{v_k}\big)>\delta.
\end{equation}

The tangent lines of $\ball$ induce a partition of $\partial\ball$, and by projecting these sets on the unit sphere $S^1:=\{x\in\R^2:|x|=1\}$ we obtain a partition of $S^1$. (The sets in the partition are closed intervals or consist of a single point.) The vector $v$ is contained in precisely one of these sets; denote this set by $A_v$. Under the additional and unproven assumption that the shape is differentiable, the following facts are consequences of the work of Damron and Hanson~\cite{damhan14,damhan17}: for fixed $v$ as above, almost surely
\begin{itemize}
\item there is a unique geodesic $g=(u_1,u_2,\ldots)$ in $\tree_0$ with `asymptotic direction' in $A_v$, meaning that the set of limit points of the sequence $(u_k/|u_k|)_{k\ge1}$ is contained in $A_v$;
\item the geodesic $g$ is not part of a doubly infinite geodesic;
\item there is a sequence $(g_n^-)_{n\ge1}$ of geodesics in $\tree_0$ converging to $g$ in a counter-clockwise motion, and a sequence of geodesics $(g_n^+)_{n\ge1}$ converging to $g$ in a clockwise motion.
\end{itemize}
The convergence alluded to above is in the usual sense of path convergence, where 
\begin{equation}
\label{eq:convdef}\begin{minipage}{0.9\textwidth}
\begin{center}
   a sequence $(\pi_n)_{n\ge1}$ of (finite or infinite) paths is said to converge to a
path $\pi$ if for every $m$ there exists $n_0$ such that the initial segments of length $m$
of $\pi_n$ and $\pi$ coincide for all $n\ge n_0$.
\end{center}
    \end{minipage}
\end{equation}

On the above almost sure event, we may for every $m$ choose $n$ large so that $g_n^-$ and $g_n^+$ coincide with the first $m$ steps of $g$.
For each $n$ the geodesics $g_n^-$ and $g_n^+$ are (asymptotically) directed clockwise and counter-clockwise of $v$, respectively. Consequently, for every $n$ we can find $k_0$ so that for all $k\ge k_0$ the segment $\geo(0,v_k)$ is ``trapped'' in the region between $g_n^-$ and $g_n^+$.
By~\eqref{eq:contra2} and the ergodic theorem, we may therefore, with positive probability, find a sequence $(\gamma_m)_{m\ge1}$ of singly infinite geodesics that visits the origin, and whose first $m$ steps in reverse order from the origin coincide with the first $m$ steps of some (i.e.\ the almost surely unique) geodesic $g\in\tree_0$.
Taking limits as $m\to\infty$, possibly along a subsequence, $(\gamma_m)_{m\ge1}$ will converge to a doubly infinite geodesic through the origin, which in one direction coincides with $g$. This contradicts the fact that $g$ is almost surely {\em not} extendable to a doubly infinite geodesic, and thus proves Theorem~\ref{thm:highways}.

In the argument outlined above we have imposed the additional unproven assumption that the shape is differentiable at every point on the boundary, in which case methods from~\cite{damhan14,damhan17} apply. Using methods from~\cite{ahlhof} we can make the above argument rigorous without assumptions on the shape, as long as $v$ is a direction of differentiability.
%Since there are at most countably many points of non-differentiability of the shape, this is sufficient to establish Corollary~\ref{cor:highways}. \marginal{Correct, right?} However, it does not suffice to prove the more detailed statement of Theorem~\ref{thm:highways}.
%In essence, our approach will be based on the coalescence property of geodesics in a strong form. Coalescence of geodesics is the reason the geodesics cannot be extended backwards into a bi-infinite geodesic.
The above outline hides a story about coalescence among geodesics with a similar asymptotic behaviour. For a given direction of differentiability, geodesics starting from different point and with this direction will eventually coalesce. Being non-extendable into a doubly infinite geodesic is a consequence of this coalescence property.
In order to prove Theorem~\ref{thm:highways}, without assumptions on the shape, we shall, in essence, rely on the significantly stronger statement claiming that a `typical' geodesic is coalescing. 
%In order to prove Theorem~\ref{thm:highways}, without assumptions on the shape,
The statement can be made precise in terms of shift-invariant measures on families of geodesics; see Theorem~\ref{thm:non-crossing}.
We shall make the argument rigorous by combining the more refined methods from~\cite{ahlhof} with the approach of geodesic measures of~\cite{damhan14} (discussed in more detail in Section~\ref{sec:weakconv}).

The geodesic measures constructed in the present paper are translation-invariant distributions from which we may sample families of infinite geodesics.  Roughly speaking, these infinite geodesics will arise as limits of finite geodesics of the form $\geo(x, x + v_k)$ for appropriate $x$ and $v_k$.
Under the assumption \eqref{eq:contra} above, we will be able to construct our measures in such a way that, with positive probability, the collection of geodesics sampled from them will not coalesce. Indeed, the subgraph of $\mathbb{Z}^2$ induced by this collection will have infinitely many components or ``coalescence classes''. On the other hand, the Busemann functions of many of these geodesics --- relative distances to infinity along these geodesics --- will have the same asymptotics, and this will contradict their non-coalescence, as argued in Theorem~\ref{thm:non-crossing}.

%Suppose next that the boundary $\partial\ball:=\{x\in\R^2:\mu(x)=1\}$ is differentiable in direction $v$, meaning that there is a unique supporting line to $\ball$ through the point $v/\mu(v)\in\partial\ball$. In this case recent results on geodesics 

%Note that this is sufficient to establish Corollary~\ref{cor:highways}, since there are at most countably many points of non-differentiability of the shape.\marginal{Correct, right?} However, it does not suffice to prove the more detailed statement of Theorem~\ref{thm:highways}. Note, of course, that  However, in order to prove Theorem~\ref{thm:highways} without assumptions on the shape, we shall need an argument that requires the methods from the later work of Ahlberg and Hoffman~\cite{ahlhof}.

\subsection{Outline of the paper}

The paper is outlined as follows. We shall first, in Section~\ref{sec:geodesics}, review the state-of-the-art in the study of geodesics in first-passage percolation. At the end of this section we describe in detail under which assumptions these results are known, and the precise setting in which we shall work. In Section~\ref{sec:ergodictheory} we proceed with an overview of the methods used in the study of geodesics, which shall also be required for the proof. Along the way we establish some key lemmas that will be used for the proof of our main theorem. We then, finally, prove Theorem~\ref{thm:highways} in Section~\ref{sec:thehighways}.

\section{Geodesics in first-passage percolation} \label{sec:geodesics}

Questions regarding the structure of geodesics in first-passage percolation were raised already in the foundational work of Hammersley and Welsh~\cite{hamwel65}, and our main theorem addresses an open problem proposed there. However, the modern study of infinite geodesics began in the mid 1990s with work of Newman~\cite{newman95}, together with Licea and Piza~\cite{licnewpiz96,licnew96,newpiz95}. These papers began a systematic study of these objects and laid out important conjectures that have motivated much subsequent work. In this section we discuss these and  more recent results, and we provide several lemmas which will be useful in our later arguments.

To simplify the presentation, we defer the statement of the precise assumptions on the distribution of the edge weights $(\omega_e)$ until Section~\ref{sec:assns}. Unless otherwise stated, any past result quoted in this section is valid under either of the two sets {\bf A1} and {\bf A2} of assumptions from that section. For the moment, we remind the reader that these assumptions are enough to establish the shape theorem in the form of \eqref{eq:shapethm} and the almost sure existence and uniqueness of the finite geodesics $\geo(x,y)$ for each pair $x, y \in \Z^2$. For instance, the class of distributions includes every continuous distribution with finite mean.

\subsection{Geodesics according to Newman and collaborators} \label{sec:newmangeo}

In his paper inaugurating the modern study of geodesics, Newman~\cite{newman95} addressed the geodesic structure of the first-passage metric by posing questions regarding the number and straightness of infinite branches in the graph obtained from the union of all finite geodesics with an endpoint at the origin, i.e.\ the set $\tree_0$ of infinite geodesics. In order to describe Newman's results in some detail, we recall the meaning of asymptotic direction and coalescence of paths.

Given an infinite self-avoiding path $\pi = (v_i)_{i=0}^\infty$, we define its {\bf asymptotic direction} $\dir(\pi)$:
\begin{equation}
\label{eq:dirgdef}
\dir(\pi) \text{ is the set of all limit points of the sequence $(v_i / |v_i|)$ in $ S^1$},
\end{equation}
where, as before, $S^1=\{x\in\R^2:|x|=1\}$ is the Euclidean unit circle in $\R^2$. We shall identify $S^1$ with $[0, 2 \pi)$ in the natural way --- for instance, we shall write $\{\theta\} = \dir(\pi)$ when we mean that $v_i / |v_i| \to (\cos\theta,\sin\theta)$.
In case $\dir(\pi) = \{\theta\}$ is a singleton, we say that $\pi$ has {\bf asymptotic direction} $\theta$. 

Two infinite self-avoiding paths $\pi = (v_i)_{i=0}^\infty$ and $\pi' = (v_j')_{j=0}^\infty$ are said to {\bf coalesce} if
\begin{equation}
\label{eq:coaldef}
\text{for some $I, J < \infty$, the terminal segments } (v_i)_{i = I}^\infty,\, (v_j')_{j=J}^\infty \text{ are equal.}
\end{equation} 

%For each site at distance $n$ (in the $\ell_\infty$-norm, say) from the origin, consider the weight-minimizing path to the origin. Denote by $\tree_n$ the union of all such paths. Sending $n$ to infinity leaves us with the tree $\tree_0$ consisting of all infinite geodesics starting at the origin. We would like to understand the geometry of this object.

%{\color{red} It might be useful to put ``direction'', etc in a notation subsection along with the assumptions {\bf A1, A2} and other notational stuff.}
%Given an infinite geodesic $g=(v_1,v_2,\ldots)$ in $\tree_0$ and a $\theta \in S^1$ (the unit circle), we say that $g$ has asymptotic direction $\theta$ if $v_i / \|v_i\|_2 \to \theta$. %In fact, it is often more convenient (for reasons to become apparent --- see Lemma \ref{lem:whatrho}) to take an equivalent perspective, indexing directions by the boundary $\partial \ball$. Clearly $v_i / |v_i|$ converges (to $\theta \in S^1$) if and only if $v_i / \mu(v_i)$ converges (to $v = \theta  / \mu(\theta) \in \partial \ball$).

%With this perspective in mind, we define the {\bf asymptotic direction} $\dir(g)$ of $g = (v_1, v_2, \ldots)$ to be the set
%\begin{equation}
%  \label{eq:dirgdef}
%  \dir(g) = \{\text{all limit points of the sequence $v_i / \|v_i\|_2 \in S^1$}  \}\ .
%\end{equation}
%Thus, the asymptotic direction $\dir(g)$ of $g$ is $\{\theta\}$, in the sense of \eqref{eq:dirgdef}, exactly when $g$ has asymptotic direction $\theta$ in the sense immediately preceding \eqref{eq:dirgdef}. {\color{red} Worth thinking about what rephrasings are necessary.}

Several credible predictions can be drawn from Newman's~\cite{newman95} pioneering work on the geodesic structure of first-passage percolation on $\Z^2$. In the setting of i.i.d.\ edge-weights, drawn from some continuous distribution with finite mean (and possibly satisfying some weak additional regularity condition) the set $\tree_0$ should have the following properties:
\begin{enumerate}[label= (\roman*)]
\item \label{it:dirgeo} Almost surely, each $\gamma \in \tree_0$ has an asymptotic direction --- that is, $\dir(\gamma) = \{\theta\}$ for some $\theta \in S^1$.
\item \label{it:ungeo} For each fixed deterministic $\theta \in S^1$, there almost surely exists a unique geodesic in $\tree_0$ with direction $\theta$;
\item \label{it:coalgeos} For each fixed deterministic $\theta \in S^1$, almost surely, all infinite geodesics having direction $\theta$ coalesce. That is, for fixed $\theta\in S^1$, almost surely, for any $u,v\in\Z^2$ and geodesics $\gamma\in\tree_u$ and $\gamma'\in\tree_v$ with $\dir(\gamma)=\dir(\gamma')=\{\theta\}$ the symmetric difference $|\gamma\triangle\gamma'|$ is finite.
%In other words (since geodesics are a.s.~self-avoiding under {\bf A1}):
%\[\P\left(\text{there are } \gamma, \gamma' \in \tree_0 \text{ with $\dir(\gamma) = \{\theta\} = \dir(\gamma')$, and $|\gamma \triangle \gamma'| = \infty$}  \right) = 0\]
%for each $\theta \in \partial \ball.$
\end{enumerate}

In the papers \cite{licnew96,newman95}, Licea and Newman showed conditional versions of these predictions. The statement~\ref{it:dirgeo} was in~\cite{newman95} shown to hold under the additional unproven assumption that $\partial \ball$ is uniformly curved. This assumption is widely believed to hold for sufficiently nice i.i.d.~weight distributions, but to this day has not been established for a single example of i.i.d.\ weights. Given statement \ref{it:dirgeo}, a partial verification of \ref{it:ungeo}--\ref{it:coalgeos} was accomplished in~\cite{licnew96}. Namely, fixing a direction $\theta$ outside of some (non-explicit) set $D \subseteq S^1$ having zero Lebesgue measure, all $\theta$-directed geodesics from all vertices of $\Z^2$ must almost surely coalesce.

Much recent work on geodesics has been motivated by the technical obstacles encountered by the above approach --- for instance, circumventing the uniform curvature assumption, since establishing uniform curvature seems to be a difficult problem. In fact, uniform curvature and other properties used by Newman and collaborators (such as exponential tail estimates for $T(0,x)$) should not hold for many ergodic models of interest --- it is unclear how to run Newman's argument for directedness of geodesics under the more general hypothesis of ergodic edge-weights. A now-classical paper of H\"aggstr\"om and Meester~\cite{hagmee95} shows that any convex and compact set with the symmetries of $\Z^2$ appears as the asymptotic shape of some ergodic model of first-passage percolation, and the more recent works of Alexander and Berger~\cite{aleber18} and Brito and Hoffman~\cite{brihof21} provide examples of ergodic distributions with polygonal asymptotic shapes such that $\tree_0$ violates some aspect of \ref{it:dirgeo}--\ref{it:coalgeos} above. All of these considerations have motivated the approach we now describe, which studies geodesics via objects known as Busemann functions.

\subsection{Busemann functions}\label{sec:Busef}

If one were trying to prove conjecture \ref{it:dirgeo} from Section \ref{sec:newmangeo}, one might take the following approach: taking a sequence $(x_k)_{k\ge1}$ of points in $\Z^2$ such that $x_k/|x_k|\to\theta$ as $k\to\infty$, consider the limiting behavior of the sequence of finite geodesics $(\geo(0,x_k))_{k\ge1}$. One may expect that $\geo(0, x_k)$ converges to some $\theta$-directed $g \in \tree_0$, where convergence is in the usual sense of \eqref{eq:convdef}.
%\footnote{A sequence $(\pi_n)_{n\ge1}$ of (finite or infinite) paths is said to converges to a (finite or infinite) path $\pi$ if for every $m$ there exists $n_0$ such that the initial segments of length $m$ of $\pi_n$ and $\pi$ coincide for all $n\ge n_0$.}
To show that the limit exists is nontrivial, and in general not known. However, a standard compactness argument shows that sequences of paths with the same initial point always have subsequential limits:
\begin{equation}
\label{eq:compactnessg}
\begin{gathered}
\text{If $(\pi_n)_{n\ge1}$ is a sequence of paths that share the same initial vertex, then}\\
\text{there exists a path $\pi$ and a strictly increasing sequence $(n_k)_{k\ge1}$ such that $\pi_{n_k}\to\pi$.}
\end{gathered}
\end{equation}
Moreover, it follows immediately from definitions that geodesicity is preserved under limits:
\begin{equation}
  \label{eq:geopreserve}
  \text{If $(g_n)_{n\ge1}$ is a sequence of geodesics and $g_n \to g$, then $g$ is a geodesic.}
\end{equation}
%\eqref{eq:geopreserve} holds for both finite and infinite geodesics.
It follows that any limit of finite geodesics towards direction $\theta$ is indeed an element of $\tree_0$, but it is a priori not clear that this limit should be $\theta$-directed --- or indeed that the subsequential limits should change as we change $\theta$. Newman~\cite{newman95} proved that the limit exists, and indeed is $\theta$-directed, but his argument is conditional on the unproven curvature hypothesis.

Work to remove this assumption \cite{garmar05,hagpem98,hoffman05} began with a less ambitious goal: to show that the sequences $(\geo(0,n \be_1))_{n \geq 0}$ and $(\geo(0, -n \be_1))_{n \geq 0}$ should have distinct subsequential limits, since an infinite geodesic which is `fast' for travel in one direction should not also be `fast' for travel in the antipodal direction.
We describe here Hoffman's~\cite{hoffman05,hoffman08} method for making such arguments rigorous, which introduced so-called Busemann functions to first-passage percolation. These measure a form of relative passage time to a point at infinity picked out by a particular infinite geodesic. Precisely, if $g = (v_0,v_1,\ldots)$ is an element of $\tree_0$, its {\bf Busemann function} $B_g:\Z^2\times\Z^2\to\R$ is defined by the limit
\begin{equation}
\label{eq:busesdef}
B_g(y,z):=\lim_{k\to\infty}\big[T(y,v_k)-T(z,v_k)\big].
\end{equation}
This limit exists, by a monotonicity argument (see~\cite[Sec.~3]{hoffman05}), simultaneously for every $\omega \in \Omega_1$ and every $g\in\tree_0(\omega)$.

Some elementary but very useful properties of Busemann functions are collected in the following lemma. The lemma follows readily from the definitions and is now standard (see, e.g.~\cite[Sec.~5.1]{aufdamhan17}), so we omit the proof.

\begin{lemma}\label{lem:busemanpropps}
For every $\omega \in \Omega_1$ and every infinite geodesic $g = (v_i)_{i\ge0}$ (with arbitrary initial vertex), the following properties hold:
\begin{enumerate}[label = (\alph*)]
\item $B_g(x,z)=B_g(x,y)+B_g(y,z)$ for all $x,y,z$ in $\Z^2$; \label{it:badd}
\item $|B_g(y,z)|\le T(y,z)$ for all $y,z$ in $\Z^2$; \label{it:bleq}
\item $B_g(v_i,v_j)=T(v_i,v_j)$ for all $i < j$; \label{it:beq}
\item $B_g(x, v_0) \leq T(x, v_n) - T(v_0, v_n)$ for each $n \geq 0$; \label{it:buseprelim}
\item for every geodesic $g'$ such that $g$ and $g'$ coalesce, we have $B_{g'} = B_{g}$. \label{it:terminalseg}
\end{enumerate}  
\end{lemma}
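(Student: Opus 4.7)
The plan is to verify each of the five properties by unwinding the definition \eqref{eq:busesdef}, taking as given the nontrivial input that the defining limit exists simultaneously in $y,z\in\Z^2$ and $g\in\tree_0(\omega)$ (this is the content of \cite[Sec.~3]{hoffman05}). Once existence is granted, each property reduces to a short algebraic manipulation of the passage-time function $T$ combined with either the triangle inequality or the geodesicity of $g$.

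I would dispatch \ref{it:badd}, \ref{it:bleq}, and \ref{it:beq} first. For additivity \ref{it:badd}, adding the sequences $T(x,v_k)-T(y,v_k)$ and $T(y,v_k)-T(z,v_k)$ produces $T(x,v_k)-T(z,v_k)$ termwise, and the existence of both individual limits justifies splitting the limit of the sum. For \ref{it:bleq}, the triangle inequality for $T$ gives $|T(y,v_k)-T(z,v_k)|\le T(y,z)$ for every $k$, a bound that passes to the limit. For \ref{it:beq}, since $(v_0,\ldots,v_k)$ is a geodesic for every $k$, so is its subsegment $(v_i,\ldots,v_k)$ for each $i<k$; hence $T(v_i,v_k)=T(v_i,v_j)+T(v_j,v_k)$ whenever $i<j<k$, and substitution into \eqref{eq:busesdef} makes $T(v_j,v_k)$ cancel, so the limit collapses to the constant $T(v_i,v_j)$.

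The remaining parts follow as corollaries. For \ref{it:buseprelim}, \ref{it:badd} yields $B_g(x,v_0)=B_g(x,v_n)+B_g(v_n,v_0)$; a further application of additivity together with $B_g(y,y)=0$ shows $B_g(v_n,v_0)=-B_g(v_0,v_n)$, which by \ref{it:beq} equals $-T(v_0,v_n)$, while \ref{it:bleq} bounds the remaining term by $T(x,v_n)$. For \ref{it:terminalseg}, the Busemann limit may be evaluated along any subsequence of the defining vertex sequence of $g$; if $g=(v_i)$ and $g'=(v'_j)$ coalesce, say with $v_{I+m}=v'_{J+m}$ for all $m\ge0$, then $B_g(y,z)$ and $B_{g'}(y,z)$ are both computed from the identical sequence of passage-time differences along the shared tail, so $B_g=B_{g'}$. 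There is no genuine obstacle in this argument: the one step requiring any care is justifying the limit-splitting in \ref{it:badd}, which is legitimate precisely because each individual limit is known a priori to exist.
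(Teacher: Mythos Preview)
Your proof is correct and is exactly the standard argument the paper has in mind: the paper in fact omits the proof entirely, remarking that the lemma ``follows readily from the definitions and is now standard (see, e.g.~\cite[Sec.~5.1]{aufdamhan17}).'' Each of your derivations---termwise additivity for \ref{it:badd}, the triangle inequality for \ref{it:bleq}, geodesicity of subsegments for \ref{it:beq}, the combination of \ref{it:badd}--\ref{it:beq} for \ref{it:buseprelim}, and the shared-tail observation for \ref{it:terminalseg}---is the expected one.
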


As observed already in~\cite{hoffman05}, additivity of Busemann functions and the fact that the Busemann functions of coalescing geodesics coincide (properties (a) and (e) of the above lemma) are together responsible for another fundamental notion for Busemann functions --- that of asymptotic linearity.
%As a consequence of Newman's work, Busemann functions should appear roughly flat, in the sense that they are approximated by linear functionals {\color{red} elaborate}.
Let $\rho:\R^2\to\R$ be a linear functional. Given a weight configuration $\omega \in \Omega_1$ and a geodesic $g\in\tree_0(\omega)$ with Busemann function $B_g$, we say that $B_g$ is {\bf asymptotically linear} to $\rho$ if
\begin{equation}\label{Busemann shape}
\limsup_{|z|\to\infty}\frac{\big|B_g(0,z)-\rho(z)\big|}{|z|}=0.
\end{equation}
As will be clear from Theorem~\ref{AH1} below, this behavior is in fact typical.
The asymptotic linearity in~\eqref{Busemann shape} is extremely useful for controlling the behavior of infinite geodesics. In particular, if $B_g$ is asymptotically linear, then $\dir(g)$ is constrained by properties of the asymptotic shape $\ball$.

Let us introduce some notation in order to make this idea precise.
%From the observations that $|B_g(0,z)|\le T(0,z)$ holds in general, and that the equality $B_g(0,z)=T(0,z)$ holds for $z\in g$, it follows that if $B_g$ is asymptotically linear to $\rho$, then $\{x\in\R^2:\rho\cdot x=1\}$ is a supporting line to $\ball$.
In Lemma~\ref{lem:whatrho} below, we show that if $B_g$ is asymptotically linear to $\rho$, then the line\footnote{Here and below we shall identify a linear functional $\rho:\R^2\to\R$ with its gradient.} $\{x\in\R^2:\rho\cdot x=1\}$ is a supporting line to $\ball$. That is, only linear functionals such that $\{x\in\R^2:\rho\cdot x=1\}$ is a supporting line to $\ball$ may arise as a limit as in~\eqref{Busemann shape}. We shall henceforth refer to any linear functionals (not necessarily arising as such a limit) such that $\{x\in\R^2:\rho\cdot x=1\}$ is a supporting line to $\ball$ as {\bf supporting}, and to functionals in the subset of supporting functionals such that $\{x\in\R^2:\rho\cdot x=1\}$ is a tangent line to $\ball$, i.e. the unique supporting line at some point of $\partial\ball$, as {\bf tangent}. It is a standard fact from convex analysis that the set of supporting lines of a compact convex set in $\R^2$ is in 1-1 correspondence with $S^1$.

For each supporting linear functional $\rho: \R^2 \to \R$ we define
\begin{equation}
\label{eq:sectrho}
\arc(\rho) := \{x \in S^1: \, \rho \cdot x = \mu(x)\}.
\end{equation}
That is, the set $\arc(\rho)$ is obtained by taking the set of $y \in \partial \ball$ for which $\{x\in\R^2: \, \rho \cdot x = 1\}$ is a supporting line to $\ball$, and then projecting this set onto $S^1$ in the natural way. Hence, $\arc(\rho)$ is a closed arc of the form $[\theta_1, \theta_2] \subseteq S^1$.

If the Busemann function of $g$ is asymptotically linear to a particular $\rho$, this guarantees that $\dir(g)$ is a subset of $\arc(\rho)$; this is the content of the next lemma. Lemmas similar to it have appeared in~\cite{ahlhof,damhan14,hoffman08}. We include the proof for completeness.
 
\begin{lemma}\label{lem:whatrho}
	The following occurs almost surely: for each $g \in \tree_0$ and each linear functional $\rho$ such that $B_g$ is asymptotically linear to $\rho$ (as in \eqref{Busemann shape}),
	\begin{enumerate}[label = {(\alph*)} ]
	\item \label{item:whatrho1} $\dir(g) \subseteq \arc(\rho)$, and
	\item \label{item:whatrho2} $\{x \in \mathbb{R}^2: \,  \rho \cdot x = 1\}$ is a supporting line of $\ball$ (i.e.\ $\rho$ is a supporting functional).
 \end{enumerate}
\end{lemma}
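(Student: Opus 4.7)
The plan is to derive both statements from a single pair of ingredients: the general bound $|B_g(y,z)|\le T(y,z)$ from Lemma~\ref{lem:busemanpropps}\ref{it:bleq}, which combined with the shape theorem will force $|\rho|\le\mu$ globally; and the equality $B_g(v_0,v_i)=T(v_0,v_i)$ from Lemma~\ref{lem:busemanpropps}\ref{it:beq}, which combined with the shape theorem will force $\rho=\mu$ somewhere on $\dir(g)$. Work throughout on the almost sure event on which~\eqref{eq:shapethm} holds and the Busemann limits~\eqref{eq:busesdef} exist.

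First I would fix $g\in\tree_0$ and $\rho$ with $B_g$ asymptotically linear to $\rho$. For any $x\in\Z^2$, evaluating the asymptotic linearity~\eqref{Busemann shape} along $z=nx$ and using Lemma~\ref{lem:busemanpropps}\ref{it:bleq} together with~\eqref{eq:shapethm} gives $|\rho(x)|\le\mu(x)$. By homogeneity of $\rho$ and $\mu$ and continuity, $|\rho(x)|\le\mu(x)$ for all $x\in\R^2$; in particular $\ball\subseteq\{x:\rho\cdot x\le 1\}$, which already yields half of part~\ref{item:whatrho2}.

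Next I would show that this supporting half-plane is actually a supporting \emph{line}, and simultaneously identify the contact arc. Write $g=(v_i)_{i\ge 0}$; since $g$ is an infinite self-avoiding path in $\Z^2$, $|v_i|\to\infty$, so by compactness of $S^1$ the set $\dir(g)$ is nonempty. Pick $\theta\in\dir(g)$ and a subsequence with $v_{i_k}/|v_{i_k}|\to\theta$. On the one hand, asymptotic linearity gives
\[
\frac{B_g(0,v_{i_k})}{|v_{i_k}|}\longrightarrow \rho\cdot\theta.
\]
On the other hand, Lemma~\ref{lem:busemanpropps}\ref{it:badd}--\ref{it:beq} gives $B_g(0,v_{i_k})=B_g(0,v_0)+T(v_0,v_{i_k})$, and the shape theorem~\eqref{eq:shapethm} together with the triangle inequality $|T(v_0,v_{i_k})-T(0,v_{i_k})|\le T(0,v_0)$ gives $T(v_0,v_{i_k})/|v_{i_k}|\to\mu(\theta)$. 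Hence $\rho\cdot\theta=\mu(\theta)$, which places $\theta\in\arc(\rho)$ and proves~\ref{item:whatrho1}. Since $\mu(\theta)>0$ (compactness of $\ball$), the point $\theta/\mu(\theta)\in\partial\ball$ lies on the line $\{x:\rho\cdot x=1\}$, and combined with the inclusion from the previous paragraph this line is a supporting line to $\ball$, giving~\ref{item:whatrho2}.

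No step looks genuinely hard; the only subtlety to keep track of is that every inequality and limit should be invoked on the fixed almost sure event (shape theorem plus existence of Busemann limits), so that the conclusion holds simultaneously for all $g\in\tree_0$ and all admissible $\rho$, as required by the statement.
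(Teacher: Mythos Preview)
Your proof is correct and follows essentially the same approach as the paper's: both arguments combine the shape theorem with Lemma~\ref{lem:busemanpropps}\ref{it:bleq} to bound $\rho$ by $\mu$ globally, and with Lemma~\ref{lem:busemanpropps}\ref{it:beq} along the geodesic to force equality on $\dir(g)$. The only cosmetic differences are that you establish the global bound first while the paper establishes the contact point first, and your detour through $B_g(0,v_0)$ is unnecessary since $g\in\tree_0$ forces $v_0=0$.
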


\begin{proof}
	Consider an outcome $\omega$ in the event
	\begin{equation}\label{eq:shapey}
	\left\{\lim_{|x| \to \infty} \frac{T(0, x)}{\mu(x)} = 1 \right\} \cap \left\{ \geo(x,y) \text{ exists and is unique for each $x,y\in\Z^2$} \right\}\ ,
	\end{equation}
	which has probability one by assumptions {\bf A1} and {\bf A2}, defined below, and by the shape theorem \eqref{eq:shapethm}. In this outcome, fix $g$ and $\rho$ as in the statement of the lemma, and write $g = (0 = x_0, x_1, \ldots)$. By \eqref{eq:shapey}, we have $|x_i| \to \infty$. Let $z \in \dir(g)$, and choose a subsequence $(x_{n_k})$ of vertices of $g$ such that $x_{n_k}/|x_{n_k}| \to z$. Then we have
	\begin{align*}
	\rho \cdot z &= \lim_{k \to \infty} \frac{B_g(0, x_{n_k})}{|x_{n_k}|} \quad \text{(by asymptotic linearity of $B_g$)}\\
	&= \lim_{k \to \infty} \frac{T(0, x_{n_k})}{|x_{n_k}|} \quad \text{(by \ref{it:beq} of Lemma \ref{lem:busemanpropps})}\\
	&= \mu(z) \quad \text{(by occurrence of the event in \eqref{eq:shapey})}.
		\end{align*}
		In other words, $z \in \arc(\rho)$, proving claim \ref{item:whatrho1} of the lemma holds on the event in \eqref{eq:shapey}. In addition, we note that $z /\mu(z)$ is then an element of $\partial \ball$ with $\rho \cdot (z / \mu(z)) = 1$.
		
		On the other hand, let us consider any $y \in \partial \ball$, with $(y_n)$  any sequence of distinct vertices of $\Z^2$ such that $y_n / \mu(y_n) \to y$. For $\omega$, $g$, and $\rho$ as in the previous paragraph,
		\begin{align*}
		\rho \cdot y &= \lim_{k \to \infty} \frac{B_g(0,y_n)}{\mu(y_{n})}\\
		&\leq \limsup_{k \to \infty} \frac{T(0,y_n)}{\mu(y_n)} \quad \text{(by \ref{it:bleq} of Lemma \ref{lem:busemanpropps})}\\
		&= 1 \quad \text{(by occurrence of the event in \eqref{eq:shapey}).}
		\end{align*}
		In particular, $\rho \cdot x \leq 1$ for all $x \in \partial \ball$. But we have already seen that there is some $x \in \partial \ball$ with $\rho \cdot x = 1$; namely, $z / \mu(z)$ (where $z$ is as in the previous paragraph). Thus, the line $\{w \in \mathbb{R}^2: \,\rho \cdot w = 1\}$ is a supporting line to $\ball$ at $z / \mu(z)$. Since $\omega$ was an arbitrary element of the probability one event in \eqref{eq:shapey}, we have shown claim \ref{item:whatrho2} of the lemma.
\end{proof}

Hoffman used the above properties to show by contradiction that, a.s., there must exist at least two non-coalescing geodesics. In outline, if there were always only one infinite geodesic $G$ modulo coalescence, then its Busemann function $B_G$ would be a random variable which is translation-covariant, in the sense that  and \ref{it:badd} of Lemma \ref{lem:busemanpropps} and the ergodic theorem show that $B_G$ is asymptotically linear to some nonrandom $\rho$. But since $B_G$ must be invariant in distribution under symmetries of $\Z^2$ (because each $\tree_x$ is), we see that $\rho$ would be the zero functional, contradicting Lemma \ref{lem:whatrho}.

\subsection{Versions of Newman's conjectures}

In a follow-up work, Hoffman~\cite{hoffman08} associated Busemann functions to sides (tangent functionals) of $\ball$ and showed that $\tree_0$ has cardinality at least four. In pursuit of Newman's conjectures, Hoffman's approach was refined in later work of Damron and Hanson~\cite{damhan14}, who showed existence of geodesics with asymptotically linear Busemann functions, and this result remains the state-of-the-art regarding existence.

\begin{theorem}\label{DH1}
Let $\rho$ be an arbitrary fixed tangent functional. There exists, with probability one, a geodesic in $\tree_0$ whose Busemann function is asymptotically linear to $\rho$.
\end{theorem}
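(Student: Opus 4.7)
The plan is to construct $g$ from a limiting Busemann cocycle obtained as a subsequential limit of finite-horizon Busemann differences targeted at the tangent point picked out by $\rho$, build the geodesic directly from this cocycle, and verify asymptotic linearity using shift-invariance and the shape theorem.

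Tangency of $\rho$ guarantees some $\theta_\rho \in \partial\ball$ at which $\{x : \rho\cdot x = 1\}$ is the unique supporting line; fix such a point. For each $N \geq 1$, choose $x_N \in \Z^2$ within bounded $\ell_\infty$-distance of $N\theta_\rho$, and set $B_N(y,z) := T(y,x_N) - T(z,x_N)$. This cocycle satisfies $|B_N(y,z)| \le T(y,z)$ by the triangle inequality, and also the dynamic-programming recursion
\[
\min_{u\sim v} \big[\omega_{\{v,u\}} - B_N(v,u)\big] = 0,
\]
inherited from $T(v,x_N) = \min_u \big[\omega_{\{v,u\}} + T(u,x_N)\big]$. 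A diagonal compactness argument extracts a subsequence $(N_k)$ along which $B_{N_k}(y,z) \to B^*(y,z)$ for every pair $y,z \in \Z^2$; the limit $B^*$ inherits the cocycle identity from Lemma~\ref{lem:busemanpropps}\ref{it:badd}, the pointwise bound from \ref{it:bleq}, and the recursion above (the minimum is over only four neighbors, so passes to the limit).

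Next I would build the geodesic directly from $B^*$. Starting at $0$ and iteratively selecting a neighbor $u^*(v)$ realizing the minimum in the recursion yields an infinite nearest-neighbor path $g^*$. On its edges $\omega = B^*$; telescoping and cocycle-additivity then give $T(0,v_k) = B^*(0,v_k)$ along initial segments $(v_0,\ldots,v_k)$ of $g^*$, and combining this with the pointwise bound $B^* \le T$ forces $g^*$ to be a geodesic. Hence $g^* \in \tree_0$ and $B_{g^*} = B^*$. To access ergodicity I would promote the construction to a $\Z^2$-shift-invariant law on $(\omega, B^*)$, by Ces\`aro-averaging the joint law of $(\omega, B_N)$ over uniformly random translates in a growing box before extracting a weak subsequential limit. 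Under the resulting invariant law, $B^*$ is a stationary cocycle, and the multidimensional ergodic theorem yields a linear functional $\tilde\rho$ with $B^*(0,nz)/n \to \tilde\rho\cdot z$ almost surely for every $z$.

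It remains to identify $\tilde\rho$ with $\rho$ and upgrade to the uniform form \eqref{Busemann shape}. By Lemma~\ref{lem:whatrho}, $\{x : \tilde\rho\cdot x = 1\}$ is a supporting line to $\ball$ and $\dir(g^*) \subseteq \arc(\tilde\rho)$. Since the finite geodesics $\geo(0,x_{N_k})$ terminate near direction $\theta_\rho$, a short argument using the shape theorem shows $\theta_\rho \in \arc(\tilde\rho)$, and then the tangency assumption (uniqueness of the supporting line at $\theta_\rho$) forces $\tilde\rho = \rho$. The uniform convergence \eqref{Busemann shape} follows by sandwiching $B^*(0,\cdot)$ between its cocycle-additive rate of growth and the uniform shape theorem \eqref{eq:shapethm} applied to the bound $B^* \le T$. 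The main obstacle is precisely this identification step: tangency is essential, since for merely supporting functionals at corners of $\partial\ball$ several boundary points could be targeted simultaneously and the limit $\tilde\rho$ need not equal $\rho$. This is why the present theorem is stated only for tangent functionals, and why handling genuine corners in the ergodic setting requires substantially more intricate machinery developed later in the paper.
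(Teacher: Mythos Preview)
Your outline has the right general architecture---weak limits of Busemann-like cocycles, averaging to recover shift-invariance, the ergodic theorem to extract a linear functional---but two steps do not go through as written, and both are precisely where the paper's approach (following Damron--Hanson, outlined in Section~\ref{sec:weakconv}) differs from yours.

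The first gap is the identification $\tilde\rho=\rho$. You build $g^*$ from the limiting cocycle $B^*$ by dynamic programming; this path has no a priori relation to the finite geodesics $\geo(0,x_{N_k})$, so the fact that those terminate near $\theta_\rho$ says nothing about $\dir(g^*)$. After Ces\`aro-averaging over spatial translates the link is severed further: the paths one sees near the origin arise from random translates of finite geodesics, not from $\geo(0,x_N)$ itself. Damron--Hanson sidestep this entirely by targeting half-planes $H_\alpha=\{x:\rho\cdot x\ge\alpha\}$ rather than points in direction $\theta_\rho$: translating the basepoint by $\be_i$ shifts the target half-plane by exactly $\rho\cdot\be_i$, which lets one compute the mean increment $\E[\theta(0,\be_i)]=\rho\cdot\be_i$ directly (and is why the averaging in~\eqref{DH average} is over $\alpha$, not over spatial shifts). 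With point targets the analogous identification would need second-order control of $N\mapsto\E[T(0,x_N)]-\mu(x_N)$, which is unavailable. The second gap is the claim $B_{g^*}=B^*$: your dynamic-programming construction gives $B^*(v_i,v_j)=T(v_i,v_j)$ along the path, and hence only the one-sided bound $B_{g^*}(0,y)\le B^*(0,y)$ for general $y$ (use $|B^*|\le T$ and the cocycle property). Upgrading this to equality is exactly the coalescence statement of Theorem~\ref{DH2}(c), which requires a Licea--Newman type argument on the encoded geodesic graph and is most of the technical work in~\cite{damhan14}; it is not a consequence of the cocycle alone, and you cannot cite Theorem~\ref{AH1} here since its proof rests on Theorem~\ref{DH1}. (A smaller issue: you conflate the per-$\omega$ diagonal limit of your second paragraph with the distributional limit of your third; only the latter supports an ergodic-theorem argument, and under it $B^*$ is not a function of $\omega$ alone.)
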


We mention also that Theorem~\ref{DH1} is a simplified synthetization of the results from~\cite{damhan14}, and we shall come back to this in the more detailed discussion in the next section.
Later work of Ahlberg and Hoffman~\cite{ahlhof} establishes the fact that all geodesics simultaneously have asymptotically linear Busemann functions. 

\begin{theorem}\label{AH1}
There is a probability one event on which the following holds: for every geodesic $g\in\tree_0$, there exists a supporting functional $\rho:\R^2\to\R$ such that the Busemann function of $g$ is asymptotically linear to $\rho$.
\end{theorem}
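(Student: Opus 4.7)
The plan is to bootstrap Theorem~\ref{DH1}, which produces for each fixed tangent functional $\rho$ a single geodesic in $\tree_0$ with $B_g$ asymptotically linear to $\rho$, into the uniform statement that \emph{every} geodesic in $\tree_0$ enjoys this property. The key leverage is Lemma~\ref{lem:busemanpropps}\ref{it:terminalseg}: Busemann functions are constant on coalescence classes of geodesics, and there are only countably many such classes, so in principle it suffices to control one representative per class.

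First I would enumerate a countable dense set $(\rho_n)_{n\ge 1}$ of tangent functionals to $\ball$ and, for each pair $(n,v)$ with $v\in\Z^2$, invoke Theorem~\ref{DH1} together with the translation invariance of the model to obtain, on a single probability-one event $\Omega^*$, a distinguished geodesic $g_n^v \in \tree_v$ whose Busemann function is asymptotically linear to $\rho_n$. Thus on $\Omega^*$ every lattice point is the initial vertex of a dense family of ``calibrated'' geodesics, and by Lemma~\ref{lem:busemanpropps}\ref{it:terminalseg} every geodesic that coalesces with any of them inherits a linear Busemann asymptote.

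On $\Omega^*$ fix an arbitrary $g\in\tree_0$ and aim to identify a supporting functional $\rho$ for which $B_g$ is asymptotically linear. I would argue by a planarity-based sandwiching: the nonexistence of crossing nearest-neighbor geodesics (guaranteed by the almost sure uniqueness of $\geo(x,y)$) forces $g$ to be trapped in a topological strip bounded by two calibrated geodesics $g_n^{v^+}$ and $g_n^{v^-}$ whose tangent functionals $\rho_n^+$, $\rho_n^-$ bracket any asymptotic direction of $g$. Combining this trapping with Lemma~\ref{lem:busemanpropps}\ref{it:bleq} and the shape theorem~\eqref{eq:shapethm} should yield asymptotic sandwich bounds of the form $\rho_n^-\cdot z-\eps_n|z|\le B_g(0,z)\le \rho_n^+\cdot z + \eps_n|z|$ for all sufficiently large $|z|$, with $\eps_n\to 0$ and $\rho_n^\pm$ converging to a common supporting functional $\rho$, giving the desired asymptotic linearity.

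The main obstacle is the potential failure of differentiability or strict convexity of $\partial\ball$. If $\dir(g)$ meets a corner of $\ball$ the sandwiching functionals $\rho_n^\pm$ need not collapse to a unique $\rho$; if $\dir(g)$ lies in a flat edge $\arc(\rho)$ then the shape theorem alone determines $T(0,x)/|x|$ only up to the ambiguity across $\arc(\rho)$. Navigating these degeneracies is precisely where the ergodic framework for geodesics developed in~\cite{ahlhof} is essential: one represents $B_g$ through shift-invariant measures on geodesic configurations and uses the ergodic theorem to force a single linear asymptote even when the shape-level sandwich is loose. I expect this ergodic-theoretic rigidity, rather than the planar trapping argument, to be the substantive part of the proof.
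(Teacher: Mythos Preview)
The paper does not prove Theorem~\ref{AH1}; it is quoted as a result of Ahlberg--Hoffman~\cite{ahlhof}, so there is no in-paper argument to compare against directly. That said, your outline has a concrete gap worth naming.

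Your sandwiching step asserts that trapping $g$ between calibrated geodesics $g_n^{v^\pm}$ yields, via Lemma~\ref{lem:busemanpropps}\ref{it:bleq} and the shape theorem, bounds of the form $\rho_n^-\cdot z - \eps_n|z| \le B_g(0,z) \le \rho_n^+\cdot z + \eps_n|z|$ for general $z$. The cited tools do not deliver this. Lemma~\ref{lem:busemanpropps}\ref{it:bleq} gives only $|B_g(0,z)| \le T(0,z)$, and the shape theorem turns that into $|B_g(0,z)| \le \mu(z) + o(|z|)$ --- a bound by the norm $\mu$, not by the particular functionals $\rho_n^\pm$. Planar trapping does constrain $\dir(g)$ to lie between $\arc(\rho_n^-)$ and $\arc(\rho_n^+)$, and via Lemma~\ref{lem:busemanpropps}\ref{it:beq} and the shape theorem this pins down $B_g(0,\cdot)$ \emph{along the vertices of $g$}. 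But for $z$ off the geodesic you have no mechanism to convert the topological sandwich into a two-sided bound on $B_g(0,z)$. The natural statement that the ordering of geodesics forces an ordering of their asymptotic functionals is Lemma~\ref{lem:ordergood}\ref{it:consistent} in this paper, but that lemma already \emph{presupposes} that both geodesics have asymptotically linear Busemann functions; invoking it for the unknown $g$ would be circular.

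You correctly identify in your last paragraph that the real content lies in the ergodic machinery of~\cite{ahlhof}, and the present paper confirms this: it remarks (just before Theorem~\ref{rcg dense}) that the bulk of the work toward Theorems~\ref{AH1}--\ref{AH3} is showing that random coalescing geodesics are dense in $\tree_0$, and the passage from density to the uniform statement uses shift-invariant measures on geodesic configurations rather than a pointwise sandwich. So your diagnosis of where the difficulty sits is accurate, but the sandwich you sketch does not bridge the gap even in the differentiable case.
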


The above results regard linearity of Busemann functions, which we have seen to describe the asymptotic direction of geodesics. Theorems~\ref{DH1} and~\ref{AH1} can thus be thought of as a rigorous version of Newman's conjecture~\ref{it:dirgeo} --- at least if one is looking for a result valid at the level of general ergodic distributions. Indeed, as alluded to in Section~\ref{sec:newmangeo}, Newman's conjectures for i.i.d.~distributions do not hold for general translation-ergodic distributions: \cite{brihof21} provides an example of such a distribution where $|\tree_0| = 4$. In this example, $\ball$ has four sides, and the elements of $\tree_0$ have Busemann functions respectively asymptotically linear to functionals tangent to each of these four sides. 

The next result, from~\cite{ahlhof}, asserts that the linear functionals associated to geodesics in $\tree_0$ are both ergodic and unique, thus proving a version of Newman's conjecture~\ref{it:ungeo}.
%Recall that the set of supporting functionals are in 1-1 correspondence with the unit circle $S^1$. We shall therefore identify this set of functionals with $S^1$, which inherits its topology.

\begin{theorem}\label{AH2}
There exists a compact deterministic set $\functionals \subseteq \R^2$ of linear functionals $\rho: \R^2 \to \R$ supporting to $\ball$ such that the following holds with probability one: the (a priori random) set of linear functionals 
\[\{\rho: \, \text{there is a $g \in \tree_0$ with $B_g$ asymptotically linear to $\rho$}\} \]
 equals $\functionals$. Moreover, for every $\rho\in\functionals$ we have
$$
\P\big(\text{there are two distinct geodesics in $\tree_0$ with Busemann function linear to }\rho\big)=0.
$$
\end{theorem}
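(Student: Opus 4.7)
The two claims split naturally, so I would address them in sequence. Let $R(\omega) := \{\rho \text{ supporting}: \exists\, g \in \tree_0(\omega) \text{ with } B_g \text{ asymp.\ linear to } \rho\}$ denote the random set to be controlled. By Theorem~\ref{AH1}, $R(\omega)$ is always a nonempty subset of the compact set of supporting functionals, which is homeomorphic to $S^1$.

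For the first claim, the plan is to define $\functionals := \{\rho : \P(\rho \in R) > 0\}$ and then establish two things: a 0-1 law (so that $\P(\rho \in R) \in \{0,1\}$ for each supporting $\rho$), and the almost-sure equality $R(\omega) = \functionals$. For the 0-1 law, the key observation is that if $g \in \tree_0$ has $B_g$ asymp linear to $\rho$, then the tail of $g$ starting at any vertex $v$ of $g$ is a geodesic in $\tree_v$ with the same Busemann asymptote; translating by $v$ yields a geodesic in $\tree_0(\tau_v\omega)$ with the same asymptote. This relates $\{\rho \in R\}$ to the translation-invariant event that $\rho \in R(\tau_v\omega)$ holds for a positive density of $v \in \Z^2$; by ergodicity the latter has probability $0$ or $1$, and one can upgrade this link to a dichotomy for $\{\rho \in R\}$ itself by using the geodesic/Busemann-measure framework of Damron-Hanson~\cite{damhan14}. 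The inclusion $\functionals \subseteq R$ a.s.\ then follows from Theorem~\ref{DH1}, which secures $\P(\rho \in R) = 1$ for each tangent functional, combined with a countable-dense argument: the corners of $\ball$ are at most countable, so tangent functionals are dense in supporting ones, and $R(\omega)$ is closed under path limits by \eqref{eq:compactnessg}, \eqref{eq:geopreserve} together with continuity of Busemann asymptotes. The reverse inclusion $R \subseteq \functionals$ follows directly from the 0-1 law applied along a countable dense subset of the supporting functionals. Compactness of $\functionals$ is inherited from compactness of the set of supporting functionals.

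For the second claim, I would argue by contradiction via a Licea-Newman style counting argument. Fix $\rho \in \functionals$ and assume that with positive probability, two distinct geodesics $g_1, g_2 \in \tree_0$ both have $B_{g_i}$ asymp linear to $\rho$. Because $\tree_0$ is a tree, $g_1$ and $g_2$ must diverge at some vertex and never reunite. By translation invariance and ergodicity, an analogous non-coalescing pair of $\rho$-geodesics exists in $\tree_v$ for a positive density of $v \in \Z^2$. By Lemma~\ref{lem:whatrho}, each such geodesic has asymptotic direction contained in the arc $\arc(\rho)$, so all these paths must eventually exit any large box $B_n$ through the portion of $\partial B_n$ corresponding to directions in $\arc(\rho)$. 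The planarity of the lattice and the non-crossing structure of distinct lattice geodesics cap the number of pairwise disjoint paths exiting through this arc by $O(n)$, whereas a positive density inside $B_n$ forces on the order of $n^2$ disjoint $\rho$-geodesic pairs, yielding the desired contradiction.

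The main obstacle is the 0-1 dichotomy in Part (a): the event $\{\rho \in R\}$ is not manifestly a tail event (in the ergodic setting) nor manifestly translation-invariant, and I expect that the cleanest route is to pass through the shift-invariant geodesic measures of~\cite{damhan14}, as refined in~\cite{ahlhof}, to extract the desired translation-invariant structure. A secondary subtlety appears at corners of $\ball$, where several supporting functionals share a single boundary point, and one must determine precisely which of them arise as Busemann asymptotes --- again this is naturally handled within the Busemann-measure formalism rather than by any direct geometric argument.
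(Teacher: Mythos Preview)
The paper does not prove Theorem~\ref{AH2}; it is quoted as a result of~\cite{ahlhof}, and the proof there goes through the machinery of random coalescing geodesics (Section~\ref{sec:rcgprop} here summarizes that theory). So there is no ``paper's own proof'' to compare against beyond the statement that the result is established in~\cite{ahlhof} via the random coalescing geodesic framework.

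That said, your outline has real gaps that are worth naming. The most serious is the assertion that $R(\omega)$ is closed ``by continuity of Busemann asymptotes'' under path limits. Path convergence in the sense of~\eqref{eq:convdef} only pins down initial segments, whereas the Busemann function $B_g$ and its asymptotic slope are determined entirely by the tail of $g$. If $g_n \to g$ with $\rho_{g_n} \to \rho$, there is no a priori reason that $B_g$ is asymptotically linear to $\rho$; this is precisely the content that has to be supplied, and in~\cite{ahlhof} it comes from the much heavier fact that random coalescing geodesics are dense in $\tree_0$ (Theorem~\ref{rcg dense} here). Relatedly, your inclusion $R \subseteq \functionals$ via a countable dense set does not work: knowing $\P(\rho \in R) = 0$ for countably many $\rho$ in the complement of $\functionals$ does not exclude that $R(\omega)$ contains some $\omega$-dependent exceptional $\rho$. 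You need the random set $R(\omega)$ to be deterministic, not just to have deterministic membership at each fixed $\rho$, and these are different statements for uncountable sets. Your 0--1 law sketch also conflates ``$\rho \in R(\omega)$ for $\omega$'' with ``$\rho \in R(\tau_v \omega)$ for a positive density of $v$''; the forward implication uses that the tail of $g$ starting at $v$ lies in $\tree_v$, but the backward implication (needed to make the event translation-invariant) is not addressed.

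For the uniqueness claim, a Licea--Newman counting argument is the right intuition, but your version skips the key step: you assert that a positive density of vertices $v$ carry a non-coalescing pair of $\rho$-geodesics in $\tree_v$, and that these force $\Theta(n^2)$ disjoint paths through an arc of $\partial B_n$. But the pairs from different basepoints need not be disjoint from each other --- they may merge, cross, or share long segments --- so the $n^2$-versus-$n$ pigeonhole does not go through without further structure. In~\cite{ahlhof} this is handled by first reducing to the coalescing setting (via the random coalescing geodesic associated to $\rho$, whose uniqueness is Proposition~\ref{rcg}(b)) rather than by a direct count. You correctly flag the corner case and the need to pass through the geodesic-measure formalism, but you should recognize that this formalism is doing essentially all of the work, not just smoothing over a technical detail.
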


The final result, also from~\cite{ahlhof}, establishes a version of Newman's conjecture \ref{it:coalgeos}.

\begin{theorem}\label{AH3}
Fix an arbitrary supporting linear functional $\rho\in\functionals$. For each pair $y, z \in \Z^2$,
\begin{equation*}
\P\left(\exists \,g \in \tree_y,\, g' \in \tree_z \text{ with } B_g, \, B_{g'} \text{ asymptotically linear to $\rho$ and } |g \triangle g'| = \infty  \right) = 0\ .
\end{equation*}
In other words, with probability one, all geodesics (from arbitrary starting points) with Busemann function asymptotically linear to $\rho$ coalesce.
\end{theorem}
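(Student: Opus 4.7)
I argue by contradiction: suppose that for some pair $y,z\in\Z^2$ the event
\[
E_{y,z}:=\bigl\{\exists\,g\in\tree_y,\, g'\in\tree_z\text{ with }B_g,\,B_{g'}\text{ asymptotically linear to }\rho\text{ and }|g\triangle g'|=\infty\bigr\}
\]
has positive probability. The overall plan is to package the $\rho$-directed geodesics from every vertex into a single shift-invariant random forest, show that $E_{y,z}$ forces this forest to have infinitely many infinite components, and then derive a contradiction by combining planarity of $\Z^2$ with the asymptotic linearity of Busemann functions.

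\textbf{Step 1 (Construction of the $\rho$-forest).} Applying Theorems~\ref{AH1} and~\ref{AH2} at every vertex yields, almost surely, a unique $g_x\in\tree_x$ whose Busemann function is asymptotically linear to $\rho$, for each $x\in\Z^2$. Define
\[
F_\rho:=\bigcup_{x\in\Z^2}g_x.
\]
Then $F_\rho$ is a translation-invariant random subgraph of $(\Z^2,\Ec^2)$ spanning $\Z^2$. By property~\ref{it:terminalseg} of Lemma~\ref{lem:busemanpropps}, the connected components of $F_\rho$ coincide with the coalescence classes of $\rho$-geodesics; by Lemma~\ref{lem:whatrho}, each component is an infinite one-ended tree with asymptotic direction contained in $\arc(\rho)\subset S^1$. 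The contradiction hypothesis translates into: with positive probability $F_\rho$ has at least two distinct infinite components.

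\textbf{Step 2 (From two components to infinitely many).} The number $N\in\{1,2,\ldots,\infty\}$ of infinite components of $F_\rho$ is shift-invariant, hence almost surely constant by translation-ergodicity of $\P$. Step~1 excludes $N=1$. A Burton--Keane-style exchange argument, adapted to shift-invariant one-ended planar forests, excludes every finite $N\ge 2$: on a positive-probability event one selects nearby vertices in distinct components and modifies edge weights in a bounded box so as to force a merger, producing a positive-probability configuration incompatible with $N$ being almost surely a finite constant. Hence $N=\infty$ almost surely.

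\textbf{Step 3 (Contradiction and main obstacle).} The crucial use of the hypothesis that both Busemann functions are asymptotically linear to the \emph{same} $\rho$ is that whenever $g,g'$ lie in different components of $F_\rho$, their Busemann functions differ only sublinearly: by~\eqref{Busemann shape},
\[
B_g(0,x)-B_{g'}(0,x)=o(|x|)\quad\text{as }|x|\to\infty.
\]
Combined with the planar non-crossing of distinct components of $F_\rho$ and the elementary properties~\ref{it:badd}--\ref{it:bleq} of Lemma~\ref{lem:busemanpropps}, this sublinearity constrains the growth of $F_\rho$: a careful quantitative accounting --- applying the ergodic theorem to the shift-invariant Busemann differences and a planar-counting argument for disjoint one-ended trees meeting $B_n:=[-n,n]^2$ --- contradicts the Burton--Keane conclusion $N=\infty$ from Step~2. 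This final step is the crux of the proof. Without strict convexity or differentiability of $\partial\ball$, the arc $\arc(\rho)$ may be a non-degenerate sub-arc of $S^1$, so $\rho$-geodesics in distinct components may fluctuate transversely within $\arc(\rho)$; the planar and Busemann-theoretic arguments must therefore rely solely on~\eqref{Busemann shape} and on the additivity, monotonicity, and on-geodesic identity properties~\ref{it:badd},~\ref{it:bleq},~\ref{it:beq} of Lemma~\ref{lem:busemanpropps}, rather than on any curvature or strict-directedness of $\ball$. Executing this matching of sublinearity of Busemann differences against a planar lower bound on the number of distinct components meeting $B_n$ is the main technical effort of the theorem.
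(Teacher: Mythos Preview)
This theorem is not proved in the present paper; it is quoted from~\cite{ahlhof} as background (see the discussion following the theorem statement, and the remark at the end of Section~\ref{sec:assns} that Theorems~\ref{AH1}--\ref{AH3} ``are in~\cite{ahlhof} proven under the full set of conditions of~{\bf A2}''). So there is no proof here to compare your proposal against. That said, let me comment on your outline as a standalone attempt.

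Your Step~1 is sound: under Theorems~\ref{AH1} and~\ref{AH2}, the $g_x$'s are well-defined and (via the uniqueness part of~\ref{AH2} applied at intersection vertices) any two that meet must coalesce, so $F_\rho$ is genuinely a one-ended forest with one outgoing edge per vertex.

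Step~2, however, is muddled. What you describe --- modifying weights in a box to merge two components and contradict the constancy of $N$ --- is not Burton--Keane; Burton--Keane is a trifurcation-counting argument that rules out $N=\infty$, not finite $N\ge 2$. Moreover, the modification step is not as innocuous as you suggest: changing weights in a bounded region can alter geodesics (and hence the entire forest $F_\rho$) in an unbounded way, so it is not clear that the modified configuration has exactly one fewer component. In the literature the passage from $N\ge 2$ to $N=\infty$ for such forests uses planarity and translation-invariance rather differently (e.g., ordering the components and showing each would have to be the unique random coalescing geodesic with functional $\rho$, a contradiction).

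Step~3 is the real gap. You write that ``a careful quantitative accounting \ldots\ contradicts the Burton--Keane conclusion $N=\infty$'', and then explicitly concede that ``executing this matching \ldots\ is the main technical effort of the theorem.'' That is not a proof; it is an announcement that a proof is needed. The sublinearity $B_g(0,x)-B_{g'}(0,x)=o(|x|)$ alone does not obviously bound the number of components meeting $[-n,n]^2$, and you give no mechanism linking the two. The actual argument in~\cite{ahlhof} proceeds via the machinery of random coalescing geodesics (Proposition~\ref{rcg} and Theorem~\ref{rcg dense} here): once one knows there is a random coalescing geodesic $G$ with $\rho_G=\rho$, the uniqueness in Theorem~\ref{AH2} forces $g_x=G(x)$ for every $x$, and the coalescence is then built into Definition~\ref{defin:rcgd}. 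The hard work is establishing Theorem~\ref{rcg dense}, which is where the local-modification and density arguments live, and which your outline does not address.
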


Together, the stated theorems allow one to prove versions of Newman's conjectures given relatively weak information about the limit shape or set $\functionals$ of attainable limiting linear functionals for Busemann functions. For instance, showing the existence of a $\theta$-directed geodesic requires only the knowledge that $\theta \in \partial \ball$ is an exposed point of differentiability. Moreover, the above results give information in general ergodic models.
%do not completely settle the conjectures from Newman's contribution to the ICM proceedings, but they \chJH{give a systematic reading }%come a long way without having to address the shape of $\ball$. 
We currently cannot determine $\functionals$ for most distributions of interest, but it follows from Theorem~\ref{DH1} that $\functionals$ contains all tangent functionals, and so  $\{\rho / |\rho|: \, \rho \in \functionals\} = S^1$  if the boundary of $\ball$ is differentiable. If $\ball$ is strictly convex, then Theorem~\ref{AH1} implies that every geodesic has an asymptotic direction in the sense of Newman's predicitons.

\subsection{Model assumptions}\label{sec:assns}

We shall, finally, describe the precise assumptions under which our results are valid. We shall work under two separate sets of conditions {\bf A1} and {\bf A2}, corresponding to i.i.d.\ and stationary-ergodic edge-weight distributions, respectively. %In particular, assumptions {\bf A1} will include the setting of i.i.d.\ edge-weights from a common distribution with finite mean. 
Throughout the paper we shall often consider the probability space  $(\Omega_1,\mathcal{F},\P)$,  where $\Omega_1=[0,\infty)^{\Ec^2}$, $\mathcal{F}$ is the corresponding Borel sigma algebra, and $\P$ satisfies either of the following.
\begin{itemize}
		\item[\bf A1] The variables $(\omega_e)_{e \in \Ec^2}$ are independent and identically distributed under $\P$, and they satisfy the moment condition of \cite{coxdur81}: If $\omega_{e_1}, \, \ldots, \omega_{e_4}$ are the four edges incident to the origin, then
		\[\E\left[\Big(\min_{i = 1, \ldots, 4} \omega_{e_i}\Big)^2\right] < \infty\ . \]
		Moreover, the common distribution of the $\omega_e$'s is continuous.
		
		As shown in \cite{coxdur81}, the shape theorem \eqref{eq:shapethm} holds under {\bf A1}. It trivially implies the almost sure existence of geodesics between vertices; the uniqueness of such geodesics is implied by the continuity of the distribution of each $\omega_e$.
		\item[\bf A2] The joint distribution $\P$ of the variables $(\omega_e)_{e \in \Ec^2}$ obeys the following conditions:
		
		\begin{enumerate}
			\item\label{item:1stshape} It is ergodic with respect to translations of $\Z^2$;
			\item It has all the symmetries of $\Z^2$;
			\item\label{item:lastshape} For some $\delta > 0$, the $2 + \delta$ moment $\E[\omega_e^{2 + \delta}] < \infty$;
			\setcounter{saveenum}{\value{enumi}}
		\end{enumerate}
			Items \eqref{item:1stshape} -- \eqref{item:lastshape} suffice to prove the shape theorem in the form of \eqref{eq:shapethm} for some $\mu$; see \cite{boivin90}. Additional assumptions are required to guarantee that $\mu$ is not identically zero, or equivalently that the asymptotic shape $\ball$ defined at \eqref{eq:ball} above is bounded. We impose the boundedness of $\ball$ as our next assumption, and thereafter describe the remaining conditions on the joint distribution of $(\omega_e)$ which make up {\bf A2}:
		\begin{enumerate}
			\setcounter{enumi}{\value{saveenum}}
			\item \label{item:bddball} The asymptotic shape $\ball$ is bounded;
			\item \label{it:uniquea2} Passage times are a.s.~unique: for any two finite lattice paths $\gamma, \gamma'$ with distinct edge sets,
			\[\P\big(T(\gamma) = T(\gamma')\big) = 0\ ; \]
			\item\label{it:energy} The upward finite energy property holds: given any $\lambda > 0$ such that $\P(\omega_e > \lambda) > 0$, we have $\P(\omega_e > \lambda \mid (\omega_f)_{f \neq e}) > 0$ a.s.;
			\end{enumerate}
			The above assumptions suffice to show a.s.~uniqueness of geodesics between arbitrary pairs of vertices. Recalling the geodesic from $x$ to $y$ is denoted $\geo(x,y)$, we use this notation in the remaining conditions of {\bf A2}:
			\begin{enumerate}[resume]
			\item There exists $L < \infty$ such that
			\[\lim_{n \to \infty} \P\left( \text{for all $x, y \in [-n,n]^2$, $\geo(x,y)$ contains at most $Ln$ edges} \right) = 1\ ; \]
			\item There exist $t, \, \delta,\,\gamma > 0$ such that the following hold:
				\begin{enumerate}
					\item For each edge $e \in \Ec^2$, 
					\[ \P(\omega_e < t - \delta \mid (\omega_f)_{f \neq e}) > \delta \quad \text{a.s., and} \]
					\item $\displaystyle \lim_{n \to \infty} \P\left(\begin{array}{c}\text{for all } x, y \text{ with } \|x\|_\infty = n \text{ and } \|y\|_\infty = 2n, \vspace{0.25em} \\ |\{ e \in \geo(x,y): \omega_e > t\}| > \gamma \|x-y\|_\infty    \end{array}\right) = 1\ . $
					\end{enumerate}
		\end{enumerate}
%			Existence and uniqueness of geodesics under {\bf A2} follows from \eqref{item:bddball} and \eqref{it:uniquea2}.
\end{itemize}

All results and arguments described in this paper are valid under either assumption {\bf A1} or {\bf A2}. Many preliminary results and statements do not require the full range of conditions in assumption {\bf A2} in order to hold. For instance, \eqref{item:1stshape}-\eqref{item:lastshape} of {\bf A2} suffice for the shape theorem (as stated in~\eqref{eq:shapethm}) to hold~\cite{boivin90}, and Theorem~\ref{DH1} is in~\cite{damhan14} proved under conditions~\eqref{item:1stshape}-\eqref{it:energy} of {\bf A2}. However, Theorems~\ref{AH1}-\ref{AH3} are in~\cite{ahlhof} proven under the full set of conditions of {\bf A2}.

\section{Elements of an ergodic theory for infinite geodesics} \label{sec:ergodictheory}
%{\color{red} Put references to appropriate sections below.}
%{\color{red} The ordering of sections will change --- make sure correct ordering is reflected in this introductory paragraph.}

In this section, we introduce some techniques and prove some structural results about geodesics which will be used in our proof of Theorem \ref{thm:highways}.
In Section \ref{sec:weakconv}, we describe the technique of geodesic measures originally introduced in \cite{damhan14}. We will use a different, but similar construction of geodesic measures later in the paper, and so we introduce the main ideas of the measures from \cite{damhan14} here. In Section \ref{sec:rcgprop}, we give the definition and properties of random coalescing geodesics, a tool introduced in \cite{ahlhof}, which allow us to measurably select a family of coalescing geodesics with Busemann function asymptotically linear to a particular functional. Along the way, we introduce a ``good event'' of probability one, denoted $\mathcal{X}$, on which geodesics all simultaneously have natural directedness properties.  In Section \ref{sec:ordering}, we describe the natural counterclockwise ordering on infinite geodesics, along with consequences thereof.   Finally, in Section \ref{sec:measureson}, we prove a useful result about distributions on families of ``non-crossing geodesics'' (geodesics which either coalesce or remain disjoint). This result, Theorem \ref{thm:non-crossing}, will be an important ingredient in the proof of our main Theorem \ref{thm:highways}: we will show in Section \ref{sec:thehighways} that if Theorem \ref{thm:highways} did not hold, one could derive a contradiction to Theorem \ref{thm:non-crossing}.

%In this section we describe some of the techniques that have been important in developing an ergodic theory for infinite geodesics, and that will be central to the remainder of this paper.

\subsection{Weak convergence of geodesic measures} \label{sec:weakconv}

In order to remedy the difficulties in Hoffman's subsequential approach for producing infinite geodesics, Damron and Hanson~\cite{damhan14} developed a more systematic method based on weak limits of measures on geodesics and Busemann-like functions. By encoding families of finite geodesics along with associated Busemann differences, they were able to preserve properties of these geodesics while taking limits. We shall outline their method below, as weak convergence of geodesic measures will play an important role in the proof of Theorem~\ref{thm:highways}. We remark that our exposition differs from theirs in some details.

We recall that a linear functional $\rho:\R^2\to\R$ is called a supporting linear functional if $\{x \in \mathbb{R}^2: \, x \cdot \rho = 1\}$ is a supporting line of $\partial \ball$. Let $\rho$ be such a supporting linear functional. We define the FPP model on a particular ``enlarged'' probability space which naturally encodes some additional randomness, allowing us to pick out the Busemann function $B_g$ of a particular geodesic $g \in \tree_0$. 

 We define the new probability spaces
\begin{equation}
\label{eq:allomegas}
\Omega_2:=\{0,1\}^{\dedges^2}, \quad  \text{and}\quad \Omega_3:=\R^{\Z^2\times \Z^2};
\end{equation}
here we have introduced the shorthand $\dedges^2 := \{(x,y): \, \{x, y\} \in \Ec^2 \}$ for the set of directed versions of elements of $\Ec^2$.
An outcome  $\eta \in \Omega_2$ can be thought of as inducing a directed graph whose vertex set is $\Z^2$ and whose edge set is $\{\overline e \in \dedges^2:\, \eta(\overline e) = 1\}$. We often identify $\eta$ with the directed graph it induces. 

We henceforth take the convention
\begin{equation}
\label{eq:samplept3}
(\omega, \eta, \theta) \text{ represents a generic sample point of } \Omega_1 \times \Omega_2 \times \Omega_3\ ,
\end{equation}
with $\eta$ representing a point of $\Omega_2$ and $\theta$ a point of $\Omega_3$.
To avoid cumbersome subscripts, we indicate entries of $\eta$ and 
$\theta$ using arguments rather than subscripts. For $\overline e = (x,y) \in \dedges^2$, we write $\eta(x,y)$ for the $\overline e$ entry of $\eta$, and similarly write $\theta(w,z)$ for arbitrary $w, z \in \Z^2$.

For each supporting functional $\rho$ and real number $\alpha\ge0$, we will define a function $\Psi_{\rho,\alpha}:\Omega_1\to\Omega_1\times\Omega_2\times\Omega_3$ encoding geodesics to distant half-planes as well as an associated Busemann-like function. We fix a realization of edge weights $(\omega_e) \in \Omega_1$. For each $z \in \Z^2$, let $\gamma_\alpha(z)$ denote the geodesic from $z$ to the half-plane $H_\alpha := \{x\in\R^2:\rho(x)\ge\alpha\}$; it is uniquely defined for almost every $\omega \in \Omega_1$.  Considering each $\gamma_\alpha(z)$ as a sequence of directed edges oriented away from its initial point $z$, we define for each $(x,y) \in \dedges^2$
	\[\eta_\alpha (x,y) = \begin{cases} 1, \quad &\text{the directed edge $(x,y)$ is traversed by some $\gamma_\alpha(z)$;}\\
	0, \quad &\text{otherwise};
	\end{cases} \]
	we write $\eta_\alpha = (\eta_\alpha(x,y))_{(x,y) \in \dedges^2} \in \Omega_2$.
	
 Thus $\eta_\alpha$ encodes all geodesics to $H_\alpha$. It induces, as described above, a directed ``geodesic graph'' with vertex set $\Z^2$.  We define a $\theta_\alpha\in\Omega_3$ encoding associated ``Busemann increments'': 
	\[
\theta_\alpha(x,y):=T(x, H_\alpha) - T(y, H_\alpha)\ .
\]
We write $\theta_\alpha = (\theta_\alpha(x,y))_{x,y \in \Z^2}$. Finally, the map $\Psi_{\rho,\alpha}$ is defined using the above as
 \begin{align}
 \Psi_{\rho,\alpha}(\omega) = (\omega, \eta_\alpha, \theta_\alpha) \quad \text{for all $\omega \in \Omega_1$};  \label{eq:psidh}
 \end{align}
 $\Psi_{\rho, \alpha}$ can be seen \cite[Appendix A]{damhan14} to be Borel-measurable. The configuration $(\omega, \eta_\alpha, \theta_\alpha) = \Psi_{\rho, \alpha}(\omega)$  enjoys a few important properties analogous to those of Lemma~\ref{lem:busemanpropps} for a.e.~$\omega$. First, each $x \in \Z^d$ almost surely has out-degree one in $\eta$, and $\eta$ a.s.~has no cycles. Second, $\theta_\alpha$ is additive ($\theta_\alpha(x,z) = \theta_\alpha(x,y) + \theta_\alpha(y,z)$ a.s.) and behaves as $T$ along $\eta$ (if there is a directed path in $\eta_\alpha$ from $x$ to $y$, then $\theta_\alpha(x,y) = T(x,y)$).
 
  We obtain a probability measure $\nu_{\rho,\alpha}$ on $\Omega_1\times\Omega_2\times\Omega_3$ as the push-forward of $\P$ through the map $\Psi_{\rho,\alpha}$.  Using these, we will build a limiting measure on $\Omega_1\times\Omega_2\times\Omega_3$; a point $(\omega, \eta, \theta)$ sampled from this measure will encode geodesics and Busemann functions to a half-plane ``at infinity''. Before taking limits, we take averages as follows:
\begin{equation}\label{DH average}
\nu_{\rho,\alpha}^\ast:=\int_0^\alpha\nu_{\rho,t}\,dt = \E[\nu_{\rho, U_{\alpha}}]\ ,
\end{equation}
where $U_\alpha$ is a uniform random variable on $[0, \alpha]$ independent of $\omega$.
The sequence of measures $(\nu_{\rho,\alpha}^\ast)_{\alpha\ge0}$ is tight due to a statement analogous to property~\ref{it:bleq} of Busemann functions from Lemma~\ref{lem:busemanpropps}. By Prokhorov's theorem, this sequence will have a weakly convergent subsequence. It is straightforward to show that the distribution of $\omega$ under any such subsequential limit $\nu_\rho$ is $\P$ and that $\nu_\rho$ is shift-invariant (due to the averaging in~\eqref{DH average}).
The properties enjoyed by $\nu_{\rho, \alpha}$ yield similar properties for $\nu_\rho$. The following result encapsulating these is a more detailed version of Theorem~\ref{DH1}.

\begin{theorem}\label{DH2}
For every tangent functional $\rho:\R^2\to\R$ the measure $\nu_\rho$ is shift-invariant and satisfies the following properties: For $\nu_\rho$-almost every $(\omega,\eta,\theta)$ we have
\begin{enumerate}[label = {(\alph*)} ]
\item from each site of $\Z^2$, there is a unique infinite directed path in $\eta$, and each such path is a geodesic for $\omega$; \label{it:forgeo}
\item the Busemann function of any infinite directed path is asymptotically linear to $\rho$;
\item any two infinite directed paths coalesce, and $\theta$ encodes their Busemann function.
\end{enumerate}
\end{theorem}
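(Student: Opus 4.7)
The plan is to construct $\nu_\rho$ as a subsequential weak limit of the averaged pre-limit measures $\nu_{\rho,\alpha}^{\ast}$ and to transfer properties of $\nu_{\rho,\alpha}$ across the limit. For tightness of $(\nu_{\rho,\alpha}^{\ast})_{\alpha\ge 0}$, the $\omega$-marginal is always $\P$, the $\eta$-marginal lives in the compact space $\{0,1\}^{\dedges^2}$, and the $\theta$-marginal is tight coordinate-wise because $|\theta_\alpha(x,y)|\le T(x,y)$ with $T(x,y)\in L^1$ under either \textbf{A1} or \textbf{A2}. Prokhorov yields a subsequential weak limit $\nu_\rho$ with $\omega$-marginal $\P$. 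Shift-invariance of $\nu_\rho$ comes from the averaging: a lattice translation by $v$ shifts $H_t$ to $H_{t-\rho(v)}$ and hence maps $\nu_{\rho,t}$ to $\nu_{\rho,t-\rho(v)}$, so the discrepancy between the shifted and unshifted averages over $[0,\alpha]$ is a boundary contribution of size $O(|\rho(v)|/\alpha)\to 0$.

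For property (a), the event that a given vertex $v$ has out-degree exactly one in $\eta$ is clopen in $\Omega_2$ and fails under $\nu_{\rho,\alpha}^{\ast}$ only when $v\in H_{U_\alpha}$, an event of probability $O(1/\alpha)\to 0$; the absence of directed cycles through $v$ in $\eta$ is likewise clopen and holds pre-limit a.s.\ by construction of $\eta_\alpha$. Together these yield a unique infinite directed path $g_v$ from each $v$ under $\nu_\rho$. To see that $g_v$ is a geodesic for $\omega$, for any fixed finite segment $(v_0,\ldots,v_k)$ the event
\[
E=\{\eta(v_i,v_{i+1})=1\text{ for all }i\}\cap \{T(v_0,v_k)<\tsum_i\omega_{\{v_i,v_{i+1}\}}\}
\]
is open in the product topology and has $\nu_{\rho,\alpha}^{\ast}$-probability zero (any $\eta_\alpha$-directed segment is a sub-path of some $\gamma_\alpha(v_0)$); Portmanteau and a countable union bound over such segments transfer this to $\nu_\rho$.

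For the first parts of (b) and (c), additivity of $\theta$ and the bound $|\theta|\le T$ are closed conditions that pass to the limit, and the identity $\theta(x,y)=T(x,y)$ along any directed $\eta$-path is transferred by combining the clopen connectivity event with the closed equality event (in the pre-limit the identity holds by telescoping along $\gamma_\alpha(x)$). To obtain asymptotic linearity of $\theta$ to $\rho$, I would use shift-invariance of $\nu_\rho$ to represent $\theta(0,ne_i)$ as a Birkhoff sum $\sum_{k=0}^{n-1}\theta(ke_i,(k{+}1)e_i)$. The pre-limit identity $\theta_t(0,e_i)=T(0,H_t)-T(e_i,H_t)$, combined with the translation-invariance of $\P$ and the shape-theoretic asymptotic $T(0,H_t)=t+o(t)$, pins down $\E_{\nu_\rho}[\theta(0,e_i)]=\rho(e_i)$; Birkhoff's theorem then gives a.s.\ convergence $n^{-1}\theta(0,ne_i)\to\rho(e_i)$, and the bound $|\theta|\le T$ together with the shape theorem extends this to asymptotic linearity of $\theta$ to $\rho$ in all directions.

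The hardest step will be establishing coalescence of any two infinite directed paths in $\eta$. I would approach it through a Burton--Keane-style planar mass-transport argument: non-coalescence of $g_v$ and $g_{v'}$ is a shift-invariant event under $\nu_\rho$, and under the hypothesis of positive probability of non-coalescence the coalescence classes would partition $\Z^2$ into infinitely many infinite one-ended planar trees whose associated $\theta$-functions all share the common asymptotic slope $\rho$; the planar geometry of these trees (in the spirit of Licea--Newman) combined with this common slope would force a contradiction. Once coalescence is established, evaluating $\theta(v,v')=T(v,w)-T(v',w)$ at a common point $w\in g_v\cap g_{v'}$ and letting $w\to\infty$ along the shared terminal segment identifies $\theta$ with the common Busemann function $B_{g_v}$; combined with the asymptotic linearity of $\theta$, this yields (b) and (c).
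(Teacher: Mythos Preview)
The paper does not give its own proof of Theorem~\ref{DH2}; it is quoted as a result from Damron--Hanson~\cite{damhan14} (with the exposition following~\cite{aufdamhan17,bridamhan}). So there is no paper-proof to compare against, only the construction sketched in Section~\ref{sec:weakconv}. Your outline follows the Damron--Hanson strategy closely: tightness and Prokhorov, shift-invariance from the Ces\`aro averaging, passage of local (clopen or closed) graph properties through Portmanteau, computation of $\E_{\nu_\rho}[\theta(0,e_i)]$ via the half-plane shift identity, and a Licea--Newman-type argument for coalescence. That is indeed the right architecture.

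There is, however, a genuine gap in your asymptotic-linearity step. You write that ``Birkhoff's theorem then gives a.s.\ convergence $n^{-1}\theta(0,ne_i)\to\rho(e_i)$'', but $\nu_\rho$ is only shift-invariant, not shown to be ergodic, so Birkhoff yields convergence to the conditional expectation $\E_{\nu_\rho}[\theta(0,e_i)\mid\mathcal{I}]$, which need not equal the unconditional mean $\rho(e_i)$. The repair in~\cite{damhan14} is to note that the Birkhoff limits $L(z):=\lim_n n^{-1}\theta(0,nz)$ define a \emph{random linear functional} (additivity of $\theta$ forces $L$ to be additive in $z$) satisfying $L(z)\le\mu(z)$ for all $z$ (from $\theta\le T$ and the shape theorem), so $L$ takes values in the polar body $\ball^\circ$; one then uses the tangency hypothesis on $\rho$ together with $\E[L]=\rho$ to pin $L=\rho$ almost surely. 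This extra convexity/extremality step is not present in your sketch and is where the assumption that $\rho$ is a \emph{tangent} (not merely supporting) functional actually enters.

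Your coalescence paragraph is also too thin to count as a proof. ``Burton--Keane-style planar mass-transport'' is the right slogan, but the actual argument (absence of trifurcation points, translation-invariance ruling out multiple one-ended trees with the same Busemann slope, and the planar topology that prevents distinct coalescence classes from interleaving) has real content and needs the backward-finiteness/non-bigeodesic ingredient that the paper later packages into Proposition~\ref{rcg}\ref{it:rcgbf} and Lemma~\ref{backwards}. As written, ``would force a contradiction'' is a placeholder, not an argument.
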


Let us mention that our exposition differs from that of~\cite{damhan14} in that we here encode the entire Busemann differences instead of increments per edge. Our exposition follows that of~\cite{aufdamhan17}, as well as later work~\cite{bridamhan}. The two approaches are analogous to each other.

\subsection{Random coalescing geodesics}\label{sec:rcgprop}

Random coalescing geodesics are a  central tool introduced in \cite{ahlhof} for development of an ``ergodic theory for FPP geodesics.''
 %{\color{red} reword?}, introduced by Ahlberg and Hoffman~\cite{ahlhof}, in transforming the tools and techniques from the study of infinite geodesics into an ergodic theory is that of a random coalescing geodesic.
The important aspects of their definition relate to their behavior under shifts, and so we introduce notation for shift operators here.
The translation $\sigma_z$ along the vector $z\in\Z^2$ maps $\Omega_1 \times \Omega_2 \times \Omega_3$ to itself via the rule $\sigma_z(\omega, \eta, \theta) = (\sigma_z \omega, \sigma_z \eta, \sigma_z \theta)$, where
\begin{equation}
\label{eq:sigmazdef}
\sigma_z \omega=(\omega_{e-z})_{e\in\edges^2}, \quad \sigma_z \eta = (\eta(\overline e - z))_{\overline e \in \dedges^2}, \quad \sigma_z \theta = (\theta(x-z, y-z))_{x,y \in \Z^2 \times \Z^2}\ .
\end{equation}
Here we have denoted translates of edges in the usual way:  $(x,y) - z = (x-z, y-z)$ (similarly for undirected edges). Of course, each $\sigma_z$ can also be considered as an operator on an individual $\Omega_i$ or a product $\Omega_i \times \Omega_j$ via projection.

Supposing $G:\Omega_1\to\Omega_2$ is a measurable map, we write 
\begin{equation}
\label{eq:rcgsig}
G(z) := \sigma_{-z}\circ G\circ\sigma_z \quad \text{ for each $z \in \Z^2$;}
\end{equation}
 thus $G=G(0)$. The class of such $G$ which are  translation-covariant with respect to the weights $(\omega_e)$ and whose shifts coalesce with one another is exactly the class of random coalescing geodesics:
\begin{definition}\label{defin:rcgd}
A measurable map $G: \Omega_1 \to \Omega_2$ is called a  {\bf random coalescing geodesic} if it has the following almost sure properties: For all $y,z\in\Z^2$
\begin{itemize}
\item almost surely, $G(z)$ is an element of $\tree_z$ --- in other words, $G(z)$ consists of a single infinite directed path beginning at $z$ which is an infinite geodesic, and
\item almost surely, $G(y)$ and $G(z)$ coalesce: $|G(y) \triangle G(z)| < \infty$,
\end{itemize}
where $G(y)$ and $G(z)$ are interpreted as in \eqref{eq:rcgsig}.
  
\end{definition}

%\begin{define}
%	A measurable map $G: \Omega_1 \to \Omega_2$ is a {\bf random coalescing geodesic} if the following hold $\prob$-a.s.:
%	\begin{enumerate}
%		\item each $z \in \Z^2$ has out-degree one in $G$;
%		\item the unique infinite forward path $\gamma_z$ from each $z \in \Z^2$ is a geodesic;
%		\item for each pair $x,y \in \Z^2$, the forward paths $\gamma_x$ and $\gamma_y$ coalesce;
%		\item $G$ is shift covariant; that is, for each $x \in \Z^2$ with corresponding shift operator $\sigma_x$, we have $\sigma_{-x} G [\sigma_x \omega] = G$.
%		\end{enumerate} 
%	\end{define}

%We will find it convenient to write $G(x) = G(x)[\omega]$ for the path $\gamma_x$ in $G(\omega)$, and for this reason reserve square brackets for writing the $\omega$-dependence of $G$. Note that, by coalescence, the Busemann functions $B_{G(x)}$ and $B_{G(y)}$ are a.s.~the same for all $x, y$. We will invoke several properties of random coalescing geodesics, summarized in Proposition \ref{rcg} and several consequences below. The results of Proposition \ref{rcg} were previously derived in~\cite[Theorem ?	]{ahlhof}.

The shift-invariance and coalescence assumptions of Definition \ref{defin:rcgd}, in combination with the ergodic theorem, result in well-behaved asymptotic properties. For instance, each random coalescing geodesic $G$ has an asymptotically linear Busemann function. We summarize some of these properties, first described in~\cite[Propositions~4.2--4.5]{ahlhof}, as follows.
\begin{prop}\label{rcg}
Let $G$ be a random coalescing geodesic. The following properties hold:
\begin{enumerate}[label = {(\alph*)}]
\item There exists a (non-random) supporting functional $\rho_G$ such that the Busemann function of $G$ is almost surely linear to $\rho_G$.
\item If $G'$ is a random coalescing geodesic such that $\rho_{G'} = \rho_G$, then $G' = G$ almost surely.
\item \label{it:rcgbf} $G$ is almost surely {\bf backwards finite}, meaning that the set $\{y: \, 0 \in G(y)\}$ is finite.
\end{enumerate}
\end{prop}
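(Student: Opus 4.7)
My plan is to prove the three claims in sequence, following the strategy of \cite{ahlhof} and relying on the previously stated theorems whenever possible.

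\emph{Part (a).} The coalescence property of Definition~\ref{defin:rcgd} together with Lemma~\ref{lem:busemanpropps}\ref{it:terminalseg} ensures that $B_{G(y)}$ and $B_{G(z)}$ agree almost surely for any fixed $y,z\in\Z^2$, so there is a single well-defined random Busemann function $B_G:=B_{G(0)}$. By Lemma~\ref{lem:busemanpropps}\ref{it:bleq}, $|B_G(0,e_i)|\le T(0,e_i)=\omega_{\{0,e_i\}}$, so this random variable has finite mean under \textbf{A1} or \textbf{A2}. I would define $\rho_G$ to be the unique linear functional on $\R^2$ satisfying $\rho_G(e_i):=\E[B_G(0,e_i)]$ for $i=1,2$. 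Translation covariance of $G$ combined with the additivity of $B_G$ (Lemma~\ref{lem:busemanpropps}\ref{it:badd}) and the multidimensional ergodic theorem yield $n^{-1}B_G(0,nz)\to\rho_G(z)$ almost surely for every fixed $z\in\Z^2$. To upgrade this to uniform convergence in the sense of~\eqref{Busemann shape}, I would imitate the proof of the shape theorem: sandwich an arbitrary $z$ between two lattice points $z_1,z_2$ at distance $\le\eps|z|$, use additivity to rewrite $B_G(0,z)-\rho_G(z)$ as a sum of terms each controlled either by the radial a.s.\ convergence along a fixed ray or by $T$-distances of order $\eps|z|$, and apply the shape theorem~\eqref{eq:shapethm}. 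Finally, Lemma~\ref{lem:whatrho}\ref{item:whatrho2} identifies $\rho_G$ as a supporting functional.

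\emph{Part (b).} Suppose $G,G'$ are random coalescing geodesics with $\rho_G=\rho_{G'}=:\rho$. By part (a), both $B_G$ and $B_{G'}$ are asymptotically linear to $\rho$ almost surely. Theorem~\ref{AH3}, applied with $y=z=0$ and the deterministic functional $\rho$, then forces $G(0)$ and $G'(0)$ to coalesce almost surely. Since both are infinite geodesics starting from $0$ and sharing some vertex $w$ beyond the coalescence point, their initial segments from $0$ to $w$ are each realizations of $\geo(0,w)$, which is a.s.\ unique under either \textbf{A1} or \textbf{A2}. Hence $G(0)=G'(0)$ almost surely, and translation covariance propagates this to $G(z)=G'(z)$ a.s.\ for every $z\in\Z^2$.

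\emph{Part (c).} This is the main obstacle. By coalescence and the a.s.\ uniqueness of finite geodesics, the random subgraph $\mathcal{T}_G:=\bigcup_{z\in\Z^2}G(z)$ is a translation-covariant tree of degree at most $4$ spanning $\Z^2$, in which every vertex has out-degree exactly one, with the ``forward'' end at infinity oriented in directions lying in $\arc(\rho_G)$. Backwards finiteness at $0$ is equivalent to the backward subtree of $0$ in $\mathcal{T}_G$ being finite, i.e., to $\mathcal{T}_G$ having no backward end passing through $0$. Suppose for contradiction that $\P(|D_0|=\infty)>0$ where $D_0:=\{y:0\in G(y)\}$. Since the backward subtree of $0$ has bounded degree, König's lemma yields on this event an infinite simple backward path $(y_k)_{k\ge0}$ with $y_0=0$ and $(y_{k+1},y_k)\in\mathcal{T}_G$ for each $k$, so that $\mathcal{T}_G$ has at least two ends. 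The concatenation of the reversal of this backward path with $G(0)$ is then a bi-infinite geodesic through $0$. The contradiction is extracted by combining translation invariance, ergodicity, and the planarity of $\Z^2$: under the contradiction hypothesis a positive density of vertices would lie on such bi-infinite branches, and the resulting collection of infinitely many eventually-disjoint backward infinite branches cannot be accommodated by a translation-invariant random tree of bounded degree in $\Z^2$ (a Burton--Keane style / mass transport argument on the ends of $\mathcal{T}_G$). The delicate point, and the technical heart of the argument, is to verify that distinct backward branches really separate in the plane --- this is where one uses the uniqueness of finite geodesics and the specific FPP structure encoded in $G$ to prevent backward branches from re-coalescing in a way that would evade the planarity obstruction.
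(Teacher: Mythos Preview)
The paper does not prove this proposition: it is simply quoted from \cite[Propositions 4.2--4.5]{ahlhof}, so there is no in-paper argument to compare against. I can therefore only comment on whether your sketch reproduces the argument of \cite{ahlhof} and whether it is internally sound.

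Your part (a) is fine and is exactly the standard argument.

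For part (b), invoking Theorem~\ref{AH3} is not acceptable as a proof here, because in \cite{ahlhof} Theorems~\ref{AH1}--\ref{AH3} are the \emph{output} of the random-coalescing-geodesic machinery; Propositions 4.2--4.5 (i.e.\ the present statement) are the basic structural facts proved first, and the uniqueness in (b) is one of the inputs used along the way to the main theorems. So your argument is circular relative to the actual development, even though within this survey-style section AH3 is stated earlier. The direct proof in \cite{ahlhof} does not go through Theorem~\ref{AH3}; it uses translation covariance and the ordering on geodesics to compare $G$ and $G'$ directly.

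For part (c) your outline is pointed in the right direction but has a real gap and a misdiagnosis. The Burton--Keane style argument you describe rules out the case in which $\mathcal{T}_G$ has at least three ends (positive density of trifurcation points contradicts amenability). It does \emph{not} touch the case in which $\mathcal{T}_G$ has exactly two ends: a single bigeodesic, with finite bushes hanging off. In that case there is only one backward-infinite branch, so your appeal to ``infinitely many eventually-disjoint backward infinite branches'' is vacuous, and one needs a separate argument to show a translation-covariant single bigeodesic cannot occupy a positive density of sites. This two-end case is where the work actually lies. Conversely, the issue you flag as the ``delicate point'' --- backward branches re-coalescing --- is a non-issue: $\mathcal{T}_G$ is a tree (by a.s.\ uniqueness of finite geodesics), so distinct branches from a common vertex can never meet again, and there is nothing to verify there.
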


We take note of the following immediate consequence of a random coalescing geodesic $G$ being backwards finite: almost surely the path $G(0)$ cannot be extended backwards into a bi-infinite distance-minimising path, i.e.\ a bigeodesic, due to uniqueness of passage times. Related statements are contained in the following lemma, which will be used numerous times in our proof of Theorem \ref{thm:highways}. The lemma rules out ``long backward extensions'' of a similar character, and is derived as a consequence of \ref{it:rcgbf} from Proposition \ref{rcg}.

\begin{lemma}\label{backwards}
	For any random coalescing geodesic $G$, the following occur with probability zero:\footnote{	Here, we consider both $G = G(0)$ and $\geo(0,z)$ as paths beginning at $0$ and interpret convergence as in \eqref{eq:convdef}, and we recall that ``$x \in \tree_y$'' means that the vertex $x$ is in some infinite geodesic beginning at $y$.}
	\begin{enumerate}[label=(\alph*)]
	\item $0\in\tree_v$ for every $v\in G$;
	\item $\exists$ a sequence  $(z_\ell)_{\ell\ge1}$ with $\geo(0, z_\ell) \to G$ and $0 \in \tree_{z_\ell}$ for each $\ell$.
	\end{enumerate}
%	\begin{equation}\label{eq:coincide}
%	 \P \left(\begin{array}{c}
%	\exists \text{a sequence  $(z_\ell)$ with $\geo(0, z_\ell) \to G$} \text{ and $0 \in \tree_{z_\ell}$ for each $\ell$}
%	\end{array}\right)  =0.
%	\end{equation}
\end{lemma}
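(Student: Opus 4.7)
The plan is to derive both parts by contradicting the consequence of Proposition~\ref{rcg}\ref{it:rcgbf} noted just above the lemma: since $G$ is backwards finite, uniqueness of passage times forces the path $G(0) = (0 = u_0, u_1, u_2, \ldots)$ to almost surely admit no backward extension into a bigeodesic. In each part I will use the hypothesised event together with a compactness argument to assemble exactly such a forbidden extension.

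For part~(a), assume the event holds on a positive-probability set and, for each $k \geq 0$, pick some $\gamma_k \in \tree_{u_k}$ passing through $0$. Uniqueness of finite geodesics forces the initial segment of $\gamma_k$ from $u_k$ to $0$ to equal the reversal of $\geo(0, u_k) = (0, u_1, \ldots, u_k)$, so $\gamma_k$ decomposes as $(u_k, u_{k-1}, \ldots, u_1, 0, v_1^k, v_2^k, \ldots)$ with the tail $\delta_k := (0, v_1^k, v_2^k, \ldots)$ lying in $\tree_0$. Applying compactness~\eqref{eq:compactnessg} to the $\delta_k$'s (which share initial vertex $0$) and invoking~\eqref{eq:geopreserve}, I pass to a subsequential limit $\delta = (0, v_1, v_2, \ldots) \in \tree_0$. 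For any fixed $m, n$ and $k$ large enough along that subsequence, the finite path $(u_m, u_{m-1}, \ldots, 0, v_1, \ldots, v_n)$ is a sub-path of $\gamma_k$ and therefore itself a geodesic; reading the bi-infinite sequence $(\ldots, u_2, u_1, 0, v_1, v_2, \ldots)$ in the opposite orientation then yields the desired backward extension of $G(0)$ into a bigeodesic.

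Part~(b) follows from an essentially identical construction. For each $\ell$ I pick $\gamma_\ell \in \tree_{z_\ell}$ passing through $0$ and decompose it as the reverse of $\geo(0, z_\ell)$ followed by a post-$0$ tail $\delta_\ell \in \tree_0$. The hypothesis $\geo(0, z_\ell) \to G$ guarantees that for each fixed $m$, the last $m{+}1$ vertices of the $z_\ell$-to-$0$ portion coincide with $(u_m, \ldots, u_1, 0)$ once $\ell$ is large; combining this with a subsequential limit of the $\delta_\ell$'s produces the same bi-infinite geodesic as in~(a), again contradicting Proposition~\ref{rcg}\ref{it:rcgbf}.

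The step where I expect to pause is verifying that the concatenated limit is a genuine, and in particular self-avoiding, bi-infinite geodesic. Geodesicity of every finite sub-path is automatic, since each such sub-path is contained in some $\gamma_k$ (or $\gamma_\ell$). Self-avoidance --- that no $u_i$ coincides with any $v_j$ for $i, j \geq 1$ --- follows from uniqueness of passage times (assumption~{\bf A1} or~{\bf A2}\ref{it:uniquea2}), since otherwise there would exist two distinct geodesics of different weights between the same pair of endpoints. The remaining ingredients (compactness of the space of paths sharing an initial vertex, preservation of geodesicity under path limits, and the final appeal to backwards finiteness) are all routine.
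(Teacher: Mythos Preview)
Your proof is correct and follows the paper's strategy: assemble a bigeodesic extending $G(0)$ from the hypothesised geodesics and contradict backwards finiteness (Proposition~\ref{rcg}\ref{it:rcgbf}). The only organizational difference is that the paper first reduces (b) to (a)---observing that $0\in\tree_{z_\ell}$ together with $\geo(0,z_\ell)\to G$ forces $0\in\tree_v$ for every $v\in G$---whereas you run the limiting construction in each part separately.
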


Before we present the proof, we introduce an event $\mathcal{X}$ whose complement is a null set. In many of our arguments, we will need to discard a zero-measure set on which geodesics behave ``unusually''; the common event $\mathcal{X}$ guarantees that many unusual behaviors do not occur. For this reason, it will appear at various points from here on. 

\begin{definition}\label{defin:calx}
	Let $\mathcal{X} \subseteq \Omega_1$ be the event that:
	\begin{enumerate}[label= (\roman*)]
		\item each finite lattice path has a unique passage time: for any finite paths $\gamma, \gamma'$ with at least one edge in their symmetric difference, we have $T(\gamma) \neq T(\gamma')$;\label{it:calx1}
		\item between each pair of points $x, y \in \Z^2$, the geodesic $\geo(x,y)$ exists (by the preceding item, this geodesic is also unique);\label{it:calx2}
		\item the shape theorem holds for passage times from each $x \in \Z^2$, in the sense that $\lim_{|y| \to \infty} [T(x,y) - \mu(y-x)]/|y| = 0$;\label{it:calx3}
		\item every infinite geodesic has linear Busemann function: for each infinite geodesic $g$, there is a linear functional $\rho_g \in \functionals$ such that $\lim_{|y| \to \infty} [B_g(0, y) - \rho_g(y)]/|y| = 0;$\label{it:calx4}
		\item for each infinite geodesic $g$, with $\rho_g$ as in the preceding item, $\dir(g) \subseteq \arc(\rho_g)$.\label{it:calx5}
	\end{enumerate}
	\end{definition}
	
We emphasize that $\P(\mathcal{X}) =1$. Indeed, Item \ref{it:calx1} is almost sure by assumption: the continuity of the distribution from Item {\bf A1}, or the explicit assumption \ref{it:uniquea2} of {\bf A2}. Item \ref{it:calx2} is almost sure under {\bf A1} or {\bf A2}, as noted during the statement of those assumptions. Item \ref{it:calx3} is almost sure by the shape theorem \eqref{eq:shapethm}, Item \ref{it:calx4} by Theorem \ref{AH1}, and Item \ref{it:calx5} by Lemma \ref{lem:whatrho}.

\begin{proof}[Proof of Lemma~\ref{backwards}]
Clearly the occurrence of (a) implies the occurrence of (b); simply take $(z_\ell)_{\ell\ge1}$ to be the enumeration of the vertices on $G$. Moreover, on the event $\mathcal{X}$, the occurrence of (b) implies the occurrence of (a). To see this, suppose that there exists a sequence $(z_\ell)$ as in (b). Since $0\in\tree_{z_\ell}$ we have $0\in\tree_v$ for every $v\in\geo(0,z_\ell)$, due to unique passage times. Since $\geo(0,z_\ell)\to G$, it follows that every $v\in G$ lies on $\geo(0,z_\ell)$ for some $\ell$. Consequently, $0\in\tree_v$ for all $v\in G$.

It will suffice to prove that (a) occurs with probability zero. Suppose the contrary, and let $\omega\in\mathcal{X}$ be a sample point for which (a) occurs. Let $(0,v_1,v_2,\ldots)$ be an enumeration of the vertices in $G$. Let $g_k$ be some geodesic in $\tree_{v_k}$ such that $0\in g_k$; let e.g.\ $g_k$ be the counterclockwise-most geodesic with this property. We may take a subsequential limit of the sequence $(g_k)_{k\ge1}$ to obtain a bi-infinite path $g^*=(\ldots,v_{-1},0,v_1,\ldots)$ that visits the origin, and whose ``positive'' part coincides with $G$. Since $g^*$ is a subsequential limit, we have for every $\ell\ge1$ that $v_{-\ell}\in g_k$ for some $k\ge\ell$. That is, $(v_{-\ell},\ldots,v_\ell)$ is a segment of $g_k$ for some $k$, and hence the geodesic between $v_{-\ell}$ and $v_\ell$. Then $g^*$ is a bigeodesic, and $G$ is backwards infinite with positive probability, contradicting Proposition~\ref{rcg}.
\end{proof}

Proposition \ref{rcg} gives some indication of the utility of Definition \ref{defin:rcgd}. A priori, though, it is not clear that random coalescing geodesics exist. In \cite{ahlhof}, Ahlberg and Hoffman gave the first construction of random coalescing geodesics by showing that one could measurably extract geodesics with the required properties from the geodesic measures of Damron and Hanson.  As a consequence, they obtained that for every tangent functional $\rho$ there exists a random coalescing geodesic with Busemann function asymptotically linear to $\rho$. In fact, most of the work leading up to Theorems~\ref{AH1}-\ref{AH3} amounted to showing that although (perhaps) not all geodesics in $\tree_0$ are (the image of) random coalescing geodesics, the set of random coalescing geodesics is \emph{dense} in $\tree_0$, in the sense of the following theorem, which is essentially \cite[Theorem~10.9]{ahlhof}.

\begin{theorem}\label{rcg dense}
For every $\rho\in\functionals$ there exists a random coalescing geodesic $G$ with Busemann function asymptotically linear to $\rho$.
\end{theorem}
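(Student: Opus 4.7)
The plan is to construct $G$ by combining the Damron--Hanson geodesic measure construction from Section~\ref{sec:weakconv} with a measurable selection argument, carried out in three steps.

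\textbf{Construction of $\nu_\rho$.} First I would produce a shift-invariant measure $\nu_\rho$ on $\Omega_1\times\Omega_2\times\Omega_3$ having the properties listed in Theorem~\ref{DH2}. When $\rho$ is tangent, $\nu_\rho$ is precisely the measure supplied by that theorem. For a general $\rho\in\functionals$, which may be a non-tangent supporting functional at a corner of $\partial\ball$, I would run the construction $\nu_{\rho,\alpha}$, $\nu_{\rho,\alpha}^\ast = \E[\nu_{\rho,U_\alpha}]$ from Section~\ref{sec:weakconv} verbatim using the half-planes $H_\alpha = \{x : \rho(x) \geq \alpha\}$, then extract a subsequential weak limit via Prokhorov. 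The shift-invariance, the tree structure of $\eta$, coalescence of directed paths in $\eta$, and identification of $\theta$ with Busemann increments all pass to the limit as in~\cite{damhan14}. The hypothesis $\rho\in\functionals$ enters here to guarantee that the Busemann function of each sampled path is asymptotically linear to $\rho$ rather than to a competing supporting functional at the same corner.

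\textbf{Measurable selection.} Next, I would extract a deterministic $G : \Omega_1 \to \Omega_2$ by disintegrating $\nu_\rho = \int \nu_\rho^\omega\, d\P(\omega)$ over its $\omega$-marginal. The key claim is that for $\P$-a.e.\ $\omega$ the conditional $\nu_\rho^\omega$ is a Dirac mass on $\Omega_2$, in which case $G(\omega)$ is defined to be the unique $\eta$ in its support. To prove this, I would take two independent samples $\eta_1,\eta_2$ from $\nu_\rho^\omega$, yielding two coalescing geodesic families in the same weights, both with Busemann function asymptotically linear to $\rho$. Using uniqueness of finite passage times (Definition~\ref{defin:calx}\ref{it:calx1}) together with the Busemann identity Lemma~\ref{lem:busemanpropps}\ref{it:beq}, the directed edges leaving any fixed vertex in $\eta_1$ and $\eta_2$ both minimize the same functional along the respective geodesics and must therefore coincide, forcing $\eta_1=\eta_2$.

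\textbf{Verification of the RCG properties.} Measurability of $G$ follows from the regularity of the disintegration, and shift-covariance $G(z) = \sigma_{-z}\circ G\circ\sigma_z$ is forced by the shift-invariance of $\nu_\rho$ together with the $\P$-a.s.\ uniqueness of the selection, since the two a.s.\ equal measurable maps must agree. That $G(z)\in\tree_z$ is inherited from property~\ref{it:forgeo} of Theorem~\ref{DH2}; the coalescence of $G(y)$ and $G(z)$ is inherited from property~(c) of Theorem~\ref{DH2} applied inside the common $\eta$; and asymptotic linearity of $B_G$ to $\rho$ is inherited from property~(b). The main obstacle is the Dirac-mass claim in the second step: in the tangent case, $\arc(\rho)$ is a single point, so Lemma~\ref{lem:whatrho} pins down the direction of every selected geodesic and uniqueness is tractable, but for non-tangent $\rho\in\functionals$ the set $\arc(\rho)$ is a proper arc and two candidate configurations $\eta_1,\eta_2$ could a priori aim their geodesics at different directions within this arc while still displaying the required Busemann asymptotics. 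Ruling out such a discrepancy is the delicate structural work carried out in~\cite{ahlhof}.
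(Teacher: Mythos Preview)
The paper does not give its own proof of this theorem; it cites \cite[Theorem~10.9]{ahlhof} and explains, in the paragraph preceding the statement, that the construction proceeds in two stages: first, for each \emph{tangent} functional one extracts a random coalescing geodesic measurably from the Damron--Hanson measure; second, the extension to arbitrary $\rho\in\functionals$ (in particular non-tangent supporting functionals at corners) requires the substantial ``density'' argument that constitutes much of \cite{ahlhof}. So there is no in-paper proof to match against, but your outline can be compared to what the paper says about the method.

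Your Step~2 contains a genuine gap even in the tangent case. The sentence ``the directed edges leaving any fixed vertex in $\eta_1$ and $\eta_2$ both minimize the same functional along the respective geodesics and must therefore coincide'' does not follow from Lemma~\ref{lem:busemanpropps}\ref{it:beq} and unique passage times. Two distinct geodesics $g_1,g_2\in\tree_0$ can perfectly well have $B_{g_1}$ and $B_{g_2}$ both asymptotically linear to the same $\rho$ while leaving the origin along different edges; asymptotic linearity says nothing about any particular finite increment, and $B_g(0,v_1)=T(0,v_1)=\omega_{\{0,v_1\}}$ holds for \emph{every} first step $v_1$ of \emph{every} $g\in\tree_0$, so it cannot distinguish them. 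The uniqueness you need is exactly the content of the second assertion of Theorem~\ref{AH2}, and in \cite{ahlhof} that uniqueness is proved \emph{after} and \emph{using} the existence of random coalescing geodesics, so invoking it here is circular.

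Your Step~1 is also problematic for non-tangent $\rho$. The Damron--Hanson argument that the limiting Busemann function is linear to $\rho$ relies on $\rho$ being tangent: one uses that half-plane passage times $T(0,H_\alpha)$ are asymptotic to $\alpha$, and that the minimizer on $\partial\ball$ of distance to $\{x:\rho\cdot x=1\}$ is picked out by $\rho$. At a corner there are continuum-many supporting functionals sharing the same family of half-planes up to reparametrization, and nothing in the construction singles out your particular $\rho$ among them. Saying ``the hypothesis $\rho\in\functionals$ enters here'' does not explain the mechanism. The actual route in \cite{ahlhof} to non-tangent $\rho\in\functionals$ does not run a half-plane construction for $\rho$ directly; it instead builds the general RCG as a limit of RCGs associated to tangent functionals approaching $\rho$, via an extremal (counterclockwise-most) selection, which is where the ``density'' language in the paper comes from. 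You correctly flag at the end that the non-tangent case is where the real difficulty lies, but the proposed mechanism in Steps~1--2 is not the one that works.
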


We recall that the set $\functionals$ (introduced in Theorem \ref{AH2}) contains all tangent functionals.

\subsection{``Ordering'' and convergence of infinite geodesics and its relation to Busemann functions\label{sec:ordering}} 

Given a half-plane $H := \{z \in \R^2:\, z \cdot \rho > 0 \}$ for some nonzero $\rho \in \R^2$, we say the path $\pi = (x_i)_{i=0}^\infty$ \emph{eventually moves into} $H$ if
\begin{equation}
\label{eq:moveintodef}\text{for each $z \in \Z^2$, the set } \pi \setminus [z + H] \text{ is finite}.
\end{equation}
Similarly, a family $(\pi_\alpha)_\alpha$ of paths is said to \emph{uniformly move into $H$} if
\begin{equation}
\label{eq:unifmoveintodef} 
\text{for each $z \in \Z^2$, the set } \bigcup_{\alpha} \pi_\alpha \setminus [z + H] \text{ is finite.}
\end{equation}
Lemma~\ref{lem:whatrho} implies that every infinite geodesic $\gamma$ whose Busemann function is asymptotically linear to $\rho$ eventually moves into a half-plane described by $\rho$:
	\begin{equation}
	\label{eq:topasshalf}
	\text{a.s., for each $\rho, g$ as in \eqref{Busemann shape}, $g$ eventually moves into $\{x:\, \rho \cdot x > 0\}$.}
	\end{equation}
	Our event $\mathcal{X}$ from Definition~\ref{defin:calx} implies that each geodesic has asymptotically linear Busemann function and so moves into an appropriate half-plane as in \eqref{eq:topasshalf}.
	
Of course, there could be uncountably many elements of $\functionals$; it will be useful to consider instead a finite family of half-planes:
\begin{equation}
\label{eq:Hidef}
H_i:=\{x\in\R^2:(\cos(i\pi/4),\sin(i\pi/4))\cdot x> 0\},\quad i=0,1,\ldots,7.
\end{equation}
	 Because $\arc(\rho)$ has length at most $\pi/4$ for a supporting $\rho$, \eqref{eq:topasshalf} implies
	 \begin{equation}
	 \label{eq:topasshalf2}
	 	\text{on $\mathcal{X}$, for each infinite geodesic $g$, $\exists i$ such that $g$ eventually moves into $H_i$.}
	 \end{equation}

We will notate the boundaries of these half-planes in the usual way:
\begin{equation}
\label{eq:partialH}
\partial H_i =\{x\in\R^2:(\cos(i\pi/4),\sin(i\pi/4))\cdot x= 0\}\ .    
\end{equation}
As above, we sometimes write $H_i$ when we mean $H_i \cap \Z^2$ (or $\partial H_i$ for $\partial H_i \cap \Z^2$) when there is no risk of confusion.
We note that if $j = i + 4 \mod 8$ then $H_i = -H_j$, and in particular $\R^2$ is the disjoint union of $H_i$, $H_j$, and $\partial H_i = \partial H_j$. For this reason, we index the half-planes cyclically, setting $H_i := H_{i \text{ mod } 8}$ for general integers $i$. These half-planes enjoy some useful properties: most notably, that a path of the graph $\Z^2$ which begins in the set $\Z^2 \setminus H_i$ and enters $H_i$ must exit $\Z^2 \setminus H_i$ at a vertex of $\Z^2 \cap \partial H_i$.

Consider two disjoint infinite geodesics which eventually move into a common $H_i$. Their terminal segments after their last entry into $H_i$ have a fixed counterclockwise ordering. Properties of this ordering will be important in the arguments that follow, so we formalize it carefully here. We define the ordering on the event $\mathcal{X}$ and describe some fundamental results relating the ordering and Busemann functions; this lemma is the main focus of this subsection.  For simplicity of notation, we describe the ordering procedure in detail only in the case of the half-plane $H_0$; the other cases are similar. This will recur often (especially in the proofs of Section \ref{sec:thehighways}); in general, we prefer to work with $H_0$ and $H_4$ when possible for reasons of notational simplicity.

\begin{definition}\label{defin:ordering}
	Consider an outcome $\omega \in \mathcal{X}$; suppose $g, g'$ are two infinite geodesics that eventually move into $H_0$.  The last intersection of $g$ (resp.~$g'$) with $\partial H_0 + n \be_1$ will be denoted by $v_n$ (resp.~$v_n'$) when it is defined --- in other words, when $g \cap [\partial H_0 + n\be_1] \neq \varnothing$. We write  $g \prec g'$ in $H_0$, and say that {\bf $g$ is ccw (counterclockwise) of $g'$} in $H_0$, if we have $v_n \cdot \be_2 \geq v_n' \cdot \be_2$ for all sufficiently large $n$. (Since $g,g'$ eventually move into $H_0$, $v_n, v_n'$ are defined for large $n$.)
	Moreover, a sequence $(g_m)_{m\ge1}$ of infinite geodesics that eventually move into $H_0$ is called {\bf ccw increasing} if $g_{m'} \prec g_m$ for all $1 \leq m < m'$.
\end{definition}

Recall, from~\eqref{eq:topasshalf2}, that on $\mathcal{X}$ every geodesic evetually moves into some $H_i$.
The ccw ordering relation $\prec$ is almost surely a total ordering (modulo coalescence), as the following lemma guarantees:
\begin{lemma}\label{lem:ordergood}
	Fix an outcome $\omega \in \mathcal{X}$. For each pair $g, g'$ of infinite geodesics which eventually move into $H_0$,
	\begin{enumerate}[label = {(\alph*)} ]
		\item \label{it:ordertrue} Either $g \prec g'$, $g' \prec g$, or  $g$ and $g'$ coalesce.
  %In fact, if $g$ and $g'$ do not coalesce and if $v_m \cdot \be_2 > v_m' \cdot \be_2$ for some $m$, then $v_n \cdot \be_2 > v_n' \cdot \be_2$ for all $m < n$. (Here, $v_n$ and $v_n'$ have the same meaning as in Definition \ref{defin:calx}.) 
		\item \label{it:twoplanes} The ordering relation $\prec$ is consistent: if $g$ and $g'$ also eventually move into some $H_i$ with $i \neq 0$, then
$g \prec g'$ in $H_0$ if and only if $g \prec g'$ in $H_i$.
		\item \label{it:consistent} If $g \prec g'$, then $\rho_g \cdot \be_2 \geq \rho_{g'} \cdot \be_2$.
	\end{enumerate}
\end{lemma}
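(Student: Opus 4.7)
The plan is to argue on the probability-one event $\mathcal{X}$ from Definition~\ref{defin:calx}, combining planarity of $\Z^2$ with asymptotic linearity of Busemann functions.

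For part~\ref{it:ordertrue}, uniqueness of finite geodesics (Item~\ref{it:calx2} of $\mathcal{X}$) forces the following: if $g$ and $g'$ share two vertices $w_1, w_2$ with $w_2$ later on both geodesics, then the segments of $g$ and $g'$ between $w_1$ and $w_2$ must coincide, as both equal $\geo(w_1, w_2)$. Hence either $g$ and $g'$ coalesce, or their common vertices form a finite isolated set whose removal leaves two disjoint self-avoiding infinite tails, both eventually in $H_0$. A discrete Jordan-curve argument applied to these tails then produces a fixed vertical order: the higher tail, joined with a segment of $\partial H_0 + m\be_1$ and an infinite rightward extension, separates the discrete half-plane $H_0 + m\be_1$ and confines the other tail to a single side, so $v_n \cdot \be_2 - v_n' \cdot \be_2$ has fixed sign for all large $n$. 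For part~\ref{it:twoplanes}, choose $N$ so large that the tails of $g$ and $g'$ past $\partial H_0 + N \be_1$ \emph{and} past the corresponding line for $H_i$ all lie in the wedge $H_0 \cap H_i$. The argument used for \ref{it:ordertrue}, applied inside this wedge, shows that the relative up/down ordering of two disjoint tails is an intrinsic planar feature of their arrangement and is therefore detected identically through the reference line for $H_0$ and through that for $H_i$.

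For part~\ref{it:consistent}, the plan is to invoke the planar crossing inequality for lattice geodesics. For large $n < n'$ the four points $v_n, v_n', v_{n'}', v_{n'}$ lie in counterclockwise cyclic order in $\R^2$, so the diagonal finite geodesics $\geo(v_n, v_{n'}')$ and $\geo(v_{n'}, v_n')$ topologically cross; since two lattice paths can only meet at vertices of $\Z^2$, they must share a vertex. Splitting both at a shared vertex and applying the triangle inequality yields
\begin{equation*}
T(v_n, v_{n'}') + T(v_{n'}, v_n') \ \geq\ T(v_n, v_{n'}) + T(v_n', v_{n'}').
\end{equation*}
Letting $n' \to \infty$, with $v_{n'} \in g$ on one side and $v_{n'}' \in g'$ on the other, the definition~\eqref{eq:busesdef} of Busemann functions converts this into the clean comparison $B_{g'}(v_n, v_n') \geq B_g(v_n, v_n')$. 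Asymptotic linearity (Item~\ref{it:calx4} of $\mathcal{X}$) then gives
\begin{equation*}
c_n\, (\rho_g - \rho_{g'}) \cdot \be_2 \ \geq\ o(n), \qquad c_n := (v_n - v_n') \cdot \be_2 \geq 0,
\end{equation*}
and along any subsequence on which $c_n / n$ has a strictly positive limit this forces $\rho_g \cdot \be_2 \geq \rho_{g'} \cdot \be_2$.

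The main obstacle is the borderline case $c_n = o(n)$, which arises when $g$ and $g'$ share a common asymptotic direction at a non-differentiability point of $\partial\ball$ while $\rho_g \neq \rho_{g'}$ lie in the normal cone there. In that regime the sublinear error from asymptotic linearity swamps the $c_n$-term, and I expect to need a supplementary input, such as repeating the planar swap argument along a rotated reference line (via part~\ref{it:twoplanes} applied to one of the other half-planes $H_i$ into which both geodesics still enter) to recover a linearly growing separation in that frame and thereby the desired sign for $(\rho_g - \rho_{g'}) \cdot \be_2$.
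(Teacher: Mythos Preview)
Your treatment of parts~\ref{it:ordertrue} and~\ref{it:twoplanes} is essentially correct and close in spirit to the paper's proof, though the paper spells out the Jordan-curve constructions more explicitly. The real issue is part~\ref{it:consistent}.

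Your crossing/quadrangle inequality is correct and the passage to $B_{g'}(v_n, v_n') \geq B_g(v_n, v_n')$ is valid. But the gap you identify in the borderline case $c_n = o(n)$ is genuine and your proposed fix does not close it. If $\rho_g \neq \rho_{g'}$ are two distinct supporting functionals at a single corner $v/\mu(v)$ of $\partial\ball$, then $\dir(g) = \dir(g') = \{v\}$ is perfectly possible, and both $v_n$ and $v_n'$ track the ray through $v$. The separation $c_n$ between two disjoint geodesics with the \emph{same} asymptotic direction can be $o(n)$; nothing in $\mathcal{X}$ prevents this. Rotating to another half-plane $H_i$ changes the reference line but not the fact that both geodesics head toward the same point of $S^1$, so the separation in the rotated frame is again $o(n)$ and you recover nothing. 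Moreover, the error in asymptotic linearity is $o(|v_n|+|v_n'|)$, not $o(n)$; if $v$ is close to $\pm\be_2$ this is an additional (minor) issue you glossed over.

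The paper avoids the whole difficulty by a different mechanism: rather than comparing Busemann functions at the on-geodesic points $v_n, v_n'$, it fixes an \emph{off-geodesic} reference point $k\be_2$ and shows, via a topological trapping argument, that $\geo(k\be_2, v_n')$ is forced to pass within bounded distance of $g$ (equation~\eqref{eq:withinL}). This yields $B_{g'}(k\be_2, 0) \geq B_g(k\be_2, 0) - A$ for a constant $A$ independent of $k$; dividing by $k$ and sending $k\to\infty$ gives $\rho_{g'}\cdot\be_2 \leq \rho_g\cdot\be_2$ directly, with the error term $o(k)$ controlled because $k\be_2$ is a deterministic sequence in a fixed direction. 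The trapping step is the substantive geometric input your argument is missing.
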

Of course, analogous statements hold for geodesics directed in $H_i$ for $i \neq 0$, with appropriate replacements for the vector $\be_2 \in \partial H_0$. For $H_1$, we take $(-1,1)$, so that the ordering defined by $\prec$ agrees with the intuitive notion of paths being clockwise to one another; for other $H_i$'s we take a rotation of either $\be_2$ (if $i$ is even) or $(-1,1)$ (if $i$ is odd). 

Item \eqref{it:consistent} of the lemma guarantees in particular that if $\rho_g \neq \rho_{g'}$, then their ordering in $H_i$ is determined by and compatible with the natural clockwise ordering on supporting functionals.

%Analogously, if $\rho, \rho' \in \functionals$ with $\arc(\rho), \arc(\rho') \subseteq H_0$, we write
%\begin{equation}
%\label{eq:orderingfunct}
%\rho \prec \rho'  \text{ in $H_0$ if } \rho \cdot e_2 \geq \rho' \cdot e_2\ ,
%\end{equation}
%with similar analogues for other $H_i$.  Lemma \ref{lem:ordergood} shows that the ordering relation between $\rho_g$ and $\rho_{g'}$ is consistent with the ordering relation between $g$ and $g'$, for any infinite geodesics $g$ which eventually move into a common $H_i$.

\begin{proof}
	We begin by proving \eqref{it:ordertrue}. We fix an outcome in $\mathcal{X}$ and geodesics $g, g'$ as in the statement of the lemma. Suppose $g$ and $g'$ do not coalesce, and that $v_m \cdot \be_2 > v_m' \cdot \be_2$ for some $m$. By translating and considering terminal segments of $g$ and $g'$, we may assume that $g$ and $g'$ are disjoint and each intersect $\partial H_0$ at exactly one vertex (respectively $v_0$ and $v_0'$), with $v_0 \cdot \be_2 > v_0' \cdot \be_2$.  We form a doubly infinite simple curve $\mathcal{C}$ from the union of $g$, the edge $\{v_0, v_0 - \be_1\}$, and the set $\{x:\, x \cdot \be_1 = -1, \,  x \cdot \be_2 \geq v_0 \cdot \be_2\}$.  Then by the Jordan curve theorem, $\mathbb{R}^2 \setminus \mathcal{C}$ (considered as a topological subspace of $\mathbb{R}^2$) has two connected components $D_1$ and $D_2$; the set $D_1$ contains $k \be_2$ for all $k$ large (and indeed $n \be_1 + k \be_2$ for each $n \geq 0$ and each $k$ large relative to $n$), and $D_2$ contains both $g'$ and the vertices $- k \be_2$ for all $k$  large.
	
	Suppose (for the sake of contradiction) that for some $N > 0$, we had $v_N \cdot \be_2 < v_N' \cdot \be_2$.  We write $\widetilde g$ and $\widetilde g'$ for the terminal segments of $g$ and $g'$ beginning at $v_N$ and $v_N'$. As in the last paragraph, $\widetilde g'$ splits $[H_0 \cup \partial H_0] + N \be_1 $ into two components. One of these components, $\widetilde D_2$, contains $(N + n) \be_1 + k \be_2$ for each  $n \geq 0$ and all $k$ large relative to $n$; the other, $\widetilde D_1$, contains $\widetilde g$. Since $|g \setminus \widetilde g| < \infty$, we can choose a value of $n \geq 0$ such that $(g \setminus \widetilde g) \cap [\partial H_0 + (N + n) \be_1] = \varnothing$. For this fixed $n$ and for each large value of $k$, we build a path from $(N + n) \be_1 + k \be_2$ to $-k \be_2$ by concatenating
	\begin{itemize}
		\item The vertical line segment from $(N + n) \be_1 + k \be_2$ to its first intersection $w$ with $\widetilde g'$,
		\item The segment of $g'$ from $w$ to $v'_0$,
		\item The vertical line segment from $v_0'$ to $-k \be_2$.
	\end{itemize}
	Since the first vertical segment remains in $\widetilde D_2$ and contains no portion of $g \setminus \widetilde g$, this vertical segment does not intersect $g$. The segment of $g'$ in the second item avoids $g$ since $g$ and $g'$ are disjoint. The final vertical segment also clearly avoids $g$. Thus, for $k$ sufficiently large, the entire path so described connects a vertex of $D_1$ with a vertex of $D_2$ without intersecting $g$, a contradiction; this completes the proof of item \eqref{it:ordertrue}.
	
	We use the same basic setup as above to prove \eqref{it:twoplanes}, assuming that $g$ and $g'$ intersect $\partial H_0$ only at $v_0$ and $v_0'$. For definiteness, we assume that $g$ and $g'$ eventually move into not only $H_0$ but also $H_3$.  For some $t$ larger than $v_0 \cdot \be_2$ and $v_0' \cdot \be_2$, we have $g \cap [\partial H_3  + t \be_2] \neq \varnothing$ and $g' \cap [\partial H_3  + t \be_2] \neq \varnothing$. For the sake of contradiction, we suppose $g \prec g'$ in $H_0$ (i.e., $v_0 \cdot \be_2 > v_0' \cdot \be_2$) but that $g' \prec g$ in $H_3$. Thus, letting $w$ (resp.~$w'$) be the last intersection of $g$ (resp.~$g'$) with  $[\partial H_3  + t \be_2]$, we have $(w'-w) \cdot (-\be_1 + \be_2) \geq 0$.

 Constructing the same doubly infinite curve $\mathcal{C}$ as above, we have again that $v_0' \in D_2$. We will find the required contradiction by showing that $w' \in D_1$, an impossibility since $g'$ cannot cross $\mathcal{C}$. We construct a path $\mathcal{P}$ from $w'$ to $v_0' - \be_1/2$ by concatenating the straight line segments
	\begin{itemize}
	\item from $w'$ to $\zeta := w' - \be_2/2$;
	\item from $\zeta$ along the ray $\{\zeta- s (\be_1 + \be_2):\, s \geq 0\}$ until its first intersection with $\partial H_0 - \be_1/2$;
	\item from the endpoint of the previous segment along $\partial H_0 - \be_1/2$ to the point $v_0' - \be_1/2$.
	  %  \item from $v_0' - \be_1/2$ to $v_0'$.
	\end{itemize}
	See Figure~\ref{fig:18} for a depiction of these paths.
	We extend $\mathcal{P}$ to a doubly infinite curve $\mathcal{C}'$ by concatenating the portion of $g'$ after $w'$ on one end, and the vertical segment $\{v_0' - \be_1/2 - s \be_2: \, s \geq 0\}$ on the other.  Both $v_0$ and $w$ lie on the same side of  $\mathbb{R}^2 \setminus \mathcal{C}'$.
	Now, $g$ can only intersect $\mathcal{P}$ along the segment $\{\zeta - s (\be_1 + \be_2):\, s \geq 0\}$; since such intersections must represent incursions into the other component of $\mathbb{R}^2 \setminus \mathcal{C}'$, there must be an even number of such intersections. 
	
	\begin{figure}
  \begin{minipage}[c]{0.3\textwidth}
\hspace*{2cm}\includegraphics[scale=0.5]{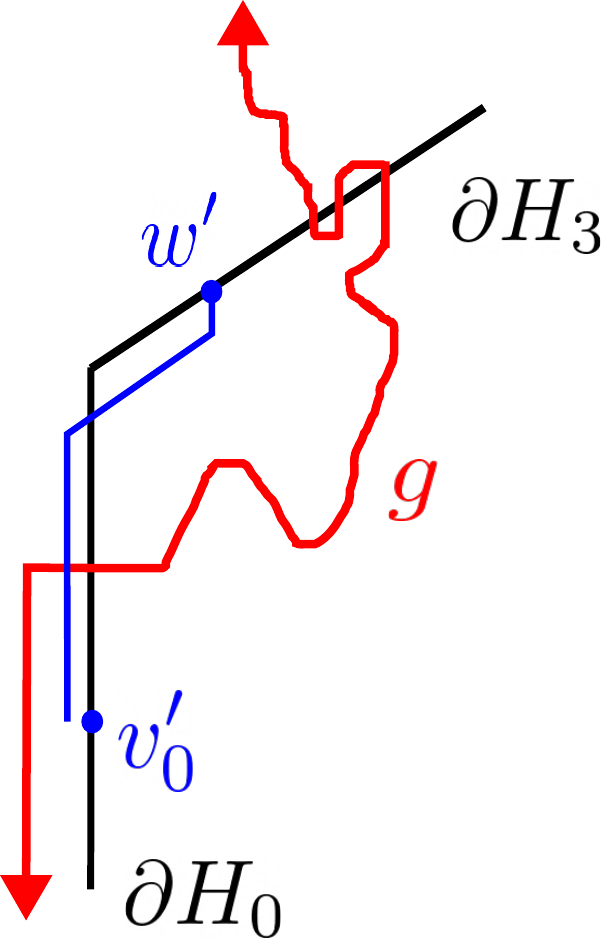}
  \end{minipage}\hfill
  \begin{minipage}[c]{0.6\textwidth}
    \caption{The argument for item \eqref{it:twoplanes} of Lemma~\ref{lem:ordergood}. The path $\mathcal{C}$ is colored red, and the path $\mathcal{P}$ is colored blue. The portion of $\mathcal{C}$ to the right of $\partial H_0$ is identical to $g$ (labeled in the figure).
    } \label{fig:18}
  \end{minipage}
\end{figure}
	
	In particular, $\mathcal{P}$ intersects $\mathcal{C}$ an odd number of times: an even number (possibly zero) of intersections with $g$, and a single intersection with $\mathcal{C}$ at $v_0 - \be_1/2$. Since it ends in  $D_2$, it follows that $w' \in D_1$, furnishing the necessary contradiction.
	
	%The proof of \eqref{it:twoplanes} is essentially the same, with some minor notational complications; we consider the case that $g$ and $g'$ eventually move into both $H_0$ and $H_3$ for definiteness. Again by translation, we may assume that $g$ and $g'$ are disjoint and each intersect $\partial H_0$ at exactly one vertex (respectively $v_0$ and $v_0'$). Since both geodesics eventually move into $H_3$, for some suitably large positive integer $t$ we have $g \cap [\partial H_3  + t \be_2] \neq \varnothing$ and similarly with $g'$. Letting $w$ (resp.~$w'$) be the last intersection of 

	We move to proving item \eqref{it:consistent}; since by \ref{it:terminalseg} of Lemma \ref{lem:busemanpropps} replacing a geodesic by a terminal segment does not change its Busemann function, we may continue to assume that $g$ and $g'$ intersect $\partial H_0$ at a single vertex and are ordered as in the proof of item \eqref{it:ordertrue}. In fact, for simplicity of notation, we shall assume that $v_0 = 0$.
	We will first simplify the problem by making some harmless assumptions about the linear functionals $\rho_g, \rho_{g'}$.
	
	Suppose $g\prec g'$. We shall assume that $\rho_g \cdot \be_2 > 0$; the proof of the remaining case is analogous. If $\rho_{g'} \cdot \be_2 \leq 0$, then item \eqref{it:consistent} is proved; we therefore may additionally assume $\rho_{g'} \cdot \be_2 > 0$. Regarding $\be_1$-coordinates, we spend the remainder of the paragraph arguing that we may assume $\rho_g \cdot \be_1 > 0$ or $\rho_{g'} \cdot \be_1 > 0$. Clearly $\rho_g \cdot \be_1, \, \rho_{g'} \cdot \be_1 \geq 0$ --- otherwise, $g$ and $g'$ would not eventually move into $H_0$ on $\mathcal{X}$. If $\rho_g \cdot \be_1 = 0$ and $\rho_{g'} \cdot \be_1 = 0$, then since $\rho_g, \rho_{g'}$ are supporting functionals of $\ball$ with positive $\be_2$-coordinate, we have $\rho_g = \rho_{g'}$ corresponds to a supporting line touching $\partial \ball$ at $\be_2 / \mu(\be_2)$. We can therefore assume at least one of $\rho_g \cdot \be_1, \, \rho_{g'} \cdot \be_1 $ is greater than zero; the argument is similar in either case, so we assume $\rho_{g'} \cdot \be_1 > 0$ in what remains.

	Because both $\rho_g$ and $\rho_{g'}$ have positive $\be_2$-coordinate, property \ref{it:calx4} of Definition \ref{defin:calx} guarantees that, for all $k$ larger than some $\omega$-dependent $K \geq 0$, $B_{g'}(v_0', k \be_2) > 0$. In particular, for $k > K$ fixed, we have $T(v_0', v_n') > T(k \be_2, v_n')$ for all large $n$.
	Similarly, because $\rho_{g'} \cdot \be_1 > 0$,  we see that $B_{g'}(v_0', - \ell \be_1) < 0$ for all $\ell$ larger than some $\omega$-dependent $L \geq 0$.
	By \ref{it:buseprelim} of Lemma \ref{lem:busemanpropps}, this implies
	\begin{equation}
	\label{eq:Bmono}
	T(v_0', v_n') < T(- \ell \be_1, v_n') \quad \text{ for all $n \geq 1$ and $\ell > L$}.
	\end{equation}
	By \eqref{eq:Bmono} and the observations immediately preceding, we see that for each $k > K$, we have  $- \ell \be_1 \notin \geo(k \be_2, v_n')$ for each $\ell > L$ and each large $n$.
%	Indeed, suppose $T(-\ell \be_1, v_n) \leq T(0, v_n)$ for some $n$. Then for $m \geq n$,
%	\[T(-\ell \be_1, v_m) \leq T(-\ell \be_1, v_n) + T(v_n, v_m) \leq T(0, v_n) + T(v_n, v_m) = T(0, v_m)\ ,\]
%	where the last inequality used that $0$, $v_n$, and $v_m$ appear in $g$ in order. Subtracting and taking $m \to \infty$ gives $B_g(0, -\ell \be_1) \geq 0$, a contradiction, and so we have established \eqref{eq:Bmono}.

	%Suppose $k > K$ and $\ell > L$, and suppose $n$ is large enough compared to these fixed $k$ and $\ell$ that $T(k \be_2, v_n') < T(- \ell \be_1, v_n')$ (which is possible since, by the previous two sentences and property \ref{it:badd} of Busemann functions from Lemma \ref{lem:busemanpropps}, $B_{g'}(k \be_2, - \ell \be_1) < 0$).
	
	We use the above to argue the intuitively clear fact that 
	\begin{equation}
	\label{eq:withinL}
	\text{$\geo(k \be_2, v_n')$ (with $k > K$ and $n$ large relative to $k$) comes within distance $L$ of $g$.}
	\end{equation}
		\begin{figure}
 \includegraphics[scale=0.5]{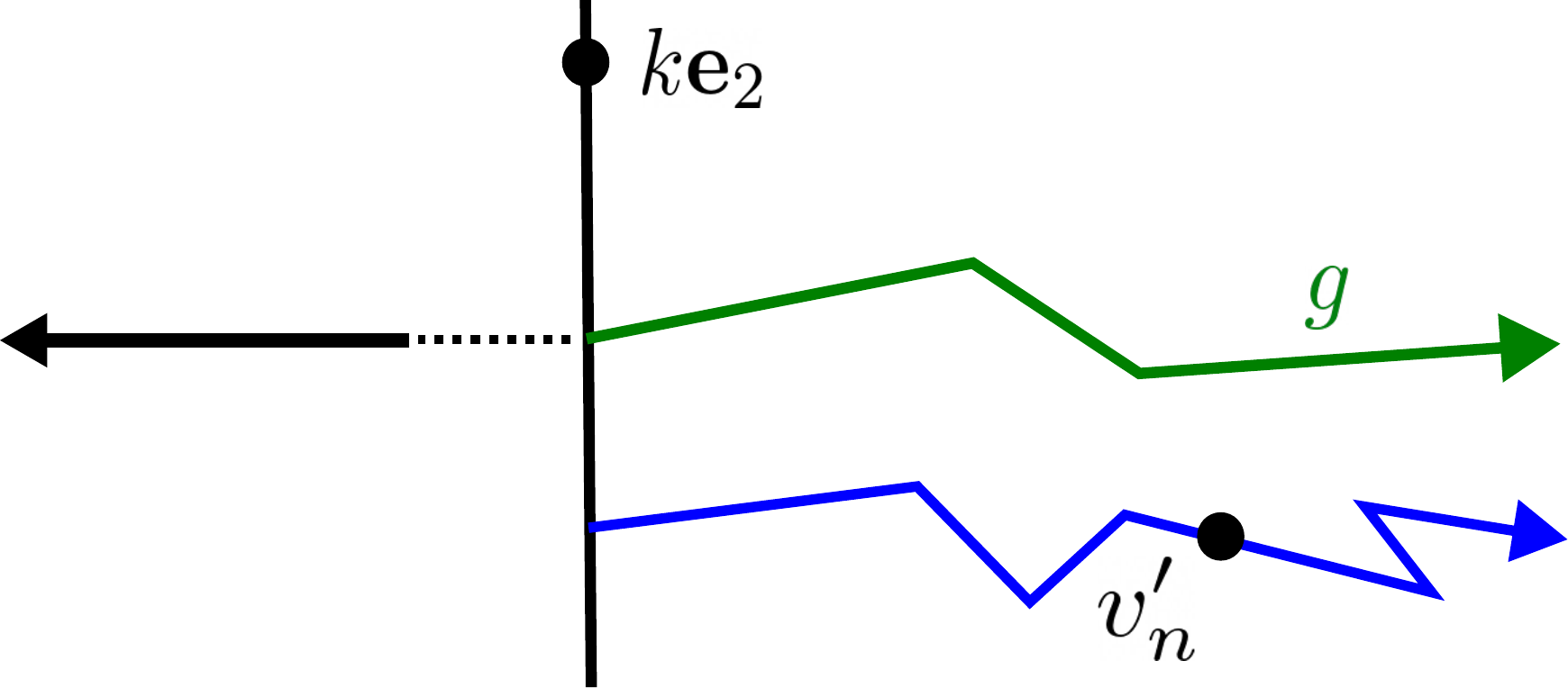}
    \caption{The situation described in \eqref{eq:withinL}. The path $\pi$ consists of the concatenation of $g$ (drawn in green), the dashed segment (from $0$ to $- L \be_1$), and the black ray (from $- L \be_1$ onwards). The blue ray is the geodesic $g'$. The geodesic from $k \be_2$ to $v_n'$ must intersect the union of $g$ and the dashed segment.
    } \label{fig:19}
\end{figure}

    Indeed, building a doubly infinite simple curve $\pi$ by concatenating $\{- r \be_1: \, r \geq 0\}$ and $g$, we see by arguments similar to those earlier in the proof that $\geo(k \be_2, v_n')$ must cross $\pi$. Since it cannot do this at a vertex $- \ell \be_1$ for $\ell > L$, it must either intersect $g$ or touch a vertex of the form $-\ell \be_1$ for $0 \leq \ell \leq L$, proving \eqref{eq:withinL}. See Figure~\ref{fig:19}.
	
	Applying \eqref{eq:withinL} and subadditivity of $T$, we see there exists an $\omega$-dependent constant $A$ such that
	\begin{equation}\label{eq:almostthrough}
	T(k \be_2, v_n') \geq \inf_{w \in g} [T(k\be_2, w) + T(w,  v_n')] - A\ ,
	\end{equation}
	uniformly in $k > K$ and $n$ large as above. In fact, by item \ref{it:calx3} of Definition \ref{defin:calx}, the infimum above is attained at some $w$.
	
	Applying \eqref{eq:almostthrough} for the infimizing value of $w$, we have
	\begin{align*}
	T(k \be_2, v_n') - T(0, v_n') &\geq T(k \be_2, w) + T(w,  v_n') - T(0, v_n') - A\\
	&\geq T(k \be_2, w) + T(w,  v_n') - T(0, w) - T(w, v_n') - A\\
	&= T(k \be_2, w) - T(0, w) - A  \\
	&\geq B_g(k \be_2, 0) - A\ ,
	\end{align*}
	where in the last line we again used \ref{it:buseprelim} from  Lemma \ref{lem:busemanpropps}.
	Taking $n \to \infty$ next with $k$ fixed, this gives
	\[B_{g'}(k\be_2, 0) \geq B_g(k \be_2, 0) - A\ . \]
	Since  the above inequality holds with fixed $A$ for all $k > K$, it immediately yields
	\[\rho_{g} \cdot \be_2 \geq \rho_{g'} \cdot \be_2\ , \]
	as claimed.
\end{proof}

There is a natural alternate description of the above ordering on $\tree_0$ (on the event $\mathcal{X}$) which will be useful in many arguments.
\begin{definition} \label{defin:regionbetweentwo}
	Consider an outcome in $\mathcal{X}$ and suppose that $g_1\neq g_3$ are elements of $\tree_0$ which eventually move into a common half-plane $H_i$ (from \eqref{eq:Hidef}). This implies that there is some $z \in \Z^2$ such that
	\[(g_1 \cup g_3) \subseteq [z + H_i]\ . \]
	The geodesics $g_1, g_3$ start at the same vertex and (by the occurrence of $\mathcal{X}$) intersect in a single segment; hence, they have a last intersection point $x$. Concatenating the terminal segments of $g_1$ and $g_3$ beginning at $x$, we obtain a doubly infinite simple path, which --- when considered as a plane curve --- divides the plane into two components. The union of this doubly infinite path with the component contained entirely in $z + H_i$ is called the {\bf region between $g_1$ and $g_3$} in $H_i$.  (It is immediate that this definition is independent of the choice of $z$ as above.)
	
	With some abuse of notation, we say that an infinite geodesic $g_2$ {\bf lies in the region between $g_1$ and $g_3$} in $H_i$ if all but finitely many vertices of $g_2$ lie in this region. Similarly, we say that a finite geodesic $\geo(0, y)$ lies in the region between $g_1$ and $g_3$ in $H_i$ if $\geo(0,y)$ is contained in the union of this region with the geodesics $g_1$ and $g_3$.
\end{definition}

\begin{lemma}
	\label{lem:regionbetween}
	Consider an outcome of $\mathcal{X}$ in which $g_1, g_2, g_3$ are three distinct elements of $\tree_0$ eventually moving into a common half-plane $H_i$. Then $g_1 \prec g_2 \prec g_3$ if and only if $g_1\prec g_3$ and $g_2$ lies in the region between $g_1$ and $g_3$ in $H_i$.
\end{lemma}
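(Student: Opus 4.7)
My plan is to prove both directions via Jordan-curve arguments. I take $H_i = H_0$ for notational simplicity; the other cases are analogous. The preliminary observation used throughout is that on $\mathcal{X}$, two distinct elements of $\tree_0$ cannot coalesce (uniqueness of finite geodesics would force them to agree), so each pair among $g_1, g_2, g_3$ meets in only a finite initial segment, and the last intersections $v_n^{(k)}$ of $g_k$ with the vertical line $\partial H_0 + n\be_1$ are well-defined and pairwise distinct for all large $n$.

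For the forward direction, assume $g_1 \prec g_2 \prec g_3$. The plan is to choose $n$ large enough that the strict inequality $v_n^{(1)} \cdot \be_2 > v_n^{(2)} \cdot \be_2 > v_n^{(3)} \cdot \be_2$ holds and that all pairwise intersections of the three geodesics lie strictly to the left of $\partial H_0 + n\be_1$. I then form a simple doubly-infinite curve $C_n$ by concatenating the terminal tail of $g_1$ from $v_n^{(1)}$, the vertical segment $L_n$ between $v_n^{(1)}$ and $v_n^{(3)}$ on $\partial H_0 + n\be_1$, and the terminal tail of $g_3$ from $v_n^{(3)}$. By the Jordan theorem, $C_n$ divides $\R^2$ into two components; let $I_n$ denote the component containing $v_n^{(2)} + \be_1$. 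The tail of $g_2$ starting at $v_n^{(2)}$ enters $I_n$ and cannot escape: it cannot recross $\partial H_0 + n\be_1$ (by the ``last intersection'' property) and has no further intersections with $g_1 \cup g_3$ by the choice of $n$. It then remains to check that $I_n \subseteq R$, where $R$ denotes the region between $g_1$ and $g_3$ in $H_0$; this will hold because the non-$L_n$ portions of $\partial I_n$ are sub-arcs of $\partial R$, and $I_n$ lies on the correct (``between $g_1$ and $g_3$'') side.

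For the reverse direction, suppose $g_1 \prec g_3$ and $g_2$'s tail lies in $R$. By the trichotomy of Lemma~\ref{lem:ordergood}\ref{it:ordertrue}, exactly one of $g_1 \prec g_2 \prec g_3$, $g_2 \prec g_1 \prec g_3$, or $g_1 \prec g_3 \prec g_2$ holds. The first is the desired conclusion; I rule out the other two by contradiction. Suppose $g_2 \prec g_1 \prec g_3$. Applying the forward direction to the triple $(g_2, g_1, g_3)$ places the tail of $g_1$ in the region $R'$ between $g_2$ and $g_3$. Since $R$ and $R'$ share the $g_3$-boundary and $R$'s other boundary $g_1$ lies in $R'$, one obtains the nesting $R \subseteq R'$, hence $R^\circ \subseteq (R')^\circ$. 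Because $g_2$ avoids $g_1 \cup g_3$ eventually, its tail (assumed to lie in $R$) must in fact lie in $R^\circ \subseteq (R')^\circ$. But $g_2$ bounds $R'$, so its tail lies on $\partial R'$, a contradiction. The case $g_1 \prec g_3 \prec g_2$ is handled symmetrically, by applying the forward direction to $(g_1, g_3, g_2)$.

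The main technical obstacle is the verification that $I_n \subseteq R$ in the forward direction: terminal segments of $g_1$ and $g_3$ can zigzag, so $R \cap \{x : x \cdot \be_1 \geq n\}$ may be topologically complicated. The essential mitigating point is that, by the ``last intersection'' property, the portions of $g_k$ after $v_n^{(k)}$ lie strictly in the open half-plane $H_0 + n\be_1$, so the non-$L_n$ portion of $\partial I_n$ is a clean sub-arc of the boundary of $R$. Together with a short verification that $v_n^{(2)} + \be_1 \in R$ (using the orientations of the outgoing edges of $g_1, g_3$ from $v_n^{(1)}, v_n^{(3)}$), this suffices to pin $I_n$ inside $R$.
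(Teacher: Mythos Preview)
Your overall plan is sound and close in spirit to the paper's, but in both directions the decisive step is left unproved.

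\textbf{Forward direction.} Once you observe that the tail of $g_2$ past $v_n^{(2)}$ avoids $\partial R$ (immediate from your choice of $n$), the whole question collapses to checking that a single point, say $v_n^{(2)}$ or $v_n^{(2)}+\be_1$, lies in $R$; your auxiliary region $I_n$ is then unnecessary. That single-point verification \emph{is} the content of the argument, and your hint about ``orientations of the outgoing edges'' does not settle it: the pre-terminal portions of $g_1,g_3$ may have excursions into $\{x_1\geq n\}$, so $\partial R$ can cross the segment $L_n$ several times, and the local direction at $v_n^{(j)}$ does not determine which side of $\partial R$ the interior of $L_n$ sits on. The paper handles exactly this by a parity count: a ray from $y=v_n^{(2)}$ (half-edge right, then straight up) meets the terminal tail $\tilde g_1$ exactly once, while every further intersection with $\partial R$ comes in a pair, because each pre-terminal excursion of $g_1$ or $g_3$ must both enter and exit an auxiliary region (built from $\tilde g_1$, $\tilde g_2$ and part of the ray) by crossing the ray. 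You need this paired-crossings argument, or something equivalent, to close the gap.

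\textbf{Reverse direction.} Your route via trichotomy and nesting is genuinely different from the paper's and is attractive in outline, but the nesting $R\subseteq R'$ does not follow from the premises you invoke. Consider the tree case where $g_1$ branches off first at $p=p_{12}=p_{13}$ and $g_2,g_3$ continue together to $p_{23}$. The arc of $g_3$ from $p$ to $p_{23}$ lies in $\partial R$ but is \emph{not} on $\partial R'$, and it can lie in the open complement of $R'$ (it reaches $p_{23}$ from the ``outside'' of the wedge between $g_2$ and $g_3$). Then $R\not\subseteq R'$ outright, and hence $R^\circ\not\subseteq (R')^\circ$. At best one could hope for an eventual inclusion $R\cap\{x_1>N\}\subseteq R'$, but proving that is again a Jordan/parity argument of the same weight as the forward direction. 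The paper instead proves the converse directly: given $g_1\prec g_2\prec g_3$, it shows $g_1$ is \emph{not} in the region between $g_2$ and $g_3$ via another ray from $x=v_n^{(1)}$, now with an even crossing count; the two bad orderings in your trichotomy are then excluded by relabeling and reflection.
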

\begin{proof}
	Consider an outcome $\omega \in \mathcal{X}$ and suppose $g_1 \prec g_2 \prec g_3$ eventually move into $H_i$ (for simplicity of notation, we take $H_i = H_0$ as usual). We translate $\omega$ by a multiple of $\be_1$ so that $g_1 \cap H_0, g_2 \cap H_0,$ and $g_3 \cap H_0$ are all distinct; we write $\tilde g_j$ for the segment of $g_j$ starting with its last intersection with $\partial H_0$ for $j = 1, \,2,\, 3$. Write $x \in \partial H_0$ (resp.~$y$, $z$) for the starting points of $\tilde g_1$ (resp.~$\tilde g_2$, $\tilde g_3$). Consider the path $\pi$ starting at $y$ consisting of the union of the half-edge  $\{y,\,y + \be_1/2\}$ and the ray $\{y + \be_1/2 + t \be_2, \, 0 \leq t < \infty\}$.
	
	Points on the curve $\pi$ corresponding to large $t$ all lie outside of the region between $g_1$ and $g_3$.  We will show that $y$ lies in the region between $g_1$ and $g_3$ by showing that $\pi$ crosses $g_1 \cup g_3$ an odd number of times (note that such intersections occur in isolated points). % from a point $y + \be_1/2 + t \be_2$ with $t$ large, the horizontal ray $\{y + s \be_1 + t \be_2,\, - \infty < s \leq 1/2\}$ does not intersect either $g_1$ or $g_2$.
	
	Indeed, since $\widetilde g_1$ intersects $\partial H_0$ in exactly one place (at $x$), the curve $\pi$ intersects $\widetilde g_1$ at exactly one point (at $x + \be_1/2$). It is possible that $\pi$ intersects the boundary of the region between $g_1$ and $g_3$ elsewhere, but such intersections must come in pairs, corresponding to excursions of $g_1$ or $g_3$ into $H_0 \setminus \partial H_0$ before their terminal segment. More specifically, forming a doubly infinite path $\pi'$ from $\widetilde g_1$, $\widetilde g_2$, and the segment of $\pi$ connecting $\widetilde g_1$ and $\widetilde g_2$, again $\pi'$ breaks the plane up into two components, one of which is entirely contained in  $H_0$. If $g_3$ (or the portion of $g_1$ before $\tilde g_1$) enters this component by intersecting $\pi$, it must subsequently exit again to reach $x$ or $z$. Because $\mathcal{X}$ occurs, this exit must occur via another intersection of $\pi$; otherwise there would be multiple geodesics between some pair of vertices. %Traversing the  segment of $g_1$ (resp.~$g_3$) from the last intersection of $g_1$ and $g_3$, up to its intersection with $x$ (resp.~$z$),  we see that this initial segment must cross the segment of  $\pi$ which makes up part of $\pi'$ an even number of times. A similar argument shows that
	
	We thus see that 
	\[|\pi\cap (g_1 \setminus \widetilde g_1)| + |\pi \cap g_3| \quad \text{is even}. \]
	%Any intersections of $\pi$ with $g_1 \cap g_3$ are counted in both terms of the previous display.
   % Subtracting these intersections, it follows that $\pi$ intersects the portions of $g_1 \setminus \widetilde g_1$ and $g_3$ lying on the boundary of the region between $g_1$ and $g_3$ an even number of times. 
   Adding in the single intersection with $\tilde g_1$, we see that $\pi$ has an odd number of  intersections with the boundary of the region between $g_1$ and $g_3$  in $H_0$, and so $y$ lies in this region. This completes one implication of the lemma. 
	
	To show the converse, by relabeling and reflecting if necessary, it suffices to take the setup above, with $g_1 \prec g_2 \prec g_3$, and show that $g_1$ is not in the region between $g_2$ and $g_3$. The argument is similar to the preceding (with different choices of paths), so we describe it only briefly. Recall that $x$ is the last intersection of $g_1$ with $\partial H_0$. We construct a new path $\check \pi$ consisting of the half-edge from $x$ to $x + \be_1 /2$ and then a vertical ray. This path does not intersect $\tilde g_2$ or $\tilde g_3$. Though it may enter the region between $g_2$ and $g_3$, each entrance is paired with a corresponding exit: the segment of $g_2$ or $g_3$ which $\check \pi$ intersects must both enter and exit $H_0$ by crossing $\check \pi$. Hence, considering the two components of the plane made by removal of the portions of $g_2$ and $g_3$ after they separate, we see $\pi'$ begins and ends in the same component, completing the proof.
\end{proof}

The ordering of infinite geodesics can be extended to compare infinite and finite geodesics. On the event $\mathcal{X}$, because there exists a unique finite geodesic between each pair of vertices of $\Z^2$, any finite geodesic from $0$ is ``trapped'' by the geodesics of $\tree_0$. More formally, on $\mathcal{X}$, suppose $g_1, g_2 \in \tree_0$ last intersect at the vertex $x$, and consider the geodesic $\geo(0, y)$. 
%This geodesic shares an initial segment with $\geo(0,x)$.
If $\geo(0, y)$ ever strictly enters one of the two regions of the plane formed by removing the segments of $g_1 \cup g_2$ from $x$ onward, it cannot then enter the other component without re-intersecting $g_1 \cup g_2$, in contradiction to uniqueness of geodesics. This is the basis of the following lemma.

\begin{lemma}\label{lem:geobetween}
	We fix an outcome $\omega \in \mathcal{X}$ and geodesics $g_1 \prec g_2$ eventually moving into $H_i$, and we denote the region between $g_1$ and $g_2$ in $H_i$ by $U$. For each $y \in \Z^2$,
	if $\geo(0,y)$ contains a vertex $z$ strictly lying in $U$  --- in other words, a vertex $z \in U \setminus(g_1 \cup g_2)$ --- then $\geo(0, y)$ lies in the region between $g_1$ and $g_2$ in $H_i$. In other words, recalling our convention from Definition \ref{defin:regionbetweentwo}, $\geo(0, y) \subseteq U \cup g_1 \cup g_2$. Moreover, all vertices of $\geo(0,y)$ from $z$ onward lie in $U \setminus(g_1 \cup g_2)$.
	%\begin{itemize}
	%	\item 
		%\item  If each $z_k$ lies in the region between $g$ and $g'$ in $H_i$, then so does each $\geo(0, z_k)$. If $\gamma \in \tree_0$ is any subsequential limit of $(\geo(0,z_k))_k$, then $\gamma$ is ccw between $g$ and $g'$ in $H_i$.
		%\item If each $z_k$ lies outside of the region between $g$ and $g'$ in $H_i$, then each $\geo(0, z_k)$ --- and any subsequential limit of $(\geo(0,z_k))_k$ --- intersects the region between $g$ and $g'$ only at vertices of $g \cup g'$.
		%\item
	%\end{itemize}
\end{lemma}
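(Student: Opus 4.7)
The plan is to exploit two features of the setup on the event $\mathcal{X}$: planarity of $\Z^2$, which forces any lattice path that moves between the two components of $\R^2 \setminus \mathcal{D}$ to pass through a vertex of $\mathcal{D}$; and uniqueness of finite geodesics on $\mathcal{X}$, which forces an initial segment of $\geo(0,y)$ to coincide with an initial segment of $g_1$ or $g_2$ as soon as it touches one of them at a common vertex.

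First, I would set up the separating curve. Let $x$ denote the last intersection point of $g_1$ and $g_2$, and let $\mathcal{D}$ be the doubly infinite simple lattice path obtained by concatenating the terminal segments of $g_1$ and $g_2$ starting at $x$. By the Jordan curve theorem, $\R^2 \setminus \mathcal{D}$ has two connected components; let $U^\circ$ denote the one eventually contained in $H_i$ (so $U = U^\circ \cup \mathcal{D}$) and $V^\circ$ the other. The planarity observation is that two distinct edges of $\Z^2$ meet only at shared endpoints, so the edge between any pair of $\Z^2$-adjacent vertices in different components of $\R^2 \setminus \mathcal{D}$ would have to cross an edge of $\mathcal{D}$ in its interior, which is impossible. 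Consequently, whenever a lattice path in $\Z^2$ contains vertices in both $U^\circ$ and $V^\circ$, it must also contain a vertex of $\mathcal{D}$ strictly between them in the path.

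Next I would argue the containment statement by contradiction. Suppose $\geo(0,y) \not\subseteq U \cup g_1 \cup g_2$, and pick $w \in \geo(0,y)$ with $w \notin U \cup g_1 \cup g_2$; then $w \in V^\circ$ and $w \notin g_1 \cup g_2$. Note also that the hypothesized vertex $z$ satisfies $z \in U^\circ$, since $z \notin g_1 \cup g_2 \supseteq \mathcal{D}$. Writing $\geo(0,y) = (v_0, \ldots, v_n)$ with $v_j = z$ and $v_k = w$, the planarity observation gives an index $l$ strictly between $j$ and $k$ with $v_l \in \mathcal{D}$; without loss of generality $v_l$ lies on the terminal segment of $g_1$. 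Since initial segments of geodesics are geodesics, both $(v_0, \ldots, v_l)$ and the initial segment of $g_1$ ending at $v_l$ are geodesics from $0$ to $v_l$, so by uniqueness on $\mathcal{X}$ they coincide. In particular this segment is contained in $g_1$, but by choice of $l$ it also contains whichever of $z$ or $w$ has the smaller index in $\geo(0,y)$, contradicting $z, w \notin g_1$. Therefore $\geo(0,y) \subseteq U \cup g_1 \cup g_2$.

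For the ``moreover'' statement, suppose $v_l \in g_1 \cup g_2$ for some $l > j$, and say $v_l \in g_1$ (the other case is identical). Uniqueness of geodesics again forces $(v_0, \ldots, v_l)$ to equal the initial segment of $g_1$ ending at $v_l$; but then $z = v_j$ lies in this segment, contradicting $z \notin g_1$. Combined with the first part, every vertex of $\geo(0,y)$ after $z$ lies in $U \setminus (g_1 \cup g_2)$. I do not expect any serious obstacle; the only care needed is in the initial setup, specifically in recognizing that $z$ and $w$ are strictly interior to $U^\circ$ and $V^\circ$ (because $g_1 \cup g_2 \supseteq \mathcal{D}$) so that the planarity observation applies to give a separating vertex of $\mathcal{D}$ strictly between $z$ and $w$ along $\geo(0,y)$.
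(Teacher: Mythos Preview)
Your proof is correct and follows essentially the approach the paper sketches in the paragraph preceding the lemma: once $\geo(0,y)$ strictly enters $U^\circ$, any attempt to reach the other component forces an intersection with $\mathcal{D} \subseteq g_1 \cup g_2$, and then uniqueness of finite geodesics on $\mathcal{X}$ pins the initial segment of $\geo(0,y)$ to an initial segment of $g_1$ or $g_2$, contradicting that $z$ (or $w$) was off both. The paper itself omits the proof, merely remarking that it is similar to that of Lemma~\ref{lem:regionbetween}; your argument is a clean direct implementation of the idea and is arguably more transparent than adapting the intersection-counting method used there.
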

The preceding lemma follows by an argument similar to that used to prove Lemma \ref{lem:regionbetween}; we omit the proof. The next result tells us that ``extremal'' infinite geodesics exist: given a family of infinite geodesics $(g_\alpha)$ which uniformly move into $H_i$, there is a ``counterclockwisemost limiting geodesic'' which eventually moves into $H_i$ (as long as these geodesics are uniformly confined in $H_i$).

%{\color{red} This could need to be changed --- see where it is referenced. We might want to make it that the entire family of geodesics stays completely in $H_i$ except within some UNIFORM $[-L,L]^2$.}
%old version of proof is in 7_19 document.

%{\color{red} Changed: added ``closure'' of the set of geo. Check again since the argument is a little slick.}
\begin{lemma}\label{lem:extremal}
	Fix an outcome $\omega \in \mathcal{X}$. Suppose $(\gamma_\alpha)$ is a collection of elements of $\tree_0$ which uniformly moves into $H_i$ in the sense of \eqref{eq:unifmoveintodef}.
	Then there exists an extremal ccw limit $\gamma$ of $(\gamma_\alpha)$ in the following sense:
	\begin{enumerate}[label = {(\alph*)} ]
		\item \label{it:1stgam} There is a sequence $(\alpha_k)_{k=1}^\infty$ of values of $\alpha$ such that $\gamma_{\alpha_k}$ converges to $\gamma$ in the sense of \eqref{eq:convdef} (hence $\gamma \in \tree_0$), and
		\item \label{it:2ndgam} For each $\alpha,$ we have $ \gamma \prec \gamma_\alpha$ in $H_i$.
	\end{enumerate}
	Suppose that for each $\omega \in \mathcal{X}$ we have a family $(\gamma_\alpha)$ as above. If for each edge $e \in \edges^2$ the set of outcomes $\{e \in \gamma_\alpha \text{ for some $\alpha$}\}$ is measurable, then $\gamma$ is a random variable $\gamma: \Omega_1 \to \Omega_2$.
\end{lemma}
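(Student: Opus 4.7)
\emph{Outline.} I take $i=0$; the other cases are symmetric. The plan has three parts: produce a candidate limit $\gamma$ via compactness combined with pigeonhole, match last-intersection vertices of the approximants with those of the limit, and deduce the extremal property by contradiction. Uniform moving-in of $(\gamma_\alpha)$ into $H_0$ with $z = m\be_1$ makes $F_m^* := \bigcup_\alpha \gamma_\alpha \cap (\partial H_0 + m\be_1)$ finite for every $m \geq 0$. In particular $V_n^+ := \max_\alpha v_n^{(\gamma_\alpha)} \cdot \be_2$ is attained, say by $\gamma_{\beta_n}$. By the path compactness \eqref{eq:compactnessg} combined with a diagonal pigeonhole extraction over the finite sets $F_m^*$, I extract a subsequence $(\gamma_{\beta_{n_k}})_k$ with $\gamma_{\beta_{n_k}} \to \gamma$ in the sense of \eqref{eq:convdef} and such that, for every fixed $m$, the vertex $v_m^{(\gamma_{\beta_{n_k}})}$ is eventually constant in $k$ with common value $\widetilde v_m \in F_m^*$. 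Then \eqref{eq:geopreserve} and the common starting vertex $0$ give $\gamma \in \tree_0$, which is item~\eqref{it:1stgam}; since $\gamma \subseteq \bigcup_\alpha \gamma_\alpha$ as vertex sets, $\gamma$ inherits uniform moving-in into $H_0$ and each $v_m^{(\gamma)}$ is well-defined.

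The small technical lynchpin, which I expect to be the most delicate step, is the identification $\widetilde v_m = v_m^{(\gamma)}$ for each $m$. Uniqueness of finite geodesics forces $\widetilde v_m$ to occupy the fixed position $J^* := |\geo(0,\widetilde v_m)|$ in every $\gamma_{\beta_{n_k}}$, and path convergence then places it at position $J^*$ of $\gamma$ as well. Because $\widetilde v_m$ is by definition the last intersection of $\gamma_{\beta_{n_k}}$ with $\partial H_0 + m\be_1$, and uniform moving-in prevents $\gamma_{\beta_{n_k}}$ from remaining in $\{x \cdot \be_1 < m\}$, the path $\gamma_{\beta_{n_k}}$ must lie in $\{x \cdot \be_1 > m\}$ after position $J^*$. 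This property passes to $\gamma$ by pointwise convergence, so $v_m^{(\gamma)} = \widetilde v_m$.

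With this equality in hand, I obtain item~\eqref{it:2ndgam} by contradiction. Suppose $\gamma \not\prec \gamma_\alpha$ for some $\alpha$; by Lemma~\ref{lem:ordergood}\eqref{it:ordertrue} this forces $\gamma_\alpha \prec \gamma$ strictly and non-coalescing. Pick $m^*$ large enough that $v_{m^*}^{(\gamma_\alpha)} \cdot \be_2 > v_{m^*}^{(\gamma)} \cdot \be_2$; by the previous paragraph $v_{m^*}^{(\gamma_{\beta_{n_k}})} = v_{m^*}^{(\gamma)}$ for all $k$ large, so the same strict inequality holds with $\gamma_{\beta_{n_k}}$ in place of $\gamma$ at scale $m^*$. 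The geodesics $\gamma_{\beta_{n_k}}$ and $\gamma_\alpha$ are also non-coalescing (for $k$ large, $\gamma_{\beta_{n_k}}$ matches $\gamma$ past the divergence of $\gamma$ from $\gamma_\alpha$, after which uniqueness of geodesics prevents re-intersection), so the argument used in the proof of Lemma~\ref{lem:ordergood}\eqref{it:ordertrue} propagates this strict inequality to every scale $N \geq m^*$. Taking $N = n_k$ (eventually $\geq m^*$) then yields $v_{n_k}^{(\gamma_\alpha)} \cdot \be_2 > v_{n_k}^{(\gamma_{\beta_{n_k}})} \cdot \be_2 = V_{n_k}^+$, contradicting the definition of $V_{n_k}^+$. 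Under the measurability hypothesis, the whole construction is canonical: $V_n^+(\omega)$, the lex-smallest maximizer $u_n(\omega)$, and the diagonal extraction each admit measurable selections, so $\gamma$ is a random variable as required.
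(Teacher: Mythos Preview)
Your argument is correct in outline but follows a genuinely different route from the paper. The paper avoids compactness and contradiction entirely by proving a short \emph{consistency} property: writing $\zeta_m$ for the vertex of the form $v_m^{(\alpha)}$ with maximal $\be_2$-coordinate, one has $\zeta_k\in\geo(0,\zeta_m)$ whenever $k<m$ (otherwise Lemma~\ref{lem:ordergood}\ref{it:ordertrue}, applied to the $\zeta_k$- and $\zeta_m$-maximizers, would contradict the extremality of $\zeta_m$). This makes $(\geo(0,\zeta_m))_m$ an increasing nested sequence of finite geodesics, so its limit $\gamma$ exists outright (no subsequence needed), contains every $\zeta_m$, and satisfies $v_m^{(\gamma)}=\zeta_m$ directly; item~\ref{it:2ndgam} is then immediate from the definition of $\zeta_m$. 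Your compactness-plus-contradiction approach also works, but is thinner than you indicate in two places. First, in the contradiction step you must choose $m^*$ large enough that both $v_{m^*}^{(\gamma_\alpha)}$ and $v_{m^*}^{(\gamma)}$ lie beyond the divergence point of $\gamma$ and $\gamma_\alpha$; only then are the terminal segments of $\gamma_\alpha$ and $\gamma_{\beta_{n_k}}$ from scale $m^*$ disjoint, which is the hypothesis under which the proof of Lemma~\ref{lem:ordergood}\ref{it:ordertrue} propagates the strict inequality to every $N\ge m^*$. Second, your measurability sketch is underspecified: you need to measurably select a geodesic $\gamma_{\beta_n}$ (not merely the vertex $u_n$) from a possibly uncountable family, and then make the diagonal extraction canonical. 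The paper's explicit construction via the vertices $\zeta_m$ sidesteps both issues, since $\{\zeta_k=z\}$ is directly expressible in terms of the measurable edge-events and $\gamma=\lim_m\geo(0,\zeta_m)$ is a limit of measurable maps.
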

\begin{proof}
	Fix such an outcome $\omega \in \mathcal{X}$ and geodesics $(\gamma_\alpha)$ as in the statement of the lemma. As usual, for simplicity of notation, we take $H_0 = H_i$; the argument is essentially the same for the other half-planes of \eqref{eq:Hidef}.
	For each $k \geq 1$, each $\gamma_\alpha$ has a last intersection with $H_0 + k \be_1$; write $v_k^{(\alpha)}$ for the vertex at which this last intersection occurs.
	Because $(\gamma_\alpha)$ uniformly moves into $H_0$, we have
	\begin{equation}
	\label{eq:tocontravk}
	\text{ for each $k \geq 1$,}\quad \sup_{\alpha} v_k^{(\alpha)} \cdot \be_2 < \infty\ .
	\end{equation}
	
	%Indeed, suppose (for the sake of contradiction) that \eqref{eq:tocontravk} did not hold for some value of~$k$. Fixing this $k$, we choose a sequence $(\alpha_\ell)$ such that $\lim_{\ell \to \infty} v_{k_0}^{(\alpha_\ell)} \cdot \be_2 =\infty$ and let $\gamma_\beta$ be a subsequential limit of $\gamma_{\alpha_\ell}$ (such a subsequential limit must be an element of $(\gamma_\alpha)$ by closure). Since $\gamma_\beta$ eventually moves into $H_0$, its intersection with $H_0 + k \be_1$ is finite, and in particular 
	
	We can thus make the following definition:
	\begin{equation}
	\text{for each $k \geq 1$,}\quad\zeta_k \text{ is the vertex of the form  } v_k^{(\alpha)} \text{ which maximizes } v_k^{(\alpha)} \cdot \be_2.
	\label{eq:extremalzeta}
	\end{equation}
%	Otherwise, we choose a value of $k_0$ and a sequence $(\alpha_\ell)$ such that $v_{k_0}^{(\alpha_\ell)} \cdot \be_2 \to \infty$ and set
%	\begin{equation}
%	\text{for each $\ell \geq 1$,}\quad\zeta_\ell := v_{k_0}^{\alpha_\ell}\ .
%	\label{eq:extremalzeta2}
%	\end{equation}
		We note a ``consistency'' property of the $\zeta_k$'s; namely,
	\begin{equation}
	\label{eq:zetaconst}
	\text{for each $0 < k < m$, } \quad \zeta_k \in \geo(0, \zeta_m).
	\end{equation}
	Indeed, suppose $\gamma_{\alpha_m}$ (resp.~$\gamma_{\alpha_k}$) is a geodesic in $(\gamma_\alpha)$ such that $\zeta_m = v_m^{(\alpha_m)}$  is its last intersection with $\partial H_0 + m \be_1$ (resp.~$\zeta_k = v_k^{(\alpha_k)}$ its last intersection with $\partial H_0 + k \be_1$), and suppose that the last intersection of $\gamma_m$ with $\partial H_0 + k \be_1$ were not $\zeta_k$. Then since $\zeta_k$ is extremal in the sense of \eqref{eq:extremalzeta}, the last intersection of $\gamma_m$ with $\partial H_0 + k\be_1$ must have strictly smaller $\be_2$-coordinate than $\zeta_k$. But then by Lemma \ref{lem:ordergood}, we would have $\gamma_k \prec \gamma_m$, and in particular the last intersection of $\gamma_k$ with $\partial H_0 + m \be_1$ would have strictly larger $\be_2$-coordinate than $\zeta_m$. This contradicts  the definition of $\zeta_m$ and shows \eqref{eq:zetaconst}.
	
	We now consider the sequence $(\geo(0, \zeta_k))$ and set $\gamma$ to be any subsequential limit (we will indeed see that there is a unique limit) of this sequence as $k \to \infty$. By \eqref{eq:zetaconst}, we see that $\zeta_k \in \gamma$ for each $k$, which shows \eqref{it:1stgam} from the statement of the lemma: indeed, $\gamma$ and $\gamma_{\alpha_k}$ each contain $\geo(0, \zeta_k)$ as their initial segment. Moreover, since $\zeta_k$ is the last intersection of  $\gamma_{\alpha_m}$ with $\partial H_0 + k \be_1$ for each $m \geq k$, and since $\gamma$ and $\gamma_{\alpha_m}$ both contain $\geo(0, \zeta_m)$ as an initial segment, we see that $\zeta_k$ is also the last intersection of $\gamma$ with $\partial H_0 + k \be_1$.  In particular, applying Lemma \ref{lem:ordergood}, we see $\gamma \prec \gamma_\alpha$  for each $\gamma_\alpha$.
	
	The measurability result follows  from the way we exhibit $\gamma$ as a subsequential limit. From our assumption that $\{e \in \cup_\alpha \gamma_\alpha\}$ is measurable, it follows immediately that each event of the form $\{\zeta_k = z\}$, for $z \in \Z^2$, is measurable. Since $\geo(x,y): \Omega_1 \to \Omega_2$ is a measurable mapping for deterministic $x$ and $y$, it follows that $\geo(0, \zeta_m)$ is measurable for each $m$. Since $\geo(0, \zeta_m) \to \gamma$, we see $\gamma$ is also measurable.
\end{proof}

\subsection{Measures on non-crossing geodesics}\label{sec:measureson}
%\marginal{JH: Clearer idea about ``ordering'' and the ``consistency''. Try to block off exactly what is used from AH (hopefully only the Busemann linear functional stuff). In fact, try to avoid the ``consistency'' part, or prove it here?}
We describe first the goal of this section in somewhat loose terms, then spend the bulk of the section making this precise. Suppose we could sample a family of geodesics $(\gamma_x)_{x \in \Z^2}$ (where each $\gamma_x \in \tree_x$) in a translation-invariant way. By Lemma \ref{lem:ordergood}, the $\gamma_x$'s whose Busemann functions are asymptotically linear to functionals with larger $\be_2$-coordinate must be ``more $\be_2$-directed'' than those whose functionals have smaller $\be_2$-coordinate. In particular, these geodesics must somehow ``sort themselves'' so that their ordering is consistent with their Busemann functions. It seems plausible that if there are enough distinct linear functionals exhibited by the $\gamma_x$'s, then this ``sorting'' will require some of the $\gamma_x$'s to cross.

On the other extreme, if all the $\gamma_x$'s have Busemann functions asymptotically linear to the same functional $\rho$, there are two possibilities: either all the $\gamma_x$'s coalesce or some do not. Theorem \ref{AH3} suggests that coalescence is typical: for some of these geodesics not to coalesce, the functional $\rho$ must be chosen in an exceptional set for the weights $\omega$ for many vertices, which seems difficult to achieve. To summarize this and the previous paragraph: we should expect in most cases that our $\gamma_x$'s should tend to either cross or to coalesce.

The above reasoning has some obvious gaps, but is not so far from the truth, as we shall see. To make it precise, we need some new definitions.
%The coalescence phenomenon exhibited by geodesics in the first-passage metric on $\Z^2$ makes apparent that there should be no procedure in which one should be able to construct a shift-invariant family of infinite geodesics which do not cross nor coalesce. In order to make this precise we need a definition.
\begin{definition} \label{defin:shiftinvar}
  %\begin{enumerate}
  %\item \label{it:shiftinvar}
  A translation-invariant probability measure $\nu$ on $\Omega_1\times\Omega_2$ is called a {\bf shift-invariant measure on non-crossing geodesics} if its marginal on $\Omega_1$ is $\P$ and its marginal on $\Omega_2$ does not put all mass on the empty configuration (i.e., $\P(\eta(x,y) = 0 \text{ for all } (x,y) \in \dedges^2) < 1$), and if for $\nu$-almost every $(\omega,\eta)\in\Omega_1\times\Omega_2$ the (directed) graph encoded by $\eta$ has the following properties: 
\begin{itemize}
\item every site has either out-degree 1 or both in- and out-degree zero;
  \item there are no (directed) cycles in this graph (combined with the previous item, this guarantees all nontrivial directed paths are infinite and self-avoiding);
\item all infinite directed paths in $\eta$ are geodesics in the weight configuration $\omega$;
\item any two  infinite directed paths are either disjoint or coalesce.
\end{itemize}
%\item
Suppose $\eta \in \Omega_2$ is sampled from a shift-invariant measure on non-crossing geodesics as above. If $g$ and $g'$ are two infinite paths in the graph encoded by $\eta$, we write $g \sim g'$ if $g$ and $g'$ coalesce; this defines an equivalence relation, whose equivalence classes are called {\bf coalescence classes}.
%  \end{enumerate}
\end{definition}
We note that the final item in the definition of shift-invariant measures on non-crossing geodesics follows from the statement about out-degrees, and so is  redundant to specify --- we do so only to emphasize this statement.

The following theorem makes precise the reasoning with which we began this section, and will be central in the proof of Theorem~\ref{thm:highways}. The theorem was proved in~\cite{ahlhof}. We provide here an alternative proof, which shows how the theorem follows from Theorems~\ref{AH1}-\ref{AH3}.

\begin{theorem}\label{thm:non-crossing}
Every shift-invariant measure on non-crossing geodesics is supported on families of geodesics with at most four coalescence classes.
\end{theorem}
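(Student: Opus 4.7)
The plan is to argue by contradiction: assume $\nu$ is a shift-invariant measure on non-crossing geodesics supported on configurations with five or more coalescence classes, and derive a contradiction. By the ergodic decomposition we may assume $\nu$ is ergodic, so the number $N$ of coalescence classes of $\eta$ is $\nu$-almost surely a fixed constant (possibly $+\infty$) with $N \ge 5$.

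First I would associate to each coalescence class a single supporting functional. Because the $\omega$-marginal of $\nu$ is $\P$, the probability-one event $\mathcal X$ from Definition~\ref{defin:calx} has $\nu$-probability one, so for $\nu$-almost every $(\omega,\eta)$ every infinite directed path in $\eta$ is an infinite geodesic of $\omega$ whose Busemann function is asymptotically linear to some $\rho \in \functionals$ (Theorem~\ref{AH1}). Coalescing geodesics share a Busemann function (Lemma~\ref{lem:busemanpropps}\ref{it:terminalseg}), so each coalescence class $C$ carries a well-defined functional $\rho_C \in \functionals$. The random set $\mathcal R(\omega,\eta) := \{\rho_C : C \text{ a coalescence class of } \eta\}$ is invariant under the diagonal shift action of $\Z^2$ on $\Omega_1 \times \Omega_2$; applying ergodicity to each indicator $\mathbf 1\{\rho \in \mathcal R\}$ (viewed as a shift-invariant random variable) one sees that $\mathcal R$ equals a deterministic set $\mathcal R_* \subseteq \functionals$ $\nu$-a.s. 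Since $|\mathcal R_*| \le N$ and any configuration $\eta$ has at most countably many coalescence classes, $\mathcal R_*$ is at most countable.

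Next I would invoke Theorem~\ref{AH3} to show $N = |\mathcal R_*|$. For each fixed $\rho \in \mathcal R_*$, Theorem~\ref{AH3} asserts that $\P$-almost surely --- and hence $\nu$-almost surely, since the $\omega$-marginal of $\nu$ is $\P$ --- any two infinite geodesics (from arbitrary starting points in $\Z^2$) whose Busemann functions are asymptotically linear to $\rho$ must coalesce. Since $\mathcal R_*$ is at most countable, a countable union over $\rho \in \mathcal R_*$ yields that, $\nu$-almost surely, any two distinct coalescence classes have distinct functionals. Consequently $N = |\mathcal R_*| \ge 5$, and the problem is reduced to establishing that $|\mathcal R_*| \le 4$.

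The hard part will be this final step: showing $|\mathcal R_*| \le 4$. I would use the consistency of the ccw-ordering from Lemma~\ref{lem:ordergood}\ref{it:consistent} to inherit a cyclic order on $\mathcal R_*$ from the circle of supporting directions of $\partial \ball$. If $|\mathcal R_*| \ge 5$, then at least one of the four half-planes $H_0, H_2, H_4, H_6$ must contain three or more ccw-ordered classes $C_1 \prec C_2 \prec C_3$ moving into it; by Lemma~\ref{lem:regionbetween} the middle class $C_2$ is then trapped in the planar region between $C_1$ and $C_3$, and Lemma~\ref{lem:geobetween} forbids any finite geodesic entering that region from ever leaving it. I would then exploit shift-invariance of $\nu$ through a Palm/mass-transport argument on the coalescence classes in order to show that the trapping forces $C_2$ to share a functional with either $C_1$ or $C_3$, contradicting the distinctness of functionals established in the previous step. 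Making this trapping-plus-shift-invariance argument precise --- and extracting the sharp bound of four rather than a larger constant by pairing opposite half-planes $(H_0,H_4)$ and $(H_2,H_6)$ --- is the central technical difficulty, and is where I expect most of the work to lie.
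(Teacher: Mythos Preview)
Your first two steps are sound and in fact constitute a legitimate reorganization of the paper's argument. The ergodic decomposition is valid here because the $\omega$-marginal of each ergodic component of $\nu$ is still $\P$ (since $\P$ is itself ergodic and hence extremal among shift-invariant measures on $\Omega_1$), so each component is again a shift-invariant measure on non-crossing geodesics. Once $\nu$ is ergodic, $\mathcal R_*$ is deterministic and at most countable, and Theorem~\ref{AH3} applied to each $\rho \in \mathcal R_*$ legitimately yields $N = |\mathcal R_*|$. The paper does this in a different order: it first shows that within each half-plane $H_i$ all geodesics in $\eta$ share a single functional, then uses a separate independence argument (showing that this common functional $\varrho$ is independent of $\omega$, via ergodicity of $\P$) to apply Theorem~\ref{AH3} without ever passing to ergodic components of $\nu$. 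Your ergodic reduction is a clean shortcut around that independence step.

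The genuine gap is in your final step. First, your pigeonhole is simply wrong: five classes distributed among four half-planes forces only \emph{two} in some $H_i$, not three, so you cannot set up a ``middle class trapped between two others'' configuration from $|\mathcal R_*| \ge 5$ alone. Second, and more seriously, even granting two distinct non-coalescing classes $C_1, C_2$ directed into a common $H_i$ with $\rho_{C_1} \neq \rho_{C_2}$, your proposal gives no mechanism for the contradiction --- the Palm/mass-transport idea is left entirely unspecified. The paper's mechanism is quite different from trapping: it observes that by Lemma~\ref{lem:ordergood}\ref{it:consistent} the ccw-ordering of $g_1$ and $g_2$ in $H_i$ is determined by the sign of $(\rho_{g_1} - \rho_{g_2}) \cdot \be_2$, whereas for \emph{non-crossing} paths entering $H_0$ through $\partial H_0$ the ordering is also determined by the heights of their entry points. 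Shift-invariance (together with the ergodic theorem applied along $\partial H_0$) then lets one realize both orderings of the entry heights with positive probability, and these cannot both be compatible with the fixed sign of $(\rho_{g_1} - \rho_{g_2}) \cdot \be_2$. This directly gives at most one functional per half-plane and hence at most four classes; no trapping of a middle geodesic is needed. Your sketch does not locate this contradiction, and the ``three-in-a-half-plane'' picture you are reaching for is both unnecessary and unavailable.
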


\begin{proof}
We start by simplifying the measure $\nu$. Since the marginal of $\nu$ on $\Omega_1$ is $\P$, we have that $\nu(\omega \in \mathcal{X}) = 1$, where as usual $\mathcal{X}$ refers to the event in Definition \ref{defin:calx}. Since $\nu$-a.s.~all paths $\gamma$ in the graph encoded by $\eta \in \Omega_2$ are infinite geodesics, items \ref{it:calx4} and \ref{it:calx5} in the definition of $\mathcal{X}$ ensure that directed paths in graphs sampled from $\nu$ have asymptotically linear Busemann functions:
\begin{equation}
  \label{eq:noncrosshi}
   \begin{minipage}{0.9\textwidth}
\begin{center}
  for $\nu\text{-a.e. } (\omega, \eta)$, each infinite path $\gamma$ of $\eta$ is a geodesic and has Busemann function asymptotic to some supporting functional $\rho_\gamma$.
  \end{center}
  \end{minipage}
\end{equation}

For each $i$, we define a mapping $F_i: \Omega_2 \to \Omega_2$ which deletes all directed edges except those which are part of a (directed) path $\gamma$  in $\eta$ whose linear functional $\rho_\gamma$ is supporting only at points $z \in \partial \ball$ with $z \cdot (\cos(i \pi/4), \sin(i \pi/4)) > 0$. In other words, for each directed edge~$(x,y) \in \dedges^2$,
	\[F_i(\eta)(x,y) = \begin{cases}1\ , \quad &\text{the unique infinite path $\gamma$ in $\eta$ beginning with $(x,y)$ }\\
	&\text{has } \rho_\gamma \text{ such that } \arc(\rho_\gamma) \subseteq H_i;\\
	0 \quad &\text{otherwise.} \end{cases}\]
    Of course, if $\eta(x,y) = 0$, then $(x,y)$ is in no path in the graph encoded by $\eta$, so $F_i(\eta)(x,y) = 0$.  The function $F_i$ is manifestly Borel measurable; we can thus define $\nu_i$ to be the measure on $\Omega_1 \times \Omega_2$ which is the pushforward of $\nu$ by the mapping $(\omega, \eta) \mapsto (\omega, F_i(\eta))$. We  note that the mapping $\textup{id}\times F_i$ is translation-covariant in the following sense: for each $z \in \Z^2$,
    \[\big(\omega,F_i(\sigma_z \eta)\big) = \sigma_z\big(\omega, F_i(\eta)\big)\ , \]
    where we recall from \eqref{eq:sigmazdef} the definition of the translation operator $\sigma_z$. From this it follows that $\nu_i$ is invariant under translations: if $A \subseteq \Omega_1 \times \Omega_2$ is an arbitrary event,
    \begin{align*}
      \nu_i(\sigma_z^{-1} A) = \nu\big((\textup{id}\times F_i)^{-1}(\sigma_z^{-1} A)\big) = \nu\big( \sigma_z^{-1}(\textup{id}\times F_i)^{-1}(A)\big) = \nu\big((\textup{id}\times F_i)^{-1}(A)\big) = \nu_i(A)\ .
    \end{align*}
    In fact,
    \begin{equation*}
    \begin{gathered}
      \text{if}\quad\nu(\text{there is a path $\gamma \in \eta$ with $ \arc(\rho_\gamma) \subseteq H_i$}) > 0 \ ,\\
      \text{then } \nu_i \text{ is a shift-invariant measure on non-crossing geodesics}.
      \end{gathered}
    \end{equation*}
    Indeed, we have shown translation invariance, and $\nu_i$ inherits the other properties of Definition \ref{defin:shiftinvar} immediately from $\nu$ --- as long as it does not put all its mass on the empty configuration (and this could only happen if no paths $\gamma$ of $\eta$ have $\rho_\gamma$ as in the last display).
    
    For the remainder of the proof we fix an $i$ such that $\nu_i$ is a shift-invariant measure on non-crossing geodesics, as above. We will show that
    \begin{equation}
      \label{eq:iclasses}
      \nu_i(\text{$\eta$ has more than one coalescence class}) = 0\ .
    \end{equation}
    Given \eqref{eq:iclasses}, the theorem follows for reasons we explain next. First note that if $\ball$ is not a diamond (i.e.\ the ball in the $\ell^1$ norm), then each $\rho \in \functionals$ must satisfy $\arc(\rho) \subseteq H_i$ for some $i \in \{0, 2, 4, 6\}$. On the other hand, if $\ball$ is a diamond, then each $\rho \in \functionals$ must satisfy $\arc(\rho) \subseteq H_i$ for some $i \in \{1, 3, 5, 7\}$. Hence, identifying $\eta$ with the graph it encodes for abbreviation and applying \eqref{eq:noncrosshi}, for some set $I$ of size four we have
    \begin{align*}
    \nu(\eta \text{ has $> 4$ coalescence classes})
     \leq \sum_{i\in I} \nu_{i}(\text{there exist paths $g_1 \not \sim g_2$ of $\eta$})\ ,
    \end{align*}
    which by \eqref{eq:iclasses} is zero.
    %In fact, the ``8'' in the last display can be replaced by a ``4'' as in the statement of the theorem, and the eight terms on the right hand side reduced to four, by applying convexity and symmetries of the asymptotic shape. If $\ball$ is not a diamond (ball in the $\ell^1$ norm), then each $\rho \in \functionals$ must satisfy $\arc(\rho) \subseteq H_i$ for some $i \in \{0, 2, 4, 6\}$. If $\ball$ is a diamond, then each $\rho \in \functionals$ must satisfy $\arc(\rho) \subseteq H_i$ for some $i \in \{1, 3, 5, 7\}$.
    It hence suffices to show that \eqref{eq:iclasses} holds.

    %As in the case of $\nu$, we have that for $\nu_i$-a.e.~$(\omega, \eta)$, each infinite path (geodesic) $\gamma$ in the graph encoded by $\eta$ has Busemann function asymptotically linear to some $\rho_\gamma$. 
    We show that for a.e.~$(\omega,\eta)$, the supporting functional $\rho_\gamma$ cannot depend on the choice of $\gamma$ in $\eta$:
    \begin{equation}\label{eq:onebuse}
      \nu_i(\text{there are infinite paths $g_1, g_2$ in $\eta$ with } \rho_{g_1} \neq \rho_{g_2}) = 0\ .
    \end{equation}
    Without loss of generality, we fix the case $i = 0$ to simplify notation. We suppose that \eqref{eq:onebuse} were false for $i = 0$; by convexity of $\ball$, this implies that we can find
    $g_1, g_2$ in $\eta$ such that $\rho_{g_1} \cdot \be_2 \neq \rho_{g_2} \cdot \be_2$. On $\mathcal{X}$, both $g_1$ and $g_2$ must eventually move into $H_0$.
    In particular, if \eqref{eq:onebuse} were false,
       % \begin{equation}\label{eq:twobuse}
      %\nu_i(\text{there are paths $g_1, g_2$ in $\eta$ with } \rho_{g_1} \cdot \be_2 \neq \rho_{g_2} \cdot \be_2) > 0\ .
    %\end{equation}
    %From \eqref{eq:twobuse}, we can find some rational number $q$ such that
        \begin{equation}\label{eq:threebuse}
      \nu_0(\text{there are $g_1, g_2$ in $\eta$ eventually moving into $H_0$ with } \rho_{g_1} \cdot \be_2 > \rho_{g_2} \cdot \be_2) > 0\ .
    \end{equation}

    By the translation-invariance of $\nu_0$, we can furthermore guarantee that the paths $g_1, g_2$ in \eqref{eq:threebuse} intersect $\partial H_0$; since these paths (a.s.) eventually move into $H_0$, we can (by taking terminal segments) ensure they touch $\partial H_0$ only once, at their initial vertex. Formally,
    \begin{equation}
      \label{eq:fourbuse}
      \nu_0(\text{there are $g_1, g_2$ as in \eqref{eq:threebuse} intersecting $\partial H_0$ only at their initial vertices}) > 0\ .
      \end{equation}

      We wish to slightly refine the event in \eqref{eq:fourbuse} to guarantee that the path $g_1$ begins below $g_2$. Let us write $A_{k, \ell}$ for the event in $\eqref{eq:fourbuse}$ with initial vertices $k \be_2$, $\ell \be_2$:
      \[A_{k, \ell} = \{\text{there are paths $g_1, g_2$ as in \eqref{eq:threebuse} intersecting $\partial H_0$ only at (respectively) $k \be_2$, $\ell \be_2$} \}. \]
By the ergodic theorem, for a.e.~outcome $(\omega, \eta)$ in the event from \eqref{eq:fourbuse}, there exist infinitely many positive integers $k, \ell$ such that $(\omega, \eta) \in A_{k, \ell}$.
Because there are only countably many choices of such $k, \ell$, we can choose some $k < \ell$ such that $A_{k, \ell}$ occurs with positive probability; recalling that this conclusion followed from \eqref{eq:threebuse}, we see that
\[\text{If \eqref{eq:threebuse} is true, then there exist $k < \ell$ with } \nu_0(A_{k, \ell}) > 0\ . \]

We now argue that in fact $\nu_0(A_{k, \ell}) = 0$ whenever $k < \ell$; by the last display, this will show \eqref{eq:threebuse} is false. Indeed, on the event $A_{k, \ell} \cap \mathcal{X}$ (which has positive probability if $A_{k, \ell}$ does), by Item \ref{it:consistent} of Lemma \ref{lem:ordergood} we have $g_1 \succ g_2$ in the ordering on $H_0$ (since the starting point of $g_1$ lies below that of $g_2$, and these paths cannot coalesce, having distinct Busemann functiions). On the other hand, by Item \eqref{it:consistent} of Lemma \ref{lem:ordergood}, we have $g_1 \prec g_2$, since $\rho_{g_1} \cdot \be_2 > \rho_{g_2} \cdot \be_2$. This shows that $A_{k, \ell} \cap \mathcal{X}$ has contradictory properties and hence is empty, showing that \eqref{eq:threebuse} must be false. This shows that \eqref{eq:onebuse} holds.

%To finish the proof \eqref{eq:onebuse}, we need only show that
%\[\nu_0(\text{there exist $g_1, g_2$ in $\eta$ such that $\rho_{g_1} \cdot \be_2 = \rho_{g_2} \cdot \be_2$ but $\rho_{g_1} \cdot \be_1 \neq \rho_{g_2} \cdot \be_2$} ) = 0. \]
%If the preceding display were false, we would with positive $\nu_0$-probability be able to choose $g_1$ and $g_2$ to both eventually move into either $H_1$ or $H_7$. The above argument then applies essentially unmodified, replacing $\be_2$ with either $\be_1 - \be_2$ (in case of $H_1$) or $\be$
    
On the event
\begin{align*}
R :=  \mathcal{X} &\cap \{\text{the graph encoded by $\eta$ is nonempty and all paths in it are geodesics} \} \\
  &\cap \{\text{there do not exist infinite paths $g_1, g_2$ in $\eta$ with $\rho_{g_1} \neq \rho_{g_2}$ }\}
\end{align*}
(which has positive $\nu_i$-probability), we define $\varrho = \varrho(\omega, \eta)$ be the common $\rho_g$ of each infinite path $g$ in the graph encoded by $\eta$ (off this event, we set $\varrho = 0$).
The event $R$ is invariant under lattice translations, and on $R$, the value of $\varrho$ is clearly translation-invariant (since its value does not depend on the particular infinite path chosen in $\eta$). Hence,
\begin{equation}
  \label{eq:rhoinvar}
  \varrho = \varrho \circ \sigma_z \quad \text{ for each lattice translation $\sigma_z$}\ .
\end{equation}

If we knew that $\varrho$ were almost surely equal to some nonrandom constant $\widehat \varrho$ on the event $R$, we could immediately conclude \eqref{eq:iclasses}, using Theorem \ref{AH3}: if the geodesics in the graph encoded by $\eta$ did not coalesce, we would find two geodesics whose Busemann functions are asymptotic to the same linear functional $\widehat \varrho$, a probability zero event.
However, our assumptions are not enough to guarantee that $\varrho$ is $\nu_i$-a.s.~constant. To overcome this,  we show that $\varrho$ is independent of the edge configuration $\omega$. Formally:
\begin{equation}
  \label{eq:rhoindep}
  \text{For arbitrary Borel $U \subseteq \R^2$ and  $V \subseteq \Omega_1$,}\quad \nu_i(\{\varrho \in U\} \cap \{\omega \in V\}) = \nu_i(\varrho \in U) \P(V)\ .
\end{equation}
We show \eqref{eq:rhoindep}; writing $\E_{\nu_i}$ for expectation with respect to $\nu_i$, we have
\begin{align}\label{eq:ergodrho}
  \nu_i(\{\varrho \in U\} \cap \{\omega \in V\}) = \frac{1}{(2m + 1)^2} \E_{\nu_i} \left[\sum_{z \in [-m,m]^2} \mathbf{1}_{\{\sigma_z \omega \in V\}} \mathbf{1}_{\varrho \circ \sigma_z \in U} \right]
\end{align}
for arbitrary fixed $U,V$ as in \eqref{eq:rhoindep}. Applying \eqref{eq:rhoinvar}, we write the expectation from \eqref{eq:ergodrho} as
\begin{equation}
  \label{eq:ergodrho2}
   \E_{\nu_i} \left[\mathbf{1}_{\varrho\in U} \left( \frac{1}{(2m + 1)^2}\sum_{z \in [-m,m]^2} \mathbf{1}_{\{\sigma_z \omega \in V\}} \right) \right]\ .
 \end{equation}
 The quantity in parentheses is an ergodic average of a measurable function on $\Omega_1$; since the marginal of $\nu_i$ on $\Omega_1$ is the ergodic measure $\P$, we have
 \[ \lim_{m \to \infty} \frac{1}{(2m + 1)^2}\sum_{z \in [-m,m]^2} \mathbf{1}_{\{\sigma_z \omega \in V\}} = \P(\omega \in V)\ , \quad \nu_i\text{-a.s.} \]

 Using the last display and the dominated convergence theorem, we see
 \[\eqref{eq:ergodrho2} \longrightarrow \E_{\nu_i} \left[\mathbf{1}_{\varrho\in U} \P(\omega \in V) \right]\quad \text{as $m \to \infty$} ; \]
 inserting this back into \eqref{eq:ergodrho}, we have established \eqref{eq:rhoindep}. It remains to use \eqref{eq:rhoindep} to show that $\nu_i$-a.s.~all paths in the graph encoded by $\eta$ coalesce, thus establishing \eqref{eq:iclasses}. We define the function $h: \R^2 \times \Omega_1 \to \R$ without any explicit reference to $\eta$:
 \[h(\rho, \omega) = \mathbf{1}_{\{\exists \text{non-coalescing infinite geodesics $g_1, g_2$ with $\rho_{g_1} = \rho = \rho_{g_2}$} \} }(\omega)\ ;\]
 if $(\omega, \eta) \in R$ exhibits multiple coalescence classes, then $h(\varrho(\omega, \eta), \omega) = 1$. Thus, \eqref{eq:iclasses} will follow once we establish that $\E_{\nu_i}[h(\varrho(\omega, \eta), \omega)] = 0$.

 Writing $\E_\varrho$ for expectation with respect to the distribution of $\varrho$, by the independence shown in \eqref{eq:rhoindep}, we can write
 \begin{align*}
   \E_{\nu_i}[h(\varrho(\omega, \eta), \omega)] = \E_{\varrho}[ \E[ h(\varrho, \omega)]] = 0\ ,
 \end{align*}
 where the inner expectation (over $\omega$ for fixed $\varrho$) is zero by Theorem \ref{AH3}. This shows \eqref{eq:iclasses} and completes the proof of the theorem.
\end{proof}

\section{The highways and byways problem}
\label{sec:thehighways}

 In this section we prove Theorem~\ref{thm:highways}.
 We shall aim for a contradiction, and thus assume that for some diverging sequence $(v_k)_{k\ge1}$ we have uniformly in $k$ that
 \begin{equation}\label{assumption}
 0 < \delta := \lim_{k \to \infty} \P\big(v_k\in\tree_0\big) =  \lim_{k \to \infty} \P\big(0\in\tree_{v_k}\big)\ ,
 \end{equation}
 where the existence of the limit follows by taking a subsequence (if necessary) and the final equality follows by the invariance of $\P$ under translation and reflection under {\bf A1} or {\bf A2}. We will generally prefer to take the perspective of the last quantity from \eqref{assumption}, working with the events $\{0 \in \tree_{v_k}\}$.

The proof will roughly amount to showing that~\eqref{assumption} will imply the existence of a sequence of families of finite disjoint geodesics. The important step will be to show that these geodesics grow longer as we move forward through our sequence, while the density remains stable. This will allow us to take limits to obtain a family of infinite non-crossing and non-coalescing geodesics. This may be formalized by encoding the finite geodesics in a larger probability space, producing a shift-invariant measure on non-crossing  geodesics. Graphs sampled from this measure will turn out to have infinitely many coalescence classes with positive probability, contradicting Theorem~\ref{thm:non-crossing} and thus disproving \eqref{assumption}.

The argument is similar in spirit to the argument used in~\cite{ahlhof} to solve the `midpoint problem' from~\cite{benkalsch03}, but differs in the details. The two main differences between the two proofs are both related to the fact that the basic assumption in~\eqref{assumption} is expressed in terms of infinite geodesics instead on finite paths as in~\cite{ahlhof}. On one hand this facilitates the analysis in some aspects, since there will be fewer endpoints of finite segments to control. On the other hand it will force us to consider two different constructions of measures on non-crossing geodesics (in Sections~\ref{sec:finseg} and~\ref{sec:finseg2}): one based on an infinite collection of finite paths (as in~\cite{ahlhof}) and the other based on a finite number of infinite paths. The latter gives some evidence of the flexibility of the method.

 Before beginning the main part of our proof, we state some elementary consequences of our assumption \eqref{assumption}: first, by compactness of $S^1$ and taking a further subsequence of $(v_k)$, we may assume that $(v_k / |v_k|)$ converges. By the rotation and reflection symmetries of $\P$, we can further assume that the limit lies in a sector of central angle $\pi/4$:
 \begin{equation}
   \label{eq:vktov}
  \lim_{k \to \infty}v_k/|v_k| = v \quad \text{for some $v \in [0, \pi/4] \subseteq S^1$}. 
 \end{equation}
 We henceforth assume our sequence $(v_k)$ satisfies \eqref{eq:vktov}.

 It will sometimes be useful to consider the set of all $v_k$ for which $0 \in \tree_{v_k}$. By Fatou's lemma, \eqref{assumption} implies that there are infinitely many such $v_k$ with positive probability:
 \begin{equation}
   \begin{gathered}
\label{assumption3}
\text{Defining } \mathcal{D} := \{\exists \text{ a subsequence $(v_{k_\ell})$ of } (v_k) \text{ with } 0 \in \tree_{v_{k_\ell}} \text{ for each $\ell$} \}\ ,\\
\text{the assumption \eqref{assumption} implies $\P(\mathcal{D}) \geq \delta$.}
\end{gathered}
\end{equation}
On the event $\mathcal{D}$ from \eqref{assumption3},
 \begin{equation}
  \label{eq:Ikdef}
  (I(k))_{k = 1, 2 \ldots} \text{ is the subsequence of $(1, 2, \ldots)$ consisting of those $\ell$ such that $0 \in \tree_{v_\ell}$.}
 \end{equation}
We note that after some initial work, we will again replace the sequence $(v_k)$ by a subsequence (at \eqref{eq:goodprob} below), along which a certain ``good event'' has nonvanishing probability, and take a further subsequence at \eqref{eq:vkthin}. We emphasize here that this restriction to a subsequence will not invalidate any of the results preceding \eqref{eq:goodprob}.

The organization of the remainder of this section is as follows. In Section~\ref{sec:nondiff}, we rule out the possibility that $v$ corresponds to a direction of differentiability of $\partial \ball$ (Lemma~\ref{corners}) and argue that for large $k$, $\geo(0, v_{I(k)})$ is ``well-localized'' near its endpoints (Lemmas~\ref{lem:hpfromvk} and \ref{lem:hpfromvk2}). In Section~\ref{sec:finseg}, we show that under \eqref{assumption}, if additionally geodesics of the form $\geo(x,x+v_k)$ tend not to intersect (see \eqref{eq:raul0} for a formal statement), one can construct a measure on non-crossing geodesics which exhibits infinitely many coalescence classes (the latter fact, the culmination of the construction, is Lemma~\ref{cornelis}). In Section \ref{sec:finseg2}, we consider the case that \eqref{eq:raul0} does not hold and provide an alternative construction of a measure on non-crossing geodesics which exhibits infinitely many coalescence classes. Finally, in Section~\ref{sec:highwaysatlast}, we pull together the arguments of the preceding sections and prove Theorem~\ref{thm:highways}.
 
%Throughout the proof we fix some small $\eps\in(0,\delta/100)$.

\subsection{Showing non-differentiability, and first consequences thereof}\label{sec:nondiff}

We first assume, in addition to \eqref{assumption}, that ``$\partial\ball$ is differentiable at $v$'' --- more formally, that
\begin{equation}
  \label{eq:diffblev}
\text{$v / \mu(v)$ is a point of $\partial \ball$ at which there is a (unique) tangent functional to $\partial \ball$,}
\end{equation}
where $v$ was introduced in \eqref{eq:vktov}. In this section, we will show that assuming both \eqref{assumption} and \eqref{eq:diffblev} simultaneously leads to a contradiction, proving a special case of Theorem \ref{thm:highways}. We deal with the more difficult case of non-differentiability at $v$ in Sections \ref{sec:finseg} -- \ref{sec:finseg2}.

We begin by introducing two families $(G_n^+), \, (G_n^-)$ of particularly useful random coalescing geodesics, which will be invoked repeatedly in the arguments of this subsection. The first important feature of these families is that large multiples of $v$ lie in the region between $G_n^+$ and $G_n^-$, and so $\geo(0, v_k)$ lies in this region (in the sense of Definition \ref{defin:regionbetweentwo}) for large $k$. The second is that these families get as close to the side of $\partial \ball$ containing $v$ as possible: there is no geodesic $g$ lying between $G_n^+$ and $G_n^-$ for all $n$ that does not have $v \in \arc(\rho_g)$. Special care is needed at first when discussing ``the region between'' $G_n^+$ and $G_n^-$, since it is not clear that these must eventually move into a common $H_i$. These issues are resolved once we have proved Lemmas \ref{lem:hpfromvk} and \ref{lem:hpfromvk2}; we can then work with geodesics directed in single choice of $H_i$ and rely on the ordering of Definition \ref{defin:ordering} (and subsequent characterizations thereof).

We first need a simple result showing that there exist enough geodesics to allow our construction of $(G_n^+)$ and $(G_n^-)$.
\begin{prop}\label{prop:diamondprop}
  Under \eqref{assumption}, we have $|\functionals| = \infty$. In particular, there exist infinitely many $\rho \in \functionals$ such that both $\rho \cdot \be_1 \geq 0$ and $\rho \cdot \be_2 \geq 0$.
%   some functional $\rho^{\mathrm{max}} \in \functionals$ with $\arc(\rho^{\mathrm{max}}) \cap (\pi/4, \pi/2] \neq \varnothing$ and $v  \cdot \be_2 < w \cdot \be_2$ for every $w$ in the closed set $\arc(\rho^{\mathrm{max}})$.
%   --- indeed, if $\arc(\rho^{\mathrm{max}}) \not \subseteq (\pi/4, \pi/2]$, then we can take $\rho^{\mathrm{max}}$ such that $\arc(\rho^{\mathrm{max}})$ contains $\be_2$ and is symmetric with respect to reflections about the $\be_2$-axis. Informally speaking, $\rho^{\mathrm{max}}$ is a functional in $\functionals$ whose associated arc intersects the first quadrant but lies entirely counterclockwise of $v$ (rather, the projection of $v$ onto $\partial \ball$).

% Similarly, there exists some functional $\rho^{\mathrm{min}} \in \functionals$ with $\arc(\rho^{\mathrm{min}}) \cap [-\pi/2, \pi/4)  \neq \varnothing$ and $v  \cdot \be_2 > w \cdot \be_2$ for every $w$ in the closed set $\arc(\rho^{\mathrm{min}})$.
\end{prop}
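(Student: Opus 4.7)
The plan is to argue by contradiction, assuming $|\functionals| = N < \infty$, and then to derive a contradiction with the consequence \eqref{assumption3} of our standing hypothesis \eqref{assumption} via a pigeonhole argument combined with the backwards finiteness of random coalescing geodesics.

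The first step is to establish, almost surely simultaneously for every $v \in \Z^2$, the decomposition
\[
\tree_v \;=\; \bigcup_{\rho \in \functionals} G_\rho(v),
\]
where $G_\rho$ denotes a random coalescing geodesic with Busemann function asymptotically linear to $\rho$; such a $G_\rho$ exists for each $\rho \in \functionals$ by Theorem~\ref{rcg dense}. Indeed, Theorem~\ref{AH1} guarantees that every infinite geodesic has an asymptotically linear Busemann function, while Theorem~\ref{AH2} identifies the set of attainable limit functionals as exactly $\functionals$ and asserts that each $\rho \in \functionals$ is achieved by a unique geodesic starting from a prescribed vertex. Together with Proposition~\ref{rcg}(a), this shows that for each fixed $v$, $\tree_v = \{G_\rho(v) : \rho \in \functionals\}$ almost surely. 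Since $\functionals$ is finite and $\Z^2$ is countable, a countable intersection of probability-one events yields the displayed decomposition for all $v \in \Z^2$ simultaneously.

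With this in hand, consider the event $\mathcal{D}$ of \eqref{assumption3}, which satisfies $\P(\mathcal{D}) \geq \delta > 0$. On $\mathcal{D}$, there is a (random) subsequence $(v_{k_\ell})$ of $(v_k)$ such that $0 \in \tree_{v_{k_\ell}}$ for every $\ell$, and the decomposition above supplies, for each $\ell$, some $\rho_\ell \in \functionals$ with $0 \in G_{\rho_\ell}(v_{k_\ell})$. As $\functionals$ is finite, the pigeonhole principle yields a single $\rho_\ast \in \functionals$ along a further subsequence of indices with $0 \in G_{\rho_\ast}(v_{k_\ell})$. Since $|v_k| \to \infty$, the random set $\{w \in \Z^2 : 0 \in G_{\rho_\ast}(w)\}$ is then infinite, which directly contradicts the backwards finiteness of $G_{\rho_\ast}$ from Proposition~\ref{rcg}\ref{it:rcgbf}. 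Formally, applying a union bound over the finite set $\functionals$,
\[
\P(\mathcal{D}) \;\leq\; \sum_{\rho \in \functionals} \P\bigl(\{w \in \Z^2 : 0 \in G_\rho(w)\} \text{ is infinite}\bigr) \;=\; 0,
\]
contradicting $\P(\mathcal{D}) \geq \delta > 0$. Hence $|\functionals| = \infty$. This pigeonhole-plus-backwards-finiteness step is the only delicate point; everything else is bookkeeping involving the results already available.

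For the ``in particular'' clause, under either {\bf A1} or {\bf A2} the measure $\P$ is invariant under the dihedral group $D_4$ of lattice symmetries fixing the origin, so the deterministic set $\functionals$ is $D_4$-invariant as well. Every $D_4$-orbit in $\R^2 \setminus \{0\}$ has size at most $8$, so $|\functionals| = \infty$ forces infinitely many distinct orbits. Since each such orbit meets the closed first quadrant $\{\rho : \rho \cdot \be_1 \geq 0,\, \rho \cdot \be_2 \geq 0\}$ (using the rotations by multiples of $\pi/2$ in $D_4$), infinitely many elements of $\functionals$ must lie there, as claimed.
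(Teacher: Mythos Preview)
Your proof is correct. The approach differs from the paper's in perspective, though both rest on the same core observation that finitely many random coalescing geodesics cannot account for infinitely many $v_k$ with $0 \in \tree_{v_k}$.

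The paper argues from the viewpoint of the origin: on $\mathcal{D}$, it takes subsequential limits of the finite geodesics $\geo(0, v_{I(k)})$ to produce an element of $\tree_0$, identifies this limit (via Theorems~\ref{AH1}, \ref{AH2}, \ref{rcg dense} and finiteness of $\functionals$) as some random coalescing geodesic $G$ with positive probability, and then invokes Lemma~\ref{backwards}(b) for the contradiction. Your argument instead works from the viewpoint of the $v_k$'s: you decompose each $\tree_{v_k}$ as the finite set $\{G_\rho(v_k):\rho\in\functionals\}$, pigeonhole to find a single $\rho$ with $0 \in G_\rho(v_{k_\ell})$ for infinitely many $\ell$, and appeal directly to backwards finiteness (Proposition~\ref{rcg}\ref{it:rcgbf}). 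Your route is slightly more economical --- it bypasses the compactness argument for path limits and does not need the intermediate Lemma~\ref{backwards} --- while the paper's route rehearses the limiting-geodesic technique that recurs throughout Section~\ref{sec:thehighways}.
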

\begin{proof}

  The second claim of the proposition follows from the first by the symmetry properties of $\P$ under {\bf A1} and {\bf A2}. We prove the first claim, assuming $|\functionals| < \infty$ in order to  derive a contradiction.
  
We recall that under \eqref{assumption}, the observation \eqref{assumption3} holds: with positive probability, $0 \in \tree_{v_{I(k)}}$ for some  sequence $(I(k))$ with $I(k) \to \infty$. On this event,  taking a subsequential limit of $(\geo(0, v_{I(k)}))_k$ in the sense of \eqref{eq:convdef}, we can produce some infinite geodesic $g \in \tree_0$. Since $\functionals$ is finite, Theorem \ref{rcg dense}, Theorem \ref{AH1}, and Theorem \ref{AH2} give the existence of some random coalescing geodesic $G$ such that
\[\P\left(|I(k)| = \infty\text{ and some subsequence of $(\geo(0, v_{I(k)}))$ converges to $G$ } \right) > 0\ . \]
But this contradicts Lemma \ref{backwards}, and so we have proved $|\functionals| = \infty$.
\end{proof}
We can now identify appropriate sequences $(\rho^+_n), (\rho_n^-)$ of elements of $\functionals$, whose corresponding random coalescing geodesics will play the roles of $(G_n^+), (G_n^-)$ alluded to above.
%{\color{red} Check this carefully. Is there a more natural way to phrase?}
\begin{definition}\label{defin:Gplus}
We let $\functionals_v^+ \subseteq \functionals$ (with $v$ as in \eqref{eq:vktov}) be the set  of $\rho \in \functionals$ such that $\arc(\rho) \subseteq [0, \pi]$ with $\inf \arc(\rho) > v$.
We choose a maximal counterclockwise  decreasing sequence $(G_n^+)$ of random coalescing geodesics whose functionals $\rho_n^+ = \rho_{G_n^+}$ lie in $\functionals_v^+$. That is, $\inf \arc(\rho_m^+) \geq \inf \arc(\rho_n^+)$ for $m \leq n$; moreover, there is no random coalescing geodesic $G$ with $\rho_G \in \functionals_v^+$ such that $\inf \arc(\rho_n^+) > \sup \arc(\rho_G) > \inf \arc(\rho) > v$ for each $n$ or such that $G_n \prec G$ in some $H_i$ for all large $n$.
%supporting functional $\rho$ with $\inf \arc(\rho_n^+) > \sup \arc(\rho) > \inf \arc(\rho) > v$ for each $n$.

%$(\rho_n^+)$ of elements of $\functionals_v^+$ with arcs decreasing in $[0, \pi]$. 

%For each $n$, we let $G_n^+$ be the random coalescing geodesic such that $\rho_n^+ = \rho_{G_n^+}$ (the existence of which follows by Theorem \ref{rcg dense}). 
Informally, this is a counterclockwise decreasing sequence of geodesics whose functionals are not supporting at $v$, but which get as close to being supporting at $v$ as possible.
We define $\functionals_v^-$,  considering arcs now as subsets of $[-\pi/2, \pi/4]$, to be the set of elements $\rho \in \functionals$ such that $\sup \arc(\rho) < v$.
The functionals and random coalescing geodesics $\rho_n^-$ and $G_n^-$ are a maximal counterclockwise increasing sequence: we choose $\rho_n^-$'s so that $\sup \arc(\rho_m^-) \leq \sup \arc(\rho_n^-) < v$ for $m \leq n$ and so there is no $G$ with $\rho_G \in \functionals_v^-$ which lies counterclockwise of all $G_n^-$ for large $n$. 
\end{definition}

%Convexity of $\ball$ ensures that the arcs associated to the sequence $(\rho_n^+)$ (resp.~$(\rho_n^-)$) are counterclockwise decreasing (resp.~increasing):
%\begin{equation}
%\label{eq:arcsrhon}
%\sup\{w \cdot \be_1: w \in \arc(\rho_n^+) \} \leq \inf\{ w \cdot \be_1: \, w \in \arc(\rho_{n+1}^+)\}\ ,
%\end{equation}
%with the obvious analogue holding for the sequence $(\rho_n^-)$. 
We note that it is possible that the sequence $(\rho_n^+)$ (similarly $(\rho_n^-)$) be constant or eventually constant. For instance, if $\ball$ is a square with sides aligned with the axes and $v  = 0$, it is \emph{a priori} possible that $\functionals$ consists of four elements, the ones associated to the four sides of $\partial \ball$ whose existence is guaranteed by Theorem~\ref{DH1}.  In this case, for each $n$ the functional $\rho_n^+$ will take the same value, namely an appropriate multiple of $\be_2$.

The statement of the next lemma involves the random subsequence of vertices $(v_{I(k)})$ from \eqref{eq:Ikdef}, defined on the event $\mathcal{D}$ from \eqref{assumption3}. 

  \begin{lemma}\label{corners}
    Assume \eqref{assumption} holds, and recall that $v_k/|v_k|\to v$. Then:
    \begin{enumerate}
    \item \label{it:cornerv} $\ball$ is not differentiable ``at $v$'' (i.e., at $v / \mu(v)$).
    \item \label{it:cornerv2} There is a  subevent  $\mathcal{D}_1 \subseteq \mathcal{D}$ with $\P(\mathcal{D}_1) = \P(\mathcal{D})$ on which any subsequential limit $\gamma \in \tree_0$ of $(\geo(0, v_{I(k)}))_k$ has $\arc(\rho_\gamma) = \{v\}$. In particular, there exist functionals $\rho \in \functionals$ with $\arc(\rho) = \{v\}$, and indeed infinitely many such functionals.
%    \item $\lim_{k \to \infty} \P( \text{there is a $\gamma' \in \tree_{v_k}$ with $0 \in \gamma'$ but $\arc(\rho_{\gamma'}) \neq \{v\}$} ) = 0$.
    \end{enumerate}
\end{lemma}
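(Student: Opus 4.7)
The plan is to prove both parts by a common mechanism: trap $\geo(0, v_{I(k)})$ between the sequences $(G_n^+),(G_n^-)$ from Definition~\ref{defin:Gplus}, identify each subsequential limit in $\tree_0$ with a random coalescing geodesic (RCG) via Theorems~\ref{rcg dense} and~\ref{AH2}, and then invoke Lemma~\ref{backwards}(b) to contradict the positive-probability event $\mathcal{D}$ from~\eqref{assumption3}.

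\emph{Trapping step.} I would first verify on $\mathcal{X}$ that, for each sufficiently large $n$, the geodesics $G_n^+$ and $G_n^-$ eventually move into a common half-plane $H_i$ (their arcs accumulate around $v$, so this follows from~\eqref{eq:topasshalf2}); that the region between $G_n^+$ and $G_n^-$ in $H_i$ contains $v_k$ for all large $k$ (since $v_k/|v_k|\to v$ lies strictly between $\arc(\rho_n^+)$ and $\arc(\rho_n^-)$); and hence, by Lemma~\ref{lem:geobetween}, that $\geo(0, v_k)$ is confined to this region. Therefore every subsequential limit $\gamma\in\tree_0$ of $(\geo(0, v_{I(k)}))_k$ satisfies $G_n^+\prec\gamma\prec G_n^-$ in $H_i$ for every large $n$ (by Lemma~\ref{lem:regionbetween}). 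Identifying $\gamma$ with its associated RCG $G_\gamma$ via Theorem~\ref{AH2}, the maximality condition of Definition~\ref{defin:Gplus} forbids any RCG $G$ with $\rho_G\in\functionals_v^+$ from lying cw of every $G_n^+$ (and symmetrically on the other side), so $\rho_\gamma\notin\functionals_v^+\cup\functionals_v^-$; hence $v\in\arc(\rho_\gamma)$, i.e.\ $\rho_\gamma$ is tangent to $\ball$ at $v/\mu(v)$.

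\emph{Part (1).} If $\partial\ball$ is differentiable at $v/\mu(v)$, then the unique supporting functional $\rho_v$ there is the only supporting $\rho$ with $v\in\arc(\rho)$, so the trapping step forces $\rho_\gamma=\rho_v$ for every subsequential limit $\gamma$. Theorem~\ref{rcg dense} gives an RCG $G_v$ with $\rho_{G_v}=\rho_v$, and Theorem~\ref{AH2} identifies $\gamma=G_v(0)$ a.s. On $\mathcal{D}$ (of probability $\ge\delta>0$), one then extracts a subsequence $(k_\ell)$ with $\geo(0,v_{I(k_\ell)})\to G_v(0)$ and $0\in\tree_{v_{I(k_\ell)}}$ for every $\ell$, contradicting Lemma~\ref{backwards}(b).

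\emph{Part (2) and main obstacle.} By part (1), $\ball$ is non-differentiable at $v$, so the tangent functionals at $v$ with $\arc\supsetneq\{v\}$ are exactly those supporting along flat portions of $\partial\ball$ adjacent to $v/\mu(v)$ --- at most two of them, say $\rho_1^b$ and $\rho_2^b$. For each $\rho_j^b$, Theorems~\ref{rcg dense} and~\ref{AH2} plus Lemma~\ref{backwards}(b) show exactly as in part (1) that $\mathcal{D}\cap\{\text{some subsequential limit }\gamma\text{ has }\rho_\gamma=\rho_j^b\}$ is null; taking $\mathcal{D}_1=\mathcal{D}\setminus\bigcup_j\{\text{some limit has }\rho_\gamma=\rho_j^b\}$ proves the first claim. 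For the existence statement, non-differentiability at $v$ produces a continuous one-parameter family of tangent functionals at $v/\mu(v)$, all but $\rho_1^b$ and $\rho_2^b$ having $\arc=\{v\}$; since all tangent functionals lie in $\functionals$ (as noted after Theorem~\ref{AH3}), uncountably many such $\rho\in\functionals$ exist. The main obstacle throughout is the trapping step, for which one must carefully use Lemmas~\ref{lem:ordergood}, \ref{lem:regionbetween}, and~\ref{lem:geobetween} together with the maximality condition in Definition~\ref{defin:Gplus}; the remainder is a routine application of the existing toolbox.
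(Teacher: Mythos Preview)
Your overall strategy matches the paper's, but there are two genuine gaps.

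\textbf{Gap in the trapping step (part (2)).} Your claim that for large $n$ the geodesics $G_n^+$ and $G_n^-$ eventually move into a common half-plane $H_i$ is not valid in general, and your justification ``their arcs accumulate around $v$'' can fail. Consider $\ball$ equal to the $\ell^1$ ball and $v=0$ (a corner). Then $\functionals_v^+$ excludes the side functional with arc $[0,\pi/2]$ (since its infimum equals $v$), so by Proposition~\ref{prop:diamondprop} the $\rho_n^+$ are supporting functionals at the corner $\pi/2$ with $\arc=\{\pi/2\}$; symmetrically $\arc(\rho_n^-)=\{-\pi/2\}$. Hence $G_n^+$ moves into $H_2$ while $G_n^-$ moves into $H_6=-H_2$, and no common $H_i$ exists. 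The paper confronts exactly this obstacle: in part~(1) the differentiability hypothesis excludes $v$ being such a corner, and a case analysis (the displays around~\eqref{eq:existshirho}) establishes the common half-plane; in part~(2), where $v$ \emph{is} a corner, the paper does \emph{not} try to trap between $G_n^+$ and $G_n^-$ in the polygonal cases but instead gives a separate argument (\eqref{eq:assumediamond}--\eqref{eq:noother}) using two RCGs $G',G''$ with $\arc=\{j\pi/2\}$ to rule out limits directed toward the wrong corners. Your sketch needs an analogue of this.

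\textbf{Gap in the existence argument.} Your claim that supporting functionals at the corner $v/\mu(v)$ with $\arc=\{v\}$ are \emph{tangent} functionals, hence automatically in $\functionals$, is incorrect. By definition a tangent functional corresponds to the \emph{unique} supporting line at some boundary point; at a corner there are many supporting lines, and a line touching $\ball$ only at the corner is not the unique supporting line anywhere, so it is not tangent. Thus the inclusion ``tangent functionals $\subseteq\functionals$'' does not yield any $\rho\in\functionals$ with $\arc(\rho)=\{v\}$. The paper instead derives this by contradiction: on $\mathcal{D}_1$ every subsequential limit $\gamma$ has $\rho_\gamma\in\functionals$ with $\arc(\rho_\gamma)=\{v\}$, while Lemma~\ref{backwards} rules out any \emph{fixed} finite set of such functionals as possible values of $\rho_\gamma$; since $\P(\mathcal{D}_1)>0$, infinitely many must exist.
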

\begin{proof}
	
	We begin by proving item \eqref{it:cornerv}; we now assume to the contrary that $v / \mu(v) \in \partial \ball$ is a point of differentiability, with corresponding tangent functional $\varpi$, and derive a contradiction. Let $G$ denote the random coalescing geodesic with $\rho_G = \varpi$ (whose existence follows from Theorems \ref{DH2} and \ref{rcg dense} and from Proposition~\ref{rcg}). We first argue that 
\begin{equation}
\label{eq:existshirho}
\exists i \in \{0, 1\} \text{ such that $G$, and $G_n^+, G_n^-$ for all large $n$, eventually move into $H_i$.}
\end{equation}
This follows from the convexity and symmetry of $\ball$, using the fact that $v \in [0, \pi/4]$.

Indeed, suppose first that $\partial \ball$ admits more than four (and hence at least eight) tangent functionals.  In this case, Proposition~\ref{prop:diamondprop} guarantees there is a supporting functional
%Indeed, if $0 \notin \arc(\varpi)$ (recalling ``$0$'' here refers to an element of $S^1$), then by reflecting $\varpi$ about the line $\{t (\be_1 + \be_2):\, t \in \R\}$, we produce a tangent functional 
$\varpi'$ in $\functionals_v^+$ with $v, \pi/2 \notin \arc(\varpi')$. Reflecting $\varpi'$ about the $\be_1$-axis produces another functional $\varpi'' \in \functionals_v^-$ with $-\pi/2 \notin \arc(\varpi'')$, and then the maximality from Definition \ref{defin:Gplus} guarantees that $G_n^+$ and $G_n^-$ eventually move into $H_0$ for all large $n$. %If $0 \in \arc(\varpi)$, then it is possible that there is a tangent functional $\varpi' \in \functionals_v^+$ with $\pi/2 \notin \arc(\varpi')$. In this case, the remainder of the argument just given for \eqref{eq:existshirho} with $i = 0$ proceeds as before.

Otherwise, $\ball$ is either a ``diamond'' ($\ell^1$ ball) or ``square'' ($\ell^\infty$ ball).  In the former case, $\arc(\varpi) = [0, \pi/2]$ and, since $|\functionals| = \infty$ by Proposition \ref{prop:diamondprop}, we have that there exist infinitely many supporting functionals to $\partial \ball$ at $\be_1 / \mu(\be_1)$ and $\be_2 / \mu(\be_2)$ (in particular, $v \neq 0$). Choosing $\varpi'$ and $\varpi''$ such that $\arc(\varpi') = \{\pi/2\}$, $\arc(\varpi'') = \{0\}$, we see that $G_n^+$ and $G_n^-$ must eventually move into $H_1$ for all large $n$, proving \eqref{eq:existshirho}. In the latter case, by Proposition \ref{prop:diamondprop} there must then exist infinitely many elements $\rho$ of $\functionals$ with $\arc(\rho) = \{\pi/4\}$ and $\{-\pi/4\}$, and then the argument proceeds similarly to before.

We fix $H_i$ as in \eqref{eq:existshirho} and fix large $n$ such that $G_n^+$ and $G_n^-$ eventually move into $H_i$ --- as usual, for definiteness of notation, we take $H_i = H_0$. 
%We write $F_n^+$  (resp.~$F_n^-$) for the event appearing in Lemma \ref{backwards}, with the ``$G$'' there replaced by $G_n^+$ (resp.~$G_n^-$); 
We consider a fixed outcome $\omega \in \mathcal{D} \cap \mathcal{X}$.
% \setminus \bigcup_n[F_n^+ \cup F_n^-]$.
Since $v \notin \arc(\rho_n^+) \cup \arc(\rho_n^-)$, for any fixed $n$ the geodesics $\geo(0, v_k)$ must all lie in the region between $G_n^+$ and $G_n^-$ in $H_0$ (in the sense of Definition \ref{defin:regionbetweentwo}) for large $k$. In particular, any subsequential limit $\gamma$ of $(\geo(0, v_{I(k)}))_k$ must eventually move into $H_0$ and indeed lie in the region in $H_0$ between $G_n^+$ and $G_n^-$, by Lemma \ref{lem:geobetween}. 

%By the definition of $F_n^+$ and $F_n^-$, $\gamma$ cannot be $G_n^+$ or $G_n^-$, so must lie strictly in the region between them. 
Item~\ref{it:calx5} of Definition~\ref{defin:calx} then implies that, treating arcs as subsets of $[-\pi/2, \pi]$,
\[ \sup \arc(\rho_n^-) \leq \inf \arc(\rho_\gamma) \leq \sup \arc(\rho_\gamma) \leq \inf\arc(\rho_{n}^+)\]
with $\rho_\gamma \neq \rho_n^+, \rho_n^-$.
 Since $n$ was large but arbitrary, 
\begin{equation}
\label{eq:limnodiff0}
\text{$\rho_\gamma$ is actually a supporting functional to $\ball$ at $v/\mu(v)$;}
\end{equation}
 in other words, $\rho_\gamma = \varpi$, the unique such supporting functional.

But by Theorem \ref{AH2}, $G$ is a.s.~the unique element of $\tree_0$ having $\rho_G = \varpi$. So the above has actually shown that
\[\text{on a full-measure subevent of $\mathcal{D}$, a subsequence of $(\geo(0, v_{I(k)}))$ converges to $G$.}  \]
Since $\P(\mathcal{D}) > 0$, this contradicts Lemma \ref{backwards} and therefore our assumption that $\partial \ball$ is differentiable at $v$, showing the first claim \eqref{it:cornerv} of the proposition.

We now turn to the second claim of the proposition. We will first show that \eqref{eq:limnodiff0} still holds, even though we no longer assume that $\partial \ball$ is differentiable at $v / \mu(v)$:
\begin{equation}
\label{eq:limnodiff}
\text{on $\mathcal{X}$, each subsequential limit $\gamma$ of $(\geo(0, v_{I(k)}))_k$ has $v \in \arc(\rho_\gamma)$}
\end{equation}
or in other words, $\rho_\gamma$ is a supporting functional at $v$.
Since $\partial \ball$ is not differentiable at the point $v / \mu(v)$, this point is an extreme point of $\ball$. If there exists another point $w \in [0, \pi/2)$ such that $w / \mu(w)$ is an extreme point of $\ball$ (which is true in particular if $v \in (0, \pi/4)$), then the analogue of \eqref{eq:existshirho} with $G$ replaced by $\gamma$ holds by an argument nearly identical to the one used previously. Then the argument used to establish \eqref{eq:limnodiff0} again establishes \eqref{eq:limnodiff}. 

Proving \eqref{eq:limnodiff} is slightly more complicated in the case that no $w$ as above exists --- in other words, $\ball$ is a square or diamond, with $v$ as one of its corners. The argument is nearly identical in both cases, so for definiteness of notation we assume 
\begin{equation}
\label{eq:assumediamond}
\text{$\ball$ is a multiple of the $\ell^1$ unit ball and $0 = v$.}
\end{equation}
In this case, on $\mathcal{X}$, every $\gamma$ arising as as subsequential limit of $\geo(0, v_{I(k)})$ must have either $\arc(\rho_\gamma) = [j \pi/2, (j+1)\pi/2]$ (so $\rho(\gamma)$ is a supporting functional along a ``side'' of the diamond) or $\arc(\rho(\gamma) = \{j \pi/2\}$ for some $j \in \{0, 1 ,2, 3\}$. There are at most four elements $\rho \in \functionals$ with $\arc(\rho_\gamma)$ of the form $[j \pi/2, (j+1)\pi/2]$; applying Lemma \ref{backwards}, we see that a.s.~no subsequential limit $\gamma$ can have one of these four functionals as its $\rho_\gamma$. Thus, we will have shown \eqref{eq:limnodiff} (in case \eqref{eq:assumediamond}) once we establish
\begin{equation}
\label{eq:noother}
\text{a.s., no limit $\gamma$ of $(\geo(0, v_{I(k)})$ can have $\arc(\rho_\gamma) = \{j \pi/2\}$ for $j = 1, 2, 3$}.
\end{equation}

If \eqref{eq:noother} were false, then applying Proposition \ref{prop:diamondprop} and Lemma \ref{backwards}, we see that there would exist random coalescing geodesics $G', G''$ with $\arc(\rho_{G'}) = \arc(\rho_{G''}) = \{j \pi/2\}$ (in particular, eventually moving into $H_{2j}$) such that
\[\P(\exists \text{ a subsequential limit $\gamma \neq G', G''$ of $(\geo(0, v_{I(k)}))$
with $G' \prec \gamma \prec G''$ in $H_{2j}$}) > 0. \]
But the vertices $v_k$ do not lie in the region between such $G'$ and $G''$ for large $k$, and so the last display contradicts Lemma \ref{lem:geobetween}. This contradiction establishes \eqref{eq:noother} and hence \eqref{eq:limnodiff}.

It remains to use \eqref{eq:limnodiff} to conclude the second claim of the proposition. Another application of Lemma \ref{backwards} shows that for any fixed supporting functional $\rho$ (or finite set of supporting functionals) at $v$, there is a.s.~no limit of $(\geo(0, v_{I(k)}))$ whose Busemann function is asymptotically linear to $\rho$. The proof concludes by noting that (by convexity) there can be at most two supporting functionals $\rho$ at $v$ with $\arc(\rho) \neq \{v\}$.
	\end{proof}

\subsubsection{Control of $0 \ni \gamma \in \tree_{v_k}$}
In the previous lemma, we showed that subsequential limits of the geodesics $\geo(0, v_{I(k)})$ are directed toward $v$. From this, we will be able to show an appropriate version of the statement ``$\geo(0, v_k)$ eventually moves into some $H_i$ uniformly for large $k$;''  see Lemma \ref{lem:hpfromvk2} below. First, we provide an analogue ``from the perspective of $v_{I(k)}$'' by controlling the behavior of $\geo(0, v_{I(k)})$ near its endpoint $v_{I(k)}$. Because we cannot apply Lemma \ref{backwards} (since $0 \in \tree_{v_k}$ does not necessarily mean $v_k \in \tree_0$), the conclusions we can draw here are slightly weaker, and in particular we will have less control over the particular $H_i$ our geodesics will move into.

 To shorten the statement of this lemma (and the event appearing therein), we introduce the following notation:
	\begin{equation}
	\label{eq:Xixdef}
	\text{For each $x \in \Z^2$ and $k \geq 1$,}\quad	\Xi_x(k) = \Xi_x := \{\gamma \in \tree_x: \, \exists \gamma' \in \tree_{x + v_k} \text{ with } \gamma \subseteq \gamma' \}.
	\end{equation}
  \begin{lemma}\label{lem:hpfromvk}
    There is a choice of half-plane $H_i\ni v$, such that the following holds for all large $L$:

    	    \[\limsup_{k \to \infty}\, \P\left(
    	    \begin{array}{c}
    	   \geo(0, v_k) \subseteq v_k + H_{i+4} + [-L, L]^2, \\
    	     0 \in \tree_{v_k}, \,  \text{$\Xi_0(k)$ uniformly moves into $H_{i+4}$},\\
    	     \text{and indeed }\bigcup_{\gamma \in \Xi_0(k)} \gamma \subseteq H_{i+4} + [-L, L]^2, 
    	    \end{array}
    	    \right) \geq \delta/2\ , \]
    	    where we recall that ``uniformly moves into $H_i$'' was defined at \eqref{eq:unifmoveintodef}.
   % \[\limsup_{k \to \infty} \P\left(
    %    \begin{array}{c}
     %     0 \in \tree_{v_k}, \text{ and for all $\gamma \in \tree_0, \, \gamma' \in \tree_{v_k}$ with $\gamma \subseteq \gamma'$,}
      %    \text{ we have}\\
       %   \gamma \text{ eventually moves into $-H_i$, and moreover }\\\gamma \subseteq  (-H_i) \cup [-L, L]^2 \text{ and } \gamma' \subseteq v_k + (-H_i) + [-L, L]^2
        %  \end{array}
        %\right) > \delta/4\ . \]
%      \[\limsup_{L \to \infty} \limsup_{k \to \infty} \P(R_k \cap \{\forall \gamma \in \tree_{v_k} \text{ with } 0 \in \gamma, \, [\gamma \setminus \geo(v_k, 0)] \subseteq [-H_i \cup [-L, L]^2]  \}) > \delta /4\ . \]
     % There is a random coalescing geodesic $G_*$ with $\arc(\rho_{G_*}) = \{v\}$, and a choice of $\cdot = +, -$ and half-plane $H_i$, such that
    %   \[\forall n,\,\limsup_{k \to \infty}\P(v_k \in \tree_0,\, \geo(0, v_k) \text{ is in the region between $G_*$, $G_n$ in $H_i$}) > \delta /4\ .\]
    % In particular, by restricting to a subsequence of $(v_k)$, we can ensure that the above holds with $\limsup$ replaced by $\liminf$.
    \end{lemma}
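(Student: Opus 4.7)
My plan combines translation invariance, Lemma~\ref{corners}, and a trapping argument using random coalescing geodesics.

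\emph{Step 1.} By the rotation/reflection symmetries of $\P$, the sequence $\tilde v_k:=-v_k$ also satisfies the analog of \eqref{assumption} with the same $\delta$ and $\tilde v_k/|\tilde v_k|\to -v$. Applying Lemma~\ref{corners} to $(\tilde v_k)$ then gives that $-v/\mu(-v)$ is a corner of $\partial\ball$ and provides infinitely many $\rho\in\functionals$ with $\arc(\rho)=\{-v\}$. I would pick two distinct such $\rho^+,\rho^-$ and let $G^+,G^-$ be the corresponding random coalescing geodesics, produced by Theorem~\ref{rcg dense}.

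\emph{Step 2.} I would then choose a half-plane $H_i$ with $H_i\ni v$ so that $H_{i+4}$ contains $-v$ in its interior and contains every $\arc(\rho)\subseteq\partial\ball$ with $-v\in\arc(\rho)$ (feasible since such arcs have length at most $\pi/4$). Both $G^\pm(z)$ eventually move into $z+H_{i+4}$ for every $z\in\Z^2$, so by translation invariance the random quantities
$$L_0(z):=\inf\{L\geq 1:G^+(z)\cup G^-(z)\subseteq z+H_{i+4}+[-L,L]^2\}$$
have the same a.s.-finite distribution as $L_0(0)$; I fix a deterministic $L$ with $\P(L_0(0)\leq L)\geq 1-\delta/8$.

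\emph{Step 3.} On the event $F_k:=\{0\in\tree_{v_k}\}\cap\{L_0(0)\leq L\}\cap\{L_0(v_k)\leq L\}$, which by a union bound and translation invariance has $\P(F_k)\geq\P(0\in\tree_{v_k})-\delta/4\to 3\delta/4$, I would argue as follows for $k$ large. Any $\gamma^*\in\tree_{v_k}$ passing through $0$ has Busemann functional $\rho_{\gamma^*}$ that supports $\partial\ball$ at $-v/\mu(-v)$: this is a Busemann computation using Lemma~\ref{lem:busemanpropps}\ref{it:beq} with $v_k,0$ both on $\gamma^*$, namely $B_{\gamma^*}(v_k,0)=T(v_k,0)=|v_k|\mu(v)+o(|v_k|)$ together with asymptotic linearity and the shape theorem. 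A suitable extremal choice of $\rho^\pm$ then places $\rho_{\gamma^*}$ between them in the sense of Lemma~\ref{lem:ordergood}\ref{it:consistent}, trapping $\gamma^*$ between $G^\pm(v_k)$. Consequently, Lemma~\ref{lem:geobetween} confines $\geo(v_k,0)$ to the region between $G^\pm(v_k)$, hence to $v_k+H_{i+4}+[-L,L]^2$, while Lemma~\ref{lem:regionbetween} confines the tail $\gamma^*|_{[0,\infty)}\in\Xi_0(k)$ to the region between $G^\pm(0)$ in $H_{i+4}$, hence to $H_{i+4}+[-L,L]^2$. This gives the full event in the lemma.

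\emph{Main obstacle.} The principal difficulty is the Busemann argument identifying $\rho_{\gamma^*}$ as supporting at $-v$: the rate in asymptotic linearity depends on the (random) path $\gamma^*$, so one cannot simply let $k\to\infty$ inside the approximation $B_{\gamma^*}(0,v_k)\approx\rho_{\gamma^*}(v_k)$. I would handle this by compactness of $\functionals$ (Theorem~\ref{AH2}), extracting $\rho_{\gamma^*_k}\to\rho^*$ along a subsequence and using $B_{\gamma^*_k}(v_k,0)=T(v_k,0)$ together with the uniform shape theorem from item \ref{it:calx3} of $\mathcal{X}$ to identify $\rho^*(v)=-\mu(v)$. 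A second technicality is ensuring that the trapping of \emph{every} $\gamma^*\in\tree_{v_k}$ through $0$ --- not just a chosen representative --- goes through, which requires the careful extremal selection of $\rho^\pm$ among functionals tangent at $-v$ in Step~1 so that all possible limit functionals are bracketed.
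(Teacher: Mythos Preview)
Your Step~3 contains a genuine gap: the Busemann computation does not establish that $\rho_{\gamma^*}$ supports $\partial\ball$ at $-v$. You write $B_{\gamma^*}(v_k,0)=T(v_k,0)$ and then invoke asymptotic linearity to identify $\rho_{\gamma^*}(v_k)\approx B_{\gamma^*}(0,v_k)$. But asymptotic linearity (item~\ref{it:calx4} of Definition~\ref{defin:calx}) says $[B_{\gamma^*}(0,z)-\rho_{\gamma^*}(z)]/|z|\to 0$ as $|z|\to\infty$ \emph{for a fixed geodesic $\gamma^*$}; it says nothing about the single point $z=v_k$ when $k$ is fixed. Your proposed fix---passing to a subsequence with $\rho_{\gamma^*_k}\to\rho^*$---still needs $[B_{\gamma^*_k}(0,v_k)-\rho_{\gamma^*_k}(v_k)]/|v_k|\to 0$ along that subsequence, and the $o(|z|)$ in asymptotic linearity is not uniform in $\gamma^*_k$, so nothing controls this diagonal. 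In fact the conclusion you aim for is simply false in general: the tail $\gamma^*|_{[0,\infty)}\in\Xi_0(k)$ is just some element of $\tree_0$ extending $\geo(v_k,0)$, and there is no a~priori reason its asymptotic direction is $-v$ rather than some other direction compatible with being such an extension.

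A second, related problem is your choice of $G^\pm$ with $\arc(\rho^\pm)=\{-v\}$. Both geodesics are then directed exactly at $-v$, so the region between $G^+(v_k)$ and $G^-(v_k)$ is an $o(|v_k|)$-thin sliver; since $0-v_k$ has perpendicular offset from $-v$ of order $o(|v_k|)$ as well, there is no way to guarantee $0$ lies \emph{strictly} between them, which is what Lemma~\ref{lem:geobetween} requires. The paper avoids both problems by working from the origin's perspective (switching to $\{v_k\in\tree_0\}$ via symmetry) and trapping with a geodesic $G_*$ directed at $v$ together with the approximating $G_n^\pm$ whose arcs are strictly \emph{off} $v$. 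This produces two genuine wedges $C^\pm$ on either side of $G_*$; since $v_k\notin G_*$ almost surely (Lemma~\ref{backwards}), a pigeonhole puts $v_k$ in one of $C^\pm$ with probability at least $\delta/2$, and then Lemma~\ref{lem:geobetween} traps the \emph{entire} infinite geodesic through $v_k$ topologically, with no Busemann identification needed. The resulting half-plane depends on which of $C^+,C^-$ wins the pigeonhole, which is why the lemma only asserts existence of \emph{some} $H_i$.
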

    We have written the half-plane in the form $H_{i+4}$ for notational convenience, since its rotation $H_i$ (for the same value of $i$) will frequently be used; see e.g.~Lemma \ref{lem:hpfromvk2}.
\begin{proof}
	We extend our $\Xi_x(k)$ notation from \eqref{eq:Xixdef} to negative values of $k$ by setting $v_{-k} = -v_k$ for $k \geq 1$.
  By symmetry, it suffices to show the analogue of the lemma from the perspective of $0$:
    \begin{equation}\label{eq:from0confine}
    \limsup_{k \to \infty}\, \P\left(
  \begin{array}{c}
  \geo(0, v_k) \subseteq (H_i + [-L, L]^2), \\
  v_k \in \tree_{0}, \,   \text{ $\Xi_{v_k}(-k)$ uniformly moves into $H_i$},\\
   \text{and indeed } \bigcup_{\gamma \in \Xi_{v_k}(-k)} \gamma \subseteq v_k + H_i + [-L, L]^2
  \end{array}
  \right) \geq \delta/4 \end{equation}
      for all $L$ larger than some ($k$-independent) constant.
The result of Lemma \ref{corners} implies  (along with Theorem \ref{rcg dense}) that there is some $\rho \in \functionals$ with $\arc(\rho) = \{v\}$ and an associated random coalescing geodesic $G_*$ with $\rho_{G_*} = \rho$ almost surely. Intuitively, \eqref{eq:from0confine} is true because the geodesic $\geo(0, v_k)$ must either lie in the region between $G_*$ and $G_n^+$ or the region between $G_*$ and $G_n^-$. However, because it is not guaranteed that all three of these geodesics lie in a common $H_i$, we have to take care in defining these regions.

  We for the moment fix $n \geq 1$ large enough that
\begin{equation}\label{eq:stari}
  \text{$\arc(G_n^+)$ and $v$ are both contained in a common (open) $H_i$};
\end{equation}
this is possible since there exist analogues of $G_*$ and $\rho$ associated to rotations and reflections of $v$. Enlarging $n$, we can ensure \eqref{eq:stari} also holds when we replace $G_n^+$ by $G_n^-$ (possibly also replacing $H_i$ with a different half-plane, to be denoted by $H_j$). 
For $H_i$ as in \eqref{eq:stari},
\begin{equation}
  \label{eq:minusv}
  \text{the point $-v$  lies outside of the closed set $[H_i \cup \partial H_i]$,}
\end{equation}
with an analogous statement holding for $H_j$.

By Lemma \ref{backwards}, the set $\{k \geq 1: \, v_k \in G_*\}$ is finite almost surely. % In particular, by
%\eqref{assumption}, we have that
%\[\text{for all large $k$, } \P(v_k \in \tree_0, \, v_k \notin G_*) > \delta / 2\ . \]
We fix an outcome $\omega \in \mathcal{X}$ such that finitely many $v_k$ lie in $G_*$. We will show that in $\omega$, %  We first observe that, in the outcome $\omega$,
% \begin{equation}
%   \begin{split}
%   \label{eq:noneshall}
%   \text{No vertex $u$ of $G_n^- \setminus (G_* \cup G_n^+)$ lies in the region between $G_*$ and $G_n^+$ in $H_i$.}
%     \end{split}
%   \end{equation}
%   In other words, $G_n^-$ cannot enter the interior of the region between $G_n^+$ and $G_*$ in $H_i$. To show \eqref{eq:noneshall}, we note that the closed set $\arc(\rho_n^-)$ is disjoint from the arc $[v, \sup \arc(\rho_n^+)] \subseteq S^1$. In particular, since $\dir(G_n^-) \subseteq \arc(\rho_n^-)$, each vertex $w$ sufficiently far down $G_n^-$ must lie outside of the region between $G_n^+$ and $G_*$ --- indeed, an appropriate arc of $\|w\|_2 S^1$ is a path from $w$ to $H_i^c$ avoiding $G_*$ and $G_n^+$. If \eqref{eq:noneshall} were false, we could choose a $u$ as in \eqref{eq:noneshall} and a $w$ as in the previous sentence, and the arc of $G_n^-$ between them would have to cross $G_n^+ \cup G_*$, forming a loop in $\tree_0$ (an impossibility on $\mathcal{X}$).
%   Having established \eqref{eq:noneshall}, we next show that for $\omega \in \mathcal{X}$,
 for large $k$,
 \begin{equation}
   \begin{split}
   \label{eq:allkbet}
   \text{ $v_k$ lies strictly between $G_*$ and $G_n^+$ in $H_i$ or strictly between $G_*$ and $G_n^-$ in $H_j$.}
 \end{split}
\end{equation}
(meaning that $v_k$ lies in the region between these two geodesics as in Definition \ref{defin:regionbetweentwo}, but that $v_k \notin G_n^+ \cup G_*$.)
   We begin by defining an appropriate version of the ``region $C$ between $G_n^+$ and $G_n^-$'' and arguing a weaker version of \eqref{eq:allkbet}: that $v_k$ must lie in $C$. Let $\zeta$ denote the last vertex common to both $G_n^+$ and $G_n^-$. As in Definition \ref{defin:regionbetweentwo}, the segments of $G_n^+$ and $G_n^-$ from $\zeta$ onwards form a doubly infinite simple path $P$ (considered as a plane curve). The Jordan curve theorem shows that $P$ divides $\R^2$ into two connected components. Because 
   \begin{equation}
       \label{eq:vnotindir}
       v \notin \dir(G_n^+) \cup \dir(G_n^-)
   \end{equation} (and the latter is a closed set), the vertices $v_k$ (for large $k$) and all large multiples of $v$ must lie in the same component $C$ of $\mathbb{R}^2 \setminus P$. It is easy to see from \eqref{eq:allkbet} and \eqref{eq:vnotindir} that every continuous curve connecting a large multiple of $v$ to $-v$ must intersect $P$. This shows the promised ``weaker version of \eqref{eq:allkbet}.'' 

   We now show \eqref{eq:allkbet} using a topological argument similar to many others we have made; again for definiteness, for the remainder of this paragraph, we assume $G_n^-$ and $G_*$ both eventually move into $H_0$ (in other words, $H_0 = H_j$). We write $C^+$ for the region between $G_n^+$ and $G_*$ (in $H_i$) and $C^-$ for the region between $G_n^-$ and $G_*$ (in $H_j = H_0$). Let us take $k$ large enough that the portions of the  geodesics $G_n^+$, $G_n^-$, and $G_*$ lying in $H_0 + (v_k \cdot \be_1) \be_1$ are all distinct. We also assume that %$v_k$ lies in $C$ and not on $G_*$ (if $v_k \in G_*$ for large $k$, then $v_k$ is in both $C^+$ and $C^-$), and that
   the ray
  \[\pi := \left\{v_k + \frac{1}{2} [\be_1 + \be_2] - t \be_2: \, t \geq 0\right\}\]
  does not intersect $G_n^+$, which also holds for large $k$, since the elements of $\dir(G_n^+)$ have larger $\be_2$-coordinate than $v$. The starting point of $\pi$ is $v_k$, which has some membership status with respect to $C, \, C^+,\, C^{-}$. For $t$ large, the point $v_k + \frac{1}{2} [\be_1 + \be_2] - t \be_2$ lies outside of $C^+$, $C^-$, and $C$. 
  
  The path $\pi$ can cross only $G_*$ and $G_n^-$. Each time $\pi$ crosses $G_n^-$, it enters or exits  $C_-$ and also $C$; in particular, there must be an odd number of such crossings, since $\pi$ ends up outside $C$. Similarly, each crossing of $G_*$ corresponds to an entrance or exit of $C_+$ and also $C_-$. If the number of crossings of $G_*$ is odd, then $\pi$ crosses from $C_+$ to its complement, so $v_k \in C_+$; if the number is even, then $\pi$ crosses from $C_-$ to its complement, so $v_k \in C_-$. This proves \eqref{eq:allkbet} holds for $\omega$, for all large $k$.
  
 Thus it follows from Lemma \ref{lem:geobetween} that, in $\omega$, the geodesic $\geo(0,v_k)$ lies in $C_+$ or $C_-$ (possibly with some vertices belonging to the initial portions of $G_n^+$, $G_n^-$, $G_*$ before these three geodesics branch). Since $\omega$ was an arbitrary element of a probability one event,  by \eqref{assumption} we have  % by  Lemma \ref{lem:geobetween}, we see that ther$\geo(0, v_k) \subseteq C \cup G_n^+ \cup G_n^-$.   We now use the fact that that $G_*$ ``cuts $C$ into two pieces'' to show that, for all large $k$, either $v_k$ lies in the region $C^+$ between $G_n^+$ and $G_*$ (in $H_i$) or in the region $C^-$ between $G_n^-$ and $G_*$ (in $H_j$).
   % Now we suppose for the sake of contradiction that \eqref{eq:allkbet} did not hold. Choose a $v_k$ that lies in $C$ but neither in the region $R^+$ between $G_n^+$ and $G_*$ (in $H_i$) nor in the region $R^-$ between $G_n^-$ and $G_*$ (in $H_j$); choose a point $p \in \R^2$ which lies outside of $C$ as well as $R^+$ and $R^-$. (For instance, $p$ could be chosen as a large multiple of $-v$.) There exists a path (in $\R^2$) from $v_k$ to $p$ which does not intersect the boundary curve of $R^+$ but which intersects the boundary curve of $C$ exactly once. 
  % By \eqref{eq:minusv}, for all large $r > 0$ the point $- r v \in \R^2$ lies neither in the region between $G_*$ and $G_n^+$ in $H_i$ nor between $G_*$ and $G_n^-$ in $H_j$. In particular, since we have assumed 
%   We now restrict our consideration to outcomes in $\mathcal{X} \cap E_k$.
%  If $G_*$ intersects $P$, we let $P_*$ denote the portion of $G_*$ from its last intersection with $P$ onwards. Otherwise, we let $P_*$ be the concatenation of $G_*$ with $\geo(0, \zeta)$; in either case, we consider $P_*$ as a plane curve.
% An application of the Jordan curve theorem shows that $P_*$ divides $C$ into two disjoint components, $C_1$ and $C_2$. Since $v_k / \|v_k\|_2 \to v$, the vertex $v_k$ lies in $C$ for all large $k$.  In particular, either $v_k \in C_1 \cup P_* $ or $v_k \in C_2 \cup P_*$; we assume the former case for definiteness. The geodesic $\geo(0, v_k)$ then cannot enter $C_2,$ since it would subsequently intersect either $P_*$ or $G_1$ to re-enter $C_1$, and this is forbidden on $\mathcal{X}$.
 \begin{equation}
   \label{eq:maxregion}
   \limsup_{k \to \infty} \max_{U = C^+, C^-} \{ \P( \mathcal{X} \cap \{v_k \in \tree_{0}, \, \geo(0, v_k)  \text{ ends in } U \})  \} \geq \delta / 2 \ .
 \end{equation}
%{\color{red} Maybe replace finite geo lemma with equation? Our main observation should just be that a geo which strictly goes into region between two others cannot subsequently leave. Probably used in one of the lemma proofs in ordering sec, actually. }
 
We assume that the maximal probability in \eqref{eq:maxregion} is attained when $U = C^+$ (the other case is similar) and consider an outcome in the corresponding event. In this outcome, for each $\gamma \in \tree_0$ with $v_k \in \gamma$, the initial segment $\geo(0, v_k)$ of $\gamma$ terminates at a point lying strictly within $C^+$ --- hence, by Lemma \ref{lem:geobetween} again, $\gamma$ itself lies between $G_n^+$ and $G_*$ in $H_i$, and in particular eventually moves into $H_i$.

Because $C^+$ contains only finitely many vertices of $\Z^2 \setminus H_i$, the claim of \eqref{eq:from0confine} involving $\gamma$ holds:
on the event in \eqref{eq:maxregion}, with $U = C^+$,
\begin{equation}
  \label{eq:almostfrom0}
  \text{any $\gamma \in \tree_0$ with $v_k \in \gamma$ is contained in $H_i \cup [-L, L]^2$ for some large random $L$}.
\end{equation}
Of course, an analogous statement holds in the case $U = C^-$, with $H_i$ replaced by $H_j$.
Thus, in the case $U = C^+$, we can choose an $L$ large such that 
\begin{equation}
    \label{eq:almostXi}
    \limsup_{k \to \infty}\, \P\left(
  \begin{array}{c}
  \geo(0, v_k) \subseteq (H_i + [-L, L]^2), \\
  v_k \in \tree_{0}, \,   \text{ $\Xi_{v_k}(-k)$ uniformly moves into $H_i$}
  \end{array}
  \right) \geq 3\delta/8\ .
\end{equation}

The event from \eqref{eq:maxregion} also allows us to guarantee the remaining condition on  $\Xi_{v_k}(-k)$ from  \eqref{eq:from0confine} holds. Indeed, let us again assume $U = C^+$ for definiteness. Any $\gamma \in \Xi_{v_k}(-k)$ is a subsegment of some $\gamma' \in \tree_0$, which must (by the preceding discussion) lie in the region between $G_n^+$ and $G_*$ in $H_i$. Since $G_n^+$ and $G_*$ coalesce with $G_n^+(v_k)$ and $G_*(v_k)$, we also have $G_*(v_k) \prec \gamma \prec G_n^+(v_k)$ in $H_i$. We can choose a deterministic large $L$ such that
\[\P\left(\begin{array}{c}\text{ the region between $G_*(v_k)$ and $G_n^+(v_k)$ in $H_i$ is contained}\\ \text{in $[v_k + H_i] \cup [v_k + [-L,L]^2]$}
\end{array}
\right) > 1-\delta / 8 \]
uniformly in $k$.
% outcome in this event, for each $\gamma \in \tree_0$ with $v_k \in \gamma$, we note that  $G_n^+(v_k) \prec \gamma \prec G_*(v_k)$ as well, since $G_n^+(v_k)$ (resp.~$G_*(v_k)$) coalesces with $G_n^+$ (resp.~$G_*$). {\color{red} Maybe replace with $G_n^+$? Lemma 3.10 involving regions between geos assumes all three are in $\tree_0$, but here $\gamma \in \tree_0$ but e.g.~$G_n^+ \notin \tree_0$.}

%On the other hand, the region between $G_*(v_k)$ and $G_n^+(v_k)$ in $H_i$ 
%has finite intersection with $\Z^2  \setminus [z + H_i]$ for each $z \in \Z^2$, and so $\Xi_{v_k}(-k)$ uniformly moves into $H_i$.
%Taking in particular $z = v_k$, we see that for almost every outcome in the event from \eqref{eq:maxregion},
%\begin{align*}
%  &\text{any $\gamma' \in \tree_{v_k}$ which is a terminal segment of some $\gamma \in \tree_0$} \\
 % &\text{ satisfies  $\gamma' \subseteq (v_k + H_i \cup [-L, L]^2)$ for a (random) $L$}.
 % \end{align*}
Pulling together this observation with \eqref{eq:almostXi} shows we can choose $L$ large such that \eqref{eq:from0confine} holds, completing the proof of the lemma.
  \end{proof}

  Lemma \ref{lem:hpfromvk} allows us to localize $\geo(0, v_k)$ near its endpoint $v_k$ on $\{0 \in \tree_{v_k}\}$. The following lemma is the counterpart for the other endpoint, $0$. As alluded to previously, we are able to prove a stronger result for this endpoint thanks to Lemma \ref{backwards}. In particular, though we cannot guarantee that  Lemma \ref{lem:hpfromvk} would hold with any particular value of $i$, the following lemma guarantees that $\geo(0, v_{I(k)})$ is contained in an enlargement of $H_i$ for the same value of $i$:

\begin{lemma}\label{lem:hpfromvk2}Assuming \eqref{assumption} holds,
  \[\limsup_{L \to \infty} \limsup_{k \to \infty} \P(0 \in \tree_{v_k}, \, \geo(0, v_k) \not \subseteq H_i \cup [-L, L]^2) = 0\ ,  \]
  where $i$ has the same value as in the statement of Lemma \ref{lem:hpfromvk}.
			\end{lemma}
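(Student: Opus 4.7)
The plan is to upgrade the trapping argument from the proof of Lemma~\ref{lem:hpfromvk} by aligning all the auxiliary random coalescing geodesics inside the same half-plane $H_i$, something made possible by the existence result in Lemma~\ref{corners}(2). First I will invoke that lemma together with Theorem~\ref{rcg dense} to fix a random coalescing geodesic $G_*$ whose associated functional satisfies $\arc(\rho_{G_*}) = \{v\}$. Since $v$ lies in the open half-plane $H_i$ coming from Lemma~\ref{lem:hpfromvk}, the geodesic $G_*$ eventually moves into $H_i$ on the event $\mathcal{X}$. Because $\inf \arc(\rho_n^+) \downarrow v$ and $\sup \arc(\rho_n^-) \uparrow v$ by the maximality in Definition~\ref{defin:Gplus}, I can choose $n$ large enough that $\arc(\rho_n^\pm) \subseteq H_i$ as well, so that both $G_n^+$ and $G_n^-$ also eventually move into $H_i$. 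I will then let $C^+$ (resp.~$C^-$) denote the region between $G_*$ and $G_n^+$ (resp.~$G_*$ and $G_n^-$) in $H_i$ in the sense of Definition~\ref{defin:regionbetweentwo}.

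Because each of the three geodesics $G_*, G_n^+, G_n^-$ leaves $H_i$ only finitely many times on $\mathcal{X}$, there is an a.s.~finite random constant $L_* = L_*(\omega)$ with
\[
C^+ \cup C^- \cup G_*(0) \cup G_n^+(0) \cup G_n^-(0) \;\subseteq\; H_i \cup [-L_*, L_*]^2.
\]
By Lemma~\ref{backwards} applied to $G_*$, only finitely many $v_k$ lie on $G_*$ almost surely. The Jordan-curve argument from the proof of Lemma~\ref{lem:hpfromvk} establishing \eqref{eq:allkbet} then shows that, for all but finitely many $k$ --- say $k \geq k_0(\omega)$ --- the vertex $v_k$ lies strictly in $C^+$ or $C^-$. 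Lemma~\ref{lem:geobetween} then forces $\geo(0, v_k)$ to lie either in $C^+ \cup G_*(0) \cup G_n^+(0)$ or in $C^- \cup G_*(0) \cup G_n^-(0)$, and in either case $\geo(0, v_k) \subseteq H_i \cup [-L_*, L_*]^2$.

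Taking unconditional probabilities --- which suffices since the event in the lemma is a sub-event of $\{\geo(0, v_k) \not\subseteq H_i \cup [-L,L]^2\}$ --- I obtain
\[
\P\big(\geo(0, v_k) \not\subseteq H_i \cup [-L,L]^2\big) \;\leq\; \P(k_0 > k) + \P(L_* > L),
\]
which vanishes upon sending $k \to \infty$ first and then $L \to \infty$. The main point to check carefully is that the $H_i$ chosen in Lemma~\ref{lem:hpfromvk} can indeed host both $\arc(\rho_n^+)$ and $\arc(\rho_n^-)$ simultaneously for suitable $n$ --- the earlier proof only required $H_i$ to contain $\arc(\rho_n^+)$, with a possibly different $H_j$ absorbing $\rho_n^-$. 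This is automatic here because $v$ lies in the open interior of $H_i$ and $\arc(\rho_n^\pm) \to \{v\}$, but it may force us to pick a larger value of $n$ than in the earlier proof, and one has to verify that the topological argument yielding \eqref{eq:allkbet} still applies in this common-half-plane regime.
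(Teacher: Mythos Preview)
Your argument has a genuine gap at the point you yourself flag: the assertion that $\arc(\rho_n^\pm)\to\{v\}$ is not justified by the maximality in Definition~\ref{defin:Gplus}, and in fact it can fail. The sequences $(\rho_n^+)$ and $(\rho_n^-)$ are drawn from $\functionals_v^\pm$, whose members must satisfy $\inf\arc(\rho)>v$ (resp.\ $\sup\arc(\rho)<v$). Nothing guarantees that $\functionals_v^+$ contains functionals whose arcs come arbitrarily close to $v$: if $\partial\ball$ has a flat side $[v,w]$ with $w>v$, the tangent functional $\tau$ of that side has $\inf\arc(\tau)=v$ and is therefore \emph{excluded} from $\functionals_v^+$, while every $\rho\in\functionals_v^+$ must have $\inf\arc(\rho)\geq w$. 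Concretely, if $\ball$ is the $\ell^1$ ball and $v=0$, then by symmetry the only elements of $\functionals_v^-$ have $\arc(\rho)\subseteq\{-\pi/2\}$, so $\sup\arc(\rho_n^-)=-\pi/2$ for all $n$. In that example the half-plane produced by Lemma~\ref{lem:hpfromvk} is $H_1$ or $H_7$; in the former case $\arc(\rho_n^-)=\{-\pi/2\}\not\subseteq H_1$ for every $n$, and $G_n^-$ never eventually moves into $H_i$. Your trapping region $C^+\cup C^-$ therefore cannot be defined inside the required $H_i$, and the bound on $L_*$ collapses.

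The paper avoids this obstruction by not trying to trap the vertices $v_k$ between $G_n^+$ and $G_n^-$ at all. Instead it works on the event $\mathcal{D}_1$ of Lemma~\ref{corners}, where every subsequential limit $\gamma$ of $(\geo(0,v_{I(k)}))$ already satisfies $\arc(\rho_\gamma)=\{v\}$; such a $\gamma$ automatically eventually moves into the given $H_i$ because $v$ lies in the open $H_i$. It then uses Lemma~\ref{lem:extremal} to extract extremal subsequential limits $g_+,g_-$, and invokes Lemma~\ref{backwards} against the \emph{infinitely many} random coalescing geodesics with $\arc(\rho)=\{v\}$ (supplied by Lemma~\ref{corners}(2)) to find $G_1\prec g_+\prec g_-\prec G_2$ strictly in $H_i$. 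Since $G_1,G_2$ themselves have $\arc=\{v\}\subseteq H_i$, the region between them in $H_i$ is well-defined and differs from $H_i$ only in a bounded set; Lemma~\ref{lem:geobetween} then confines $\geo(0,v_{I(k)})$ there for large $k$. The essential idea you are missing is to sandwich with RCGs whose arcs equal $\{v\}$ rather than with the $G_n^\pm$, and to reach those via the limiting geodesics rather than via the location of $v_k$.
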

			\begin{proof}
                         % As usual, for sake of notation we assume that $H_i = H_0$.
				We fix an arbitrary $\varepsilon > 0$. We will establish the claim of the lemma by making a choice of positive integers $L,K$ such that 
				\begin{equation}
				\label{eq:hanks}
					\P(0 \in \tree_{v_k}, \, \geo(0, v_k)  \not \subseteq H_i \cup [-L, L]^2) \leq \varepsilon \text{ for all $k > K$.}
					\end{equation}
				We will focus on the event $\mathcal{D}$ defined in \eqref{assumption3}; we first show that we can disregard contributions to \eqref{eq:hanks} from the complementary event $\mathcal{D}^c$.
				We upper bound the probability in \eqref{eq:hanks} by
				\begin{equation}
				\label{eq:hanks2}
				\P(0 \in \tree_{v_k}, \,\geo(0, v_k)  \not \subseteq H_i \cup [-L, L]^2, \, \mathcal{D}) + \P(0 \in \tree_{v_k}, |I| < \infty)
				\end{equation}
				(where we have discarded the zero-probability event $\mathcal{X}^c$ as well). The latter probability converges to $0$ as $k \to \infty$, so we can choose a $K < \infty$ such that $\P(0 \in \tree_{v_k}, |I| < \infty) < \varepsilon / 3$ for all $k > K$. It remains to control the first term of \eqref{eq:hanks2}.

				As a first step, we recall the event $\mathcal{D}_1$ from the statement of Lemma \ref{corners}; since $\P(\mathcal{D} \setminus \mathcal{D}_1) = 0$, we can replace $\mathcal{D}$ by $\mathcal{D}_1$ in the first term of \eqref{eq:hanks2}.
                                As in \eqref{eq:stari}, we choose a $G_n^+$ and a half-plane of the form $H_j$ such that both $G_n^+$ and any geodesic $\gamma$ with $\dir(\gamma) = \{v\}$ eventually move into $H_j$ (%unlike in \eqref{eq:stari},
                                 we write $H_j$ to distinguish this half-plane from the special half-plane $H_i$ appearing in the statement of the lemma).
                                 
                                The geodesic $G_n^+$ will serve to uniformly bound the set of subsequential limits of $(\geo(0, v_{I(k)}))$, allowing us to find an extremal subsequential limit. Formally, we can choose some subsequential limit $g$, which (because $\arc(\rho_g) = \{v\}$ on $\mathcal{D}_1$) satisfies $G_n^+ \prec g$ in $H_j$. The family 
                                \[\{\gamma:\,\text{$\gamma$ is a subsequential limit of $(\geo(0,v_{I(k)}))$ with $\gamma \prec g$} \} \]
                                satisfies the assumption of  Lemma~\ref{lem:extremal}. Thus, for each outcome in $\mathcal{D}_1$, we may define such a random subsequential limit $g_+$ which satisfies $g_+ \prec \gamma'$ in $H_j$ for any subsequential limit $\gamma'$ of $(\geo(0, v_{I(k)}))$.
                                
                                 Because on $\mathcal{D}_1$ each subsequential limit $\gamma$ of $(\geo(0, v_{I(k)}))$ also eventually moves into $H_i$, we have by \eqref{it:twoplanes} of Lemma~\ref{lem:ordergood} that $g_+ \prec \gamma$ in $H_i$ as well. 
                                 A similar argument yields a subsequential limit $g_-$ such that each $\gamma \prec g_-$ in $H_i$.
                                 % Because both $g_+$ and $g_-$
                                  For each fixed random coalescing geodesic $G$, by Lemma \ref{backwards}, we have $g_+, g_- \neq G$ for almost every outcome in $\mathcal{D}_1$. Thus, we can choose distinct random coalescing geodesics $G_1, G_2$ with $\arc(\rho_{G_1}) = \arc(\rho_{G_2}) = \{v\}$ such that for all $k  > K$,
				\begin{equation}
				\label{eq:hanks3}
				\P(\mathcal{D}_1, G_1 \prec g_+ \prec g_- \prec G_2 \text{ in $H_i$, with $g_+ \neq G_1$ and $g_- \neq G_2$}) > \P(\mathcal{D}_1) - \varepsilon/3\ .
				\end{equation}
				
				On the event in \eqref{eq:hanks3}, Lemma \ref{lem:geobetween} tells us that for large $k$, the elements of the sequence $(\geo(0, v_{I(k)}))$ lie in the region between $G_1$ and $G_2$ in $H_i$ (with some initial vertices shared with $G_1$ and $G_2$, as usual).  In particular, if we  bound the first term of \eqref{eq:hanks2} by
                                \begin{equation}\label{eq:hanks4}
                                \begin{gathered}
      				  \P(0 \in \tree_{v_k}, \,   \mathcal{D}_1, \geo(0, v_k) \text{ is not between $G_1$ and $G_2$ in $H_i$}) \\
                                  + \P(\mathcal{D}_1,\, \text{$G_1$ or $G_2$ intersects $\Z^2 \setminus H_i$ outside of $[-L, L]^2$})\ , 
                                  \end{gathered}
                                \end{equation}
                                then we see the first term of \eqref{eq:hanks4} is smaller than $\varepsilon /3$ uniformly in $L$ for $k$ sufficiently large. On the other hand, the second term of \eqref{eq:hanks4} can be made smaller than $\varepsilon / 3$ uniformly in $k$ large by taking $L$ sufficiently large, by the fact that $G_1$ and $G_2$ eventually move into $H_i$. In other words, returning to \eqref{eq:hanks}, we see
                                \[\P(0 \in \tree_{v_k}, \, \geo(0, v_k)  \not \subseteq H_i \cup [-L, L]^2) < \varepsilon \]
                                for a fixed large $L = L(\varepsilon)$, uniformly in large $k$.
				\end{proof}

\subsection{Constructing a non-crossing family: the first case. \label{sec:finseg}}

      In the previous section, we proved several lemmas allowing us to ``localize'' the geodesics $\geo(0, v_{I(k)})$ within half-planes. The reasons for the particular form of Lemmas \ref{lem:hpfromvk}, \ref{lem:hpfromvk2} will become clearer as we proceed; for now, it is worth briefly recalling our chief goals. We will show that assumption \eqref{assumption} leads to a contradiction by showing that under that assumption, one can construct a shift-invariant measure on non-crossing geodesics which assigns positive probability to graphs with infinitely many coalescence classes. This will contradict Theorem \ref{thm:non-crossing} and prove that assumption \eqref{assumption} is false. In fact, the ``non-crossing'' portion of the construction is not obvious, and this will force us to consider two different possible constructions --- this section is devoted to the first, and Section \ref{sec:finseg2} is devoted to the other. Which construction we use depends on whether \eqref{raul seixas} below holds or not; we discuss the technical problem in more detail at \eqref{raul seixas}.

      As a first step, we need to produce a suitable shift-invariant family of geodesics: we build these from shifted analogues of $\geo(0, v_{I(k)})$. More accurately, we will choose geodesics $\geo(x, x+ v_k)$ for $x$ such that the event $\good_k(x)$ of Definition \ref{defin:goodevent} below occurs. %For this purpose, we fix an $L_0$ large such that
\begin{definition}
  \label{defin:goodevent}
  We define $\good_k(x) = \good_{k,L}(x)$ to be the subevent of $\mathcal{X}$ on which (in addition to the defining conditions of $\mathcal{X}$ from Definition \ref{defin:calx}) the following hold (with the same choice of $H_i$ as in Lemmas \ref{lem:hpfromvk} and \ref{lem:hpfromvk2} and with $\Xi_x(k)$ as in \eqref{eq:Xixdef}):
\begin{enumerate}[label = {[\alph*]}]
\item $x \in \tree_{x + v_k}$;
\item $\geo(x,x +  v_k) \subseteq [x + (H_i \cup [-L/4,L/4]^2)]$, and $\geo(x, x + v_k)  \subseteq x + v_k + (H_{i+4} + [-L/4, L/4]^2)$;\label{it:good2}
  \item For each $\gamma \in \Xi_x(k)$, we have $\gamma \subseteq (x + H_{i+4} + [-L/4, L/4]^2])$,  and furthermore the family $\Xi_x(k)$ uniformly moves into $H_{i+4}$.\label{it:good3}
% \item After intersecting $0$, the geodesic $\gamma_k$ (defined at \eqref{what}) intersects $H_+(0)$ only in $[-L, L]^2$;
%   \item  {\color{red} this might need something like previous item for all $v_{I(\ell)}$. } All subsequential limits $\gamma$ of the terminal segments of  $\gamma_{I(\ell)}$ from $0$ onward have $\arc(\gamma) = \{-v\}$.
\end{enumerate}
We omit the argument in the case $x = 0$, writing $\good_k = \good_{k,L}$ instead of $\good_k(0) = \good_{k,L}(0)$.
\end{definition}
We fix an $L_0$ uniform in $k$ such that
\begin{equation}
  \label{eq:goodprob}
  \text{for each $L \geq L_0$, we have} \quad \P(\good_{k,L}) > \delta / 8\quad \text{for all large $k$}\ .
\end{equation}
% where $i$ is the same as in Lemmas \ref{lem:hpfromvk} and \ref{lem:hpfromvk2}.
We note that those Lemmas \ref{lem:hpfromvk} and \ref{lem:hpfromvk2} guarantee that such a choice of  $L_0$ can be made.

\paragraph{\bf A convention about half-planes}
Much of the subsequent work of the paper follows from consequences of \eqref{eq:goodprob} --- indeed, we will show that properties of $\good_k$ lead to a contradiction which shows \eqref{assumption} is false. As in many of our proofs above, to avoid cumbersome notation, we assume that
\begin{equation}
  \label{eq:assumehalf}
\text{the half-plane $H_i$ invoked in the definition of $\good_k$ is $H_0$.}  
\end{equation}
Assumption \eqref{eq:assumehalf} will remain in force for the remainder of this section. As usual, the adaptations in what follows required to treat the case of a different $H_i$ are very minor but notationally cumbersome.

%{\color{red} notation $\gamma_k$ for leftmost of $\tree_{v_k}$ with $0 \in \gamma_k$?}

We will use $\good_{k, L}$ to build an event $\good_{k,L}^*(x)$ which will allow us, under the assumption \eqref{assumption}, to create a family of nonintersecting geodesics which remain in a common $H_i$ (working with $H_0$ for definiteness as in \eqref{eq:assumehalf}). In fact, we cannot guarantee this event has positive probability, and this is what forces us to consider two different possible constructions --- the construction in this section will suit our needs when \eqref{raul seixas} holds, and the construction in Section \ref{sec:finseg2} will be used otherwise.

We note that for each fixed value of $L \geq L_0$, we can find infinitely many values of  $\ell \geq L$ and associated $\delta' = \delta'(L)> 0$ such that
\begin{equation}
 \label{eq:raul0}
  \limsup_{k \to \infty} \prob(\good_{k,L} \cap \good_{k,L}(\ell \be_2)) > \delta'\ .
\end{equation}
We assume for the remainder of this section that the above can be strengthened to include a non-intersection condition; namely, there exists an $L \geq L_0$ and a $\delta' > 0$ such that
\begin{equation}
  \label{raul seixas}
  \limsup_{k \to \infty} \prob(\good_{k,L} \cap \good_{k,L}(L \be_2) \cap \{\geo(0, v_k) \cap \geo(L \be_2, L \be_2 + v_k) = \varnothing\}) > \delta'\ .
\end{equation}
We denote the event appearing in \eqref{raul seixas} by $\good_{k,L}^*$. This setting in which \eqref{raul seixas} holds is the ``first case'' alluded to in the title of this section.% We note that the event $\good_{k, L}$ is monotone in $L$, so in fact we can enlarge $L$ until $L = \ell$ --- we use this specific case of \eqref{raul seixas} in what follows.

Similarly, for a general vertex $x \in \Z^2$, we define
\begin{align}
  \label{eq:goodstar}
  \good_{k,L}^*(x) := &\good_{k,L}(x) \cap \good_{k,L}(x + L \be_2)\\
  \nonumber\cap &\{\geo(x, x + v_k) \cap \geo(x+ L \be_2, x+ L \be_2 + v_k)  = \varnothing\}\ .
\end{align}
The value of $L$ will now be fixed for the remainder of this construction so that \eqref{raul seixas} holds. We tend to omit explicit reference to $L$ in notation going forward, writing $\good_k^*(x)$.

The non-intersection of $\geo(0, v_k)$ and $\geo(L \be_2, L \be_2 + v_k)$ appearing in the event $\good_k^*$ in fact guarantees that $\geo(0, v_k)$ is ``shielded'' from other geodesics originating above $\geo(L \be_2, L \be_2 + v_k)$. This fact, which is the content of the following lemma, is a main reason for including condition \eqref{it:good2} in the definition of $\good_k$ (Definition \ref{defin:goodevent}) and hence a main reason for our work in Lemmas \ref{lem:hpfromvk} and \ref{lem:hpfromvk2}.
\begin{lemma}
  \label{lem:crosslocal}
  Suppose $k$ is large enough that $v_k \in H_0$.
  If $\good_k^* \cap \good_k^*(j L \be_2)$ occurs for some $j \geq 1$, then $\geo(0, v_k) \cap \geo(j L \be_2, j L \be_2 + v_k) = \varnothing$.
\end{lemma}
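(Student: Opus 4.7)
The case $j = 1$ is immediate, since the disjointness is built into the definition of $\good_k^*$. Assume $j \geq 2$ and write $\geo_i := \geo(iL\be_2, iL\be_2 + v_k)$ for $i \in \{0, 1, j, j+1\}$. The hypothesis $\good_k^* \cap \good_k^*(jL\be_2)$ supplies $\geo_0 \cap \geo_1 = \varnothing$ and $\geo_j \cap \geo_{j+1} = \varnothing$; moreover, the good events place each $\geo_i$ inside the vertical strip $\{w : 0 \leq w \cdot \be_1 \leq v_k \cdot \be_1 + L/4\}$ together with a box of side $L/2$ around the starting vertex $iL\be_2$. In particular, the four starting boxes sit at disjoint $\be_2$-heights $0, L, jL, (j+1)L$, each of half-width $L/4$.

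The strategy is to use $\geo_1$ as a topological separator that places $\geo_0$ and $\geo_j$ in distinct Jordan components of $\R^2$. Extend $\geo_1$ to a doubly infinite simple plane curve $\mathcal{C}_1 := R^L \cup \geo_1 \cup R^R$, where $R^L$ and $R^R$ are continuous rays of irrational slope emanating from the two endpoints of $\geo_1$ and running ``downward'' (for instance south-southwest from $L\be_2$ and south-southeast from $L\be_2 + v_k$). Irrationality of the slopes forces the rays to meet $\Z^2$ only at their initial vertex, and the box localisation of $\geo_1$ at $L\be_2$, combined with a careful choice of slope, allows us to arrange $R^L \cup R^R$ to meet $\geo_0, \geo_1$, and $\geo_j$ only at the two endpoints of $\geo_1$ themselves. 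By the Jordan curve theorem, $\R^2 \setminus \mathcal{C}_1$ has two components $U_-$ and $U_+$; the origin lies in $U_-$, and both $jL\be_2$ and $jL\be_2 + v_k$ lie in $U_+$, as vertical upward probes witness (using that the starting box of $\geo_1$ has $\be_2 \leq 5L/4 < 7L/4 \leq jL - L/4$). Combined with $\geo_0 \cap \geo_1 = \varnothing$, this yields $\geo_0 \cap \mathcal{C}_1 = \varnothing$, hence $\geo_0 \subseteq U_-$ by connectedness, and also $\geo_j \cap \mathcal{C}_1 \subseteq \geo_j \cap \geo_1$.

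The main obstacle is to exclude $\geo_j \cap \geo_1 \neq \varnothing$: within the shared interior strip $\{0 < w \cdot \be_1 < v_k \cdot \be_1\}$ both paths are free to reach arbitrary $\be_2$, so nothing in $\good_k$ on its own forbids such a crossing. The resolution invokes the secondary barrier $\geo_{j+1}$, which is disjoint from $\geo_j$ by $\good_k^*(jL\be_2)$: a mirror construction produces a doubly infinite simple plane curve $\mathcal{C}_{j+1}$ extending $\geo_{j+1}$ (with upward-pointing rays) on which $\geo_j$ lies on a single side. Near the left column $\{w \cdot \be_1 = 0\}$ the four starting boxes enforce the consistent vertical ordering of $\geo_0, \geo_1, \geo_j, \geo_{j+1}$ increasing in $\be_2$, and a planar parity argument on the four simple curves $\mathcal{C}_0, \mathcal{C}_1, \mathcal{C}_j, \mathcal{C}_{j+1}$ (constructed analogously) shows that any hypothetical crossing of $\geo_j$ and $\geo_1$ would force either $\geo_0$ to cross $\geo_1$ or $\geo_j$ to cross $\geo_{j+1}$, each contradicting a given disjointness. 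Hence $\geo_j \subseteq U_+$, and we conclude $\geo_0 \cap \geo_j \subseteq U_- \cap U_+ = \varnothing$.
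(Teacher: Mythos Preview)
There is a genuine gap in your argument. The ``planar parity argument'' you invoke at the end --- claiming that if $\geo_j$ meets $\geo_1$ then either $\geo_0$ meets $\geo_1$ or $\geo_j$ meets $\geo_{j+1}$ --- is not valid, and you give no proof of it. Nothing in the hypotheses prevents the two middle paths $\geo_1$ and $\geo_j$ from swapping $\be_2$-order somewhere in the strip while $\geo_0$ remains strictly below both and $\geo_{j+1}$ remains strictly above both; the endpoint ordering and the two disjointness conditions $\geo_0 \cap \geo_1 = \varnothing$, $\geo_j \cap \geo_{j+1} = \varnothing$ place no topological obstruction on $\geo_1 \cap \geo_j$. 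So the reduction fails, and with it your conclusion that $\geo_j \subseteq U_+$.

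The paper's proof does not attempt to rule out $\geo_j \cap \geo_1 \neq \varnothing$ at all. Instead it builds the separating curve $P$ so that a specific subsegment $\pi$ of $\geo_1$ (from its last exit of $\partial H_0$ to its first hit of $v_k + \partial H_0$) is the only part of $P$ that $\geo_j$ could possibly touch. Then it uses \emph{uniqueness of geodesics} on the event $\mathcal{X}$: if $\geo_j$ touched $\pi$ and then entered the component $C_1$ containing $\geo_0$, it would have to exit again through $\pi$ (since its endpoint lies in $C_2$), producing two distinct geodesics between a pair of vertices of $\pi$. Hence $\geo_j$ may touch $\pi$ but never enters $C_1$, and $\geo_0 \subseteq C_1$ gives the disjointness. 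Note that this argument never uses $\geo_{j+1}$; the paper remarks that only $\good_k^* \cap \good_k(jL\be_2)$ is needed. Your attempt to avoid the uniqueness-of-geodesics step by bringing in $\geo_{j+1}$ as a second barrier is both unnecessary and, as written, unsound.
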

In fact, the proof will establish the conclusion of the lemma under the weaker assumption that $\good_k^* \cap \good_k(j L \be_2)$ occurs. See Figure~\ref{fig:34} for a depiction of the relevant aspects of the event $\good_k^* \cap \good_k^*(j L \be_2)$.

	\begin{figure}\label{fig:34}
 \includegraphics[scale=0.5]{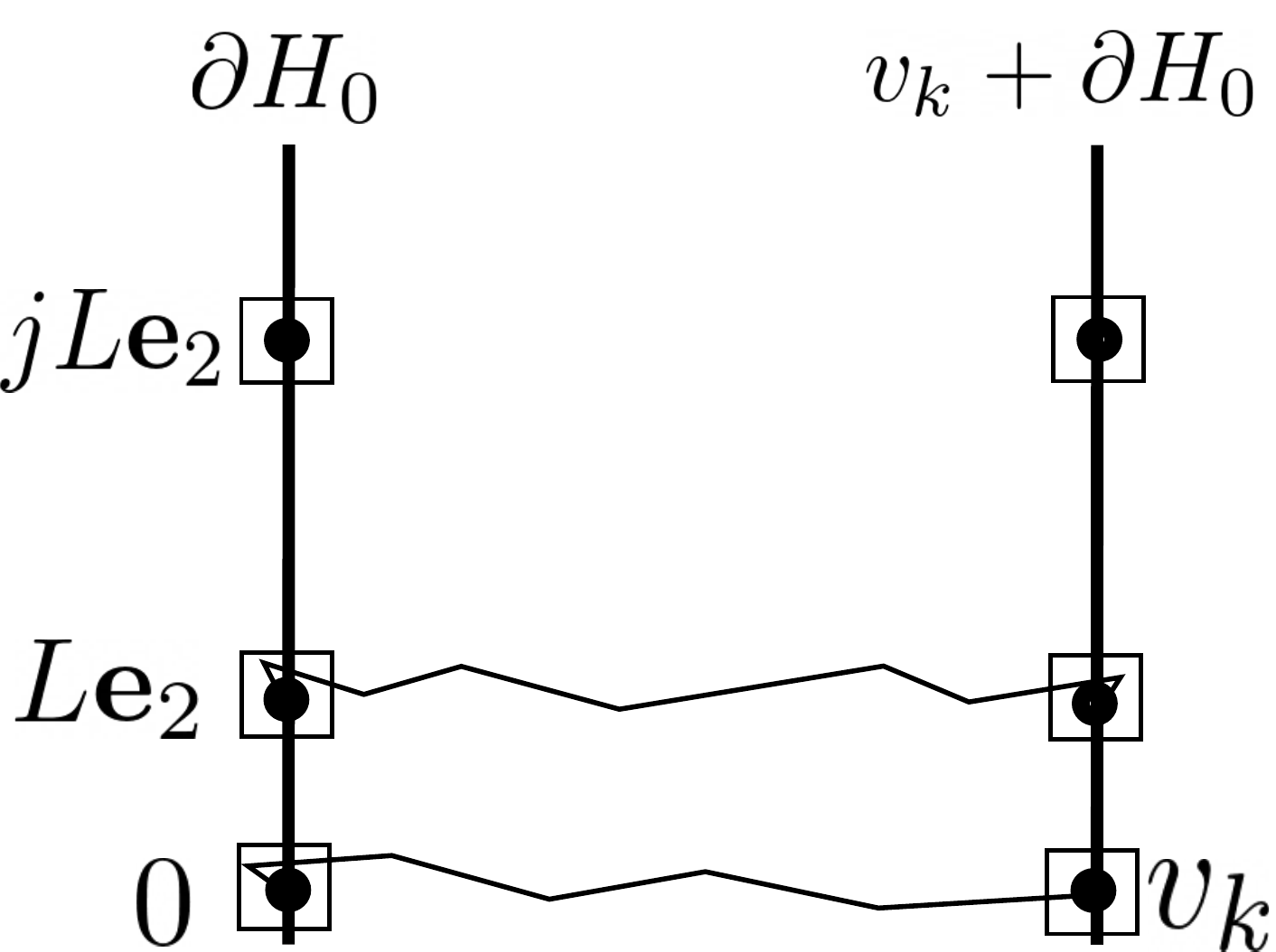}
    \caption{The relevant portions of the geometry of geodesics in the event $\good_k^* \cap \good_k^*(j L \be_2)$ from Lemma~\ref{lem:crosslocal}. The geodesics $\geo(0, v_k)$ and $\geo(L \be_2, L \be_2 + v_k)$ cannot intersect, and must remain confined in the region between $\partial H_0$ and $v_k + \partial H_0$ except within distance $L/4$ of their endpoints, the squares surrounding these points on the figure. The geodesic $\geo(j L \be_2, j L \be_2 + v_k)$ is similarly confined.  Thus, the geodesic $\geo(L \be_2, L \be_2 + v_k)$ forms a barrier preventing $\geo(0, v_k)$ and $\geo(j L \be_2, j L \be_2 + v_k)$ from intersecting.
    } \label{fig:19}
\end{figure}

\begin{proof}
  Suppose that $\good_k^* \cap \good_k^*(j L \be_2)$ occurs; if $j = 1$, the claim of the lemma is obvious. Otherwise, since $\good_k^* \subseteq \mathcal{X}$, the geodesic $\geo(L \be_2, L \be_2 + v_k)$ is well-defined and has a last intersection with $\Z^2 \setminus H_0$; we let $x_1 \in \partial H_0$ denote this last intersection, after which the geodesic remains entirely within $H_0$. After $x_1$, the geodesic touches $v_k + \partial H_0$  for the first time at some vertex $x_2$. Let $\pi$ be the segment of $\geo(L \be_2, L \be_2 + v_k)$ from $x_1$ until $x_2$. Note that $x_1 \in L \be_2 + [-L/4, L/4]^2$ and $x_2 \in L \be_2 + v_k + [-L/4, L/4]^2$, since $\good_k^*(y) \subseteq \good_{k}(y)$ for each $y$  (recalling \ref{it:good2} of Definition \ref{defin:goodevent}).

  We now construct a doubly-infinite simple curve $P$; as usual, $\mathbb{R}^2 \setminus P$ consists of two connected components. We will argue that $\geo(0, v_k)$ will lie strictly in one of these components, denoted $C_1$, and that $\geo(j L \be_2, j L \be_2 + v_k)$ lies in $C_2 \cup P$, where $C_2$ is the other connected component of $\mathbb{R}^2 \setminus P$.  This clearly implies the claim of the lemma. Because we have made several similar constructions already, we sketch this argument, giving most attention to the unique features of the setup here.

  Our curve $P$ consists of the following pieces, traversed in order:
  \begin{enumerate}[label = (\roman*)]
  \item The ray $\{-\ell \be_2: \ell \geq L/2\}$;
  \item the portion of the boundary of $[-L/2, L/2]^2$ lying outside $H_0$;
  \item The segment $\{\ell \be_2: \, L/2 \leq \ell \leq x_1 \cdot \be_2\};$
  \item The geodesic $\pi$;
  \item The segment $\{x_2 - \ell \be_2: \, 0 \leq \ell \leq (x_2 - v_k) \cdot \be_2 - L/2\};$
  \item the portion of the boundary of $v_k + [-L/2, L/2]^2$ lying in $v_k + H_0$;
  \item the segment $\{v_k - \ell \be_2: \, \ell \geq L/2\}$.
  \end{enumerate}
  We write $C_1$ for the component of $\mathbb{R}^2 \setminus P$ containing $0$, and $C_2$ for the other component. Then $\geo(0, v_k)$ starts in $C_1$. It cannot touch $\pi$ because, on $\good_k^*$, we have $\geo(0, v_k) \cap \geo(L e_2, L e_2 + v_k) = \varnothing$. It cannot touch the other pieces making up $P$ because, on $\good_k$, the geodesic $\geo(0, v_k)$ remains strictly in the union of $H_0 \cap [H_4 + v_k]$, $[-L/4, L/4]^2$, and $v_k + [-L/4, L/4]^2$. In particular, $\geo(0, v_k)$ is contained in $C_1$.

  Clearly $\geo(j L \be_2, j L \be_2 + v_k)$ begins in $C_2$. Similarly to the preceding paragraph, since $\good_{k}(j L \be_2)$ occurs, we see that this geodesic could not intersect $P \setminus \pi$. It is possible that $\geo(j L \be_2, j L e_2 + v_k)$ intersects $\pi$, but it could not then enter $C_1$. Indeed, if it entered $C_1$, it would (since $j L \be_2 + v_k \in C_2$) subsequently have to exit, and this exit would have to happen via a vertex of $\pi$. This would imply nonuniqueness of geodesics between some pair of vertices of $\pi$, which is impossible (since $\good_k \subseteq \mathcal{X}$).
\end{proof}

Using the preceding lemma, we can define a family of non-intersecting geodesics lying on any vertical line as follows. For each $u \in \Z^2$, we set
\begin{equation}
  \label{eq:Idef}
  I_{k}(u) = I_{k, L}(u) = \{-u + j L \be_2: \, j\in \Z,\, \good_{k,L}^*(-u + j L \be_2) \text{ occurs}\}\ .
  \end{equation}

We emphasize the most important aspects of this definition. First, for $x \in I_k(u)$, the geodesic $\geo(x, x + v_k)$ stays in the strip $[x + [H_0\cup \partial H_0]] \cap [x + v_k + [H_4 \cup \partial H_4]]$ except near its endpoints; second,
\begin{equation}
  \label{eq:Inoncross}
  \text{if $x, y \in I_k(u)$, then } \geo(x, x + v_k) \cap \geo(y, y +  v_k) = \varnothing\ ,
\end{equation}
where \eqref{eq:Inoncross} is an immediate consequence of Lemma \ref{lem:crosslocal}.

For fixed $u$, we set
\begin{equation}
\label{eq:Fkdefin}
\mathfrak{F}_k(u) = \{\geo(x, x+v_k):\, x \in I_k(u)\},\text{ oriented from $x$ to $x+v_k$}.
\end{equation}
In this notation, \eqref{eq:Inoncross} says that the geodesics of $\mathfrak{F}_k(u)$ are disjoint (do not intersect). The distribution of the family $\mathfrak{F}_k(u)$ is manifestly invariant by translations under multiples of $L \be_2$; we note this here for future reference:
\begin{equation}
  \label{eq:frakftrans}
  \mathfrak{F}_k(u) \text{ has the same distribution as } \mathfrak{F}_k(u) + L \be_2\ .
\end{equation}

As we alluded to in the beginning of the section, we would like to take $k \to \infty$ to produce a limiting family of infinite geodesics which do not coalesce, then use this to derive a contradiction. We will require translation invariance to make this argument, and for this reason introduced the parameter $u$ above. By averaging over values of $u$, we can take a distributional limit $\nu$ of the resulting geodesic family; we will show that the non-coalescence of these geodesics contradicts Theorem \ref{thm:non-crossing} above.

We enlarge our probability space  $\Omega_1$: for each $k \geq 1$, we define a new space
\begin{equation}
  \label{eq:omega1prime}
  \Omega^{(k)}_1 = \Omega_1 \times \left [ \{1,2, \ldots v_k \cdot \be_1 \} \times \{1, 2, \,\ldots, L\} \right ] \quad \text{ with measure } \P^{(k)} = \P \times\text{Unif}^{(k)}\ ,
\end{equation}
where $\text{Unif}^{(k)}$ is the uniform probability measure. In other words, the new coordinate appearing in $\Omega_1^{(k)}$, denoted by $U_k$, is a random variable uniform on $\{1, 2, \ldots, v_k \cdot \be_1\} \times \{1, \ldots, L\}$, and this variable is independent of the family $(\omega_e)_{e \in \Ec^2}$ of edge weights. We will use these to randomize the starting vertices of our family of geodesics: rather than considering $\mathfrak{F}_k(u)$ for fixed deterministic $u$, we will work with $\mathfrak{F}_k(U_k)$, and we will consider the distributional properties of this family as $k \to \infty$.

For each $k \geq 1$, realization of $U_k$, and edge-weight configuration $\omega \in \Omega_1$, we may represent the family $\mathfrak{F}_k(U_k)$ by a directed graph --- i.e., by a configuration in $\Omega_2$. To do this, we consider each $\geo(x, x + v_k) \in \mathfrak{F}_k(u)$ as an oriented path (i.e., a path in $(\Z^2, \dedges^2)$, directed away from $x$ and toward $x + v_k$). We then  define the mapping 
\[\eta_{k}(\omega, u) = \left(\eta_{k}(v,w)(\omega, u): \, (v,w) \in \dedges^2\right) \in \{0, 1\}^{\dedges^2} = \Omega_2\ ,\]
where $\eta_{k}(v,w)(\omega, u)$  is the indicator of the event that the edge $(v,w)$ is traversed (from $v$ to $w$) by some geodesic in $\mathfrak{F}_k(u)$:
\[  \eta_{k}(v,w)(\omega, u)=
  \begin{cases}
    1 \quad &\text{if $(v,w)$ is in some geodesic of $\mathfrak{F}_k(u)$};\\
    0 \quad &\text{otherwise.}
  \end{cases}\]
  
%\marginal{The orientation does not matter, but it makes a slight difference for the degrees of the sites in the proof of Lemma~\ref{maluco beleza}.}
% \begin{equation*}
% \eta_{k,\ell}(x,e):=\left\{ \right\}
% \begin{aligned}
% &1 && \text{if $e=(y,z)$ is crossed from $y$ to $z$ by some path in $\mathcal{F}_{k,\ell}(x)$},\\
% &0 && \text{otherwise}.
% \end{aligned}
% \right.
% \end{equation*}
Similarly to \eqref{eq:psidh}, we define the map $\Psi_{k}:\Omega_1^{(k)} \to\Omega_1\times\Omega_2$ by setting
\begin{equation}
  \label{eq:psidisjoint}
  \Psi_k(\omega) = (\omega, \eta_k(\omega, U_k))\ .\end{equation}
The measure $\nu_{k},$ defined on (the Borel sigma-algebra of) $\Omega_1 \times \Omega_2$ is then the pushforward
\[\nu_k = \P^{(k)} \circ \Psi_k^{-1}\ . \]
This can be compared to the definition appearing in \eqref{DH average}.

It is elementary that $(\nu_k)_k$ is a tight sequence. We can thus, by Prokhorov's theorem, set $\nu$ to be any subsequential (weak) limit of $(\nu_k)_k$:
\begin{equation}
  \label{eq:nudef}\nu = \lim_{m \to \infty} \nu_{k_m} \quad \text{for some sequence $k_m \to \infty$}\ .\end{equation}
We recall that we write a typical sample point in $\Omega_1 \times \Omega_2$ as $(\omega, \eta)$. As we will see below (in Lemma \ref{maluco beleza}), $\nu$ is supported on $(\omega, \eta) \in \Omega_1 \times \Omega_2$ such that $\eta$ encodes geodesics in the weight configuration $\omega$: each directed path in the graph encoded by $\eta$ will turn out to be a geodesic for $(\omega_e)$.
The measure $\nu$ can be thought of as a distribution on limiting geodesics of the form $\geo(x, x+v_k)$, observed far away from their endpoints. In the remainder of this section, we center our analysis on the measure $\nu$, which will be shown to have contradictory properties under the assumption \eqref{raul seixas}.

\subsubsection{Deriving a contradiction under \eqref{raul seixas}}
In this portion of Section \ref{sec:finseg}, we conclude the analysis of our construction under the additional assumption \eqref{raul seixas}, by showing that the measure $\nu$ from \eqref{eq:nudef} contradicts our earlier results. Specifically, we will show  that $\nu$ is translation-invariant (Lemma \ref{sebastian}), supported on non-crossing geodesics (Lemma \ref{maluco beleza}), and in fact assigns positive probability to a set of configurations $\eta \in \Omega_2$ having infinitely many coalescence classes (Lemma \ref{cornelis}). This is in contradiction to Theorem \ref{thm:non-crossing}, and completes the proof of the contradiction under \eqref{raul seixas}.

Each of the properties enjoyed by $\nu$ follows from related properties of the measures $\nu_{k}$ (often via properties of the family $\mathfrak{F}_k(U_k)$, introduced at \eqref{eq:Fkdefin}) --- and as a result, many  analogues of the below lemmas hold for $\nu_k$ as well.

First, we note that the marginal of $\nu$ on $\Omega_1$ coincides with $\P$:
\begin{equation}
  \label{eq:weightmarginal}
  \nu(A \times \Omega_2) = \P(A) \quad \text{for any Borel } A \subseteq \Omega_1\ .
\end{equation}
The observation \eqref{eq:weightmarginal} follows immediately from the definition, since the analogue of \eqref{eq:weightmarginal} holds for each $\nu_k$.
We next prove that $\nu$ is invariant under lattice translations. We recall the definitions of the translation operators $\sigma_z$ from \eqref{eq:sigmazdef}.  %in the natural way:
%\[\sigma_z((t_e),(\eta(\bar f))=(t_{e - z},\eta((\bar f)))\ ,\]
%where if $e = \{x,y\}$ (resp.~$\bar f = (u,v)$), we set $e - z = \{x-z, y-z\}$ (resp.~$\bar f = (u-z, v-z)$).

\begin{lemma}\label{sebastian}
For any $z \in \Z^2$, the measure $\nu$ is invariant under $\sigma_z$.
\end{lemma}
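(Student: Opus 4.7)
The plan is to show that each pre-limit measure $\nu_k$ is $\sigma_z$-invariant up to an error that vanishes as $k\to\infty$, from which invariance of $\nu$ follows by passing to the subsequential weak limit. For any bounded measurable $f:\Omega_1\times\Omega_2\to\R$ depending on only finitely many coordinates, it suffices to check that $\int (f\circ\sigma_z)\,d\nu_k - \int f\,d\nu_k \to 0$ along the subsequence $(k_m)$.

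The first step is to establish the covariance identity
\begin{equation*}
\sigma_z\,\eta_k(\omega, u) \;=\; \eta_k(\sigma_z\omega,\, u - z).
\end{equation*}
I would verify this by unwinding definitions. The good event $\good_{k,L}^*$ is $\sigma_z$-covariant in the weights, so $\good_{k,L}^*(y;\omega)$ holds iff $\good_{k,L}^*(y+z;\sigma_z\omega)$ does; consequently $I_k(u;\omega)+z = I_k(u-z;\sigma_z\omega)$. Combined with the fact that $\geo_{\sigma_z\omega}(x+z,x+z+v_k) = z+\geo_\omega(x,x+v_k)$, this gives $\mathfrak{F}_k(u;\omega)+z = \mathfrak{F}_k(u-z;\sigma_z\omega)$, which is precisely the claimed identity at the level of $\eta_k$. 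Using this identity together with the $\sigma_z$-invariance of $\P$ (immediate under \textbf{A1}, and built into \textbf{A2}), I can change variables $\omega\mapsto\sigma_{-z}\omega$ to rewrite
\begin{equation*}
\int (f\circ\sigma_z)\,d\nu_k \;=\; \E^{(k)}\!\left[f\big(\omega,\eta_k(\omega,U_k - z)\big)\right],
\end{equation*}
reducing the problem to comparing the laws of $\eta_k(\omega,U_k)$ and $\eta_k(\omega,U_k-z)$ under $\P^{(k)}$.

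The second step exploits two separate averaging mechanisms built into $U_k$. In the $\be_2$-direction, the list $\{-u+jL\be_2:\,j\in\Z\}$ defining $I_k(u)$ is unchanged under $u\mapsto u+L\be_2$ (after reindexing $j$), so $\eta_k(\omega,\cdot)$ depends on the $\be_2$-coordinate of its second argument only modulo $L$. Since the $\be_2$-coordinate of $U_k$ is uniform on $\{1,\dots,L\}$, shifting by any integer amount leaves its distribution modulo $L$ invariant, and this part of the comparison contributes zero error. In the $\be_1$-direction, the $\be_1$-coordinate of $U_k$ is uniform on $\{1,\dots,v_k\cdot\be_1\}$, while that of $U_k-z$ is uniform on a translated interval of the same length; the total-variation distance between these two laws is at most $2|z\cdot\be_1|/(v_k\cdot\be_1)$. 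Because $v_k/|v_k|\to v$ with $v\in[0,\pi/4]\subseteq S^1$, we have $v_k\cdot\be_1\to\infty$, and this quantity vanishes as $k\to\infty$. Putting the two ingredients together yields $\bigl|\int(f\circ\sigma_z)\,d\nu_k - \int f\,d\nu_k\bigr| \le 2\|f\|_\infty|z\cdot\be_1|/(v_k\cdot\be_1) = o(1)$, and the conclusion follows by weak convergence.

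I do not expect serious obstacles: the construction of $\nu_k$ was tailored to make translation invariance emerge in the limit, and the argument is essentially a covariance check plus a total-variation estimate between uniform laws on long overlapping intervals. The only thing one must be slightly careful about is verifying the covariance identity for $\eta_k$, and for this the key point is that $I_k(u;\omega)$ is defined through the $\sigma$-covariant family of good events $\good_{k,L}^*(y;\omega)$, so it transforms in the expected way.
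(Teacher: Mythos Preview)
Your proposal is correct and follows essentially the same approach as the paper's own proof: both reduce to showing $\nu_{k_m}(\sigma_z A)-\nu_{k_m}(A)\to 0$ on cylinder events, absorb the shift into the $u$-variable via the covariance identity $\sigma_z\eta_k(\omega,u)=\eta_k(\sigma_z\omega,u-z)$ together with the $\sigma_z$-invariance of $\P$, and then use exact periodicity of $\eta_k(\omega,\cdot)$ in the $\be_2$-coordinate modulo $L$ plus the $o(1)$ overlap error $|z\cdot\be_1|/(v_k\cdot\be_1)$ in the $\be_1$-coordinate. The only cosmetic difference is that the paper treats the unit shifts $\be_1,\be_2$ separately and then composes, whereas you handle a general $z$ in one pass via a total-variation bound; the underlying mechanism is identical.
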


\begin{proof}
		Let $A$ be an arbitrary cylinder event in $\Omega_1 \times \Omega_2$:
		\[A = \{t_{e_i} \in \mathbf{B}_i, \, \eta(\bar f_j) = 1, \, \eta(\bar g_\ell) = 0,\, \, i = 1\, \ldots, i_0, \, j = 1, \ldots, j_0,\, \ell = 1, \ldots, \ell_0\} \]
		where each $e_i \in \Ec^2$ (resp.~$\bar f_j, \bar g_\ell \in \dedges^2$), and each $\mathbf{B}_i$ is a real Borel set. We will show that
		\begin{equation}
		\label{eq:shiftcyl}
		\lim_{m \to \infty} \nu_{k_m}(\sigma_{\be} A) = \lim_{m \to \infty} \nu_{k_m}(A)\quad \text{for each $\be \in \{\pm \be_1, \pm \be_2\}$},
		\end{equation}
	where the sequence $(\nu_{k_m})_m$ was chosen to converge to $\nu$ at  \eqref{eq:nudef}. Since such cylinder events generate the Borel sigma-algebra on $\Omega_1 \times \Omega_2$, and since each $\sigma_z$ can be written as a repeated composition of such $\sigma_{\be}$, the lemma follows. Since $\sigma_{-\be_i} = \sigma^{-1}_{\be_i}$ for $i  = 1,2$, it suffices to consider the cases $\be = \be_1, \be_2$.
	
		We denote by $\text{supp}(U_k)$ the support of the distribution of $U_k$ --- that is, $\{1, \ldots, v_k \cdot \be_1\} \times\{1, \ldots, L\}$.
		We write the translation operation explicitly:
		\begin{align}
		\nu_{k_m}(\sigma_{\be} A)&= (\P \times \text{Unif}^{(k_m)})\left( \forall i, j, \ell, \, t_{e_i - \be} \in \mathbf{B}_i,\, \eta_{k_m}(\bar f_j -  \be) = 1,\, \eta_{k_m}(\bar g_\ell - \be) = 0 \right)\nonumber\\
		&= \frac{1}{|\text{Supp}(U_{k_m})|} \sum_{u \in \mathrm{Supp}(U_{k_m})} \P \left(\forall i, j, \ell, \,  t_{e_i - \be} \in \mathbf{B}_i,\, [\bar f_j -  \be] \in \mathfrak{F}_{k_m}(u),\,[\bar g_j -  \be] \notin \mathfrak{F}_{k_m}(u) \right)\label{eq:expandprp}\\
		&=  \frac{1}{|\text{Supp}(U_{k_m})|} \sum_{u \in \mathrm{Supp}(U_{k_m})} \P \left(\forall i, j, \ell, \,  t_{e_i} \in \mathbf{B}_i,\, \bar f_j \in \mathfrak{F}_{k_m}(u+\be),\,\bar g_j \notin \mathfrak{F}_{k_m}(u+ \be) \right)\nonumber\\
		&= \frac{1}{|\text{Supp}(U_{k_m})|} \sum_{w \in [\be + \mathrm{Supp}(U_{k_m})]}  \P \left(\forall i, j, \ell, \,  t_{e_i} \in \mathbf{B}_i,\, \bar f_j \in \mathfrak{F}_{k_m}(w),\,\bar g_j \notin \mathfrak{F}_{k_m}(w) \right)\label{eq:expandprp2}\ .
		\end{align}
		By a similar expansion to the above, we have
		\begin{equation}
		\label{eq:shifttwo}
		\nu_{k_m}(A) = \frac{1}{|\text{Supp}(U_{k_m})|} \sum_{u \in \mathrm{Supp}(U_{k_m})}  \P \left(\forall i, j, \ell, \,  t_{e_i} \in \mathbf{B}_i,\, \bar f_j \in \mathfrak{F}_{k_m}(u),\,\bar g_j \notin \mathfrak{F}_{k_m}(u) \right)\ .
		\end{equation}
		
		We consider first the case $\be = \be_1$. Subtracting \eqref{eq:expandprp2} from \eqref{eq:shifttwo}, we see
		\begin{align*}
		|\nu_{k_m}(\sigma_{\be_1} A) - \nu_{k_m}( A)| &\leq |\text{Supp}(U_{k_m}) \triangle [\be_1 + \text{Supp}(U_{k_m})]|/L (v_{k_m} \cdot \be_1)\\
		&= 2 / (v_{k_m} \cdot \be_1).
		\end{align*}
		Since $v \in H_0$, we have $v_{k_m} \cdot \be_1 \to \infty$, establishing \eqref{eq:shiftcyl} for $\be = \be_1$.
		The case $\be = \be_2$ is simpler. Applying \eqref{eq:frakftrans}, we see the expressions in \eqref{eq:expandprp2} and \eqref{eq:shifttwo} are actually equal for each $m$.
		 Taking a limit in $m$, \eqref{eq:shiftcyl} follows for $\be = \be_2$, completing the proof.
\end{proof}

We recall that our aim is to show that $\nu$ is a  shift-invariant measure on non-crossing geodesics having infinitely many coalescence classes, in contradiction to Theorem \ref{thm:non-crossing}. We have proven shift-invariance; we next prove the bulleted portion from Definition~\ref{defin:shiftinvar}.
\begin{lemma}\label{maluco beleza}
Assume that~\eqref{raul seixas} holds. Then, for $\nu$-almost every $(\omega,\eta)\in\Omega_1\times\Omega_2$ we have
\begin{enumerate}[label = (\alph*)]
\item \label{it:raul1} every site has either out-degree 1 or both in- and out-degree 0 in $\eta$;
\item \label{it:raul2} every directed path in the graph encoded by $\eta$ is a (finite or infinite) geodesic.
    \item \label{it:raul3} there are no undirected cycles (and hence no directed cycles) in the graph encoded by $\eta$.
%\marginal{DA: I guess part~\emph{(c)} is not needed, but perhaps helps with the understanding?}
\end{enumerate}
\end{lemma}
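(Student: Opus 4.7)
The plan is to leverage the extremely rigid structure of $\eta_k$ that is built into the construction: for every $(\omega, U_k)$, the graph encoded by $\eta_k$ is a disjoint union of the simple oriented paths $\geo(x, x+v_k)$ for $x \in I_k(U_k)$, thanks to \eqref{eq:Inoncross}. Each of (a)--(c) holds in the finite-$k$ setting in a simple form, and the task is to transfer these properties to $\nu$ via weak convergence along $k_m \to \infty$, handling the one discrepancy between the finite-$k$ and limit pictures: the ending vertices $x+v_k$, which have in-degree $1$ and out-degree $0$ in $\eta_k$, must disappear in the limit.

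First I would handle part (a). Deterministically in $\eta_k$, every vertex has in-degree at most $1$ and out-degree at most $1$, so the only possible violations of (a) are ending vertices, i.e.\ points of the form $-U_k + jL\be_2 + v_k$ for some $j\in\Z$. For fixed $w\in\Z^2$ and large $k$, the equation $w = -u + jL\be_2 + v_k$ has a unique solution $u \in \mathrm{Supp}(U_k)$, so
\[
\nu_k\big(w \text{ has in-degree $1$ and out-degree $0$ in } \eta\big) \le \frac{1}{L\,(v_k\cdot\be_1)} \longrightarrow 0,
\]
using $v \in [0, \pi/4]$ to ensure $v_k \cdot \be_1 \to \infty$. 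The event is a cylinder in $\eta$, hence clopen in the product topology and a $\nu$-continuity set, so weak convergence $\nu_{k_m} \to \nu$ yields $\nu$-measure zero; a countable union over $w \in \Z^2$ proves (a).

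For part (b), fix a finite directed path $P = ((v_0, v_1), \ldots, (v_{n-1}, v_n))$. Under $\nu_k$, if $P \subseteq \eta_k$ then $P$ is a sub-path of some $\geo(x, x+v_k) \in \mathfrak{F}_k(U_k)$ and hence itself a geodesic, so $\nu_k(P \subseteq \eta, \, T(P) > T(v_0, v_n)) = 0$. The function $\omega \mapsto T(P)(\omega)$ is continuous, while $\omega \mapsto T(v_0, v_n)(\omega)$ is an infimum of continuous functions and hence upper semicontinuous, so the set $A_P^{(j)} := \{P \subseteq \eta, \, T(P) > T(v_0, v_n) + 1/j\}$ is open in $\Omega_1 \times \Omega_2$. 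By the Portmanteau theorem $\nu(A_P^{(j)}) \le \liminf_m \nu_{k_m}(A_P^{(j)}) = 0$; a countable union over $j \ge 1$ and then over $P$ shows that $\nu$-a.s.\ every finite directed path in $\eta$ is a geodesic. Since an infinite directed path in $\eta$ is an infinite geodesic iff each of its initial segments is a geodesic, (b) follows.

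Finally for (c), the presence of any specific finite undirected cycle $C$ in $\eta$ is a cylinder event in $\eta$, hence a $\nu$-continuity set. Since $\eta_k$ is a disjoint union of simple oriented paths, $\nu_k(C \subseteq \eta) = 0$ for every $k$; weak convergence gives $\nu(C \subseteq \eta) = 0$, and a countable union over all finite undirected cycles yields (c). The main technical step is the quantitative endpoint bound in part (a); the hypothesis \eqref{raul seixas} enters only indirectly, through Lemma \ref{lem:crosslocal} and \eqref{eq:Inoncross}, which provide the disjointness underlying the entire argument.
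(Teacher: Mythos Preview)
Your proof is correct and follows the same overall strategy as the paper: establish each property for $\nu_k$ using the fact that $\eta_k(\omega,u)$ encodes a vertex-disjoint union of simple oriented geodesics, then pass to $\nu$ via weak convergence along $(k_m)$ using that the relevant events are clopen (or, for (b), at least open/closed so that Portmanteau applies).

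A few points of comparison. For (a), you correctly identify the only possible violation as \emph{terminal} vertices $x+v_k$ (in-degree $1$, out-degree $0$) and bound their probability by $1/(L\,v_k\cdot\be_1)$; the paper's proof instead bounds the event ``in-degree $0$, out-degree $\ge 1$'' (i.e.\ \emph{initial} vertices), which is not the event that needs to be ruled out under the stated item~(a). Your version is the one that matches the statement. For (b), your open-set formulation with the $1/j$ slack is equivalent to the paper's closed-set argument; in fact the slack is unnecessary since $T(P)-T(v_0,v_n)$ is lower semicontinuous, so $\{T(P)>T(v_0,v_n)\}$ is already open. For (c), your argument is shorter than the paper's: you use directly that a vertex-disjoint union of simple paths has no undirected cycles, whereas the paper argues by following the cycle and invoking (a) and (b) for $\nu_{k_m}$.

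One small wording caveat: the claim ``deterministically in $\eta_k$, every vertex has in- and out-degree at most $1$'' relies on $\mathcal{X}$ (via simplicity of geodesics and \eqref{eq:Inoncross}); since $\good_k^*(x)\subseteq\mathcal{X}$, whenever $I_k(u)\neq\varnothing$ one has $\omega\in\mathcal{X}$, and when $I_k(u)=\varnothing$ the graph is empty, so the claim holds for every $(\omega,u)$ as you assert---but it is worth saying why.
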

Item \ref{it:raul3} says that the undirected graph with vertex set $\Z^2$, whose edge set is
\[\{ \{x,y\}: (x,y) \text{ and/or } (y,x) \text{ is in the directed graph encoded by $\eta$}\},\]
 has no cycles.
\begin{proof}
  We note that, for each $x \in \Z^2$, $\{x \text{ has out-degree $\geq 2$ in $\eta$} \}$ is a cylinder event in $\Omega_2$ (and hence both open and closed). Thus, for the sequence $(\nu_{k_m})$ from \eqref{eq:nudef},
  \begin{equation}\label{eq:outdegtwo}
    \lim_{m \to \infty} \nu_{k_m}\left( x \text{ has out-degree $\geq 2$ in $\eta$}\right) = \nu\left( x \text{ has out-degree $\geq 2$ in $\eta$} \right)\ .
  \end{equation}
  It therefore suffices to show that the probability on the left-hand side of \eqref{eq:outdegtwo} is zero for each $x$ and $m$. Since $\nu_{k_m}$ was defined as a pushforward of the mapping $\Psi_k$ from \eqref{eq:psidisjoint}, we need to show that each site a.s.~has out-degree at most one in $\eta_k(\omega, U_k)$ (defined just before \eqref{eq:psidisjoint}). Because $\eta_k$ is constructed from the edges of geodesics of $\mathfrak{F}_k (U_k)$, it suffices to show that for each $u$, the union of geodesics of $\mathfrak{F}_k(u)$ does not contain (at least) two directed edges of the form $(x, w_1)$, $(x,w_2)$.

  On the probability one event $\mathcal{X}$ from Definition \ref{defin:calx}, all geodesics are simple paths, so the only way for two such directed edges to be traversed by geodesics of $\mathfrak{F}_k(u)$ would be for them to be in different geodesics: $(x, w_1) \in \gamma_1$, $(x, w_2) \in \gamma_2$ for distinct elements $\gamma_1, \gamma_2 \in \mathfrak{F}_k(u)$. But by \eqref{eq:Inoncross}, this is impossible: a.s., for each pair $\gamma_1, \gamma_2$ of distinct geodesics of $\mathfrak{F}_k(u)$, we have $\gamma_1 \cap \gamma_2 = \varnothing.$ 
  
  We can argue similarly, establishing the remaining portion of \ref{it:raul1} by showing
  \begin{equation}
  \label{eq:badindeg}\lim_{m \to \infty} \nu_{k_m}(\text{$x$ has in-degree $0$ but out-degree $\geq 1$}) = 0. \end{equation}
   If $x$ has in-degree zero in $\eta_k$, then either $x$ does not lie in any element of $\mathfrak{F}_k(U_k)$, or $x$ is an initial point of some element of $\mathfrak{F}_k(U_k)$. In the former case, clearly $x$ also has out-degree $0$. In the latter case, we must have $x \cdot \be_1  = -U_k \cdot \be_1$, and this has probability at most $1 /v_k \cdot \be_1$.
  This shows \eqref{eq:badindeg} and completes the proof of item \ref{it:raul1} above. 

  We proceed with the proof of item \ref{it:raul2}.
  Let $\pi$ be a finite directed path with endpoints $x$ and $y$ in $\Z^2$. % Denote by  the event that $\gamma$ is a path in the directed graph encoded by $\eta$. 
  We define the events 
  \begin{align}
    B_\pi &= \{(\omega, \eta): \pi \text{ is a path in the directed graph encoded by $\eta$}\}\ ,\nonumber\\
    C_\pi&=\{(\omega,\eta):\pi\text{ is a geodesic}\} = \{T(\pi) = T(x,y)  \}\ . \label{eq:Ctoport}
  \end{align}
  We first argue that
  \begin{equation}
    \label{eq:itbprelim}
    \nu_{k}\left(B_\pi^c\cup(B_\pi\cap C_\pi)\right)=1 \quad\text{for each $k$ and each $\pi$ as above.}
  \end{equation}
  To compute the probability in \eqref{eq:itbprelim}, we pull back via the mapping $\Psi_k$. The inverse image $\Psi_k^{-1}(B_\pi^c\cup(B_\pi\cap C_\pi)$ is the event that either $\pi$ does not appear in the graph encoded by $\eta_k(\omega, U_k)$, or does appear in this graph and is a geodesic for $\omega$. By the same reasoning as in the proof of part \ref{it:raul1} above, each vertex and directed edge of $\pi$ can a.s.~appear in at most one geodesic of $\mathfrak{F}_k(U_k)$. Hence, if $\pi$ appears in the graph encoded by $\eta_k$, it must be a subsegment of a single geodesic of $\mathfrak{F}_k(U_k)$, which proves \eqref{eq:itbprelim}. Since the set of finite directed paths in $\Z^2$ is countable, this proves item~\ref{it:raul2} is almost sure for $\nu_{k_m}$ (we use this fact in the proof of property \ref{it:raul3}).
  
 Taking $k = k_m$, we now argue that we can pass to the limit as $m \to \infty$ limit in \eqref{eq:itbprelim}. Since $B_\pi$ and its complement $B_\pi^c$ are cylinder events, they are both open and closed in $\Omega_2$.   Moreover, because $T$ is a continuous function of the edge weights, the latter representation in \eqref{eq:Ctoport} shows that $C_\pi$ is closed.
By the Portmanteau theorem, we conclude that $\nu\big(B_\pi^c\cup(B_\pi\cap C_\pi)\big)=1$, so that for $\nu$-a.e.~$(\omega,\eta)$, $\pi$ is either not a path in (the directed graph encoded by) $\eta$ or is both a path in $\eta$ and a geodesic in $\omega$. Applying countability of finite paths again establishes \ref{it:raul2} is  $\nu$-almost sure.

Finally, we prove item \ref{it:raul3}. We fix  $\pi = (z_1, \ldots, z_r ,z_1)$, an arbitrary finite  cycle,
and let $D_\pi$ denote the event that $\pi$ is a cycle in the (undirected version of the) graph encoded by $\eta$. Then $D_\pi$ is another cylinder event in $\Omega_2$, and hence both open and closed, so by the Portmanteau theorem $\nu(D_\pi) = \lim_m \nu_{k_m}(D_\pi)$.

We now argue that for each $m$, we have $\nu_{k_m}(D_\pi) = 0$. We fix an outcome $(\omega, \eta)$ in $D_\pi$ such that property \eqref{eq:outdegtwo} and property \ref{it:raul2} above hold, and such that $\mathcal{X}$ occurs; we will show that $(\omega, \eta)$ has contradictory properties. Since \eqref{eq:outdegtwo} and \ref{it:raul2} above, as well as $\mathcal{X}$, are $\nu_{k_m}$-almost sure, this shows that $\nu_{k_m}(D_\pi) = 0$.

Without loss of generality, we assume that the unique outgoing (directed) edge from $z_1$ in the graph encoded by $\eta$ is $(z_1, z_2)$. We then we follow along $\pi$ vertex by vertex  until the first time we either  i) return to $z_1$ or ii) reach a $z_\ell$ such that the directed edge $(z_{\ell + 1}, z_\ell)$ appears in the graph encoded by $\eta$. If i) occurs, we have traversed an oriented cycle beginning and ending at $z_1$; since this cycle is a geodesic, this contradicts the occurrence of $\mathcal{X}$. If ii) occurs, as in the proof of item \ref{it:raul2} of this lemma, the edges $(z_{\ell - 1}, z_{\ell})$ and $(z_{\ell + 1}, z_{\ell})$ must appear in exactly one geodesic $\gamma \in \mathfrak{F}_k(U_k)$, with the same orientation. In other words, $\gamma$ crosses the oriented edge $(z_{\ell - 1}, z_{\ell})$ and then the oriented edge $(z_{\ell + 1}, z_{\ell})$ (or vice-versa), in particular visiting $z_\ell$ at least twice. This is again impossible by the occurrence of $\mathcal{X}$.
% if the cycle $\gamma$ appeared in the graph  encoded by $\eta_k$, all of $\gamma$ would a.s.~be a subsegment of a single element of $\mathfrak{F}_k(U_k)$. By item \ref{it:raul2},.

Thus, $\nu_{k_m}(D_\pi) = 0$, and so $\nu(D_\pi) = 0$. Since $\pi$ was arbitrary, and since the set of finite cycles in $\Z^2$ is countable, item \ref{it:raul3} of the lemma follows.
\end{proof}

\begin{lemma}\label{cornelis}
  Assume that~\eqref{raul seixas} (and \eqref{assumption}) holds. Then
 $$
\nu\big(\text{the graph encoded by } \eta\text{ has infinitely many coalescence classes}\big)>0.
$$
In other words, we can, with uniformly positive probability, find arbitrarily large collections of disjoint infinite oriented paths (which are geodesics by Lemma~\ref{maluco beleza}) in this graph.
In particular, $\nu(\eta(\bar{e}) = 0 \text{ for all $\bar{e}$}) < 1$.
\end{lemma}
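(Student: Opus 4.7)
The plan is to establish the uniform bound
\begin{equation}\label{eq:planMbound}
\nu\bigl(\{\eta \text{ has at least } M \text{ coalescence classes}\}\bigr) \geq c \qquad \text{for every } M \geq 1,
\end{equation}
with $c = c(\delta', L) > 0$; sending $M \to \infty$ (a decreasing intersection) then gives $\nu(\text{infinitely many coalescence classes}) \geq c > 0$, whence also $\nu(\eta(\bar{e}) = 0 \text{ for all }\bar{e}) < 1$. Everything will be reduced, via Portmanteau on the clopen cylinder events
\[A^{R, L_0, M} := \{\exists \text{ distinct } v_1, \ldots, v_M \in [-R, R]^2 \text{ whose forward $\eta$-paths of length } L_0 \text{ are pairwise disjoint}\},\]
to a uniform-in-$k$ lower bound on $\nu_k(A^{R, L_0, M})$. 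Sending $L_0 \to \infty$ (monotone) promotes the disjoint finite forward segments to disjoint forward-infinite $\eta$-paths; combining Lemma~\ref{maluco beleza} with the observation that $\nu_k$-starting vertices have vanishing density ($\P^{(k)}(w \text{ is a starting vertex of } \mathfrak{F}_k(U_k)) \leq (L\, v_k \cdot \be_1)^{-1} \to 0$), under $\nu$ almost surely every non-isolated vertex lies on a bi-infinite path, and two distinct bi-infinite paths cannot share a forward tail (a would-be common tail vertex would have in-degree at least two). Hence $M$ disjoint forward-infinite paths yield $M$ distinct coalescence classes.

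The heart of the argument will lower-bound $N_R^{(k)}$, the number of geodesics in $\mathfrak{F}_k(U_k)$ meeting $[-R, R]^2$. Writing $\xi_c(\omega)$ for the first hitting vertex of $\{x \cdot \be_1 = c\}$ by $\geo(0, v_k)$ and $\zeta_w$ for the first $\{x \cdot \be_1 = 0\}$-crossing of $\geo(w, w + v_k)$, I will compute, for each $y \in \{0\} \times \Z$, the $\nu_k$-probability that $y$ is the first $\{x \cdot \be_1 = 0\}$-crossing of some $\gamma \in \mathfrak{F}_k(U_k)$. Expanding over the starting vertex $w \in I_k(U_k)$, which $U_k$ selects with probability $(L v_k \cdot \be_1)^{-1}$ for permissible $\be_1$-coordinate, and applying translation invariance of $\P$ by $-w$ to identify $\P(\good^*_k(w), \zeta_w = y) = \P(\good^*_k, \xi_{-w \cdot \be_1}(\omega) = y - w)$, this probability rewrites as
\[
\frac{1}{L\, v_k \cdot \be_1} \sum_{c=1}^{v_k \cdot \be_1} \sum_{z:\, z \cdot \be_1 = c} \P(\good^*_k, \xi_c = z).
\]
For each fixed $c$ the inner sum telescopes to $\P(\good^*_k) \geq \delta'$, since on $\good^*_k$ the unique geodesic $\geo(0, v_k)$ crosses every intermediate level $c \in \{1, \ldots, v_k \cdot \be_1\}$ by discrete intermediate value and $\xi_c$ is therefore almost surely a single vertex. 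Hence the probability equals $\P(\good^*_k)/L \geq \delta'/L$, uniformly in $y$ by the exact $\be_2$-shift invariance of $\nu_k$ from Lemma~\ref{sebastian}. Summing over $y \in \{0\} \times [-R, R]$ and noting that distinct first crossings lie on distinct geodesics yields $\E^{(k)}[N_R^{(k)}] \geq (2R + 1)\delta'/L$.

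For the matching deterministic upper bound, note that for $k$ so large that $\|v_k\|_1 > 4R$, every $\gamma \in \mathfrak{F}_k(U_k)$ meeting $[-R, R]^2$ has both endpoints outside the box and must therefore cross $\partial [-R, R]^2$; pairwise disjointness (Lemma~\ref{lem:crosslocal}) forces distinct $\gamma$'s to use disjoint boundary vertices, so $N_R^{(k)} \leq |\partial [-R, R]^2|/2 \leq 4R$. The Markov-type decomposition $\E^{(k)}[N_R^{(k)}] \leq M + 4R \cdot \nu_k(N_R^{(k)} \geq M)$ with $M := R \delta'/L$ then yields $\nu_k(N_R^{(k)} \geq M) \geq \delta'/(8L)$. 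On this event one selects one vertex in $[-R, R]^2$ from each of the $M$ disjoint geodesics; for $k$ so large that each geodesic's forward residue beyond $[-R, R]^2$ exceeds $L_0$, the $M$ forward $L_0$-segments are pairwise disjoint (again by Lemma~\ref{lem:crosslocal}), so $A^{R, L_0, M}$ occurs and $\nu_k(A^{R, L_0, M}) \geq \delta'/(8L)$. Portmanteau then transfers the bound to $\nu$, and the monotone limits $L_0 \to \infty$ and $M \to \infty$ complete the proof of \eqref{eq:planMbound}.

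The main obstacle I anticipate is the careful bookkeeping in the translation/telescoping computation of the second paragraph: one must accurately track the admissibility constraint $-w \cdot \be_1 \in \{1, \ldots, v_k \cdot \be_1\}$ through the $\omega$-translation, verify that the inner sum over $\{z:\, z \cdot \be_1 = c\}$ telescopes cleanly to $\P(\good^*_k)$ (which hinges on the uniqueness of $\xi_c$ on $\good^*_k$), and confirm that boundary losses near $c \in \{0, v_k \cdot \be_1\}$ are indeed negligible in the relevant ranges thanks to the confinement built into Definition~\ref{defin:goodevent}. Once the expectation estimate $\E^{(k)}[N_R^{(k)}] \geq (2R+1)\delta'/L$ is established, the remaining pieces --- the deterministic boundary bound, the Markov-type split, Portmanteau, and the two monotone limits --- are routine.
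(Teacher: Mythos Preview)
Your approach is genuinely different from the paper's and the central computation is both correct and cleaner. The paper argues via an ergodic averaging scheme: it introduces indicators $X_\ell$ for ``at least $M$ geodesics of $\mathfrak{F}_k(u)$ hit $\{0\}\times[aL\ell,aL(\ell+1)]$'', uses the $L\be_2$-periodicity \eqref{eq:frakftrans} to get $\E[X_\ell]=\E[X_0]$, and then runs a box-counting argument in large boxes $\Lambda_S$ (geodesics must exit through top, bottom, or left side; disjointness bounds top/bottom contributions; sending $S\to\infty$ kills the $k$-dependent constant). Your direct computation---that for \emph{every} $y\in\{0\}\times\Z$ the $\nu_k$-probability that $y$ is a first crossing equals exactly $\P(\good^*_k)/L$---bypasses all of this: the averaging over $u_2\in\{1,\dots,L\}$ and the sum over $w_2$ absorb the $y$-dependence, and the telescoping over levels $c$ is exact because on $\good^*_k$ the finite geodesic $\geo(0,v_k)$ must touch every vertical line $\{x\cdot\be_1=c\}$ for $1\le c\le v_k\cdot\be_1$. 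This gives the expectation bound in one line rather than via a double limit.

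That said, two of your auxiliary claims are not correct as stated, though both are easily repaired. First, ``for $k$ so large that $\|v_k\|_1>4R$, every $\gamma\in\mathfrak{F}_k(U_k)$ meeting $[-R,R]^2$ has both endpoints outside'' is false: the starting vertex has $\be_1$-coordinate $-U_k\cdot\be_1$, which lies in $[-R,R]$ whenever $U_k\cdot\be_1\le R$, and this occurs with positive (though vanishing) probability regardless of $k$. The simplest fix is to forget $N_R^{(k)}$ entirely and work with $\tilde N_R^{(k)}:=|\{y\in\{0\}\times[-R,R]:y\text{ is a first crossing}\}|$, which satisfies $\tilde N_R^{(k)}\le 2R+1$ deterministically and already lower-bounds the number of disjoint geodesics meeting $[-R,R]^2$. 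Second, ``for $k$ so large that each geodesic's forward residue beyond $[-R,R]^2$ exceeds $L_0$'' has the same flaw: the forward residue from a first crossing is at least $v_k\cdot\be_1-U_k\cdot\be_1$, which is large only on $\{U_k\cdot\be_1\le v_k\cdot\be_1-L_0\}$, an event of probability $1-L_0/(v_k\cdot\be_1)$. Condition on this event (and absorb the $o(1)$ loss) before applying Portmanteau.

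Finally, your justification that $M$ disjoint forward-infinite paths yield $M$ coalescence classes is right, but the parenthetical ``a would-be common tail vertex would have in-degree at least two'' is not a valid objection: Lemma~\ref{maluco beleza}\ref{it:raul1} permits arbitrary in-degree. The correct (and simpler) reason is that coalescing paths by definition share a forward tail, hence share vertices, hence are not disjoint.
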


\begin{proof}
  Let $M \geq 1$ be a fixed integer, and define the event (on $\Omega_2$)
  \[A = A(M)= \left\{\text{the directed graph encoded by $\eta$ has at least $M$ coalescence classes}\right\}\ . \]
  As before, this means that we can find at least $M$ disjoint infinite oriented paths in the graph encoded by $\eta$.
  We will show that there is a $c > 0$ such that
  \begin{equation}
    \label{eq:nucoalM}
    \nu\left(\bigcap_{M=1}^\infty A(M)\right)  = \lim_{M \to \infty} \nu(A(M))  \geq c\ ,
  \end{equation}
   and the result follows. As usual, we will show \eqref{eq:nucoalM} by showing an analogue for $\nu_{k_m}$ and then taking $m \to \infty$. Graphs sampled from $\nu_{k_m}$ almost surely contain no infinite paths, however, and this requires us to choose an appropriate finite approximation of the event $A(M)$.
%  {\color{red} mention the various parameters involved\dots not too complicated but worth diagramming for the reader.}

  Let $a > 0$ be a large integer depending only on $M$, to be fixed precisely by \eqref{eq:nucoalM2};
  we recall that $L$ was fixed as a constant in \eqref{raul seixas}. We define the subevent $A_a = A_a(M) \subseteq A(M)$ by
  \begin{align*}
    A_a = \left\{\exists\geq M \text{ disjoint directed infinite paths in $\eta$ from vertices of $\{0\} \times [0, aL]$} \right\}\ .
  \end{align*}
  We note that
  \begin{equation*}
      A_a = \bigcap_{R=aL}^\infty \left\{ \begin{array}{c}
      \text{there are at least } M \text{ paths in $\eta$ from vertices of $\{0\} \times [0, aL]$}\\ 
      \text{to vertices of }\Z^2 \setminus [-R, R]^2 \text{ which do not intersect} \end{array} \right\}
      =: \bigcap_{R=aL}^\infty A_{a,R}\ ,
\end{equation*}
where we have introduced the label $A_{a,R}$ for the event appearing in the $R$th term of the intersection in the middle expression.

In particular, $\nu(A_a) = \lim_{R \to \infty} \nu(A_{a,R}).$  Each $A_{a,R}$ is a cylinder event of $\Omega_2$, and so $\nu(A_{a,R}) = \lim_{m \to \infty} \nu_{k_m}(A_{a,R})$. The bound \eqref{eq:nucoalM} will thus follow once we have proven
\begin{equation}
%  \begin{split}
    \label{eq:nucoalM2}
   \text{for each $M \geq 1$, there is an $a \geq 1$ such that}\quad \limsup_{R \to \infty} \limsup_{m \to \infty} \nu_{k_m}(A_{a,R})  \geq c
%    \end{split}
  \end{equation}
  for some uniform (in $M$) constant $c > 0$. We devote the rest of the proof to showing \eqref{eq:nucoalM2} with the choice $a = \lceil  3M / \delta' \rceil$, where $\delta'$ is as in \eqref{raul seixas}. We fix this value of $a$ in what follows.

  Because the $\Omega_2$-marginal of the measures $\nu_{k_m}$ is a pushforward of the mapping $\eta_k$ defined above \eqref{eq:psidisjoint}, the bound \eqref{eq:nucoalM2} will follow as soon as we show  that for each large value of $R$,
  \begin{equation}
    \begin{split}
    \label{eq:nucoalM3}
    &\text{for each $k$ and for each $u$ such that $1 \leq u\cdot \be_1 < v_k \cdot \be_1 - R,$ } \\
    &\P(\text{at least $M$ geodesics of $\mathfrak{F}_k(u)$ intersect $\{0\} \times [0, aL]$} ) \geq c\ ,
    \end{split}
  \end{equation}
  Indeed, the paths of $\mathfrak{F}_k(u)$ begin at vertices $x$ for which $\good_{k,L}^*(x)$ occurs (for our fixed choice of $L$), and hence are disjoint by Lemma \ref{lem:crosslocal}. For each $x \in I_k(u)$ (i.e., each initial vertex of an element of $\mathfrak{F}_k(u)$), we have $(x + v_k) \cdot \be_1 = v_k \cdot \be_1 - u \cdot \be_1$, and so $x + v_k \notin [-R, R]^2$ when $u$ is as in \eqref{eq:nucoalM3} --- hence, when $R > aL$, each such geodesic visits the complement of $[-R,R]^2$ after intersecting $\{0\} \times [0, aL]$.  Averaging over $u = U_k$ in \eqref{eq:nucoalM3} and observing that $\P^{(k)}(U_k \cdot \be_1  \geq v_k \cdot \be_1 - R) \to 0$ as $k \to \infty$ shows \eqref{eq:nucoalM2} and establishes the lemma.  We thus devote the remainder of the proof to showing \eqref{eq:nucoalM3}. We emphasize here the order in which parameters have been chosen, in anticipation of taking limits. $M$ and $a$ have been fixed; we will derive a bound on the probability in \eqref{eq:nucoalM3} depending on $R$ and $k$, where $R$ is large but fixed relative to $k \gg R$.

  We write, for each $\ell \in \Z$ (with fixed $u$, $k$, and $M$ as above)
  \begin{equation}
  \label{eq:Xelldef}
  X_\ell = \mathbf{1}_{\{\text{$\geq M$ geodesics of $\mathfrak{F}_k(u)$ intersect $\{0\} \times [aL\ell, aL (\ell + 1)]$}\}}\ ,
  \end{equation}
  so that the probability on the left-hand side of \eqref{eq:nucoalM3} is just $\E[X_0]$. Because the distribution of the family $\mathfrak{F}_k(u)$ is invariant under translations by $L \be_2$ (recalling \eqref{eq:frakftrans}), it follows that
  \begin{equation}
    \label{eq:Xell}
  \E[X_\ell] = \E[X_0] \quad \text{for each } \ell \in \Z. \end{equation}
Each geodesic $\gamma \in \mathfrak{F}_k(u)$ must cross the $\be_2$-axis $\{0\} \times \Z$ somewhere, since $\gamma$ begins at a point with negative $\be_1$-coordinate and ends at a point with positive $\be_1$-coordinate. We will use this to argue that many $X_\ell$'s must be positive; hence, by \eqref{eq:Xell} and the definition of $X_0$, \eqref{eq:nucoalM3} must hold.

%{\color{red} Picture?}

To show this formally,  we introduce a new parameter $S$ (which will be taken to infinity for fixed $R$ and $k$), indexing the boxes
\begin{equation}
\label{eq:LambdaSdef}
\Lambda_S = [-L -u \cdot \be_1 ,\, L - u \cdot \be_1 +  v_k \cdot \be_1] \times [-a SL , a SL]\ ,\quad S \in \{1, 2, \ldots\}\ .  
\end{equation}
As noted in the previous paragraph, if $x \in I_{k}(u) \cap \Lambda_S$, then $\geo(x, x+v_k)$ intersects the $\be_2$-axis. If the first such intersection occurs outside $\Lambda_S$, then $\geo(x, x+v_k)$ must exit $\Lambda_S$. Definition \ref{defin:goodevent} ensures that such an exit can happen only through the top and bottom of $\Lambda_S$:
\[[-L -u\cdot \be_1 ,\, L - u\cdot \be_1 +  v_k \cdot \be_1] \times \{-aSL\} \text{ and } [-L -u \cdot \be_1 ,\, L - u \cdot \be_1 +  v_k \cdot \be_1] \times \{aSL\}\ . \]
Because the geodesics in $\mathfrak{F}_k(u)$ are disjoint, distinct elements of $\mathfrak{F}_k(u)$ must exit $\Lambda_S$ at distinct points (or not exit $\Lambda_S$ at all). In particular,
\begin{equation}
  \label{eq:topexit}
  \text{at most $C_1:= 4(L+1) + 2v_k \cdot \be_1$ geodesics of $\mathfrak{F}_k(u)$ can exit $\Lambda_S$}
\end{equation}
 where it is important only that the constant $C_1$ not depend on $a$, $u$, $R$, or $S$.

We now account for elements of $\mathfrak{F}_k(u)$ which do not exit $\Lambda_S$. If $X_\ell = 0$, then at most $M-1$ geodesics in $\mathfrak{F}_k(u)$ can intersect the segment $\{0\} \times [a L \ell, a L (\ell+1)]$; if $X_\ell = 1$, at most $a L + 1$ geodesics can intersect this segment. Thus, the number of geodesics of $\mathfrak{F}_k(u)$ which cross $\{0\} \times \Z$ before leaving $\Lambda_S$ is bounded above by
\begin{equation}
  \label{eq:sideexit}
  (2 S + 1)(M-1) + (a L + 1) \sum_{\ell = -S}^S X_\ell\ .
\end{equation}

Pulling together \eqref{eq:topexit}, \eqref{eq:sideexit}, and our observations about elements of $\mathfrak{F}_k(u)$ having to cross the $\be_2$-axis, we see
\begin{align*}
  |\{\gamma \in I_k(u):\, \gamma \cap \Lambda_S \neq \varnothing\}| \leq C_1+ 3MS + (a L + 1) \sum_{\ell = -S}^S X_\ell\quad \text{a.s.,}
\end{align*}
uniformly in $a$ and $S$.
Taking expectations and recalling that $\P([-u + j L \be_2] \in I_k(u)) = \P(\good_{k}^*)$, the above yields
\[(2a S+1) \prob(\good_k^*) \leq C_1 + 3MS + ( a L + 1) (2 S + 1) \E[X_0]\ , \]
where we have used \eqref{eq:Xell} to compare $\E[X_\ell]$ to $\E[X_0]$.
Last, we apply \eqref{raul seixas} to lower-bound the left-hand side, uniformly in $a$ and $S$:
\begin{equation}
  \label{eq:takealarge}(2a S+1) \delta' \leq C_1 + 3MS + (a L + 1) (2 S + 1) \E[X_0]\ ,
\end{equation}
where we recall that $\delta'$ is independent of $k$.

Dividing both sides of \eqref{eq:takealarge} by $aS$, we first take $S \to \infty$:
\begin{equation*}
  %\label{eq:takealarge2} 
  2\delta' \leq 3M/a + (2L + 1/a) \E[X_0]\ ,
\end{equation*}
and the $k$-dependent term $C_1$ has vanished. 
Using the choice $a = \lceil 3M/\delta' \rceil$ (from directly below \eqref{eq:nucoalM2}), the last display implies
\[\E[X_0] \geq \delta'/(4L + 2) \]
 uniformly in $k$ and in $u < v_k \cdot \be_1 - R$. This establishes \eqref{eq:nucoalM3} and completes the proof.\end{proof}

\subsection{Constructing a non-crossing family: the second case} \label{sec:finseg2}

We have shown that \eqref{assumption} leads to a contradiction under the additional assumption \eqref{raul seixas}. In this section, we show that \eqref{assumption} also leads to a contradiction when \eqref{raul seixas} does not hold, by using a modified version of the construction from Section \ref{sec:finseg}. This will be used to complete the proof of Theorem \ref{thm:highways} in Section \ref{sec:highwaysatlast}, where we pull together the present section with our earlier arguments.

In this section as well, \emph{we continue to enforce the convention from \eqref{eq:assumehalf}} that the half-plane $H_i$ appearing in the definition of $\good_{k,L}$ is $H_0$; this is purely for notational convenience. Unlike in Section \ref{sec:finseg2}, we assume that \eqref{raul seixas} does not hold; that is, in addition to \eqref{assumption}, we assume that, for some $L \geq L_0$ (where $L_0$ is from \eqref{eq:goodprob}) and for each $\ell \geq L$,
\begin{equation}
  \label{eq:raul2}
\liminf_{k \to \infty} \prob(\geo(0, v_k) \cap \geo(\ell \be_2, \ell \be_2 + v_k) \neq \varnothing \mid \good_{k,L} \cap \good_{k,L}(\ell \be_2)) = 1\ .
\end{equation}
The assumption \eqref{eq:raul2} will be in force for the remainder of this section, and the symbol $L$ will henceforth refer to the value appearing in \eqref{eq:raul2}. We write $\good^{\circ}_{k,L}(x, y)$ for the event appearing in \eqref{eq:raul2}, but with $0$ and $\ell \be_2$ replaced with arbitrary initial vertices. That is,
\begin{align}
  \good_{k,L}^\circ(x, y) = &\good_{k,L}(x) \cap \good_{k,L}(y)  \nonumber \\
  \cap &\{\geo(x, x+ v_k) \cap \geo(y, y + v_k) \neq \varnothing\}\ . \label{eq:goodcirc}
\end{align}
%so that $\good_{k,L}^*(x)$ is just $\good_{k,L}^\circ(x, L \be_2)$.

%{\color{red} Make the convention about half-planes apply here too.}
%We write
%\begin{align}
%  \good_{k,L}^\circ(x) = \good_k^\circ(x) = &\good_{k,L}(x) \cap \good_{k,L}(x + L \be_2)  \nonumber \\
%  \cap &\{\geo(0, v_k) \cap \geo(L \be_2, L \be_2 + v_k) \neq \varnothing\}\ , \label{eq:goodcirc}
%\end{align}
%an event analogous to $\good_k^*$ from Section \ref{what}.

On the event $\good_{k,L}(x)$, the set
\[\Xi_x = \Xi_x(k) = \{\gamma \in \tree_x: \, \exists \gamma' \in \tree_{x + v_k} \text{ with } \gamma \subseteq \gamma' \}, \]
from \eqref{eq:Xixdef}, is nonempty. By item \ref{it:good3} of Definition \ref{defin:goodevent}, on $\good_{k,L}(x)$, we even have
\begin{equation}\label{eq:backboxer}
\begin{gathered}
  \text{$\Xi_x$ uniformly moves into $H_{i+4}$, and for all $\gamma \in \Xi_x$,}\\
  \text{we have $\gamma \subseteq H_{i+4} \cup [-L/4, L/4]^2$.}
  \end{gathered}
  \end{equation}
  It is also true that, for any $x \in \Z^2$, any $k \geq 1$, and any edge $e \in \edges^2$, we have
  \begin{align}
  \label{eq:backboxer2}
  \left\{e \in \bigcup_{\gamma \in \Xi_x}\gamma\right\} = \bigcap_{m=1}^{\infty} \left\{\begin{array}{c}e \text{ appears after } x \text{ in some geodesic}\\ \text{from $x + v_k$ of length at least $m$} \end{array}\right\} \text{ is measurable,}
  \end{align}
  where the equality follows by taking subsequential limits of finite geodesics.
This allows us to make the following definition:
\begin{definition} \label{defin:bigGamma}
  For each $x$ such that $\good_{k,L}(x)$ occurs, we define the measurable map $\Gamma_k(x):\, \Omega_1 \to \Omega_2$ to be the counterclockwisemost element of $\Xi_x$ in the half-plane $H_{i+4}$ (if $\good_{k,L}(x)$ does not occur, we set $\Gamma_k(x) = \varnothing$). By Lemma \ref{lem:extremal} (whose hypothesis holds by \eqref{eq:backboxer} and \eqref{eq:backboxer2}), $\Gamma_k(x)$ is well-defined, and indeed $\Gamma_k(x) \in \Xi_x$ (since $\tree_{x + v_k}$ is closed under geodesic limits). In particular, $\Gamma_k(x)$ lies entirely in the union of $x +H_{i+4}$ and $x + [-L/4,L/4]^2$.
\end{definition}

The construction from here on will parallel that of Section \ref{sec:finseg}, using the geodesics $\Gamma_k(x)$ (themselves already infinite) rather than the geodesics $\geo(0, v_{I(k)})$.
We begin by replacing $(v_k)$ by a subsequence for a final time, in order to simplify notation. Indeed, since \eqref{eq:raul2} holds, we can assume that
\begin{equation}
  \label{eq:vkthin}
  \min_{-4k \leq \ell_1, \ell_2 \leq 4k} \P(\good_{k,L}^\circ(\ell_1 L \be_2, \ell_2 L \be_2) \mid \good_{k,L}(\ell_1 L \be_2)  \cap \good_{k,L}(\ell_2 L \be_2)) \geq 1 - 1/k^3\ .
  \end{equation}
As in \eqref{eq:Fkdefin}, we define a family of relevant geodesics: for each $k$ and vertex $u \in \Z^2$, we let
\begin{equation}
\label{eq:Gfrakk}
\mathfrak{G}_k(u) = \{\Gamma_k(u + \ell  L \be_2 ):\, -4k \leq \ell \leq 4k\}\ .\end{equation}
This definition is in some ways simpler than that of \eqref{eq:Fkdefin} --- as we will see, the nonintersection of the $\Gamma_k$'s is in a sense automatic from \eqref{eq:vkthin}:
\begin{prop}\label{prop:nointG}
	For each $u \in \Z^2$, define the event  $\mathcal{N}_k(u) \subseteq \Omega_1$ as follows:
	\[\mathcal{N}_k(u) = \{ \text{there exist $\gamma_1 \neq \gamma_2$ in $\mathfrak{G}_k(u)$ with $\gamma_1 \cap \gamma_2 \neq \varnothing$} \}\ . \]
		We then have $\lim_{k \to \infty}\P(\mathcal{N}_k(u)) = 0$, and indeed
		\[\limsup_{k \to \infty}\E[|\mathfrak{G}_k(u)| \mathbf{1}_{\mathcal{N}_k(u)}] = \limsup_{k \to \infty}\E[|\mathfrak{G}_k(0)| \mathbf{1}_{\mathcal{N}_k(0)}] < \infty\ .\]
	\end{prop}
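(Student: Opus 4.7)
The plan is to reduce everything to a pairwise bound
\[
\P(\Gamma_k(x) \cap \Gamma_k(y) \neq \varnothing) \leq k^{-3}
\]
for distinct $x = u + \ell_1 L \be_2$ and $y = u + \ell_2 L \be_2$ with $|\ell_1|, |\ell_2| \leq 4k$, and then to union-bound over the $O(k^2)$ such pairs to get $\P(\mathcal{N}_k(u)) = O(k^{-1}) \to 0$. The deterministic estimate $|\mathfrak{G}_k(u)| \leq 8k+1$ will then yield $\E[|\mathfrak{G}_k(u)| \mathbf{1}_{\mathcal{N}_k(u)}] = O(1)$, bounded uniformly in $k$; equality with the $u = 0$ value is immediate from translation invariance of $\P$, since $\Gamma_k$ is translation-covariant on $\Omega_1$. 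To establish the pairwise bound, I will use that $\{\Gamma_k(x) \cap \Gamma_k(y) \neq \varnothing\}$ automatically implies $\good_{k,L}(x) \cap \good_{k,L}(y)$ (otherwise one of the $\Gamma_k$'s is empty), and then show, on $\mathcal{X}$, the key inclusion
\[
\{\Gamma_k(x) \cap \Gamma_k(y) \neq \varnothing\} \cap \good_{k,L}(x) \cap \good_{k,L}(y) \subseteq \{\geo(x, x+v_k) \cap \geo(y, y+v_k) = \varnothing\};
\]
assumption \eqref{eq:vkthin} then closes the deal.

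The crux, and main obstacle, will be the above inclusion, which I plan to prove by contradiction via uniqueness of finite geodesics on $\mathcal{X}$. Suppose both good events hold, $z_1 \in \geo(x, x+v_k) \cap \geo(y, y+v_k)$, and $w \in \Gamma_k(x) \cap \Gamma_k(y)$. By construction of $\Gamma_k$ there exist $\gamma'_1 \in \tree_{x+v_k}$ and $\gamma'_2 \in \tree_{y+v_k}$ whose tails past $x$ and $y$ coincide with $\Gamma_k(x)$ and $\Gamma_k(y)$ respectively. Along $\gamma'_1$ the vertices $z_1, x, w$ appear in this order---since $z_1$ lies on the initial finite segment from $x+v_k$ to $x$ and $w$ lies on the infinite tail past $x$---and symmetrically along $\gamma'_2$ the order is $z_1, y, w$. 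The sub-paths of $\gamma'_1$ and $\gamma'_2$ from $z_1$ to $w$ are both geodesics, so by uniqueness they coincide with $\geo(z_1, w)$, which is thereby forced to pass through both $x$ and $y$. Supposing without loss of generality that $y$ precedes $x$ along $\geo(z_1, w)$, we find $y$ on the segment of $\gamma'_1$ from $z_1$ to $x$, which is a sub-path of $\geo(x, x+v_k)$. But $\good_{k,L}(x)$ confines this finite geodesic to $x + (H_0 \cup [-L/4, L/4]^2)$, and $y - x = (\ell_2 - \ell_1) L \be_2$ lies outside this set since its $\be_1$-coordinate is zero and $|\ell_2 - \ell_1| \geq 1$. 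The reverse ordering is ruled out symmetrically using $\good_{k,L}(y)$, and the degenerate coincidences among $z_1, w, x, y$ are blocked by analogous confinement arguments.

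In summary, the routine ingredients will be the union bound, the deterministic cardinality estimate, and translation invariance. The only genuine work lies in the geometric-uniqueness contradiction above, whose point is to convert the hypothetical simultaneous intersection of the finite geodesics and their backward extensions into a single geodesic $\geo(z_1, w)$ forced to visit both of the widely separated starting points $x$ and $y$, in manifest contradiction with the spatial confinement imposed by the good events.
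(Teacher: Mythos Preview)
Your proposal is correct and follows essentially the same approach as the paper. The overall architecture---pairwise bound via the key geometric inclusion, then union bound over $O(k^2)$ pairs using \eqref{eq:vkthin}, then the trivial bound $|\mathfrak{G}_k(u)| \leq 8k+1$---matches the paper exactly; the only cosmetic difference is in how the contradiction for the key inclusion is phrased: the paper directly exhibits two distinct geodesics from the finite-intersection point to the $\Gamma_k$-intersection point (one through each starting vertex) and invokes uniqueness, whereas you invoke uniqueness first to identify the single geodesic $\geo(z_1,w)$ and then derive $y \in \geo(x, x+v_k)$ from the forced ordering, contradicting confinement. These are dual formulations of the same argument.
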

	\begin{proof}
		We first note the following: for $\ell_1 \neq \ell_2$
		\begin{equation}\text{on $\good_{k,L}^\circ(\ell_1 L \be_2, \ell_2 L \be_2),$ we have $\Gamma_k(\ell_1L \be_2) \cap \Gamma_k(\ell_2 L \be_2) = \varnothing. $}\label{eq:goodisbetter} \end{equation}
		Indeed, if \eqref{eq:goodisbetter} did not hold, by translation invariance we could find some integer $\ell \neq 0$ and some $\omega \in \Omega_1$ on which $\geo(0, v_k) \cap \geo(\ell L \be_2, \ell L \be_2 + v_k) \ni x$ for some vertex $x$ and $\Gamma_k(0) \cap \Gamma_k(\ell L \be_2) \ni y$ for some vertex $y$. Since $\omega \in \good_{k, L}(0)$, we have by items~\ref{it:good2} and~\ref{it:good3} of Definition \ref{defin:goodevent} that $\ell L \be_2 \notin \geo(0, v_k) \cup \Gamma_k(0)$. Similarly, we have that $0 \notin \geo(\ell L \be_2, \ell L \be_2 + v_k) \cup \Gamma_k(\ell L \be_2)$. 
		
		Since $0$ is the only vertex shared by $\geo(0, v_k)$ and $\Gamma_k(0)$ (with a similar statement for $\ell L \be_2$), we see that $x \neq y$. Now we see that there are two distinct geodesics from $x$ to $y$: one consists of the segment of $\geo(0, v_k)$ from $x$ to $0$ appended to a portion of $\Gamma_k(0)$, and the other consists of a segment of $\geo(\ell L \be_2, \ell L \be_2 + v_k)$ appended to $\Gamma_k(\ell L \be_2)$. (These are distinct because one geodesic contains $0$ and the other does not). Since $\omega \in \good_{k,L}^\circ(0) \subseteq \mathcal{X}$, this nonuniqueness of geodesics leads to a contradiction, showing that \eqref{eq:goodisbetter} must in fact hold.

		The first part of the proposition now follows by \eqref{eq:goodisbetter}, \eqref{eq:vkthin}, and a union bound:
		\begin{align*}
		\sum_{\ell_1, \ell_2 = -4k}^{4k} \P(\Gamma_k(u + \ell_1 L \be_2) \cap \Gamma_k(u + \ell_2 L \be_2) \neq \varnothing) \leq (8k+1)^2 \times k^{-3}  \leq 128 k^{-1} \ ,
		\end{align*}
		proving the claim about $\P(\mathcal{N}_k(u))$. The other claim follows from the above estimate and the fact that $|\mathfrak{G}_k(u)| \leq 8k+1$ holds almost surely.
		\end{proof}

We now enlarge $\Omega_1$ slightly, analogously to \eqref{eq:omega1prime}. Recalling that the value of $L$ was fixed below \eqref{eq:raul2}, we set for each $k \geq 1$
\begin{equation}
  \label{eq:omega1prime2}
  \Omega^{(k)}_1 = \Omega_1 \times [\{0, 1, \ldots, \lfloor \delta k / 32 \rfloor \} \times \{-kL, \ldots, kL\}] \quad \text{ with measure } \P^{(k)} = \P \times \text{Unif}^{(k)} \ ,
\end{equation}
where  $\text{Unif}^{(k)}$ is uniform probability measure on the set $\{0, 1, \ldots, \lfloor \delta k / 32 \rfloor\} \times \{-kL, \ldots, kL\}$. In other words, $\Omega_1$ is enlarged to include a random variable independent of $(\omega_e)$ and  uniformly distributed on the vertices of $[0, \lfloor \delta k / 32 \rfloor] \times [-kL, kL]$. We denote a typical element of this space by $(\omega, U_k)$. Our definitions are slightly more complicated than those of Section \ref{sec:finseg}: our family of geodesics $\mathfrak{G}_k(u)$ lacks the translation-invariance appearing in \eqref{eq:frakftrans}, and so requires somewhat ``more randomization'' when taking $k \to \infty$ to recover this invariance.

We recall the definition of $\Omega_2$ from \eqref{eq:allomegas}; we will as before define a sequence of mappings $\Omega_1^{(k)} \to \Omega_1 \times \Omega_2$.  The building blocks of these mappings will be the functions $\eta_k$, defined on $\Omega_1^{(k)}$ for each $k$, which take the following form:
\[\eta_{k}(\omega, u) = \left(\eta_{k}(v,w)(\omega, u): \, (v,w) \in \dedges^2\right) \in \{0, 1\}^{\dedges^2} = \Omega_2\ ,\] 
where  $\eta_{k}(v,w)(\omega, u)$  is the indicator of the event that the edge $(v,w)$ is traversed by some geodesic in $\mathfrak{G}_k(u):$
\[  \eta_{k}(v,w)(\omega, u)=
  \begin{cases}
    1 \quad &\text{if $(v,w)$ is in some geodesic $g \in \mathfrak{G}_k(u)$};\\
    0 \quad &\text{otherwise.}
  \end{cases}\]

Similarly to \eqref{eq:psidisjoint}, we define the map $\Phi_k:\Omega_1^{(k)} \to\Omega_1\times\Omega_2$ by setting
\begin{equation}
  \label{eq:psidisjoint2}
  \Phi_k(\omega) = (\omega, \eta_k(\omega, U_k))\ .\end{equation}
The measure $\upsilon_{k},$ defined on (the Borel sigma-algebra of) $\Omega_1 \times \Omega_2$ is then the pushforward
\[\upsilon_k = \P \circ (\Phi_k)^{-1}\ . \]
This is similar to the definition of $\nu_k$ above, though in addition to the change of randomization noted above, now $\upsilon_k$ is supported on configurations with infinite paths before even taking the $k \to \infty$ limit.
% This can be compared to the definition of \eqref{what} above, though in fact this $\nu_k$ is more properly compared to $\nu_{\rho, \alpha}^*$ (from \eqref{what}) than $\nu_{\rho, \alpha}$, since it contains an average over lines via the introduction of $U_k$.
As before, $(\upsilon_k)_k$ is a tight sequence, and so we can set $\upsilon$ to be any subsequential weak limit:
\begin{equation}
  \label{eq:upsdef}\upsilon = \lim_{m \to \infty} \upsilon_{k_m} \quad \text{for some sequence $k_m \to \infty$}\ .\end{equation}
We will see that $\upsilon$ is supported on configurations of non-crossing geodesics. We first establish some basic properties of $\upsilon$, analogous to those of $\nu$, which follow by very similar arguments.

The analogue of \eqref{eq:weightmarginal} is again immediate, so we state it without proof:
\begin{equation}
\label{eq:weightmarginal2}
\upsilon(A \times \Omega_2) = \P(A) \quad \text{for any Borel } A \subseteq \Omega_1\ .
\end{equation}
We next establish translation-invariance of the measure $\upsilon$: %{\color{red} There is a proof in comments here (from 9/30 draft).}
\begin{lemma}\label{lem:upstrans}
	For any $z \in \Z^2$, the measure $\upsilon$ is invariant under $\sigma_z$.
	\end{lemma}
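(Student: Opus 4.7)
My plan is to follow closely the template of Lemma~\ref{sebastian}, adapting it to the randomization scheme used in \eqref{eq:omega1prime2}. Since cylinder events in $\Omega_1\times\Omega_2$ are clopen (hence continuity sets for every Borel measure) and generate the Borel sigma-algebra, the Portmanteau theorem reduces the problem to showing
$$\lim_{m\to\infty}\bigl[\upsilon_{k_m}(\sigma_\be A) - \upsilon_{k_m}(A)\bigr] = 0$$
for every cylinder event $A$ and every unit lattice vector $\be\in\{\pm\be_1,\pm\be_2\}$. Arbitrary lattice shifts $\sigma_z$ factor as compositions of such $\sigma_\be$, so this suffices.

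For the central step, I would pull the event $\sigma_\be A$ back through the map $\Phi_k$ of \eqref{eq:psidisjoint2}. The translation covariance of the FPP weights gives a corresponding translation covariance of the geodesics from Definition~\ref{defin:bigGamma}, namely $\sigma_{-\be}\Gamma_k(y)(\omega) = \Gamma_k(y-\be)(\sigma_{-\be}\omega)$, since each $\Gamma_k(y)$ is constructed measurably and translation-equivariantly from the weights (the counterclockwisemost selection in Lemma~\ref{lem:extremal} commutes with translations). Therefore $\sigma_{-\be}\eta_k(\omega,u) = \eta_k(\sigma_{-\be}\omega,\,u-\be)$, and after using the $\P$-invariance of the weight marginal and a change of variables $w = u-\be$ in the discrete sum over $u\in\text{Supp}(U_{k_m})$, one arrives at
$$\bigl|\upsilon_{k_m}(\sigma_\be A) - \upsilon_{k_m}(A)\bigr| \leq \frac{\bigl|\text{Supp}(U_{k_m})\triangle[\text{Supp}(U_{k_m})-\be]\bigr|}{|\text{Supp}(U_{k_m})|}.$$

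The final step is to check that the right-hand side vanishes as $m\to\infty$. Here the specific choice $\text{Supp}(U_k) = \{0,\ldots,\lfloor\delta k/32\rfloor\}\times\{-kL,\ldots,kL\}$ pays off: the denominator grows like $k^2$, while for every unit vector $\be$ the numerator is $O(k)$ (of order $kL$ when $\be = \pm\be_1$, of order $\delta k/32$ when $\be = \pm\be_2$). I do not expect any genuine obstacle, as the argument is essentially mechanical. The one point worth emphasizing is that, unlike the family $\mathfrak{F}_k(u)$ from Section~\ref{sec:finseg} which carried the built-in $L\be_2$-invariance \eqref{eq:frakftrans}, the family $\mathfrak{G}_k(u)$ has no built-in invariance at all; this is precisely why the randomization in \eqref{eq:omega1prime2} was chosen to span a large box in \emph{both} lattice directions, so that averaging can recover shift-invariance in the limit.
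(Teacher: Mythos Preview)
Your proposal is correct and takes essentially the same approach as the paper, which in fact omits the proof entirely and simply states that ``the argument is substantially similar to that used to show Lemma~\ref{sebastian}, with minor notational complications.'' Your write-up supplies exactly those details: the reduction to cylinder events and unit shifts, the translation-covariance identity $\sigma_{-\be}\eta_k(\omega,u)=\eta_k(\sigma_{-\be}\omega,u-\be)$, and the symmetric-difference bound $O(k)/O(k^2)\to 0$, together with the correct observation that here the $\be_2$-direction must also be handled by averaging (rather than by an exact periodicity as in~\eqref{eq:frakftrans}).
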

The argument is substantially similar to that used to show Lemma \ref{sebastian}, with minor notational complications. We therefore omit the proof.	
The next lemma is the analogue for $\upsilon$ of Lemma \ref{maluco beleza}.

\begin{lemma}\label{lem:maluco2}
	Assume that~\eqref{eq:raul2} holds. Then, for $\upsilon$-almost every $(\omega,\eta)\in\Omega_1\times\Omega_2$ we have
	\begin{enumerate}[label = (\alph*)]
		\item \label{it:paul1} every site has either out-degree 1 or both in- and out-degree 0 in $\eta$;
		\item \label{it:paul2} every directed path in the graph encoded by $\eta$ is a (finite or infinite) geodesic.
		\item \label{it:paul3} there are no undirected cycles (and hence no directed cycles) in the graph encoded by $\eta$.
	\end{enumerate}
\end{lemma}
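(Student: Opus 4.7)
The plan is to adapt the proof of Lemma~\ref{maluco beleza} to the present setting. The crucial structural difference is that the geodesics in $\mathfrak{G}_k(u)$ are no longer automatically pairwise disjoint (as the finite geodesics in $\mathfrak{F}_k(u)$ were by Lemma~\ref{lem:crosslocal}); rather, non-intersection now holds only with probability $1-o(1)$, as quantified by Proposition~\ref{prop:nointG}. On the favorable event $\mathcal{N}_k(U_k)^c\cap\mathcal{X}$, the graph encoded by $\eta_k(\omega,U_k)$ is simply the union of the pairwise vertex-disjoint infinite simple directed paths $\Gamma_k(U_k+\ell L\be_2)$ for $-4k\le\ell\le 4k$, and all three claims become essentially immediate; the contribution of the complementary event is then controlled by $\P(\mathcal{N}_k(U_k))\to 0$. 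The passage from $\upsilon_k$ to $\upsilon$ is carried out by Portmanteau, exploiting that the relevant events are either clopen (cylinder events in $\eta$) or unions of a clopen and a closed set.

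For claim~\ref{it:paul1}, I would argue vertex by vertex: for fixed $x\in\Z^2$, the event ``$x$ has out-degree $\ge 2$, or $x$ has out-degree $0$ and in-degree $\ge 1$'' is a cylinder event in $\Omega_2$, hence clopen. On $\mathcal{N}_{k_m}(U_{k_m})^c\cap\mathcal{X}$ neither possibility can occur, since each $\Gamma_{k_m}(y)$ is an infinite simple directed path (so every vertex on it has out-degree exactly $1$) and distinct $\Gamma_{k_m}(y)$'s are vertex-disjoint. Hence $\upsilon_{k_m}(\text{bad at }x)\le\P(\mathcal{N}_{k_m}(U_{k_m}))\to 0$, and Portmanteau gives $\upsilon(\text{bad at }x)=0$. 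A countable union over $x$ yields \ref{it:paul1}.

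For claim~\ref{it:paul2}, I would reuse the events $B_\pi$ (that the finite directed path $\pi$ lies in $\eta$) and $C_\pi=\{T(\pi)=T(x,y)\}$ from the proof of Lemma~\ref{maluco beleza}. Since $B_\pi$ is clopen and $C_\pi$ is closed, $B_\pi^c\cup C_\pi$ is closed. On $\mathcal{N}_{k_m}(U_{k_m})^c\cap\mathcal{X}$, any $\pi$ appearing in $\eta_{k_m}$ must be a subsegment of a single $\Gamma_{k_m}(y)$, which by Definition~\ref{defin:bigGamma} is an element of $\tree_y$ and hence an infinite geodesic; therefore $\pi$ itself is a geodesic. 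Consequently $\upsilon_{k_m}(B_\pi\cap C_\pi^c)\le\P(\mathcal{N}_{k_m}(U_{k_m}))$, and Portmanteau applied to the closed set $B_\pi^c\cup C_\pi$ yields $\upsilon(B_\pi^c\cup C_\pi)=1$. Taking a countable union over finite directed paths $\pi$ shows that every finite directed path in $\eta$ is a geodesic, and infinite geodesicity follows because being a geodesic reduces to being a geodesic on every finite sub-segment.

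For claim~\ref{it:paul3}, the event $D_\pi$ that a fixed finite cycle $\pi$ appears (as an undirected cycle) in $\eta$ is again a cylinder event, hence clopen, so $\upsilon(D_\pi)=\lim_m\upsilon_{k_m}(D_\pi)$. On $\mathcal{X}\cap\mathcal{N}_{k_m}(U_{k_m})^c$ the graph encoded by $\eta_{k_m}$ is a disjoint union of vertex-disjoint simple directed paths and therefore contains no undirected cycle; hence $\upsilon_{k_m}(D_\pi)\le\P(\mathcal{N}_{k_m}(U_{k_m}))\to 0$. A countable union over finite cycles $\pi$ completes the proof. The only quantitative input used anywhere in this argument is Proposition~\ref{prop:nointG}, and this is really where the main obstacle lies: the lack of automatic non-intersection in the $\mathfrak{G}_k$ construction has already been overcome quantitatively there, so what remains here is a fairly routine transcription of the argument for Lemma~\ref{maluco beleza} with an $O(1/k)$ error term carried along at each step.
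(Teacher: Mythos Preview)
Your proof is correct and follows essentially the same strategy as the paper: transcribe the argument of Lemma~\ref{maluco beleza}, replacing automatic disjointness by the estimate $\P(\mathcal{N}_k(U_k))\to 0$ from Proposition~\ref{prop:nointG}, and pass to the limit via Portmanteau on cylinder or closed events. For parts~\ref{it:paul2} and~\ref{it:paul3} the paper itself simply says the proofs are ``virtually identical'' to Lemma~\ref{maluco beleza} and omits them, so your write-up is in fact more detailed than the paper's at those points.

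There is one minor difference worth noting in part~\ref{it:paul1}. You handle both halves of the bad event at once by observing that on $\mathcal{N}_k(U_k)^c\cap\mathcal{X}$ the graph is a vertex-disjoint union of infinite simple directed paths, so every vertex in the graph has out-degree exactly~$1$, and in particular neither ``out-degree $\ge 2$'' nor ``out-degree $0$ with in-degree $\ge 1$'' can occur. The paper instead splits: it bounds the out-degree~$\ge 2$ case by $\P(\mathcal{N}_k)$, and then separately treats the case of a vertex with in-degree~$0$ and out-degree~$\ge 1$ by noting that such a vertex must be an initial point of some $\Gamma_k$, hence has $\be_1$-coordinate equal to $U_k\cdot\be_1\ge 1$, so the origin cannot be such a vertex. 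Your route is slightly more economical for the statement as written (since out-degree~$1$ with in-degree~$0$ is actually permitted by~\ref{it:paul1}, you do not need the paper's initial-vertex argument at all), while the paper's route yields the marginally stronger conclusion that $\upsilon$-a.s.\ no vertex has in-degree~$0$ and positive out-degree. Either approach is fine here.
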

As in the case of $\nu$, these reflect properties inherited from $\upsilon_k$. %The proof is different in some ways from that of Lemma \ref{maluco beleza}; %we emphasize these differences and omit details which are substantially similar.
\begin{proof}
	We begin by proving \ref{it:paul1}, first showing that $\upsilon$-almost surely no site has out-degree at least two. As in the proof of \ref{it:raul1} of Lemma \ref{maluco beleza}, this follows once we show
	\begin{align}\label{eq:paul1}
	\lim_{m \to \infty} \P \times \text{Unif}^{(k)}(\text{there are distinct $\gamma_1, \gamma_2 \in \mathfrak{G}_k(U_k)$ with } \gamma_1 \cap \gamma_2 \neq \varnothing) = 0\ .  
	 \end{align}
	 Conditioning on $U_k = u$, the conditional probability of the event appearing in \eqref{eq:paul1} is exactly the probability of $\mathcal{N}_k(u)$ from Proposition \ref{prop:nointG}; by that proposition, the conditional probability tends to zero uniformly in $u$. Applying this in \eqref{eq:paul1}, the result follows.
	 
	 Again following the strategy of the proof of Lemma \ref{maluco beleza}, we show that
	 \begin{equation}
	 \label{eq:upsdeg}
	 \lim_{m \to \infty} \upsilon_{k_m}(0 \text{ has in-degree $0$ and out-degree at least $1$}) = 0
	 \end{equation}
	 which (since the event in \eqref{eq:upsdeg} is a cylinder event, and since $\upsilon$ is translation-invariant by Lemma~\ref{lem:upstrans}) completes the proof of~\ref{it:paul1}. We recall that the marginal of $\upsilon_{k_m}$ on $\Omega_2$ is the distribution of the set of edges appearing in geodesics from $\mathfrak{G}_k(U_k)$. 
	 
	 In particular, the probability appearing in \eqref{eq:upsdeg} is at most
	 \[\P^{(k)}(0 \text{ is the initial vertex of an element of $\mathfrak{G}_k(U_k)$}) = 0. \]
	 Here the equality follows from the fact that if $x$ is the initial vertex of an element of $\mathfrak{G}_k(u)$, we have $x \cdot \be_1 = u \cdot \be_1$ and the fact that $U_k \cdot \be_1 \geq 1$ almost surely. This concludes the proof of \ref{eq:upsdeg} and thus of \ref{it:paul1}.
	 % at most
	 %\begin{align*}
	 %\sum_{\ell_1, \ell_2 = -k}^k \P(\Gamma_k(u + \ell_1 L \be_2) \cap \Gamma_k(u + \ell_2 L \be_2) \neq \varnothing) \leq (2k+1)^2 \times k^{-3} \to 0\ ,
	 %\end{align*}
	 %where we have used \eqref{eq:vkthin} to bound the probability. Applying this in \eqref{eq:paul1} and averaging over $U_k$ completes the proof of \ref{it:paul1}.
	 
	 The proofs of \ref{it:paul2} and \ref{it:paul3} are virtually identical to that of \ref{it:raul2} and \ref{it:raul3} of Lemma \ref{maluco beleza}, so we omit them.
\end{proof}

We now arrive at the ultimate goal of the construction in the second case: showing that samples from $\upsilon$ admit more coalescence classes than is allowed by Theorem \ref{thm:non-crossing}. This is the point of the following lemma, which is analogous to Lemma \ref{cornelis}.
%{\color{red} Seems like $U_k$ should be in positive direction, right? Remove restriction on $\|v_k\|$ since not necessary in this dir. Also need to clarify $\good^\circ$ --- think there is a typo here, want prob $\approx \delta'$.}
\begin{lemma}\label{lem:cornelis2}
	Assume that~\eqref{eq:raul2} (and \eqref{assumption}) hold. Then
	$$
	\upsilon\big(\text{the graph encoded by } \eta\text{ has infinitely many coalescence classes}\big)>0.
	$$
	In particular, $\upsilon(\eta(\bar{e}) = 0 \text{ for all $\bar{e}$}) < 1$.
\end{lemma}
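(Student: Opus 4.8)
## Proof plan for Lemma \ref{lem:cornelis2}

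The plan is to mirror the proof of Lemma~\ref{cornelis} from the first case, adapting it to the family $\mathfrak{G}_k(u)$ of the $\Gamma_k$-geodesics rather than the family $\mathfrak{F}_k(u)$ of finite geodesics $\geo(x,x+v_k)$. The key structural difference is that the $\Gamma_k$-geodesics are already infinite (they eventually move into $H_{i+4}$, by Definition~\ref{defin:bigGamma} and \eqref{eq:backboxer}), so no finite approximation of the event ``$\eta$ has $\ge M$ coalescence classes'' is needed at the level of $\upsilon_k$ — one can work directly with infinite directed paths. The counting will instead use the fact that the $\Gamma_k$-geodesics, being infinite and directed into $H_{i+4}$, all ultimately exit a large box through a controlled portion of its boundary.

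First I would fix $M \ge 1$ and, as in Lemma~\ref{cornelis}, define $A = A(M) \subseteq \Omega_2$ to be the event that the graph encoded by $\eta$ has at least $M$ coalescence classes, and express it as a decreasing intersection of cylinder events $A_{a,R}$ (at least $M$ pairwise-disjoint directed paths in $\eta$ from vertices of $\{0\}\times[-aL,aL]$ reaching $\Z^2\setminus[-R,R]^2$); here $a$ will be chosen depending on $M$ and $\delta$. Since each $A_{a,R}$ is a cylinder event, $\upsilon(A_{a,R}) = \lim_m \upsilon_{k_m}(A_{a,R})$, so it suffices to lower bound $\upsilon_{k_m}(A_{a,R})$ uniformly. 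Pulling back through $\Phi_k$ from \eqref{eq:psidisjoint2}, this reduces to showing that for an appropriate $a = a(M,\delta)$,
$$
\P\big(\text{at least }M\text{ geodesics of }\mathfrak{G}_k(u)\text{ intersect }\{0\}\times[-aL, aL]\big) \ge c
$$
uniformly in $k$ and $u$, for some constant $c>0$ independent of $M$. The geodesics of $\mathfrak{G}_k(u)$ are pairwise disjoint off the event $\mathcal{N}_k(u)$, which has probability $o(1)$ by Proposition~\ref{prop:nointG}, and each begins at a vertex on the vertical line $\{u\cdot\be_1\}\times\Z$, so each must cross the $\be_2$-axis. The counting argument then proceeds exactly as in Lemma~\ref{cornelis}: introduce boxes $\Lambda_S$ growing in the $\be_2$-direction, use \eqref{eq:backboxer} (confinement of $\Gamma_k$ within $H_{i+4}\cup[-L/4,L/4]^2$ shifted to $x$) to control how the $\Gamma_k$-geodesics can exit $\Lambda_S$ through its top and bottom, use disjointness to bound the number of exits, and use translation invariance of the indicator variables $X_\ell$ (``$\ge M$ geodesics of $\mathfrak{G}_k(u)$ cross $\{0\}\times[aL\ell, aL(\ell+1)]$'') to convert this into a lower bound on $\E[X_0]$. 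The relevant density input replacing \eqref{raul seixas} is that $\good_{k,L}(\ell L\be_2)$ has probability $>\delta/8$ for each $\ell$ in the relevant range (from \eqref{eq:goodprob}) together with the near-certain non-intersection \eqref{eq:vkthin}; these give that $\Gamma_k(u+\ell L\be_2)$ is a nonempty, disjoint, genuinely infinite geodesic for a proportion $\gtrsim \delta$ of indices $\ell$. This is why the randomization in \eqref{eq:omega1prime2} ranges $\ell$ over $\{-kL,\dots,kL\}$ while $\mathfrak{G}_k$ uses $-4k\le\ell\le4k$: averaging over the shift $U_k$ recovers the needed translation invariance in the limit.

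The step I expect to be the main obstacle is the exit-counting bookkeeping: unlike $\mathfrak{F}_k(u)$, whose geodesics $\geo(x, x+v_k)$ terminate at $x+v_k$ on the far side of the box, the $\Gamma_k$-geodesics are infinite and directed into $H_{i+4}$, so one must argue carefully that within $\Lambda_S$ (of bounded $\be_1$-width but large $\be_2$-extent) an infinite geodesic $\Gamma_k(x)$ directed into $H_{i+4}$ either stays inside $\Lambda_S$ near its start and then exits through the top or bottom, and that the confinement estimate \eqref{eq:backboxer} genuinely forbids exit through the left or right sides except within $[-L/4,L/4]^2$ of the initial vertex. Once the box $\Lambda_S$ is set up so that its left/right sides are at $\be_1$-coordinate far from $u\cdot\be_1 = x\cdot\be_1$ (which is automatic since $u$ ranges over a set of bounded $\be_1$-extent while $\Lambda_S$ has $\be_1$-width comparable to $v_k\cdot\be_1 \to \infty$), the confinement of each $\Gamma_k$ to a vertical strip of width $\le L/2$ around its starting line rules out lateral exits, and the argument closes exactly as in \eqref{eq:topexit}--\eqref{eq:takealarge}. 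I would then take $S\to\infty$ (killing the $k$-dependent boundary term $C_1$), take $m\to\infty$, and finally $R\to\infty$, obtaining $\upsilon(A(M)) \ge c$ for all $M$, hence $\upsilon\big(\bigcap_M A(M)\big) \ge c > 0$, which gives the claim; the final assertion $\upsilon(\eta \equiv 0) < 1$ is immediate since $A(1)$ forces $\eta$ to be nonempty.
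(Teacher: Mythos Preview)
Your plan has a genuine gap at the step where you invoke ``translation invariance of the indicator variables $X_\ell$.'' In the first case, the family $\mathfrak{F}_k(u)$ was indexed by \emph{all} $j \in \Z$ (recall \eqref{eq:Idef}), which gave the exact distributional identity \eqref{eq:frakftrans} and hence \eqref{eq:Xell}. Here, by contrast, $\mathfrak{G}_k(u) = \{\Gamma_k(u + \ell L \be_2): -4k \le \ell \le 4k\}$ is a \emph{finite} family, and shifting $u$ by $L\be_2$ genuinely changes which geodesics are included. So for fixed $u$ there is no reason to have $\E[X_\ell] = \E[X_0]$, and the chain of inequalities \eqref{eq:topexit}--\eqref{eq:takealarge} cannot be reproduced as written. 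The finiteness of the index range is not optional: it is exactly what makes the disjointness input \eqref{eq:vkthin} (and hence Proposition~\ref{prop:nointG}) available, since that estimate only controls pairs with $|\ell_1|, |\ell_2| \le 4k$. You are thus caught between needing an infinite index set for translation invariance and a finite one for disjointness.

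This is precisely why the paper does \emph{not} aim for a $\P$-bound uniform in $u$, but instead works directly with the averaged probability $\P^{(k)} = \P \times \mathrm{Unif}^{(k)}$ and establishes \eqref{eq:upcoalM3}. The substitute for translation invariance is the observation \eqref{eq:almosttrans}: if one passes to the subfamily $G_k(v) \subseteq \mathfrak{G}_k(v)$ of geodesics starting inside the fixed box $\Lambda_k' = [0, \delta k/32 + L] \times [-kL, kL]$, then $G_k(v)$ and $G_k(v + jL\be_2)$ are equidistributed for $|j| \le k$, because the oversampling $-4k \le \ell \le 4k$ absorbs the shift. The counting argument is then run once, on the single box $\Lambda_k'$, bounding exits through its left side $\{0\} \times [-kL, kL]$; a telescoping sum over shifts of $U_k$ converts this into a lower bound on the $\P^{(k)}$-probability in \eqref{eq:upcoalM3}. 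A secondary issue: your description of the box geometry is off --- $u \cdot \be_1$ ranges over $[0, \lfloor \delta k/32 \rfloor]$, which is not bounded in $k$, and the scale $v_k \cdot \be_1$ plays no role for the $\Gamma_k$-geodesics (they are infinite toward $H_{i+4}$, and the relevant horizontal extent is the width of $\mathrm{Supp}(U_k)$, not $v_k \cdot \be_1$).
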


\begin{proof}
	By essentially identical manipulations to those used in the proof of Lemma \ref{cornelis}, we arrive at an analogue of \eqref{eq:nucoalM2}. We thus see that the lemma will be proved once we show the following:
	\begin{equation}
	\begin{gathered}
	\label{eq:upcoalM3}
	\text{ for each $M \geq 1$, if $a > 128M/\delta$, we have for all $k$ large }\\
	\P \times \text{Unif}^{(k)}(\text{at least $M$ geodesics of $\mathfrak{G}_k(U_k)$ intersect $\{0\} \times [0, aL]$} ) \geq \delta / (128 L).\ 
	\end{gathered}
	\end{equation}
	(We recall $L$ was fixed below \eqref{eq:raul2}.) %Just as in our proof of \eqref{eq:nucoalM3}, we see that \eqref{eq:upcoalM3} will follow once we prove an estimate for the $\P$-probability of an analogous event.
	 Unlike in the case of \eqref{eq:nucoalM3}, though, we no longer try to get a $\P$-probability bound which is uniform in all $u$. Roughly speaking, we do this to avoid complications related to the lack of translation-invariance of the measures $\nu_k$.%, opting instead for such a bound for a positive density of values of $u$. Indeed, \eqref{eq:upcoalM3} will clearly follow once we show that for some constant $c_1 > 0$,

	We introduce the following boxes, which are different from (but play a similar role to) the boxes $\Lambda_S$ from \eqref{eq:LambdaSdef}:
	\[\Lambda_{k}' = [0,\, \delta k/32 + L] \times [-kL, kL]\ .  \]
	Here as usual, $\delta$ is as in \eqref{assumption}.
  We introduce the collection of geodesics of $\mathfrak{G}_k(u)$ beginning in $\Lambda_k'$:
	\begin{equation}
	\label{eq:Gkdef}
	\begin{gathered}
	G_k(v) :=\{\Gamma_k(v +  \ell L \be_2): \, v + \ell L \be_2 \in \Lambda_k'\} \subseteq \mathfrak{G}_k(v), \text{ for}\\ v \in \{1, \ldots, \lfloor \delta k / 32\rfloor \} \times \{ -2kL, -2kL + L, \ldots, 2kL \}\ . 
	\end{gathered}
	\end{equation}
	
	We have defined $G_k(v)$ for some values of $v$ lying outside of $\mathrm{Supp}(U_k)$; we note also that $\mathfrak{G}_k(v) \setminus G_k(v)$ is potentially large. These facts allow us to accommodate some shifts of $U_k$, and they guarantee the following statement, which recovers a modicum of translation-invariance for $\upsilon_k$:
	\begin{equation}
	    \label{eq:almosttrans}
	    \begin{gathered}
	    \text{For $v \in \{1, \ldots, \lfloor \delta k / 32\rfloor \} \times \{ -2kL, -2kL + L, \ldots, 2kL \}$ and $j = -k, -k + 1, \ldots, k$,}\\
	    \text{$G_k(v)$ has the same distribution under $\P$ as $G_k(v + j L \be_2)$.}
	    \end{gathered}
	\end{equation}
	The fact \eqref{eq:almosttrans} follows immediately from the definitions of $\mathfrak{G}_k(v)$ and $G_k(v)$.% and is the reason for the appearance of ``$4k$'' in the former definition.
	%this will become relevant in the next paragraph and the discussion in item \ref{it:thirdboxy} in the list following it.

	For each $v$, each element of $G_k(v)$ must exit $\Lambda_k'$.
	On the event $\Omega_1 \setminus \mathcal{N}_k(v)$, elements of $G_k(v)$ intersect the boundary of $\Lambda_k'$ in distinct vertices, allowing us to relate the number of intersections to $|G_k(v)|$. Similarly to the case of $\nu$, we argue that not too many elements of $G_k(v)$ can exit $\Lambda_k'$ through their top, bottom, or right sides. 
	
	Indeed, no elements of $G_k(v)$ may exit through the right side, for any value of $v$:
		\[\text{if $\Gamma_k(v + \ell \be_2) \in G_k(v)$, then } \Gamma_k(v + \ell \be_2) \cap \left( \{\left\lfloor \delta k / 32\right\rfloor + L\} \times [-kL, kL]\right) = \varnothing. \]
    	This is because $\good_k(x))$ occurs whenever $\Gamma_k(x) \neq \varnothing$ (recalling Definition \ref{defin:bigGamma}). On the event $\Omega_1 \setminus \mathcal{N}_k(v)$, because elements of $G_k(v)$ exit $\Lambda_k'$ at distinct points, there cannot be too many exits through the top and bottom sides:
		\begin{equation*}
		\begin{gathered}
		\text{On $\Omega_1 \setminus \mathcal{N}_k(v)$, at most $\delta k / 32 + L+1$} \text{ elements of $G_k(u)$}
		\text{ intersect } [0, \left\lfloor \delta k / 32\right\rfloor + L] \times \{kL\} , \\ \text{ with an analogous statement for the bottom side of $\Lambda_k'$.}
		\end{gathered}
		\end{equation*}
		In particular, at most $\delta k/16 + 2(L+1)$ geodesics may exit $\Lambda_k'$ through the union of its top and bottom side.
		
			Applying the estimates of the last paragraph, we see
		\begin{equation}
		\begin{gathered}
		    \label{eq:leftsideexits}
		    \text{On $\Omega_1 \setminus \mathcal{N}_k(v)$, at least $|G_k(v)| - \delta k/16 - 2(L+1)$ vertices of } \{0\} \times [-kL, kL]\\
		    \text{are first exit points of elements of $G_k(v)$ from $\Lambda_k'$.}
		    \end{gathered}
		\end{equation}
		Here ``first exit point'' means the first vertex on the boundary of $\Lambda_k'$ encountered by a particular element of $G_k(v)$.
		
		We use the above work to lower bound the number of geodesics in $\mathfrak{G}_k(v)$ intersecting the left side of $\Lambda_k'$.
		Recalling our goal \eqref{eq:upcoalM3}, we define
		\[ Q_k :=  \left\{ \text{at least $M$ vertices of $\{0\} \times [0, aL)$ are first exit points of elements of $\mathfrak{G}_k(U_k)$} \right\}, \]
	%	noting that
		%\begin{equation*}
		%\begin{split}
		%     \{ |\{\gamma \in G_k(U_k): \, \gamma \cap [\{0\} \times [0, aL)] \neq \varnothing\}| \geq M \} \subseteq 
		 %     Q_k .
		    % \end{split}
		  %   	\end{equation*}
		The desired bound \eqref{eq:upcoalM3} will follow once we show
		\begin{equation}
		\label{eq:fraktoG}
		\P^{(k)}(Q_k) \geq \delta / (128 L) \text{ for each pair $M\geq 1$ and $a > 128M/\delta$, uniformly in all large $k$}.
		\end{equation}
		
		We do this by estimating the expected number of vertices on $[0, aL)$ which are first exit points as in the definition of $Q_k$. Writing $\E^{(k)}$ for expectation with respect to $\P^{(k)},$
		\begin{align}
		(2k/a) &\E^{(k)}[|\{x \in \{0\} \times [0, a L): x \text{ is the first exit point of some } \gamma \in \mathfrak{G}_k(U_k) \}| ]\nonumber\\
	%	\geq
		   % &\sum_{r = -\lfloor k/a\rfloor - 1}^{\lfloor k/a \rfloor} \E^{(k)}\left[|\{x \in \{0\} \times [0, a L): x \text{ is the first exit point of some } \gamma \in \mathfrak{G}_k(U_k) \}| \right]\\
		     \geq &\sum_{r = -\lfloor k/a\rfloor - 1}^{\lfloor k/a \rfloor} \E^{(k)}\left[\left|\left\{ 
		        \begin{array}{c}x \in \{0\} \times [(r-1) a,  ra L): \, x \text{ is the first exit} \\\text{point of some } \gamma \in \mathfrak{G}_k(U_k- (r-1) a L \be_2) 
		         \end{array}
		     \right\}\right|\right]\nonumber\\
		    \geq &\sum_{r = -\lfloor k/a\rfloor - 1}^{\lfloor k/a \rfloor} \E^{(k)}\left[\left|\left\{
		        \begin{array}{c}
		        x \in \{0\} \times [(r-1)a,  ra L): \, x \text{ is the first exit}\\ \text{ point of some } \gamma \in G_k(U_k- (r-1) a L \be_2)
		        \end{array}
		    \right\}\right|\right]\nonumber\\
		    =&\sum_{r = -\lfloor k/a\rfloor - 1}^{\lfloor k/a \rfloor} \E^{(k)}\left[\left|\left\{
		        \begin{array}{c}
		        x \in \{0\} \times [(r-1)a,  ra L): \, x \text{ is the first exit} \\\text{point of some } \gamma \in G_k(U_k)
		        \end{array}      
		  \right\}\right|\right] \nonumber\\
		    \geq &\E^{(k)}\left[\left|\left\{\gamma \in G_k(U_k): \gamma \text{ exits $\Lambda_k'$ through its left side } \right\}\right| \mathbf{1}_{\Omega_1 \setminus \mathcal{N}_k(U_k)} \right] - 2 aL\ , \label{eq:lasttelescope}
		\end{align}
		where in the second-to-last line we applied \eqref{eq:almosttrans}.
		
		We lower-bound the expectation in \eqref{eq:lasttelescope} using \eqref{eq:leftsideexits}, yielding
		\[\eqref{eq:lasttelescope} \geq \E^{(k)} \left[ |G_k(U_k)| \mathbf{1}_{\Omega_1 \setminus \mathcal{N}_k(U_k)}\right] - 2 aL - \delta k / 16 - 2(L+1)\ . \]
		Now, using Proposition~\ref{prop:nointG} and \eqref{eq:goodprob} to bound $\E [ |G_k(U_k)| \mathbf{1}_{\Omega_1 \setminus \mathcal{N}_k(U_k)}]$ gives
		\[
		\begin{gathered}
		\eqref{eq:lasttelescope} \geq \delta k / 32 \quad \text{ for all large $k$, so for such $k$ we have} \\
		    \E^{(k)}[|\{x \in \{0\} \times [0, a L): x \text{ is the first exit point of some } \gamma \in \mathfrak{G}_k(U_k) \}| ] \geq \delta a / 64.
		    \end{gathered}
		    \]

		The quantity inside the expectation in the last display is almost surely bounded above by $aL$, since there are only $aL$ vertices in $[0, aL)$. Breaking up this expectation depending on whether $Q_k$ occurs or not, we see
		\[\delta a / 64 \leq (M-1) + a L \P^{(k)}(Q_k)\ ,  \]
		From this, \eqref{eq:fraktoG} immediately follows. As noted at that equation, this suffices to show \eqref{eq:upcoalM3}, which in turn implies the statement of the lemma.
\end{proof}

\subsection{Proof of Theorem~\ref{thm:highways}}\label{sec:highwaysatlast}
We pull together the above arguments to complete the proof of Theorem~\ref{thm:highways}. Assuming that the theorem did not hold (in the form of \eqref{assumption} and its refinement~\eqref{eq:vktov}), we derive a contradiction.% Recalling the sequence $v_k$ from that assumption, Lemma~\ref{corners} shows that $v_k / \mu(v_k)$ converges to a point $v / \mu(v) \in \partial \ball$ which is not a point of differentiability, and indeed a point at which there exist infinitely many supporting functionals.
%This allowed us to provide a uniform lower bound on the probability of a certain good event in \eqref{eq:goodprob}.

We are led to consider two cases, depending on whether the additional assumption \eqref{raul seixas} holds. In case that \eqref{raul seixas} holds, we construct in \eqref{eq:nudef} a measure $\nu$ on $\Omega_1 \times \Omega_2$. Lemmas~\ref{sebastian} and \ref{maluco beleza} show that $\nu$ is a shift-invariant measure on non-crossing geodesics. In Lemma~\ref{cornelis}, we show that $\nu$ assigns positive probability to the set of configurations having infinitely many coalescence classes. This contradicts Theorem~\ref{thm:non-crossing} and shows that \eqref{assumption} and \eqref{raul seixas} cannot simultaneously hold.

If instead both \eqref{assumption} and the negation of \eqref{raul seixas} hold, we construct a measure $\upsilon$ on $\Omega_1 \times \Omega_2$ in \eqref{eq:upsdef}.  Via Lemmas~\ref{lem:upstrans} and \ref{lem:cornelis2}, we see that this $\upsilon$ is a shift-invariant measure on non-crossing geodesics. Then Lemma\ref{lem:maluco2} shows that this measure assigns positive probability to configurations exhibiting infinitely many coalescence classes, again contradicting Theorem~\ref{thm:non-crossing}. Thus, \eqref{assumption} leads to a contradiction; Theorem~\ref{thm:highways} is proved.
\qed

%It remains to verify that assumption~\eqref{assumption} indeed ends in a contradiction. First, recall that if~\eqref{assumption} holds, then by~\eqref{also good} we have $\P\big(0\in I_{k,\ell}(0)\big)>\delta/20$. There are then two cases to consider, that either~\eqref{raul seixas} or~\eqref{pugh} holds. In case of the former, Lemmas~\ref{sebastian}-\ref{maluco beleza} together show that $\nu$ is a shift-invariant measure on non-crossing geodesics, and Lemma~\ref{cornelis} shows that $\nu$ puts positive mass on configurations with infinitely many coalescence classes. In case of the latter, Lemmas~\ref{prog}-\ref{peps} together show that $\nu'$ is a shift-invariant measure on non-crossing geodesics, and Lemma~\ref{blodsband} shows that $\nu'$ puts positive mass on configurations with infinitely many coalescence classes. In either case, we reach a contradiction to Theorem~\ref{thm:non-crossing}. Thus, assumption~\eqref{assumption} is incorrect, and the proof of Theorem~\ref{thm:highways} is complete.

\bibliographystyle{plain}
\bibliography{ahlberg}

\end{document}